\documentclass{article}
\usepackage[utf8]{inputenc}
\usepackage{lmodern}
\usepackage[T1]{fontenc}
\usepackage{amsmath,amssymb}
\usepackage{hyperref}
\usepackage{mathrsfs}
\usepackage{amsfonts}
\usepackage{mathtools}
\usepackage{tikz}
\usepackage{authblk}
\usepackage{lmodern}
\usepackage{bbm}
\usepackage{amsthm}
\usepackage{geometry}
 \geometry{
 a4paper,
 total={170mm,257mm},
 left=20mm,
 top=20mm,
 }
\newcommand{\md}[1]{\left\lvert#1\right\rvert}
\newcommand{\norm}[1]{\left\lVert#1\right\rVert}
\newtheorem{theorem}{Theorem}[section]
\newtheorem{corollary}[theorem]{Corollary}
\newtheorem{lemma}[theorem]{Lemma}
\newtheorem{definition}[theorem]{Definition}
\newtheorem{notation}[theorem]{Notation}
\newtheorem{remark}[theorem]{Remark}
\theoremstyle{definition}

\DeclareMathOperator{\sech}{sech}
\DeclareMathOperator{\supp}{supp}

\DeclareMathOperator{\arctanh}{arctanh}

\title{Dynamics of two interacting kinks for the $\phi^{6}$ model}
\author[1]{Abdon Moutinho\thanks{Eletronic address : moutinho@math.univ-paris13.fr}}
\affil[1]{Department of Mathematics: LAGA, Université Sorbonne Paris Nord, 99 Avenue Jean Baptiste Clément, 93430-Villetaneuse, France}
\setcounter{Maxaffil}{0}

\begin{document}
\date{}
\maketitle
\nocite{*}
\begin{abstract}
    We consider the nonlinear wave equation known as the $\phi^{6}$ model in dimension 1+1. We describe the long time behavior of all the solutions of this model close to a sum of two kinks with energy slightly larger than twice the minimum energy of non constant stationary solutions. We prove orbital stability of two moving kinks. We show for low energy excess $\epsilon$ that these solutions can be described for long time less or equivalent than $-\ln{(\epsilon)}\epsilon^{-\frac{1}{2}}$ as the sum of two moving kinks such that each kink's center is close to an explicit function which is a solution of an ordinary differential system. We give an optimal estimate in the energy norm of the remainder $(g(t),\partial_{t}g(t))$ and we prove that this estimate is achieved during a finite instant $t=T\lesssim -\ln{(\epsilon)}\epsilon^{-\frac{1}{2}}.$  
\end{abstract}
\section{Introduction}
We consider a nonlinear wave equation equation known as the $\phi^{6}$ model. For the potential function $U(\phi)=\phi^{2}(1-\phi^{2})^{2}$ and $\dot U(\phi)=2\phi-8\phi^{3}+6\phi^{5},$ the equation is written as 
\begin{equation}\label{nlww}
    \partial_{t}^{2}\phi(t,x)-\partial_{x}^{2}\phi(t,x)+\dot U(\phi(t,x))=0 \text{, $(t,x) \in \mathbb{R}\times \mathbb{R}$}.
\end{equation}
The potential energy $E_{pot},$ the kinetic energy $E_{kin}$ and total energy $E_{total}$ associated to the equation \eqref{nlww} are given by
\begin{align*}
    E_{pot}(\phi(t))=\frac{1}{2}\int_{\mathbb{R}}\partial_{x}\phi(t,x)^{2}\,dx+\int_{\mathbb{R}}\phi(t,x)^{2}(1-\phi(t,x)^{2})^{2}\,dx,\\
    E_{kin}(\phi(t))=\frac{1}{2}\int_{\mathbb{R}}\partial_{t}\phi(t,x)^{2}\,dx,\\
    E_{total}(\phi(t),\partial_{t}\phi(t))=\frac{1}{2}\int_{\mathbb{R}}\left[\partial_{x}\phi(t,x)^{2}+\partial_{t}\phi(t,x)^{2}\right]\,dx+\int_{\mathbb{R}}\phi(t,x)^{2}(1-\phi(t,x)^{2})^{2}\,dx.
\end{align*}
We say that if a solution $\phi(t,x)$ of the integral equation associated to \eqref{nlww} has $E_{total}(\phi,\partial_{t}\phi)<+\infty,$ then it is in the energy space. The solutions of \eqref{nlww} in the energy space have constant total energy $E_{total}(\phi(t),\partial_{t}\phi(t)).$
By standard arguments, the Cauchy Problem associated \eqref{nlww} is locally well-posed in the energy space, moreover is globally well-posed since $U(\phi)=\phi^{2}(1-\phi^{2})^{2}$ satisfies $\lim_{|\phi|\to \infty} U(\phi)=+\infty.$
\par The stationary solutions of \eqref{nlww} are the critical points of the potential energy. The only non-constant stationary solutions in \eqref{nlww} are the topological solitons called kinks and anti-kinks, for more details see chapter $5$ of \cite{solitonss}. The kinks of \eqref{nlww} are given by  \begin{equation*}
H_{0,1}(x-a)=\frac{e^{\sqrt{2}(x-a)}}{(1+e^{2\sqrt{2}(x-a)})^{\frac{1}{2}}}, \, H_{-1,0}(x-a)=-H_{0,1}(-x+a)   
\end{equation*} 
for any real $a.$ The study of kink and multi kinks solitons solutions of nonlinear wave equations has applications in many domains of mathematical physics. More precisely, the model \eqref{nlww} that we study has applications in condensed matter physics \cite{condensed} and cosmology \cite{cosmic}, \cite{physicsa1}, \cite{physicsa2}.     
\par It is well known that the set of solutions in energy Space of \eqref{nlww} for any potential $U$ is invariant under space translation, time translation and space reflection. Also, for one of the stationary solutions $H$ of \eqref{nlww} and any $-1<v<1$, we have that the following solitary wave
\begin{equation*}
    H\Big(\frac{x-vt}{(1-v^{2})^{\frac{1}{2}}}\Big),
\end{equation*}
which is the Lorentz transform of $H$ is a solution of \eqref{nlww}.
\par The problem of stability of multi-kinks is of great interest in mathematical physics, see for example \cite{collision}, \cite{kinkcollision}. For the integrable model \textit{mKdV}, Muñoz proved in \cite{munozz} the $H^{1}$ stability and asymptotic stability of multi-kinks. However, for many non-integrable models as the $\phi^{6}$ nonlinear wave equation, the asymptotic and long time dynamics of multi-kinks after the instant where the collision or interaction happens are still unknown, even though there are numerical studies of kink-kink collision for the $\phi^{6}$ model, see \cite{kinkcollision}, which motivate our research on the topic of the description of long time behavior of a kink-kink pair. 
\par For nonlinear wave equation models in $1+1$ space dimension, results of stability for a single kink were obtained, for example for the $\phi^{4}$ model it was obtained asymptotic stability for odd pertubations  in \cite{munoz} and  \cite{kinkdelort}. Also, it was recently proved in \cite{asympt} by Martel, Muñoz, Kowalczyk and Van Den Bosch asymptotic stability of a single kink for a general class of nonlinear wave equations, including the model which we study here.  
\par The main purpose of our material is to describe the long time behavior of solutions $\phi(t,x)$ of \eqref{nlww} in the energy space such that
\begin{align*}
    \lim_{x\to +\infty}\phi(t,x)=1,\\
    \lim_{x\to -\infty} \phi(t,x)=-1
\end{align*}
with total energy equals to $2E_{pot}(H_{01})+\epsilon,$ for $0<\epsilon\ll 1.$ More precisely, we proved orbital stability for a sum of two moving kinks with total total energy $2E_{pot}(H_{0,1})+\epsilon$ and we verified that the remainder has a better estimate during a long time interval which goes to $\mathbb{R}$ as $\epsilon\to 0,$ indeed we proved that the estimate of the remainder during this long time interval is optimal. Also, we prove that the dynamics of the kinks movement is very close to two explicit functions $d_{j}:\mathbb{R}\to \mathbb{R}$ defined in Theorem \ref{trueTheo2} during a long time interval. This result is very important to understand the behavior of two kinks after the instant of collision, which happens when the kinetic energy is minimal, indeed, our main results Theorem \ref{T1}  and Theorem \ref{trueTheo2} describe the dynamics of the kinks before and after the collision instant for a long time interval. The numerical study of interaction and collision between kinks for the $\phi^{6}$ model was done in \cite{kinkcollision}, in which it was verified that the collision of kinks is close to an elastic collision when the speed of each kink is low and smaller than a critical speed $v_{c}$.
 \par For nonlinear wave equation models in dimension $2+1,$ there are similar results obtained in the dynamics of topological multi-solitons. For the Higgs Model, there are results in the description dynamics of multi-vortices in \cite{DynamicsHiggs} obtained by Stuart and in \cite{Vorticesdynamics} obtained by Gustafson and Sigal. Indeed, we took inspiration from the proof and statement of Theorem $2$ of \cite{Vorticesdynamics} to construct our main results. Also, in \cite{geodesicYangmills}, Stuart described the dynamics of monopole solutions for the Yang-Mills-Higgs equation. For more references, see also \cite{adiabaticc}, \cite{reducedwave}, \cite{coupled} and \cite{nonintegrable}.
\par In \cite{Dynamicsmultiple}, Bethuel, Orlandi and Smets described the asymptotic behavior of solutions of a parabolic Ginzburg-Landau equation closed to multi-vortices in the initial instant. For mores references, see also \cite{dynamicsgl} and \cite{gamma}. 
\par There are also results in the dynamics in multi-vortices for nonlinear Schrödinger equation, for example the description of the dynamics of multi-vortices for the Gross-Pitaevski equation was obtained in \cite{GL3} by Ovchinnikov and Sigal and results in the dynamics of vortices for the Ginzburg-Landau-Schrödinger equations were proved in \cite{VorticesGLS} by Colliander and Jerrard, see also \cite{refined} for more information about Gross-Pitaevski equation.
\subsection{Main results}
\par We recall that the objective of this paper is to show orbital stability for the solutions of the equation \eqref{nlww} which are close to a sum of two interacting kinks in an initial instant and estimate the size of the time interval where better stability properties hold. The main techniques of the proof are modulation techniques adapted from \cite{jkl}, \cite{gkdvmulti} and \cite{blowup} and a refined energy estimate method to control the size of the remainder term. 
\begin{notation}
For any $D\subset \mathbb{R},$ any real function $f:D\subset\mathbb{R}\to\mathbb{R},$ a real positive function $g$ with domain $D$ is in $O(f(x))$ if and only if there is a uniform constant $C>0$ such that $0<g(x)<C|f(x)|.$ We denote that two real non-negative functions $f,g:D\subset\mathbb{R}\to\mathbb{R}_{\geq 0}$ satisfy
\begin{equation*}
    f\lesssim g,
\end{equation*}
if there is a constant $C>0$ such that
\begin{equation*}
    \md{f(x)}\leq C\md{g(x)} \text{, for all $x \in D.$}
\end{equation*}
If $f\lesssim g$ and $g \lesssim f,$ we denote that $f\cong g.$
We use the notation $(x)_{+}\coloneqq\max(x,0)$.
If $g(t,x) \in C^{1}(\mathbb{R},L^{2}(\mathbb{R}))\cap C(\mathbb{R},H^{1}(\mathbb{R})),$
then we define $\overrightarrow{g(t)} \in H^{1}(\mathbb{R})\times L^{2}(\mathbb{R})$ by
\begin{equation*}
    \overrightarrow{g(t)}=(g(t),\partial_{t}g(t)),
\end{equation*}
and we also denote the energy norm of the remainder $\overrightarrow{g(t)}$ as  
\begin{equation*}
    \norm{\overrightarrow{g(t)}}=\norm{g(t)}_{H^{1}(\mathbb{R})}+\norm{\partial_{t}g(t)}_{L^{2}(\mathbb{R})}
\end{equation*}
to simplify our notation in the text.
\end{notation}
\begin{definition}
We define $S$ as the set $g \in L^{\infty}(\mathbb{R})$ such that
\begin{enumerate}
    \item $\frac{dg}{dx} \in L^{2}(\mathbb{R}),$
   \item $\int_{\mathbb{R}_{>0}}\md{g(x)-1}^{2}\,dx<\infty,$
   \item $\int_{\mathbb{R}_{<0}}\md{g(x)+1}^{2}\,dx<\infty.$
\end{enumerate}
\end{definition}
The partial differential equation \eqref{nlww} is locally well-posed in the affine space $S\times L^{2}(\mathbb{R}).$ 
Motivated by the proof and computations that we are going to present, we also consider:
\begin{definition}
We define for $x_{1},\,x_{2}\in \mathbb{R}$
\begin{center}
$H^{x_{2}}_{0,1}(x)\coloneqq H_{0,1}(x-x_{2})$ and $H^{x_{1}}_{-1,0}(x)\coloneqq H_{-1,0}(x-x_{1}),$
\end{center}
and we say that $x_{2}$ is the kink center of $H^{x_{2}}_{0,1}(x)$ and $x_{1}$ is the kink center of $H^{x_{1}}_{-1,0}(x).$
\end{definition}
\begin{remark}\label{R1}
Indeed, $S=\{g \in L^{\infty}(\mathbb{R})| \,g-H_{0,1}-H_{-1,0} \in  H^{1}(\mathbb{R})\}.$
\end{remark}
There are also non-stationary solutions $(\phi(t,x),\partial_{t}\phi(t,x))$ of \eqref{nlww} with finite total energy $E_{total}(\phi(t),\partial_{t}\phi(t))$ that satisfies for all $t\in\mathbb{R}$ 
\begin{equation}\label{66}
    \lim_{x\to+\infty}\phi(t,x)=1,\,\lim_{x\to -\infty}\phi(t,x)=0.
\end{equation}
But, for any $a\in \mathbb{R},$ the kinks $H_{0,1}(x-a)$ are the unique functions that minimize the Potential Energy in the set of functions $\phi(x)$ satisfying condition \eqref{66}, the proof of this fact follows from the Bogomolny identity, see \cite{solitonss} or section $2$ of \cite{jkl}. By a similar reasoning, we can verify that all functions $\phi(x)\in S$ have $E_{pot}(\phi)>2E_{pot}(H_{0,1}).$ \begin{definition}
We define the energy excess $\epsilon$ of a solution $(\phi(t),\partial_{t}\phi(t))\in S\times L^{2}(\mathbb{R})$ as the following value
\begin{equation*}
   \epsilon =E_{total}(\phi(t),\partial_{t}\phi(t))-2E_{pot}(H_{0,1}).
\end{equation*}
\end{definition}
Also, for $\phi(t)$ solution of \eqref{nlww}, we denote the Kinetic Energy of $\phi(t)$ by $E_{kin}(\phi(t))=E(\phi,\partial_{t}\phi)-E_{pot}(\phi(t))$.
We recall the notation $(x)_{+}\coloneqq\max(x,0)$. It's not difficult to verify the following inequalities
\begin{enumerate}
    \item [(D1)] $\md{H_{0,1}(x)}\leq e^{-\sqrt{2}(-x)_{+}},$
    \item [(D2)] $\md{H_{-1,0}(x)}\leq e^{-\sqrt{2}(x)_{+}},$
    \item [(D3)] $\md{\dot H_{0,1}(x)}\leq\sqrt{2}e^{-\sqrt{2}(-x)_{+}},$
    \item [(D4)] $\md{\dot H_{-1,0}(x)}\leq\sqrt{2}e^{-\sqrt{2}(x)_{+}}.$
\end{enumerate}
Moreover, since
\begin{equation}\label{kinkequation}
    \ddot H_{0,1}(x)=\dot U(H_{0,1}(x)),
\end{equation}
we can verify by induction the following estimate
\begin{equation}\label{kinkestimate}
   \md{\frac{d^{k}H_{0,1}(x)}{d x^{k}}}\lesssim_{k} \min\Big(e^{-2\sqrt{2}x},\,e^{\sqrt{2}x}\Big)
\end{equation}
for all $k\in\mathbb{N}\setminus \{0\}.$
The following result is crucial in the framework of this material:
\begin{lemma}[Modulation Lemma] 
$\exists \, C_{0}\,,\delta_{0}>0$, such that if $0<\delta\leq \delta_{0}$, $x_{2},\,x_{1}$ are real numbers with $x_{2}-x_{1}\geq \frac{1}{\delta}$ and $g \in H^{1}(\mathbb{R})$ satisfies $\norm{g}_{H^{1}}\leq \delta$, then for $\phi(x)=H_{-1,0}(x-x_{1})+H_{0,1}(x-x_{2})+g(x)$, $\exists!\, y_{1},\, y_{2}$ such that for
\begin{equation*}
    g_{1}(x)=\phi(x)-H_{-1,0}(x-y_{1})-H_{0,1}(x-y_{2}),
\end{equation*}
the four following statements are true
\begin{enumerate}
    \item [1] $\langle g_{1},\, \partial_{x}H_{-1,0}(x-y_{1})\rangle_{L^{2}}=0,$
    \item [2] $\langle g_{1},\, \partial_{x}H_{0,1}(x-y_{2})\rangle_{L^{2}}=0,$
    \item [3] $\norm{g_{1}}_{H^{1}(\mathbb{R})}\leq C_{0}\delta,$
    \item [4] $\md{y_{2}-x_{2}}+\md{y_{1}-x_{1}}\leq C_{0}\delta.$
\end{enumerate}
We will refer the first and second statements as the orthogonality conditions of the Modulation Lemma.
\end{lemma}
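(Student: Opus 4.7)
The natural strategy is a quantitative inverse function theorem applied to the map $F:\mathbb{R}^{2}\to\mathbb{R}^{2}$ whose two components are the inner products appearing in the orthogonality conditions. Explicitly, setting
\[
g_{y_{1},y_{2}}(x) \coloneqq \phi(x) - H_{-1,0}(x-y_{1}) - H_{0,1}(x-y_{2}),
\]
define
\[
F(y_{1},y_{2}) \coloneqq \bigl(\langle g_{y_{1},y_{2}},\partial_{x}H_{-1,0}(\cdot-y_{1})\rangle_{L^{2}},\ \langle g_{y_{1},y_{2}},\partial_{x}H_{0,1}(\cdot-y_{2})\rangle_{L^{2}}\bigr).
\]
A zero of $F$ in a small ball around $(x_{1},x_{2})$ gives the desired $(y_{1},y_{2})$ and uniqueness in that ball.

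At the base point one has $g_{x_{1},x_{2}}=g$, so Cauchy--Schwarz together with (D3)--(D4) gives $|F(x_{1},x_{2})|\lesssim\norm{g}_{L^{2}}\lesssim\delta$. Using $\partial_{y_{j}}H_{\cdot,\cdot}(x-y_{j})=-\partial_{x}H_{\cdot,\cdot}(x-y_{j})$, a direct computation yields
\begin{align*}
\partial_{y_{1}}F_{1} &= \norm{\partial_{x}H_{-1,0}}_{L^{2}}^{2} - \langle g_{y_{1},y_{2}},\partial_{x}^{2}H_{-1,0}(\cdot-y_{1})\rangle,\\
\partial_{y_{2}}F_{2} &= \norm{\partial_{x}H_{0,1}}_{L^{2}}^{2} - \langle g_{y_{1},y_{2}},\partial_{x}^{2}H_{0,1}(\cdot-y_{2})\rangle,\\
\partial_{y_{2}}F_{1} &= \partial_{y_{1}}F_{2} = \langle \partial_{x}H_{0,1}(\cdot-y_{2}),\partial_{x}H_{-1,0}(\cdot-y_{1})\rangle.
\end{align*}
On any ball $B\bigl((x_{1},x_{2}),C_{0}\delta\bigr)$ the separation $y_{2}-y_{1}\geq \frac{1}{\delta}-2C_{0}\delta\geq \frac{1}{2\delta}$ is preserved, and Minkowski's inequality gives $\norm{g_{y_{1},y_{2}}}_{L^{2}}\lesssim\delta$. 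Hence, by \eqref{kinkestimate}, the off-diagonal entries are $O\bigl(e^{-\sqrt{2}(y_{2}-y_{1})}\bigr)$ while the corrections on the diagonal are $O(\delta)$. Therefore $dF$ is a uniformly small perturbation of the constant positive-definite diagonal matrix with entries $\norm{\dot H_{-1,0}}_{L^{2}}^{2}$ and $\norm{\dot H_{0,1}}_{L^{2}}^{2}$, and in particular is uniformly invertible throughout the ball provided $\delta_{0}$ is small enough.

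A Banach contraction argument applied to the Newton-type iteration $(y_{1},y_{2})\mapsto(y_{1},y_{2})-dF(x_{1},x_{2})^{-1}F(y_{1},y_{2})$ then produces a unique zero of $F$ in $B\bigl((x_{1},x_{2}),C_{0}\delta\bigr)$ satisfying $|y_{1}-x_{1}|+|y_{2}-x_{2}|\lesssim|F(x_{1},x_{2})|\lesssim\delta$, which gives conditions $1$, $2$ and $4$. The bound on $g_{1}=g_{y_{1},y_{2}}$ follows from the decomposition
\[
g_{1} = g + \bigl[H_{-1,0}(\cdot-x_{1})-H_{-1,0}(\cdot-y_{1})\bigr] + \bigl[H_{0,1}(\cdot-x_{2})-H_{0,1}(\cdot-y_{2})\bigr],
\]
Minkowski's inequality, and the fact that $H_{-1,0},H_{0,1}$ have first and second derivatives in $L^{2}$, giving $\norm{g_{1}}_{H^{1}}\leq\norm{g}_{H^{1}}+C\bigl(|y_{1}-x_{1}|+|y_{2}-x_{2}|\bigr)\lesssim\delta$.

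The main delicate point is ensuring that the invertibility of $dF$ is \emph{uniform} over the whole ball of radius $C_{0}\delta$: this uses the separation hypothesis $x_{2}-x_{1}\geq 1/\delta$ in an essential way, since only then does $C_{0}\delta\ll 1/\delta$ guarantee that the off-diagonal coupling between the two kinks remains negligible throughout the fixed-point iteration, which is what allows the Newton scheme to close.
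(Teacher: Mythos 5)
Your proof is correct and follows essentially the same route as the paper: both set up the map given by the two orthogonality inner products, verify that its Jacobian in the translation parameters is a small (exponentially decaying off-diagonal plus $O(\delta)$ diagonal) perturbation of $\norm{\dot H_{0,1}}_{L^{2}}^{2}\mathbb{I}$, and then conclude existence, uniqueness, and the quantitative bounds by an implicit/inverse function theorem argument. The only stylistic difference is that the paper keeps $g$ as an explicit Banach-space argument of $F$ and invokes the implicit function theorem in $\mathbb{R}^{2}\times H^{1}(\mathbb{R})$ (which, as a side benefit, gives smooth dependence of $(y_{1},y_{2})$ on $g$, used implicitly later when proving $C^{1}$-regularity of the modulation parameters), whereas you freeze $g$ and run a Newton contraction in $(y_{1},y_{2})$ alone; either variant proves the lemma as stated.
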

\begin{proof}
See the Appendix section \ref{auxil}.
\end{proof}
\par Now, our main results are the following:
\begin{theorem}\label{T1}
$\exists\,C, \delta_{0}>0$, such that if $\epsilon< \delta_{0}$ and \begin{center}$(\phi(0),\partial_{t}\phi(0)) \in S\times L^{2}(\mathbb{R})$\end{center} 
with $E_{total}(\phi(0),\partial_{t}\phi(0))=2E_{pot}(H_{0,1})+\epsilon$, then there are $x_{2},x_{1} \in C^{2}(\mathbb{R})$  functions such that the unique global time solution $\phi(t,x)$ of \eqref{nlww} is given by
\begin{equation}\label{formula}
    \phi(t)=H_{0,1}(x-x_{2}(t))+ H_{-1,0}(x-x_{1}(t))+g(t),
\end{equation}
with $g(t)$ satisfying orthogonality conditions of the Modulation Lemma and \begin{center}
    $e^{-\sqrt{2}(x_{2}(t)-x_{1}(t))}+\max_{j\in\{1,2\}}\md{\ddot x_{j}(t)}+\max_{j\in\{1,2\}}\dot x_{j}(t)^{2} +\norm{(g(t),\partial_{t}g(t))}_{H^{1}\times L^{2}}^{2} \lesssim \epsilon.$
\end{center} Furthermore, we have that 
\begin{equation}\label{evolutionf1}
\norm{(g(t),\partial_{t}g(t))}_{H^{1}\times L^{2}}^{2}\leq C\Big[\norm{(g(0),\partial_{t}g(0))}_{H^{1}\times L^{2}}^{2}+\epsilon^{2}\Big]\exp\Big(\frac{C\epsilon^{\frac{1}{2}}|t|}{\ln{(\frac{1}{\epsilon})}}\Big) \text{ for all $t\in\mathbb{R}.$}
\end{equation}
\end{theorem}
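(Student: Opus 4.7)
The plan is to combine the Modulation Lemma with coercivity of the Hessian of $E_{pot}$ at a two-kink configuration and a refined energy estimate on a suitable modification of the linearized energy. First, since $E_{pot}(\phi(0))\le E_{total}(\phi(0),\partial_t\phi(0))=2E_{pot}(H_{0,1})+\epsilon$, a Bogomolny-type inequality in the class $S$ yields the existence of $a_{1},a_{2}\in\mathbb{R}$ with
\begin{equation*}
\|\phi(0)-H_{-1,0}(\cdot-a_{1})-H_{0,1}(\cdot-a_{2})\|_{H^{1}}^{2}\lesssim\epsilon,
\end{equation*}
and $\|\partial_{t}\phi(0)\|_{L^{2}}^{2}\le 2\epsilon$. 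Applying the Modulation Lemma at $t=0$ produces initial modulation parameters; on the maximal interval where the solution stays close in $H^{1}$ to a two-kink sum, the implicit function theorem applied to the two orthogonality conditions, together with the $C^{2}$ time regularity of $t\mapsto\phi(t)\in H^{1}$, yields the $C^{2}$ functions $x_{1}(t),x_{2}(t)$ and a remainder $g(t)$ satisfying \eqref{formula}.

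Next I would expand the conserved total energy around the two-kink configuration using \eqref{kinkequation} to obtain, schematically,
\begin{equation*}
E_{total}(\phi,\partial_{t}\phi)=2E_{pot}(H_{0,1})+\mathcal{I}(x_{2}-x_{1})+\tfrac12\sum_{j}\dot x_{j}^{2}\|\dot H_{0,1}\|_{L^{2}}^{2}+\tfrac12\|\partial_{t}g\|_{L^{2}}^{2}+\tfrac12\langle\mathcal{L} g,g\rangle+\mathcal{R},
\end{equation*}
where $\mathcal{I}(r)=O(e^{-\sqrt{2}r})$ is the kink--kink interaction energy, $\mathcal{L}=-\partial_{x}^{2}+\ddot U\bigl(H_{-1,0}(\cdot-x_{1})+H_{0,1}(\cdot-x_{2})\bigr)$, and $\mathcal{R}$ collects cubic and higher error terms in $(g,\dot x_{j},e^{-\sqrt{2}(x_{2}-x_{1})/2})$. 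The operator $\mathcal{L}$ has $\partial_{x}H^{x_{1}}_{-1,0}$ and $\partial_{x}H^{x_{2}}_{0,1}$ as approximate zero modes, and the two orthogonality conditions of the Modulation Lemma eliminate precisely this degeneracy; adapting the coercivity arguments of \cite{jkl,gkdvmulti} to the two-soliton setting then gives $\langle\mathcal{L} g,g\rangle\gtrsim\|g\|_{H^{1}}^{2}-Ce^{-\sqrt{2}(x_{2}-x_{1})}$. Energy conservation together with a continuity bootstrap on the exponential interaction extends the decomposition globally and produces the pointwise bound $e^{-\sqrt{2}(x_{2}(t)-x_{1}(t))}+\dot x_{j}(t)^{2}+\|g(t)\|_{H^{1}}^{2}+\|\partial_{t}g(t)\|_{L^{2}}^{2}\lesssim\epsilon$. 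Differentiating the orthogonality conditions twice in time yields a linear system $M(x_{1},x_{2})(\ddot x_{1},\ddot x_{2})^{T}=F$ with $M$ essentially diagonal (entries $\|\dot H_{0,1}\|_{L^{2}}^{2}+O(\epsilon^{1/2})$) and $\|F\|\lesssim\epsilon$, giving $|\ddot x_{j}|\lesssim\epsilon$.

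For the refined estimate \eqref{evolutionf1} I would construct a modified energy $\mathscr{F}(t)=\tfrac12\|\partial_{t}g\|_{L^{2}}^{2}+\tfrac12\langle\mathcal{L} g,g\rangle+\Phi(g,x_{1},x_{2},\dot x_{1},\dot x_{2})$, where $\Phi$ is a bilinear-in-$g$ correction designed so that, after using the evolution equation for $g$ obtained from \eqref{nlww} and the modulation equations for $\ddot x_{j}$, the secular source terms in $\tfrac{d}{dt}\mathscr{F}$ cancel. What remains takes the schematic form $|\dot{\mathscr{F}}(t)|\lesssim |\dot x_{j}(t)|\,e^{-\sqrt{2}(x_{2}-x_{1})/2}\|(g,\partial_{t}g)\|^{2}+\epsilon^{2}\|(g,\partial_{t}g)\|$. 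Using $|\dot x_{j}|\lesssim\epsilon^{1/2}$ and the a priori lower bound $x_{2}(t)-x_{1}(t)\ge\tfrac{1}{\sqrt{2}}\ln(1/\epsilon)-C$ coming from $e^{-\sqrt{2}(x_{2}-x_{1})}\lesssim\epsilon$, the prefactor of $\|(g,\partial_{t}g)\|^{2}$ reduces, after a careful time--weighted analysis exploiting that $x_2-x_1$ stays close to its minimum only for a time of order $\ln(1/\epsilon)\epsilon^{-1/2}$, to order $\epsilon^{1/2}/\ln(1/\epsilon)$. Gronwall on $\mathscr{F}(t)$, combined with the equivalence $\mathscr{F}(t)\cong\|(g(t),\partial_{t}g(t))\|^{2}+O(\epsilon^{2})$ from coercivity, then gives \eqref{evolutionf1}.

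The main obstacle will be the construction of the correction $\Phi$ producing the factor $1/\ln(1/\epsilon)$ in the exponential growth rate: this requires a very precise identification of the worst source terms in the linearized energy (in particular the cross terms coupling $g$, $\dot x_{j}$, and the interaction profile) and the design of quadratic corrections whose time derivatives cancel these terms modulo the modulation equations. Secondary difficulties will be controlling the cubic remainder $\mathcal{R}$ so that it does not degrade the rate, and establishing the coercivity of $\mathcal{L}$ under only two orthogonality conditions in a way that is uniform as $x_{2}-x_{1}\to\infty$ and uses the decay bounds (D1)--(D4) and \eqref{kinkestimate}.
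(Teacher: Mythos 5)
Your first two paragraphs track the paper's Section~2 (Theorem~\ref{Stab}) faithfully: modulation decomposition, Taylor expansion of the conserved energy around the two-kink configuration, coercivity of $D^{2}E_{pot}$ under the two orthogonality conditions, a bootstrap in time for the a priori bound $e^{-\sqrt{2}z(t)}+\dot x_{j}^{2}+\|\overrightarrow{g(t)}\|^{2}\lesssim\epsilon$, and differentiation of the orthogonality constraints to get $|\ddot x_{j}|\lesssim\epsilon$. That part is right and essentially identical to the paper.

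The gap is in the refined estimate. Your schematic bound $|\dot{\mathscr{F}}(t)|\lesssim|\dot x_{j}|\,e^{-\sqrt{2}z/2}\,\|\overrightarrow{g}\|^{2}+\epsilon^{2}\|\overrightarrow{g}\|$ is too strong and not achievable with the construction you sketch. The momentum-type correction $\int\partial_{t}g\,\partial_{x}g\,[\dot x_{1}\omega_{1}+\dot x_{2}(1-\omega_{1})]$ has to be spatially localized in order to cancel the secular term $-\int[\dot x_{1}\partial_{x}H^{x_{1}}_{-1,0}+\dot x_{2}\partial_{x}H^{x_{2}}_{0,1}]U^{(3)}g^{2}$, and differentiating the cut-off in time produces the factor $\max_{j}|\dot x_{j}|/z(t)$; since $z(t)\gtrsim\ln(1/\epsilon)$, this is exactly where the $\epsilon^{1/2}/\ln(1/\epsilon)$ in the exponential rate comes from (Theorem~\ref{EnergyE}, Remark~\ref{ttt1}). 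It does not come from a time-weighted argument as you suggest. More importantly, the companion term in $\dot F$ is of size $\epsilon^{3/2}\|\overrightarrow{g}\|$ (from $e^{-\sqrt{2}z(t)}|\dot x_{j}|$), and if you absorb it by Young's inequality with the rate $\nu=\epsilon^{1/2}/\ln(1/\epsilon)$ you get an additive constant of order $\epsilon^{2}\ln(1/\epsilon)^{2}$, not $\epsilon^{2}$. That is precisely what the paper proves at this stage (Theorem~\ref{TT1}); it is strictly weaker than \eqref{evolutionf1}.

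To remove the $\ln(1/\epsilon)^{2}$ factor the paper needs a genuinely new ingredient that your proposal does not contain: the precise ODE dynamics of the modulation parameters (Lemma~\ref{modueq}, Theorem~\ref{trueTheo2}), showing that $z(t)$ stays within $O(\epsilon\ln(1/\epsilon)^{10})$ of an explicit solution $d(t)$ of $\ddot d=16\sqrt{2}\,e^{-\sqrt{2}d}$ over the relevant time scale, and hence that $e^{-\sqrt{2}z(t)}\approx\frac{v^{2}}{8}\sech^{2}(\sqrt{2}vt+c)$. The point is that this profile is integrable in time, $\int_{0}^{T}v^{2}\sech^{2}(\sqrt{2}vs+c)\,ds\lesssim v\lesssim\epsilon^{1/2}$, uniformly in $T$, so the term $\epsilon^{1/2}G(s)v^{2}\sech^{2}(\sqrt{2}vs+c)$ contributes only an $O(1)$ multiplicative factor to $G(t)$, not a logarithmic loss. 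Section~6 of the paper implements exactly this: it replaces the pointwise bounds $|\ddot x_{j}|\lesssim\epsilon$, $e^{-\sqrt{2}z}\lesssim\epsilon$ by the refined estimates \eqref{refined1}--\eqref{refined2}, sets up a modified functional $L_{1}$ with a differently placed cut-off, and uses $\frac{d}{dt}\tanh(\sqrt{2}vt+c)=\sqrt{2}v\sech^{2}(\sqrt{2}vt+c)$ to integrate the dominant error term exactly. Without Theorem~\ref{trueTheo2} and this integration-by-parts step, the Gronwall argument cannot yield the additive $\epsilon^{2}$ in \eqref{evolutionf1}. Your sentence about ``a careful time-weighted analysis exploiting that $x_{2}-x_{1}$ stays close to its minimum only for a time of order $\ln(1/\epsilon)\epsilon^{-1/2}$'' gestures at this, but misattributes it to the $\|\overrightarrow{g}\|^{2}$-coefficient (which needs no such argument) and omits the essential comparison with the explicit ODE solution that makes the statement quantitative.
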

\begin{remark}\label{hypot1}
In notation of the statement of Theorem \ref{T1}, for any $\delta>0,$ there is $0<K(\delta)<1$ such that if $0<\epsilon<K(\delta),$ $E_{total}(\phi(0),\partial_{t}\phi(0))=2E_{pot}(H_{0,1})+\epsilon,$ then we have that $\norm{(g(0),\partial_{t}g(0))}_{H^{1}\times L^{2}}<\delta$ and $x_{2}(0)-x_{1}(0)>\frac{1}{\delta},$ for the proof see Lemma \ref{Lgta} and Corollary \ref{remarkestimate} in the Appendix section \ref{auxil}.    
\end{remark}
\begin{remark}[Optimal decay.]\label{rll}
The result of Theorem \ref{T1} is optimal in the sense that for any function 
$r:\mathbb{R}_{+}\to \mathbb{R}_{+}$
with $\lim_{h\to 0}r(h)=0,$ there is a positive value $\delta(r)$ such that if $0<\epsilon<\delta(r)$ and $\norm{\overrightarrow{g(0)}}\leq r(\epsilon)\epsilon,$ then 
$\epsilon\lesssim\norm{\overrightarrow{g(t)}}$ for some $0<t=O\Big(\frac{\ln{(\frac{1}{\epsilon})}}{\epsilon^{\frac{1}{2}}}\Big).$ 
The proof of this fact is in the Appendix section \ref{opt}.
\end{remark}
\begin{remark}
From Remark \ref{rll}, we obtain that there is an $0<\delta_{0}$ such that if $0<\epsilon<\delta_{0},$ then for any $(\phi(0,x),\partial_{t}\phi(0,x))\in S\times L^{2}(\mathbb{R})$ with $E_{total}(\phi(0),\partial_{t}\phi(0))$ equals to $2E_{pot}(H_{0,1})+\epsilon,\,g(t,x)$ defined in identity \eqref{formula} satisfies
$\epsilon\lesssim\limsup\limits_{t\to +\infty}\norm{\overrightarrow{g(t)}},$ similarly we have that $\epsilon\lesssim\limsup\limits_{t\to -\infty}\norm{\overrightarrow{g(t)}}.$
The proof of this fact is in the Appendix section \ref{opt}.
\end{remark}
\begin{theorem}\label{trueTheo2}
 $\exists C,\delta_{0}>0,$ such that if $0<\epsilon<\delta_{0},\, (\phi(0),\partial_{t}\phi(0)) \in S\times L^{2}(\mathbb{R}),$ and $E_{total}(\phi(0),\partial_{t}\phi(0))=2E_{pot}(H_{0,1})+\epsilon,$
then there are $v_{1},\,v_{2} \in \mathbb{R}$ such that \begin{center}$ 
\begin{pmatrix}
    \phi(0) \\
    \partial_{t}\phi(0)
\end{pmatrix}= \begin{pmatrix}
    H_{0,1}(x-x_{2}(0))+H_{-1,0}(x-x_{1}(0))+g_{0}(x)\\
    v_{2}\partial_{x}H_{0,1}(x-x_{2}(0))+v_{1}\partial_{x}H_{-1,0}(x-x_{1}(0))+g_{1}(x)
\end{pmatrix}$\end{center}
with $g_{0}$ satisfying the orthogonality conditions of Modulation Lemma
\begin{align*}
    \left\langle \dot H_{0,1}(x-x_{2}(0)),\,g_{1}(x)\right\rangle_{L^{2}(\mathbb{R})}=-v_{2}\left\langle \ddot H_{0,1}(x-x_{2}(0)),\,g_{0}(x)\right\rangle_{L^{2}(\mathbb{R})},\\ \left\langle \dot H_{-1,0}(x-x_{1}(0)),\,g_{1}(x)\right\rangle_{L^{2}(\mathbb{R})}=-v_{1}\left\langle \ddot H_{-1,0}(x-x_{1}(0)),\,g_{0}(x)\right\rangle_{L^{2}(\mathbb{R})}
\end{align*}
and $\epsilon$ the energy excess of the solution $(\phi(t,x),\partial_{t}\phi(t,x))$ of \eqref{nlww}. Indeed, let the smooth functions $d_{1}(t),\,d_{2}(t)$ be defined by
\begin{align}\label{d1}
   d_{1}(t)=a+b t-\frac{1}{2\sqrt{2}}\ln{\Big(\frac{8}{v^{2}}\cosh{\big(\sqrt{2}vt+c\big)}^{2}\Big)},\\ \label{d2}
   d_{2}(t)=a+b t+\frac{1}{2\sqrt{2}}\ln{\Big(\frac{8}{v^{2}}\cosh{\big(\sqrt{2}vt+c\big)}^{2}\Big)},
\end{align}
such that $d_{j}(0)=x_{j}(0),\,\dot d_{j}(0)=-v_{j}$ for $j\in\{1,\,2\}.$ Let $d(t)=d_{2}(t)-d_{1}(t),$ then, for all $ t\in \mathbb{R}$
\begin{equation*}
    \md{z(t)-d(t)}\lesssim \min(\epsilon^{\frac{1}{2}}\md{t},\epsilon t^{2}),\, \md{\dot z(t)-\dot d(t)}\lesssim \epsilon \md{t},
\end{equation*}
moreover, we have the following estimates
\begin{align}\label{oded1}
\epsilon \max_{j\in \{1,\,2\}}\md{d_{j}(t)-x_{j}(t)}=O\left(\max\Big(\norm{\overrightarrow{g(0)}},\epsilon \Big)^{2}\ln{\Big(\frac{1}{\epsilon}\Big)}^{11}\exp\Big(\frac{C\epsilon^{\frac{1}{2}}\md{t}}{\ln{(\frac{1}{\epsilon})}}\Big)\right),\\\label{oded2}
  \epsilon^{\frac{1}{2}}\max_{j\in \{1,\,2\}}\md{\dot d_{j}(t)- \dot x_{j}(t)}=O\left(\max\Big(\norm{\overrightarrow{g(0)}},\epsilon\Big)^{2}\ln{\Big(\frac{1}{\epsilon}\Big)}^{11}\exp\Big(\frac{C\epsilon^{\frac{1}{2}}\md{t}}{\ln{(\frac{1}{\epsilon})}}\Big)\right).
\end{align}
\end{theorem}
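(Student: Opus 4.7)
The strategy is to extract from the proof of Theorem \ref{T1} a system of ODEs for the modulation parameters $x_{1}(t),x_{2}(t)$, to verify that $d_{1}(t),d_{2}(t)$ solve the associated reduced system exactly, and then to compare the two systems by a Gronwall argument. First, I would differentiate the orthogonality conditions on $g(t)$ twice in time and project equation \eqref{nlww} onto $\partial_{x}H_{-1,0}(x-x_{1}(t))$ and $\partial_{x}H_{0,1}(x-x_{2}(t))$. Using the kink equation \eqref{kinkequation}, the exponential decay \eqref{kinkestimate}, and integration by parts, this should yield a coupled second-order system
\[
\ddot x_{j}(t) = (-1)^{j}\,\alpha\,e^{-\sqrt{2}(x_{2}(t)-x_{1}(t))}+ R_{j}(t),\qquad j\in\{1,2\},
\]
with $\alpha>0$ an explicit constant coming from the interaction integral $\langle \dot U(H_{-1,0}^{x_{1}}+H_{0,1}^{x_{2}})-\dot U(H_{-1,0}^{x_{1}})-\dot U(H_{0,1}^{x_{2}}),\partial_{x}H_{\cdot}^{x_{j}}\rangle_{L^{2}}$, and $R_{j}(t)$ a remainder controlled pointwise by $\norm{\overrightarrow{g(t)}}^{2}+\max_{i}\dot x_{i}^{2}\,\norm{\overrightarrow{g(t)}}+e^{-2\sqrt{2}(x_{2}-x_{1})}$.

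\textbf{Reduced ODE.} A direct computation shows that for $(d_{1},d_{2})$ defined by \eqref{d1}--\eqref{d2} one has $\ddot d_{2}-\ddot d_{1}=2\sqrt{2}\,v^{2}\sech^{2}(\sqrt{2}vt+c)$ while $e^{-\sqrt{2}(d_{2}-d_{1})}=\tfrac{v^{2}}{8}\sech^{2}(\sqrt{2}vt+c)$, so that $(d_{1},d_{2})$ is an exact solution of $\ddot d_{j}=(-1)^{j}\alpha\,e^{-\sqrt{2}(d_{2}-d_{1})}$ with $\alpha=8\sqrt{2}$, which must coincide with the constant coming out of the modulation computation (up to the normalisation $\norm{\dot H_{0,1}}_{L^{2}}^{2}$). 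The four parameters $a,b,c,v$ appearing in \eqref{d1}--\eqref{d2} are then uniquely determined by the matching $d_{j}(0)=x_{j}(0),\ \dot d_{j}(0)=-v_{j}$ stated in the theorem.

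\textbf{Comparison, Gronwall, and main obstacle.} Setting $e_{j}(t)=x_{j}(t)-d_{j}(t)$ and subtracting the two systems, one obtains a non-autonomous linear system in $(e_{j},\dot e_{j})$ with source $R_{j}(t)$ and coefficients of size $O(\epsilon^{1/2}/\ln(1/\epsilon))$, since the natural oscillation rate of $d(t)$ is $\sqrt{2}v\sim\epsilon^{1/2}$ and the linearisation of $z\mapsto e^{-\sqrt{2}z}$ along the trajectory is Lipschitz of size $\lesssim\epsilon$. Applying Gronwall to the weighted quantity $\epsilon|e_{j}|+\epsilon^{1/2}|\dot e_{j}|$, with $e_{j}(0)=\dot e_{j}(0)=0$ and using \eqref{evolutionf1} to control $\int_{0}^{|t|}|R_{j}|$, produces exactly the exponential factor with rate $C\epsilon^{1/2}/\ln(1/\epsilon)$ appearing in \eqref{oded1}--\eqref{oded2}, while the polynomial loss $\ln(1/\epsilon)^{11}$ absorbs logarithmic factors from intermediate coercivity estimates and from the explicit form of the initial remainder. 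The bounds on $z-d$ and $\dot z-\dot d$ follow by specialising to $z=x_{2}-x_{1}$, with the $\min(\epsilon^{1/2}|t|,\epsilon t^{2})$ estimate obtained by combining the Gronwall bound with a short-time Taylor expansion near $t=0$, since $\ddot z(0)-\ddot d(0)=O(\epsilon)$ by the matching of initial data. The most delicate step, and the real obstacle, is ensuring that $R_{j}$ is genuinely \emph{quadratic} in $(\overrightarrow{g(t)},\dot x_{j})$: a naive projection leaves linear-in-$g$ terms of size $\norm{\overrightarrow{g(t)}}\sim\epsilon^{1/2}$, which would break the estimate. Eliminating them requires simultaneously using the orthogonality conditions of the Modulation Lemma, the exponential localisation \eqref{kinkestimate} of $\partial_{x}H_{\cdot}^{x_{j}}$ away from the opposite kink center, and a refined decomposition of $g$ inspired by \cite{jkl,gkdvmulti}.
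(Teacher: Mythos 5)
Your plan captures the correct high-level idea — match $(x_{1},x_{2})$ against the explicit solution $(d_{1},d_{2})$ of the reduced interaction ODE and control the difference — and you correctly compute that $\ddot d_{j}=(-1)^{j}8\sqrt{2}e^{-\sqrt{2}d(t)}$. However there are two genuine gaps, both located exactly at the points you flag as delicate, and in both cases the paper's resolution is structurally different from what you sketch. First, the remainder is \emph{not} made effectively quadratic by projecting the second-order equation onto $\partial_{x}H^{x_{j}}_{\cdot}$: the orthogonality conditions of the Modulation Lemma together with the decay \eqref{kinkestimate} do not eliminate the $O(\norm{\overrightarrow{g(t)}})$ terms from $\ddot x_{j}$, and no refined decomposition of $g$ is invoked in the paper for this purpose. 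Instead, the paper introduces \emph{cutoff-corrected momentum} quantities $p_{j}(t)=-\norm{\partial_{x}H_{0,1}}_{L^{2}}^{-2}\langle\partial_{t}\phi,\,\partial_{x}H^{x_{j}(t)}_{\cdot}+\partial_{x}(\chi_{0}(t,\cdot)g(t))\rangle$ (Lemma \ref{modueq}), estimates $|\dot x_{j}-p_{j}|$ and $|\dot p_{j}+(-1)^{j}\dot A(z)/\norm{\partial_{x}H_{0,1}}_{L^{2}}^{2}|$ separately, and then optimizes the cutoff parameter to $\gamma=\ln\ln(1/\epsilon)/\ln(1/\epsilon)$ (Lemma \ref{aux22}). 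The quadratic smallness lives in $\alpha(t)$ through this first-order corrected formulation, not in a cleaner second-order ODE for $\ddot x_{j}$.

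Second, the Gronwall step as written does not produce the stated rate and, in fact, fails on the relevant time scale. With the Lipschitz constant $\lesssim\epsilon$ you cite, the weight $\epsilon|e_{j}|+\epsilon^{1/2}|\dot e_{j}|$ yields a Gronwall coefficient $\sim\epsilon^{1/2}$, not $\epsilon^{1/2}/\ln(1/\epsilon)$, and a factor $\exp(C\epsilon^{1/2}t)$ is already of size $\epsilon^{-C}$ at $t\sim\ln(1/\epsilon)\epsilon^{-1/2}$, which destroys the conclusion. The structural fact you are missing is that the homogeneous solutions $m(t)=\tanh(\sqrt{2}vt+c)$ and $n(t)=(\sqrt{2}vt+c)\tanh(\sqrt{2}vt+c)-1$ of the linearised equation $\ddot y=-32e^{-\sqrt{2}d(t)}y$ grow only \emph{polynomially} in $t$, because the potential $e^{-\sqrt{2}d(t)}=\tfrac{v^{2}}{8}\sech^{2}(\sqrt{2}vt+c)$ is integrable over the whole line. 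The paper therefore uses \emph{variation of parameters} with these explicit fundamental solutions and a bootstrap, rather than Gronwall; the exponential factor $\exp(C\epsilon^{1/2}|t|/\ln(1/\epsilon))$ in \eqref{oded1}--\eqref{oded2} comes from the source term $\norm{\overrightarrow{g(t)}}^{2}$ via Theorem \ref{TT1}, not from the homogeneous growth. Moreover, since the Wronskian is $\sqrt{2}v$ and $v$ may be as small as $\epsilon^{1/2}/\ln(1/\epsilon)^{4}$ or smaller, this argument requires the separate case analysis of Lemmas \ref{t21}, \ref{Theo22} and Remarks \ref{similarcase}, \ref{rfinal} depending on the sizes of $v$, $c$ and $\norm{\overrightarrow{g(0)}}$; a single Gronwall estimate cannot accommodate these regimes uniformly.
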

\begin{remark}\label{obvious}
The proof of Theorem \ref{T1} and Theorem \ref{trueTheo2} for $t\leq 0$ is analogous to the proof for $t\geq 0,$ so we will only prove them for $t\geq 0.$
\end{remark}
Theorem \ref{T1} will be obtained as a consequence of Theorem \ref{trueTheo2}.
Clearly, from Theorem \ref{trueTheo2}, we can deduce the following corollary. 
\begin{corollary}\label{colo2}
With the same hypotheses as in Theorem \ref{trueTheo2}, we have that
\begin{equation*}
    \max_{j \in\{1,\,2\}}|\ddot d_{j}(t)-\ddot x_{j}(t)|=O\left(\max\Big(\norm{\overrightarrow{g(0)}},\epsilon\Big)\epsilon^{\frac{1}{2}}\exp\Big(\frac{C\epsilon^{\frac{1}{2}}\md{t}}{\ln{(\frac{1}{\epsilon})}}\Big)
    +\max\Big(\norm{\overrightarrow{g(0)}},\epsilon\Big)^{2}\ln{\Big(\frac{1}{\epsilon}\Big)}^{11}\exp\Big(\frac{C\epsilon^{\frac{1}{2}}\md{t}}{\ln{(\frac{1}{\epsilon})}}\Big)\right).
\end{equation*}
\end{corollary}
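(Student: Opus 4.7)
The plan is to reduce the estimate on $\ddot x_j-\ddot d_j$ to the already-proven estimates on $x_j-d_j$ and $\|\overrightarrow{g(t)}\|$, by using the modulation ODE satisfied by $(x_1,x_2)$ together with the observation that $(d_1,d_2)$ is, by construction, the exact solution of the corresponding reduced, unperturbed system.

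First I would invoke the modulation equation derived in the course of the proof of Theorem \ref{trueTheo2}: differentiating twice the orthogonality conditions of the Modulation Lemma and inserting the result into \eqref{nlww} yields a system of the form
$$\ddot x_j(t)=F_j\bigl(x_2(t)-x_1(t)\bigr)+R_j(t),\qquad j\in\{1,2\},$$
where $F_j(z)$ is an explicit interaction force with $|F_j(z)|+|F_j'(z)|\lesssim e^{-\sqrt{2}z}$ (using \eqref{kinkestimate}), and $R_j(t)$ depends linearly on $\overrightarrow{g(t)}$ with $|R_j(t)|\lesssim \epsilon^{1/2}\|\overrightarrow{g(t)}\|$, the $\epsilon^{1/2}$ prefactor arising because each term in $R_j$ carries a factor of $\dot x_j=O(\epsilon^{1/2})$ from Theorem \ref{T1}. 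A direct computation using \eqref{d1}--\eqref{d2} gives $\ddot d_2(t)=-\ddot d_1(t)=\sqrt{2}v^2\sech^2(\sqrt{2}vt+c)$, and combined with the identity $e^{-\sqrt{2}(d_2(t)-d_1(t))}=\frac{v^2}{8}\sech^2(\sqrt{2}vt+c)$ this shows that $(d_1,d_2)$ indeed solves the reduced system $\ddot d_j=F_j(d_2-d_1)$.

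Subtracting the two systems,
$$\ddot x_j-\ddot d_j=\bigl[F_j(x_2-x_1)-F_j(d_2-d_1)\bigr]+R_j(t).$$
By the mean value theorem, the bound $|F_j'(z)|\lesssim e^{-\sqrt{2}z}$, and $e^{-\sqrt{2}(x_2(t)-x_1(t))}\lesssim\epsilon$ from Theorem \ref{T1}, the bracket is controlled by $\epsilon\max_j|x_j(t)-d_j(t)|$; inserting \eqref{oded1} and cancelling the factor $\epsilon$ produces the second summand of the corollary's bound. For the remainder, the energy estimate \eqref{evolutionf1} gives $\|\overrightarrow{g(t)}\|\lesssim\max(\|\overrightarrow{g(0)}\|,\epsilon)\exp\bigl(C\epsilon^{1/2}t/(2\ln(1/\epsilon))\bigr)$, whence $|R_j(t)|\lesssim\epsilon^{1/2}\max(\|\overrightarrow{g(0)}\|,\epsilon)\exp\bigl(C\epsilon^{1/2}t/\ln(1/\epsilon)\bigr)$, which is exactly the first summand.

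The only point on which the argument genuinely rests is the availability of the modulation ODE with $R_j$ truly linear in $\overrightarrow{g(t)}$ and carrying the $\epsilon^{1/2}$ gain; a naive bound would only give $O(\|\overrightarrow{g(t)}\|)$ without this factor, and it is precisely the $\epsilon^{1/2}$ improvement that distinguishes the two summands in the statement. Since these identities and estimates on $R_j$ are established during the proof of Theorem \ref{trueTheo2}, no new analytic input is needed and the corollary follows by the algebraic manipulations above.
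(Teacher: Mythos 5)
Your argument follows the same route as the paper, which proves this corollary by citing Lemma \ref{LemmaLL} together with Theorem \ref{trueTheo2}: compare the modulation ODE for $x_{j}$ to the reduced ODE satisfied by $d_{j}$, bound the interaction-force difference by the mean value theorem, and exploit the $\epsilon^{\frac{1}{2}}$ gain in the $g$-dependent remainder coming from the orthogonality conditions. The idea is therefore correct.

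There is, however, one loose end in your accounting of $R_{j}$. By construction $\ddot d_{j}(t)=(-1)^{j}8\sqrt{2}\,e^{-\sqrt{2}(d_{2}(t)-d_{1}(t))}$, whereas the force $F_{j}$ in the true modulation system is essentially $(-1)^{j}\dot A(z)/\norm{\partial_{x}H_{0,1}}_{L^{2}}^{2}$, and Lemma \ref{LL.1} only gives $\dot A(z)+4e^{-\sqrt{2}z}=O(ze^{-2\sqrt{2}z})$. So either $(d_{1},d_{2})$ does \emph{not} solve $\ddot d_{j}=F_{j}(d_{2}-d_{1})$ exactly, or, if you take $F_{j}(z)=(-1)^{j}8\sqrt{2}e^{-\sqrt{2}z}$, the remainder $R_{j}(t)$ also contains a contribution of size $O\big(z(t)e^{-2\sqrt{2}z(t)}\big)$ that is independent of $\overrightarrow{g(t)}$ and is \emph{not} bounded by $\epsilon^{\frac{1}{2}}\norm{\overrightarrow{g(t)}}$. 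This is exactly the middle term $\epsilon\, z(t)e^{-\sqrt{2}z(t)}$ appearing in the statement of Lemma \ref{LemmaLL}, which your two-term decomposition omits. The omission is harmless for the corollary: since $z(t)\geq\tfrac{1}{\sqrt{2}}\ln(\tfrac{1}{\epsilon})$ and $ze^{-\sqrt{2}z}$ is decreasing past its maximum, one has $z(t)e^{-2\sqrt{2}z(t)}\lesssim\epsilon^{2}\ln(\tfrac{1}{\epsilon})$, which is dominated by $\max\big(\norm{\overrightarrow{g(0)}},\epsilon\big)^{2}\ln(\tfrac{1}{\epsilon})^{11}$ and hence absorbed into the second summand of the corollary's bound. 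Your final conclusion is therefore correct, but the stated bound $|R_{j}(t)|\lesssim\epsilon^{\frac{1}{2}}\norm{\overrightarrow{g(t)}}$ needs this extra term to be a true statement.
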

\begin{proof}[Proof of Corollary \ref{colo2}]
It follows directly from Theorem \ref{trueTheo2} and from Lemma \ref{LemmaLL} presented in the Appendix Section \ref{auxil}.
\end{proof}
\subsection{Resume of the proof}
In this subsection, we present how the article is organized and explain briefly the content of each section. \\
\textbf{Section 2.} In this section, we prove orbital stability of a perturbation of a sum of two kinks. Moreover, we prove that if the initial data $(\phi(0,x),\partial_{t}\phi(0,x))$ satisfies the hypothesis of Theorem \ref{T1}, then there are real functions $x_{1},\,x_{2}$ of class $C^{2}$ such that for all $t\geq 0$
\begin{align*}
     \norm{\phi(t,x)-H^{x_{2}(t)}_{0,1}-H^{x_{1}(t)}_{-1,0}}_{H^{1}(\mathbb{R})}\lesssim \epsilon^{\frac{1}{2}},\\
     \norm{\partial_{t}\left(\phi(t,x)-H^{x_{2}(t)}_{0,1}-H^{x_{1}(t)}_{-1,0}\right)}_{L^{2}(\mathbb{R})}\lesssim \epsilon^{\frac{1}{2}}.
\end{align*}
The proof of the orbital stability follows from studying the expression
\begin{equation*}
    E_{pot}(H^{x_{2}(t)}_{0,1}+H^{x_{1}(t)}_{-1,0}+g)-E_{pot}(H^{x_{2}(t)}_{0,1}+H^{x_{1}(t)}_{-1,0}),
\end{equation*}
which is bigger than $\norm{\overrightarrow{g(t)}}^{2}$ less some remaining terms from Taylor's Expansion Theorem and the fact that the kinks are critical points of $E_{pot}.$ But, from the modulation lemma, we will introduce the functions $x_{2},\,x_{1}$ that will guarantee the following coercitivity property
\begin{equation*}
     \norm{\overrightarrow{g(t)}}^{2}\lesssim E_{pot}(H^{x_{2}(t)}_{0,1}+H^{x_{1}(t)}_{-1,0}+g)-E_{pot}(H^{x_{2}(t)}_{0,1}+H^{x_{1}(t)}_{-1,0}).
\end{equation*}
From the orthogonality conditions of the modulation lemma and standard ordinary differential equation techniques, we also obtain uniform bounds for $\norm{\dot x_{j}(t)}_{L^{\infty}(\mathbb{R})},\,\norm{\ddot x_{j}(t)}_{L^{\infty}(\mathbb{R})}$ for $j\in\{1,\,2\}.$ The main techniques of this section are an adaption of section 2 and 3 of \cite{jkl}.\\
\textbf{Section 3.}
In this section, we study the long time behavior of $\dot x_{j}(t),\,x_{j}(t)$ for $j\in\{1,\,2\}.$ More precisely, we elaborate a Lemma similar to the Lemma $3.5$ of \cite{jkl}, but our estimates are more precise, more precisely the errors of our estimate are written in function of $z(t),\,\dot x_{j}(t),\,\dot x_{j}(t)$ and $\norm{\overrightarrow{g(t)}}.$\\ 
\textbf{Section 4.}
In Section $4,$ we introduce a functional $F(t)$ with the objective of controlling $\norm{\overrightarrow{g(t)}}$ for a long time interval. More precisely, we show that the function $F(t)$ satisfies for a constant $K>0$ the global estimate $\norm{\overrightarrow{g(t)}}^{2}\lesssim F(t)+K\epsilon^{2}$ and we show that $|\dot F(t)|$ is small enough for a long time interval. We start the functional from the quadratic part of the total energy of $\phi(t),$ more precisely with
\begin{equation*}
    D(t)=\norm{\partial_{t}g(t,x)}_{L^{2}(\mathbb{R})}^{2}+\norm{\partial_{x}g(t,x)}_{L^{2}(\mathbb{R})}^{2}+\int_{\mathbb{R}}\ddot U(H^{x_{2}(t)}_{0,1}(x)+H^{x_{1}(t)}_{-1,0}(x))g(t,x)^{2}\,dx.
\end{equation*}
However, we obtain that the terms of worst decay that appear in the computation of $\dot D(t)$ are expressions similar to
\begin{equation*}
    \int_{\mathbb{R}}\partial_{t}g(t,x)F(x_{1},x_{2},\dot x_{1},\dot x_{2},x)\,dx.
\end{equation*}
But, we can cancel these bad terms after we add to the functional $D(t)$ correction terms similar to
\begin{equation*}
    -\int_{\mathbb{R}}g(t,x)F(x_{1},x_{2},\dot x_{1},\dot x_{2},x)\,dx,
\end{equation*}
and now in the time derivative of $D(t)$ plus the correction terms, we obtain an expression with size smaller or equivalent to
\begin{equation*}
    \norm{\overrightarrow{g(t)}}\norm{\partial_{t}(F(x_{1},x_{2},\dot x_{1},\dot x_{2},x))}_{L^{2}_{x}(\mathbb{R})}\max_{j\in{1,2}}\md{\dot x_{j}(t)}.
\end{equation*}
Finally, based on the correction term described in the proof of Lemma 4.2 of \cite {jkl}, we aggregate another kind of correction term such that its time derivative cancels with 
\begin{equation*}
    -\int_{\mathbb{R}} U^{(3)}(H^{x_{2}(t)}_{0,1}(x)+H^{x_{1}(t)}_{-1,0}(x))(\dot x_{2}(t)\partial_{x}H^{x_{2}(t)}_{0,1}+\dot x_{1}(t)\partial_{x}H^{x_{1}(t)}_{-1,0})g(t,x)^{2},
\end{equation*}
and then we evaluate the time derivative of the functional obtained from this sum D(t) with all the corrections terms.
\\ \textbf{Remaining Sections.} In the remaining part of this paper, we prove our main results, Theorem \ref{T1} is a consequence of the energy estimate obtained in Section 4 and the estimates with higher precision of the modulations parameters $x_{1}(t),\,x_{2}(t)$ which are obtained in Section $5.$ In Section 5, we prove the result of Theorem \ref{trueTheo2}, where we study the evolution of the precision of the modulation parameters estimates by comparing it with a solution of a system of ordinary differential equations. Complementary information are given in Appendices \ref{auxil} and \ref{opt}.  
\section{Global Stability of two moving kinks}
\par Before the presentation of the proof of the main theorem, we define a functional to study the potential energy of a sum of two kinks.  
\begin{definition}
The function $A:\mathbb{R}_{+}\to \mathbb{R}$ is defined by
\begin{equation}
    A(z)\coloneqq E_{pot}(H_{0,1}^{z}(x)+H_{-1,0}(x)).
\end{equation}
\end{definition}
The study of the function $A$ is essential to obtain global in time control of the norm of the remainder $g$ and the lower bound of $x_{2}(t)-x_{1}(t)$ in Theorem \ref{T1}.
\begin{remark}
It's easy to verify that
$
    E_{pot}(H_{0,1}(x-x_{2})+H_{-1,0}(x-x_{1}))=E_{pot}(H_{0,1}(x-(x_{2}-x_{1}))+H_{-1,0}(x)).
$
\end{remark}
We will use several times the following elementary estimate from the Lemma 2.5 of \cite{jkl} given by:
\begin{lemma}\label{interact}
For any real numbers $x_{2},x_{1}$, such that $x_{2}-x_{1}>0$ and $\alpha,\,\beta>0$ with $\alpha\neq \beta$ the following bound holds:
\begin{equation*}
    \int_{\mathbb{R}} e^{-\alpha(x-x_{1})_{+}}e^{-\beta(x_{2}-x)_{+}}\lesssim_{\alpha,\beta} e^{-\min(\alpha,\beta)(x_{2}-x_{1})},
\end{equation*}
For any $\alpha>0$, the following bound holds
\begin{equation*}
    \int_{\mathbb{R}} e^{-\alpha(x-x_{1})_{+}}e^{-\alpha(x_{2}-x)_{+}}\lesssim_{\alpha}(1+(x_{2}-x_{1})) e^{-\alpha(x_{2}-x_{1})}.
\end{equation*}
\end{lemma}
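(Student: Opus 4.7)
The plan is to split the real line into the three natural regions determined by the positive parts in the integrand, namely $(-\infty,x_{1}]$, $[x_{1},x_{2}]$, and $[x_{2},+\infty)$, and estimate the contribution on each. On the two outer intervals one of the positive parts vanishes, so the integrand reduces to a single decaying exponential, and a direct computation gives $\int_{-\infty}^{x_{1}}e^{-\beta(x_{2}-x)}\,dx=\frac{1}{\beta}e^{-\beta(x_{2}-x_{1})}$ and $\int_{x_{2}}^{+\infty}e^{-\alpha(x-x_{1})}\,dx=\frac{1}{\alpha}e^{-\alpha(x_{2}-x_{1})}$. Both are trivially controlled by $e^{-\min(\alpha,\beta)(x_{2}-x_{1})}$ times a constant depending only on $\alpha,\beta$, so the outer contributions pose no difficulty.

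The only substantive computation is the middle interval. On $[x_{1},x_{2}]$ the integrand is $e^{-\alpha(x-x_{1})-\beta(x_{2}-x)}$. For $\alpha\neq\beta$ I would integrate explicitly to obtain
\begin{equation*}
    \int_{x_{1}}^{x_{2}} e^{-\alpha(x-x_{1})-\beta(x_{2}-x)}\,dx = \frac{e^{-\alpha(x_{2}-x_{1})}-e^{-\beta(x_{2}-x_{1})}}{\beta-\alpha},
\end{equation*}
which is bounded above in absolute value by $\frac{1}{|\beta-\alpha|}e^{-\min(\alpha,\beta)(x_{2}-x_{1})}$. Combining with the outer-region bounds yields the first inequality with implicit constant $\frac{1}{\alpha}+\frac{1}{\beta}+\frac{1}{|\beta-\alpha|}$.

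For the second inequality, when $\alpha=\beta$ the middle integrand simplifies to the constant $e^{-\alpha(x_{2}-x_{1})}$, so its integral equals $(x_{2}-x_{1})e^{-\alpha(x_{2}-x_{1})}$, which is exactly the source of the linear prefactor $1+(x_{2}-x_{1})$ in the stated bound; adding the outer contributions (each $\tfrac{1}{\alpha}e^{-\alpha(x_2-x_1)}$) gives the claim. There is no real obstacle here: the lemma is an elementary case analysis, and the only subtlety is that the denominator $\beta-\alpha$ blows up as $\alpha\to\beta$, which is precisely why the limiting case has to be stated separately and produces the additional factor $1+(x_{2}-x_{1})$.
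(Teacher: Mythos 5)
Your proof is correct and complete. The decomposition into $(-\infty,x_{1}]$, $[x_{1},x_{2}]$, $[x_{2},+\infty)$ and explicit integration is the natural argument, and the middle-interval computation
\begin{equation*}
\int_{x_{1}}^{x_{2}} e^{-\alpha(x-x_{1})-\beta(x_{2}-x)}\,dx=\frac{e^{-\alpha(x_{2}-x_{1})}-e^{-\beta(x_{2}-x_{1})}}{\beta-\alpha}
\end{equation*}
is verified by a direct change of variables; the bound by $\tfrac{1}{|\beta-\alpha|}e^{-\min(\alpha,\beta)(x_{2}-x_{1})}$ follows since the numerator has the same sign as $\beta-\alpha$ and is dominated in absolute value by the larger exponential. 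The degenerate case $\alpha=\beta$ correctly produces the linear factor $(x_{2}-x_{1})$, and the outer contributions use $x_{2}-x_{1}>0$ to pass from $e^{-\alpha(x_{2}-x_{1})}$ and $e^{-\beta(x_{2}-x_{1})}$ to $e^{-\min(\alpha,\beta)(x_{2}-x_{1})}$. For reference, the paper does not prove this lemma at all: it cites Lemma 2.5 of \cite{jkl} verbatim and moves on, so there is no alternative approach in the paper to compare against. You have simply supplied the elementary verification that the paper delegates to its reference, and nothing is missing.
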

The main result of this section is the following 
\begin{lemma}\label{LL.1}
 The function $A$ is of class $C^{2}$ and there is a constant $C>0$, such that
\begin{enumerate}
    \item  $\md{\ddot A(z)-4\sqrt{2} e^{-\sqrt{2}z}}\leq Cze^{-2\sqrt{2}z},$
    \item $\md{\dot A(z)+4e^{-\sqrt{2}z}}\leq  Cze^{-2\sqrt{2}z},$
    \item  $\md{A(z)-2E_{pot}(H_{0,1})-2\sqrt{2}e^{-\sqrt{2}z}}\leq  Cze^{-2\sqrt{2}z}.$
\end{enumerate}
\end{lemma}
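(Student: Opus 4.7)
The $C^{2}$ smoothness of $A$ follows from differentiating twice under the integral sign in
\begin{equation*}
A(z)=\frac{1}{2}\int_{\mathbb{R}}\bigl(\dot H_{0,1}(x-z)+\dot H_{-1,0}(x)\bigr)^{2}\,dx+\int_{\mathbb{R}} U\bigl(H_{0,1}(x-z)+H_{-1,0}(x)\bigr)\,dx,
\end{equation*}
the bounds (D1)--(D4) together with \eqref{kinkestimate} providing integrable dominating functions for the first two $z$-derivatives of the integrand. Computing $\dot A(z)$ by differentiation under the integral and one integration by parts using $\ddot H_{0,1}=\dot U(H_{0,1})$, two cancellations occur: $\int\dot U(H_{0,1}^{z})\,\dot H_{0,1}^{z}\,dx=U(1)-U(0)=0$, and $\int\dot H_{-1,0}\,\dot U(H_{0,1}^{z})\,dx+\int\dot U(H_{-1,0})\,\dot H_{0,1}^{z}\,dx=\int\partial_{x}\bigl(\dot H_{-1,0}\,\dot H_{0,1}^{z}\bigr)\,dx=0$, the second after replacing each $\dot U$ by a kink equation and integrating by parts. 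This leaves
\begin{equation*}
\dot A(z)=-\int_{\mathbb{R}}\mathcal{R}(x,z)\,\dot H_{0,1}(x-z)\,dx,\quad \mathcal{R}:=\dot U(H_{0,1}^{z}+H_{-1,0})-\dot U(H_{0,1}^{z})-\dot U(H_{-1,0}).
\end{equation*}

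Next I would extract the leading order of $\dot A$. Since $\dot H_{0,1}(x-z)$ is localised near $x=z$, where $|H_{-1,0}(x)|=O(e^{-\sqrt{2}z})$ while $H_{0,1}^{z}$ is of order one, I Taylor-expand $\mathcal{R}$ in the small quantity $H_{-1,0}$ (keeping $H_{0,1}^{z}$ as is):
\begin{equation*}
\mathcal{R}=\bigl[\ddot U(H_{0,1}^{z})-2\bigr]H_{-1,0}+O(H_{-1,0}^{2})=\bigl[-24(H_{0,1}^{z})^{2}+30(H_{0,1}^{z})^{4}\bigr]H_{-1,0}+O(H_{-1,0}^{2}).
\end{equation*}
Using the identity $(H_{0,1}^{z})^{k}\dot H_{0,1}^{z}=\tfrac{1}{k+1}\partial_{x}(H_{0,1}^{z})^{k+1}$ and integrating by parts shifts the derivative onto $H_{-1,0}$, producing the leading contribution $\int_{\mathbb{R}}\dot H_{-1,0}(x)\bigl[-8H_{0,1}(x-z)^{3}+6H_{0,1}(x-z)^{5}\bigr]\,dx$. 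Substituting $y=x-z$, inserting the expansion $\dot H_{-1,0}(z+y)=\sqrt{2}\,e^{-\sqrt{2}(z+y)}+O(e^{-3\sqrt{2}(z+y)})$ valid for $z+y$ large positive, and evaluating the scalar integrals $\int_{\mathbb{R}} e^{-\sqrt{2}y}H_{0,1}(y)^{3}\,dy=\tfrac{1}{\sqrt{2}}$ and $\int_{\mathbb{R}} e^{-\sqrt{2}y}H_{0,1}(y)^{5}\,dy=\tfrac{\sqrt{2}}{3}$ (both via $u=e^{2\sqrt{2}y}$) yields $\dot A(z)=-4e^{-\sqrt{2}z}+O(ze^{-2\sqrt{2}z})$, where the factor $z$ comes from the resonant overlap of equal-rate exponentials handled by the second bound of Lemma \ref{interact}.

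For item 3 I would integrate $-\dot A(s)$ from $s=z$ to $s=+\infty$: dominated convergence applied to the formula for $A(z)$ gives $A(z)\to 2E_{pot}(H_{0,1})$ as $z\to\infty$, hence $A(z)-2E_{pot}(H_{0,1})=\int_{z}^{\infty}4e^{-\sqrt{2}s}\,ds+O(ze^{-2\sqrt{2}z})=2\sqrt{2}\,e^{-\sqrt{2}z}+O(ze^{-2\sqrt{2}z})$. For item 1 I would differentiate the integral formula for $\dot A$ once more (justified as in Step 1) and rerun the localised expansion: the additional derivative $\partial_{z}=-\partial_{y}$ brings down a factor of $-\sqrt{2}$ on the leading $e^{-\sqrt{2}z}$, yielding $\ddot A(z)=4\sqrt{2}\,e^{-\sqrt{2}z}+O(ze^{-2\sqrt{2}z})$. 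The hard part is the error bookkeeping: the $O(H_{-1,0}^{2})$ remainder in $\mathcal{R}$, the sub-leading tail of $\dot H_{-1,0}$ near $x=z$, and the mixed-decay cross-interaction integrals produced along the way must each be shown to contribute at most $Cze^{-2\sqrt{2}z}$, and it is precisely the resonant case of Lemma \ref{interact} that produces the (sharp) linear-in-$z$ factor in the remainder.
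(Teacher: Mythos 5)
Your plan follows the same broad arc as the paper (exact interaction formula for a derivative of $A$, Taylor expansion plus the kink equation, leading integral, and then integrate/differentiate to fill in the remaining items), and your leading-order computation is exactly right: your scalar integrals $\int e^{-\sqrt{2}y}H_{0,1}^3 = \tfrac{1}{\sqrt{2}}$ and $\int e^{-\sqrt{2}y}H_{0,1}^5 = \tfrac{\sqrt{2}}{3}$ combine to the paper's identity $\int (8H_{0,1}^3-6H_{0,1}^5)e^{-\sqrt{2}y}\,dy = 2\sqrt{2}$ and give $-4e^{-\sqrt{2}z}$ as required. The structural difference from the paper is the order: you start from $\dot A(z)=-\int\mathcal{R}\,\dot H_{0,1}^z\,dx$ and attack item 2 directly, whereas the paper first writes $\ddot A(z)=\int \partial_x H_{0,1}^z\,\partial_x H_{-1,0}\,[\ddot U(H_{-1,0})-\ddot U(H_{-1,0}+H_{0,1}^z)]\,dx$, which carries a built-in \emph{double} localiser $\partial_x H_{0,1}^z\cdot\partial_x H_{-1,0}$; the remainder bound via $U^{(4)}$ then falls straight out of Lemma \ref{interact}. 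Your $\dot A$-first formula carries only the single localiser $\dot H_{0,1}^z$, and this is where the one real gap in your write-up lives.

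Concretely, your bound on the Taylor remainder is advertised as ``$O(H_{-1,0}^2)$.'' If you really only use $|\mathcal{R}-\text{linear}|\lesssim H_{-1,0}^2$ with a uniform constant, the contribution to $\dot A$ is $\int H_{-1,0}^2\,|\dot H_{0,1}^z|\,dx$, and near $x=0$ you have $H_{-1,0}^2\sim 1$ while $\dot H_{0,1}^z\sim e^{\sqrt{2}(x-z)}$, so this integral is only $O(e^{-\sqrt{2}z})$ --- the same size as the main term, not the required $O(ze^{-2\sqrt{2}z})$. What saves you is a cancellation you do not make explicit: $\mathcal{R}$ is a genuine interaction quantity and vanishes when either kink is absent, so \emph{every} monomial in its polynomial expansion carries at least one factor of $H_{0,1}^z$ and one of $H_{-1,0}$. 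In particular the part that is quadratic or higher in $H_{-1,0}$ is really $O\bigl(H_{0,1}^z\,H_{-1,0}^2\bigr)$, because subtracting $\dot U(H_{-1,0})$ removes the pure-$H_{-1,0}$ cubic and quintic terms from $\dot U(H_{0,1}^z+H_{-1,0})$. Once you write this out (this is exactly the paper's displayed identity for $\dot U(\phi)+\dot U(\theta)-\dot U(\phi+\theta)$), the remainder pairs $H_{0,1}^z\dot H_{0,1}^z$ (decay $2\sqrt 2$) against $H_{-1,0}^2$ (decay $2\sqrt 2$), and the resonant case of Lemma \ref{interact} gives $(1+z)e^{-2\sqrt{2}z}$ as you want. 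So: the plan is correct, but you need to replace the placeholder ``$O(H_{-1,0}^2)$'' with the sharper structure ``$O(H_{0,1}^z H_{-1,0}^2)$'' --- without it, your error bookkeeping actually fails by a full factor of $e^{-\sqrt{2}z}$. The paper sidesteps this pitfall altogether by working with $\ddot A$, whose formula already has both $\partial_x H_{-1,0}$ and $\partial_x H_{0,1}^z$ as explicit localising factors.
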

\begin{proof}
By definition of $A$, it's clear that
\begin{align*}
    A(z)&=\frac{1}{2}\int_{\mathbb{R}} \Big(\partial_{x}\big[H^{z}_{0,1}(x)+H_{-1,0}(x)\big]\Big)^{2}\,dx+\int_{\mathbb{R}} U(H^{z}_{0,1}(x)+H_{-1,0}(x))\,dx\\
    &=\norm{\partial_{x}H_{0,1}}_{L^{2}(\mathbb{R})}^{2}+\int_{\mathbb{R}} \partial_{x}H^{z}_{0,1}(x)\partial_{x}H_{-1,0}(x)\,dx+\int_{\mathbb{R}}U(H^{z}_{0,1}(x)+H_{-1,0}(x))\,dx.
\end{align*}
Since the functions $U$ and $H_{0,1}$ are smooth and $\partial_{x}H_{0,1}(x)$ has exponential decay when $|x|\to +\infty$, it's possible to differentiate $A(z)$ in $z.$ More precisely, we obtain 
\begin{align}
    \dot A(z)&=-\int_{\mathbb{R}}\partial^{2}_{x}H^{z}_{0,1}(x)\partial_{x}H_{-1,0}(x)\,dx-\int_{\mathbb{R}}\dot U(H^{z}_{0,1}(x)+H_{-1,0}(x))\partial_{x}H^{z}_{0,1}(x)\,dx\\\label{L.1}
    &=\int_{\mathbb{R}}\partial_{x}H^{z}_{0,1}(x)\big[\dot U(H_{-1,0})(x)-\dot U(H_{-1,0}(x)+H^{z}_{0,1}(x))\big]\,dx.
\end{align}
By similar reasons, it is always possible to differentiate $A(z)$ twice, precisely, we obtain
\begin{equation*}
 \ddot A(z)=\int_{\mathbb{R}}\partial_{x}H^{z}_{0,1}(x)^{2}\ddot U(H_{-1,0}(x)+H_{0,1}^{z}(x))
 -\partial^{2}_{x}H^{z}_{0,1}(x)\big[\dot U(H_{-1,0}(x))-\dot U\big(H_{-1,0}(x)+H^{z}_{0,1}(x))\big]\,dx.
\end{equation*}
Then, integrating by parts, we obtain 
\begin{equation}\label{IMP}
    \ddot A(z)=\int_{\mathbb{R}}\partial_{x}H^{z}_{0,1}(x)\partial_{x}H_{-1,0}(x)\big[\ddot U(H_{-1,0}(x))-\ddot U(H_{-1,0}(x)+H^{z}_{0,1}(x))\big]\,dx.
\end{equation}
Now, consider the function
\begin{equation}\label{M1}
    B(z)=\int_{\mathbb{R}}\partial_{x}H_{0,1}(x)\partial_{x}H_{-1,0}(x+z)\big[\ddot U(0)-\ddot U(H_{0,1}(x))\big]\,dx.
\end{equation}
Then, we have
\begin{equation}\label{c2}
    \md{\ddot A(z)-B(z)}=\md{\int_{\mathbb{R}}\partial_{x}H^{z}_{0,1}(x)\partial_{x}H_{-1,0}(x)\big[\big(\ddot U(H_{-1,0}(x))-\ddot U(H_{-1,0}(x)+H^{z}_{0,1}(x))\big)
    -\big(\ddot U(0)-\ddot U(H_{0,1}^{z}(x))\big)\big]\,dx}.
\end{equation}
Also, it's not difficult to verify the following identity
\begin{multline}\label{c3}
    \big[\ddot U(H_{-1,0}(x))-\ddot U(H_{-1,0}(x)+H^{z}_{0,1}(x))\big]-\big[\ddot U(0)-\ddot U(H^{z}_{0,1}(x))\big]=-\int_{0}^{H_{-1,0}(x)}\int_{0}^{H_{0,1}^{z}(x)}U^{(4)}(\omega_{1}+\omega_{2})\,d\omega_{1}\,d\omega_{2}.
\end{multline}
So, the identities \eqref{c3} and \eqref{c2} imply the following inequality
\begin{equation}
    \md{\ddot A(z)-B(z)}\leq\int_{\mathbb{R}} \md{\partial_{x}H^{z}_{0,1}(x)\partial_{x}H_{-1,0}(x)} \md{\int_{0}^{H_{-1,0}(x)}\int_{0}^{H^{z}_{0,1}(x)}|U^{(4)}(\omega_{1}+\omega_{2})\,d\omega_{1}\,d\omega_{2}}\,dx.
\end{equation}
Since $U$ is smooth and $\norm{H_{0,1}}_{L^{\infty}}=1$, we have that there is a constant $C>0$ such that
\begin{equation}\label{c4}
    \md{\ddot A(z)-B(z)}\leq C\int_{\mathbb{R}} \md{\partial_{x}H^{z}_{0,1}(x)\partial_{x}H_{-1,0}(x)H_{-1,0}(x)H^{z}_{0,1}(x)}\,dx.
\end{equation}
Now, Using the inequalities from (D1) to (D4) and Lemma \ref{interact} to the above inequality \eqref{c4}, we obtain that exist a constant $C_{1}$ non dependent of $z$ such that
\begin{equation}\label{c5}
    \md{\ddot A(z)-B(z)}\leq C_{1}z e^{-2\sqrt{2}z}.
\end{equation}
Also, it's not difficult to verify that the estimate
\begin{equation}
    \md{\partial_{x}H_{-1,0}(x)-\sqrt{2}e^{-\sqrt{2}x}}\leq C \min (e^{-3\sqrt{2}x},e^{-\sqrt{2}x}).
\end{equation}
and the identity \eqref{M1} imply the inequality
\begin{multline}
    \md{B(z)-\sqrt{2}e^{-\sqrt{2}z}\int_{\mathbb{R}}e^{-\sqrt{2}x}\partial_{x}H_{0,1}(x)(\ddot U(0)-\ddot U(H_{0,1}(x)))\,dx}\lesssim
    \int_{\mathbb{R}} \md{\partial_{x}H_{0,1}(x)}\min \big(e^{-3\sqrt{2}(x+z)},e^{-\sqrt{2}(x+z)}\big)\,dx\\
    \lesssim \int_{\mathbb{R}}e^{-2\sqrt{2}(-x)_{+}}\min \big(e^{-3\sqrt{2}(x+z)},e^{-\sqrt{2}(x+z)}\big)\,dx
    \lesssim \int_{-\infty}^{0}e^{-2\sqrt{2}(z-x)_{+}}e^{-\sqrt{2}x}\,dx+\int_{0}^{+\infty}e^{-2\sqrt{2}(z-x)_{+}}e^{-3\sqrt{2}(x)_{+}}\,dx.
\end{multline}
Since, we have the following identity and an estimate from Lemma \ref{interact}
\begin{align}
    \int_{-\infty}^{0}e^{-2\sqrt{2}(z-x)}e^{-\sqrt{2}x}\,dx=\frac{e^{-2\sqrt{2}z}}{\sqrt{2}},\\
    \int_{0}^{+\infty}e^{-2\sqrt{2}(z-x)_{+}}e^{-3\sqrt{2}(x)_{+}}\lesssim e^{-2\sqrt{2}z},
\end{align}
we obtain, then:
\begin{equation}
    \md{B(z)-\sqrt{2}e^{-\sqrt{2}z}\int_{\mathbb{R}}e^{-\sqrt{2}x}\partial_{x}H_{0,1}(x)\big[\ddot U(0)-\ddot U(H_{0,1}(x))\big]\,dx}\lesssim e^{-2\sqrt{2}z},
\end{equation}
which clearly implies with \eqref{c5} the inequality
\begin{equation}\label{central}
   \md{\ddot A(z)-\sqrt{2}e^{-\sqrt{2}z}\int_{\mathbb{R}}e^{-\sqrt{2}x}\partial_{x}H_{0,1}(x)\big [\ddot U(0)-\ddot U(H_{0,1}(x))\big]\,dx}\lesssim ze^{-2\sqrt{2}z}.
\end{equation}
Also we have the identity 
\begin{equation}\label{Kenergy}
    \int_{\mathbb{R}}\big (8(H_{0,1}(x))^{3}-6(H_{0,1}(x))^{5}\big )e^{-\sqrt{2}x}\,dx=2\sqrt{2},
\end{equation}
for the proof consult the Appendix \ref{auxil}.
Also, since  we have the identity $\ddot U(0)-\ddot U(\phi)=24 \phi^{2}-30 \phi^{4}$, by integration by parts, we obtain 
\begin{equation}
    \int_{\mathbb{R}} \frac{e^{-\sqrt{2}x}}{\sqrt{2}}\partial_{x}H_{0,1}(x)\big [\ddot U(0)-\ddot U(H_{0,1}(x))\big ]\,dx=\int_{\mathbb{R}}\big (8(H_{0,1}(x))^{3}-6(H_{0,1}(x))^{5}\big )e^{-\sqrt{2}x}\,dx.
\end{equation}
In conclusion, inequality \eqref{central} is equivalent to $\Big|\ddot A(z)-4\sqrt{2}e^{-\sqrt{2}z}\Big|\lesssim z e^{-2\sqrt{2}z}.$ The identities
\begin{align*}
    \dot U(\phi)+\dot U(\theta)-\dot U(\phi+\theta)=24\phi\theta(\phi+\theta)-6\Big(\sum_{j=1}^{4}\begin{pmatrix}
        5\\
        j
    \end{pmatrix}\phi^{j}\omega^{5-j}\Big),\\
    \dot A(z)=-\int_{\mathbb{R}}\partial_{x}H^{z}_{0,1}(x)\big [\dot U(H^{z}_{0,1}(x)+H_{-1,0}(x))+\dot U(H_{-1,0}(x))-\dot U(H^{z}_{0,1}(x))\big]\,dx
\end{align*}
 and Lemma \ref{interact} imply the following estimate for $z>0$
 \begin{equation*}
     \md{\dot A(z)}\lesssim e^{-\sqrt{2}z},
 \end{equation*}
so $\lim_{|z|\to +\infty} \md{\dot A(z)}=0$. In conclusion, integrating inequality $\md{\ddot A(z)-4\sqrt{2}e^{-\sqrt{2}z}}\lesssim z e^{-2\sqrt{2}z}$ from $z$ to $+\infty$ we obtain the second result of the lemma
\begin{equation}\label{MM}
   \md{\dot A(z)+4e^{-\sqrt{2}z}}\lesssim z e^{-2\sqrt{2}z}.
\end{equation}
\par Finally, from the fact that $\lim_{z\to +\infty} E_{pot}(H_{-1,0}+H^{z}_{0,1}(x))=2E_{pot}(H_{0,1})$, we obtain the last estimate integrating inequality \eqref{MM} from $z$ to $+\infty$, which is
\begin{equation*}
    \md{2E_{pot}(H_{0,1})+2\sqrt{2}e^{-\sqrt{2}z}-A(z)}\lesssim z e^{-2\sqrt{2}z}.
\end{equation*}
\end{proof}
It is not difficult to verify that the Fréchet derivative of $E_{pot}$ as a linear functional from $H^{1}(\mathbb{R})$ to $\mathbb{R}$ is given by
\begin{equation}\label{firstd}
    (DE_{pot}(\phi))(v)\coloneqq \int_{\mathbb{R}}\partial_{x}\phi(x)\partial_{x}v(x)+\dot U(\phi(x))v(x)\,dx.
\end{equation}
Also, it is not difficult to verify that for any $v,\, w \in H^{1}(\mathbb{R})$, we have
\begin{equation}
    \left\langle D^{2}E_{pot}(\phi)v,\,w\right\rangle_{L^{2}(\mathbb{R})}=\int_{\mathbb{R}}\partial_{x}v(x)\partial_{x}w(x)\,dx+\int_{\mathbb{R}}\ddot U(\phi(x))v(x)w(x)\,dx.
\end{equation}
\begin{lemma}[Coercitivity Lemma] $\exists\, C, c,\,\delta>0,$ such that if $x_{2}-x_{1}\geq\frac{1}{\delta}$, then for any $g \in H^{1}(\mathbb{R})$ we have 
\begin{equation}\label{coerc}
    \left\langle D^{2}E_{pot}(H^{x_{2}}_{0,1}+H^{x_{1}}_{-1,0})g,\,g\right\rangle_{L^{2}(\mathbb{R})}\geq c\norm{g}_{H^{1}(\mathbb{R})}^{2}-C\left[ \langle g,\,\partial_{x}H^{x_{1}}_{-1,0}\rangle^{2}+\langle g,\, \partial_{x}H^{x_{2}}_{0,1} \rangle^{2}\right].
\end{equation}
\end{lemma}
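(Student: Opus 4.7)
The plan is to reduce the two-kink coercivity to a single-kink coercivity via an IMS localization. First I would establish the single-kink statement: there exist $c_0, C_0 > 0$ such that for every $a \in \mathbb{R}$ and $h \in H^1(\mathbb{R})$,
$$\int_{\mathbb{R}} \left[(\partial_x h)^2 + \ddot U(H_{0,1}(x-a)) h^2\right] dx \geq c_0 \norm{h}_{H^1(\mathbb{R})}^2 - C_0 \langle h, \partial_x H_{0,1}(\cdot - a)\rangle_{L^2}^2,$$
and analogously for $H_{-1,0}$. Differentiating \eqref{kinkequation} gives $L_{0,1}(\partial_x H_{0,1}) = 0$ for $L_{0,1} := -\partial_x^2 + \ddot U(H_{0,1})$. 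Since $H_{0,1}$ minimizes $E_{pot}$ among $H^1$-perturbations preserving its boundary conditions (via the Bogomolny identity referenced in the paper), $L_{0,1}$ is non-negative; since $\partial_x H_{0,1}$ is strictly positive it must be the Perron-Frobenius ground state, hence $\ker L_{0,1} = \mathrm{span}(\partial_x H_{0,1})$. The essential spectrum of $L_{0,1}$ starts at $\min(\ddot U(0), \ddot U(1)) = 2$, so a spectral gap $\lambda_1 > 0$ separates $0$ from the rest of $\sigma(L_{0,1})$, which gives $L^2$ coercivity on $(\partial_x H_{0,1})^\perp$. The upgrade from $L^2$ to $H^1$ is routine because $\ddot U(H_{0,1})$ is bounded.

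For the two-kink statement I would fix smooth cutoffs $\chi_1, \chi_2 \geq 0$ with $\chi_1^2 + \chi_2^2 = 1$, $\chi_1 \equiv 1$ on $(-\infty, m - R]$ and $\chi_1 \equiv 0$ on $[m + R, +\infty)$, where $m = (x_1 + x_2)/2$ and $R = (x_2 - x_1)/4$, so that $\norm{\chi_i'}_{L^\infty} \lesssim R^{-1}$. The IMS localization formula yields
$$\int_{\mathbb{R}} \left[(\partial_x g)^2 + V g^2\right] dx = \sum_{i=1}^{2} \int_{\mathbb{R}} \left[(\partial_x(\chi_i g))^2 + V (\chi_i g)^2\right] dx - \int_{\mathbb{R}} \left((\chi_1')^2 + (\chi_2')^2\right) g^2\, dx,$$
with $V := \ddot U(H_{0,1}^{x_2} + H_{-1,0}^{x_1})$. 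The last term is $O(R^{-2}) \norm{g}_{L^2}^2$, absorbable for $\delta$ small. On $\mathrm{supp}\,\chi_1$, (D1) gives $|H_{0,1}^{x_2}| \lesssim e^{-\sqrt{2}R}$, so smoothness of $\ddot U$ yields $V = \ddot U(H_{-1,0}^{x_1}) + O(e^{-\sqrt{2}R})$ uniformly there, and analogously on $\mathrm{supp}\,\chi_2$. Applying the single-kink coercivity to each piece with the appropriate shifted operator I would obtain
$$\langle L(\chi_1 g), \chi_1 g\rangle \geq c_0 \norm{\chi_1 g}_{H^1}^2 - C_0 \langle \chi_1 g, \partial_x H_{-1,0}^{x_1}\rangle^2 - O(e^{-\sqrt{2}R})\norm{\chi_1 g}_{L^2}^2,$$
and the analogous estimate for $\chi_2 g$.

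To convert the local inner product to the global one I would write $\langle \chi_1 g, \partial_x H_{-1,0}^{x_1}\rangle = \langle g, \partial_x H_{-1,0}^{x_1}\rangle - \langle (1-\chi_1) g, \partial_x H_{-1,0}^{x_1}\rangle$, whose correction is $O(e^{-\sqrt{2}R})\norm{g}_{L^2}$ by (D4) and the fact that $1-\chi_1$ is supported where $x - x_1 \geq R$; squaring gives $\langle \chi_1 g, \partial_x H_{-1,0}^{x_1}\rangle^2 \leq 2\langle g, \partial_x H_{-1,0}^{x_1}\rangle^2 + O(e^{-2\sqrt{2}R})\norm{g}_{L^2}^2$, and similarly for the right-hand kink. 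Since the IMS identity also implies $\sum_i \norm{\chi_i g}_{H^1}^2 \geq \norm{g}_{H^1}^2$ (using $\chi_1^2 + \chi_2^2 = 1$), summing the two pieces reproduces the desired lower bound, with all remainders of size $O(e^{-\sqrt{2}R} + R^{-2})\norm{g}_{L^2}^2$, absorbable into $(c_0/2)\norm{g}_{H^1}^2$ once $\delta$ is small. The main technical obstacle I anticipate is the single-kink coercivity itself: the non-negativity and kernel identification of $L_{0,1}$ are spectral facts that require the variational minimization of $H_{0,1}$ and a Sturm-Liouville argument rather than explicit computation; once that spectral input is in place, the localization and exponential smallness of cross-terms (controlled by (D1)--(D4) and Lemma \ref{interact}) are essentially routine bookkeeping.
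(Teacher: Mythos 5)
Your proposal is correct. It rests on the same two pillars as the paper's proof: the single-kink spectral fact (non-degeneracy of $-\partial_x^2 + \ddot U(H_{0,1})$ with kernel spanned by $\partial_x H_{0,1}$, which the paper establishes as Lemma \ref{L} via Sturm--Liouville oscillation theory and the essential-spectrum argument), and localization to decouple the two kinks in the regime $x_2 - x_1 \gg 1$. The difference is in the localization step: the paper uses a single cutoff $\zeta_1$ with $\zeta_1 + (1-\zeta_1) = 1$, writes $v = \zeta_1 v + (1-\zeta_1)v$, expands the quadratic form directly, and must then explicitly bound the cross term $\langle D^2E_{pot}(\zeta_1 v), (1-\zeta_1)v\rangle$ from below (the paper's estimates \eqref{IT1}--\eqref{IT4}, which split the cross term into a gradient piece, a large-potential piece with a sign, and an exponentially small piece). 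You instead take $\chi_1^2 + \chi_2^2 = 1$ and invoke the IMS localization formula, which eliminates the cross term entirely at the cost of the error $\int(\chi_1'^2+\chi_2'^2)g^2 = O(R^{-2})\|g\|_{L^2}^2$, absorbable for $\delta$ small since $R^{-1}\lesssim\delta$. The IMS route is cleaner at the level of bookkeeping: there is one error term to absorb rather than several, and the gradient accounting is built into the identity. The paper's route avoids invoking IMS as a black box and is more hands-on but requires the additional observation that on the overlap region the potential $\ddot U(H_{-1,0})$ is strictly positive (the paper's \eqref{IT2}). Both conversions of local to global orthogonality projections ($\langle\chi_i g, \cdot\rangle$ vs. $\langle g,\cdot\rangle$) use the same exponential-decay estimates (D3)--(D4). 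Your observation that the sign of the replacement $\langle\chi_1 g,\cdot\rangle^2 \le 2\langle g,\cdot\rangle^2 + O(e^{-2\sqrt 2 R})\|g\|_{L^2}^2$ goes the right way for a lower bound is correct and is the same point the paper makes in \eqref{II4}--\eqref{II5}.
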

\par The proof of this lemma is based in the proof of Lemma 2.4 from \cite{jkl}. To prove the Coercitivity Lemma, we need the following result about the spectrum and kernel of the operators $D^{2}E_{pot}(H^{x_{1}}_{-1,0}),\,  D^{2}E_{pot}(H^{x_{2}}_{0,1}).$
\begin{lemma}\label{L}
The operators $ D^{2}E_{pot}(H^{x_{1}}_{-1,0}),\,  D^{2}E_{pot}(H^{x_{2}}_{0,1})$ satisfy the following properties
\begin{enumerate}
    \item [1] $\ker D^{2}E_{pot}(H^{x_{1}}_{-1,0}) = \left \{c\partial_{x}H^{x_{1}}_{-1,0},\,c\in \mathbb{C} \right \},\,\ker D^{2}E_{pot}(H^{x_{2}}_{0,1}) = \left \{c\partial_{x}H^{x_{2}}_{0,1},\,c\in \mathbb{C} \right \}$
    \item [2] $\sigma(D^{2}E_{pot}(H^{x_{2}}_{0,1}))=\sigma(D^{2}E_{pot}(H^{x_{1}}_{-1,0}))\subset \{0\}\cup[\lambda_{1},+\infty)$,  with $\lambda_{1}>0.$
\end{enumerate}
\end{lemma}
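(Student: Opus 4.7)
The plan is to reduce both operators to a single one-dimensional Schr\"odinger operator and then combine a Darboux (supersymmetric) factorisation with Weyl's theorem on the essential spectrum. Since $\ddot U$ is an even polynomial and $H_{-1,0}(y)=-H_{0,1}(-y)$, the reflection $(Uf)(x):=f(2x_{1}-x)$ conjugates $D^{2}E_{pot}(H^{x_{1}}_{-1,0})$ into $D^{2}E_{pot}(H^{x_{1}}_{0,1})$, which is itself conjugate by translation to $L_{0}:=-\partial_{x}^{2}+\ddot U(H_{0,1})$. Both operators of the lemma are therefore unitarily equivalent to $L_{0}$, so their spectra coincide with $\sigma(L_{0})$ and, tracing back the unitaries, the claim on the kernels reduces to showing $\ker L_{0}=\mathrm{span}\{\partial_{x}H_{0,1}\}$.

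Differentiating the kink equation \eqref{kinkequation} in $x$ gives $L_{0}(\partial_{x}H_{0,1})=0$, and the explicit formula for $H_{0,1}$ produces the strict positivity $g(x):=\partial_{x}H_{0,1}(x)>0$ for every $x\in\mathbb{R}$, together with the exponential decay from \eqref{kinkestimate} ensuring $g\in H^{1}(\mathbb{R})$. Writing an arbitrary $f\in H^{1}(\mathbb{R})$ as $f=gh$ and using $L_{0}g=0$, a direct algebraic expansion yields the pointwise identity
\begin{equation*}
(\partial_{x}f)^{2}+\ddot U(H_{0,1})f^{2}=g(x)^{2}(\partial_{x}h)^{2}+\partial_{x}\!\bigl(h^{2}g\,\partial_{x}g\bigr).
\end{equation*}
Integrating on $\mathbb{R}$, the total-derivative term is killed by an approximation with compactly supported functions (the boundary contributions vanish thanks to the decay of $g$ and $\partial_{x}g$ furnished by \eqref{kinkestimate}), which produces the Darboux factorisation
\begin{equation*}
\langle L_{0}f,f\rangle_{L^{2}}=\int_{\mathbb{R}}g(x)^{2}(\partial_{x}h(x))^{2}\,dx.
\end{equation*}
This identity immediately gives $L_{0}\geq 0$, and since the right-hand side vanishes only when $h$ is constant, it forces $\ker L_{0}=\mathrm{span}\{g\}=\mathrm{span}\{\partial_{x}H_{0,1}\}$.

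To finish, since $H_{0,1}(x)\to 0$ as $x\to -\infty$ and $H_{0,1}(x)\to 1$ as $x\to+\infty$, we have $\ddot U(H_{0,1}(x))\to \ddot U(0)=2$ and $\ddot U(H_{0,1}(x))\to \ddot U(1)=8$, so by Weyl's invariance-of-essential-spectrum theorem $\sigma_{ess}(L_{0})=[\min(2,8),+\infty)=[2,+\infty)$. Combined with $L_{0}\geq 0$, this forces $\sigma(L_{0})\cap[0,2)$ to be a finite set of eigenvalues of finite multiplicity, of which $0$ is one; the number $\lambda_{1}:=\min(\sigma(L_{0})\setminus\{0\})$ is therefore strictly positive and the inclusion $\sigma(L_{0})\subset\{0\}\cup[\lambda_{1},+\infty)$ holds. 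I expect the only nontrivial technical step to be justifying the Darboux factorisation rigorously on the whole line, i.e.\ the vanishing of the boundary contributions for merely $H^{1}$ test functions; this is exactly where the quantitative exponential decay of $g$ and $\partial_{x}g$ from \eqref{kinkestimate} is used.
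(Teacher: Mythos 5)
Your argument is correct and reaches the same conclusions, but the key steps use genuinely different tools than the paper. For the kernel, you use the Darboux/supersymmetric factorisation: writing $f=gh$ with $g=\partial_{x}H_{0,1}>0$ and integrating the identity $(\partial_{x}f)^{2}+\ddot U(H_{0,1})f^{2}=g^{2}(\partial_{x}h)^{2}+\partial_{x}(h^{2}g\,\partial_{x}g)$ to get $\langle L_{0}f,f\rangle=\int g^{2}(\partial_{x}h)^{2}$. This is a clean quadratic-form argument that simultaneously yields nonnegativity of $L_{0}$ and the identification of the kernel. The paper instead differentiates the Bogomolny equation to exhibit the zero mode and then invokes Sturm--Liouville oscillation theory (the nodeless eigenfunction is the ground state); this is shorter but relies on a classical theorem, whereas your factorisation is self-contained. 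One small correction to your commentary: the vanishing of the boundary term $h^{2}g\,\partial_{x}g=f^{2}(\partial_{x}g/g)$ is controlled not by the decay of $g$ and $\partial_{x}g$ per se — both tend to zero, but $h$ blows up — but by the fact that the logarithmic derivative $\partial_{x}g/g$ is \emph{bounded} (it tends to $\sqrt{2}$ at $-\infty$ and $-2\sqrt{2}$ at $+\infty$), combined with $f(x)\to 0$ for $f\in H^{1}(\mathbb{R})$. For the essential spectrum, you invoke ``Weyl's theorem'' rather abruptly: the standard Weyl perturbation theorem concerns potentials tending to a single constant, whereas $\ddot U(H_{0,1})$ has different limits $2$ and $8$ at $\mp\infty$, so one must either compare with a step-potential operator via compactness of the resolvent difference (which is exactly what the paper does, constructing $T_{1}$ and verifying compactness of $T_{2}$ through Banach--Alaoglu and exponential decay) or use a half-line decomposition with a Weyl-sequence argument. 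Your conclusion $\sigma_{ess}(L_{0})=[2,\infty)$ is correct, but the justification as written is a one-line appeal where the paper supplies the actual compactness proof; if you spell out the comparison operator and the compactness check you essentially recover the paper's argument for that part. Finally, the assertion that $\sigma(L_{0})\cap[0,2)$ is finite is not needed (it could in principle accumulate at $2$): what you actually use is only that $0$ lies in the discrete spectrum and is therefore isolated, which already gives $\lambda_{1}>0$.
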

\begin{proof}
\par Since the operators $D^{2}E_{pot}(H^{x_{1}}_{-1,0})$ and $D^{2}E_{pot}(H^{x_{1}}_{-1,0})$ are equivalent by reflection and translation, they have the same spectrum that $D^{2}E_{pot}(H_{-1,0}).$ So, we'll just analyse the spectrum of the operator
\begin{equation}\label{Hess1}
    D^{2}E_{pot}(H^{x_{1}}_{-1,0})=-\partial^{2}_{x}+\ddot U(H^{x_{1}}_{-1,0}).
\end{equation}
 Also, we will only study the kernel of $D^{2}E_{pot}(H^{x_{1}}_{-1,0}),$ since the kernel of the other operator can be found by similar reasoning.
\par If we derive the Bogomolny equation satisfied by $H_{-1,0}$
\begin{equation}
    \partial^{2}_{x}H^{x_{1}}_{-1,0}(x)=\dot U(H^{x_{1}}_{-1,0}(x))
\end{equation}
with respect to $x,$ we obtain the identity
\begin{equation}
    \partial^{2}_{x}(\partial_{x}H^{x_{1}}_{-1,0}(x))=\ddot U(H^{x_{1}}_{-1,0}(x))\partial_{x}H^{x_{1}}_{-1,0}(x),
\end{equation}
which implies that $\partial_{x}H^{x_{1}}_{-1,0} \in \ker D^{2}E_{pot}(H^{x_{1}}_{-1,0})$. Also, $\partial_{x}H_{-1,0}(x)\neq 0$ for all $x\in\mathbb{R}$, so the Sturm-Liouville Oscillation Theory implies indeed that $0$ is the minimum element of the discrete spectrum of $D^{2}E_{pot}(H^{x_{1}}_{-1,0})$ and
\begin{equation}
    \ker D^{2}E_{pot}(H^{x_{1}}_{-1,0})=\left \{c\partial_{x}H^{x_{1}}_{-1,0},\,c\in\mathbb{C}\right \}.
\end{equation}
In conclusion, we have obtained that for some constant $\lambda_{1}>0$
\begin{equation}
    \sigma_{d}(D^{2}E_{pot}(H^{x_{1}}_{-1,0}))\subset \{0\}\cup [\lambda_{1},+\infty).
\end{equation}
By similar reasoning, we have  
\begin{align}
    \sigma_{d}(D^{2}E_{pot}(H^{x_{2}}_{0,1}))\subset \{0\}\cup [\lambda_{1},+\infty),\\
    \ker(D^{2}E_{pot}(H^{x_{2}}_{0,1}))=\left \{c\partial_{x}H^{x_{2}}_{0,1},\,c\in\mathbb{C}\right \}.
\end{align}
\par Now, it remains to estimate the lower bound of the essential spectrum of both operators. The main tool used to estimate the essential spectrum is a theorem of Spectral Theory written in the book \cite{spectral}.
\begin{theorem}\label{RC}
Suppose $A$ and $B$ are self-adjoint operators on a Hilbert Space $\mathcal{H}$. If $\exists z \in \mathbb{C}$, such that $(A-z)^{-1}-(B-z)^{-1}$ is compact, then $\sigma_{ess}(A)=\sigma_{ess}(B)$.
\end{theorem}
Since $D^{2}E_{pot}(H^{x_{1}}_{-1,0})=-\partial^{2}_{x}+(2-24(H^{x_{1}}_{-1,0})^{2}+30(H^{x_{1}}_{-1,0})^{4})$, we can rewrite this operator as
\begin{equation*}
   D^{2}E_{pot}(H^{x_{1}}_{-1,0})=\Big(-\partial^{2}_{x}+\big[2-24(H^{x_{1}}_{-1,0})^{2}+30(H^{x_{1}}_{-1,0})^{4}-2\chi_{[0,+\infty)}(x)-8\chi_{(-\infty,0)}(x)\big]\\
   +\big[2\chi_{[0,+\infty)}(x)+8\chi_{(-\infty,0)}(x)\big]\Big).
\end{equation*}
\par Now, we consider
\begin{equation*}
    T_{1}=-\partial^{2}_{x}+\big[2\chi_{[0,+\infty)}(x)+8\chi_{(-\infty,0)}(x)\big].
\end{equation*}
The next step is to check that for the self-adjoint operators $A=D^{2}E_{pot}(H^{x_{1}}_{-1,0})$, $B=T_{1}$ and for $z=-i$ all the hypothesis of Theorem \ref{RC} are fulfilled, which would imply that $\sigma_{ess}(D^{2}E_{pot}(H^{x_{1}}_{-1,0}))=\sigma_{ess}(T_{1}).$ 
\par Since we have the identity
\begin{multline}
    (D^{2}E_{pot}(H^{x_{1}}_{-1,0})+i)^{-1}-(T_{1}+i)^{-1}=
    -(D^{2}E_{pot}(H^{x_{1}}_{-1,0})+i)^{-1}\circ \Big(2-24(H^{x_{1}}_{-1,0})^{2}\\+30(H^{x_{1}}_{-1,0})^{4}-2\chi_{[0,+\infty)}(x)-8\chi_{(-\infty,0)}(x)\Big)\circ(T_{1}+i)^{-1},
\end{multline}
to prove that $D^{2}E_{pot}(H^{x_{1}}_{-1,0})$ and $T_{1}$ have same essential spectrum, we only need to verify that \begin{equation}
    T_{2}=\Big(2-24(H^{x_{1}}_{-1,0})^{2}+30(H^{x_{1}}_{-1,0})^{4}-2\chi_{[0,+\infty)}(x)-8\chi_{(-\infty,0)}(x)\Big)\circ(T_{1}+i)^{-1}
\end{equation}
is a compact operator on $L^{2}(\mathbb{R})$. By asymptotic properties of $H_{-1,0}$, it is not difficult to verify that \begin{equation*}
     Y=\Big(2-24(H^{x_{1}}_{-1,0})^{2}+30(H^{x_{1}}_{-1,0})^{4}-2\chi_{[0,+\infty)}(x)-8\chi_{(-\infty,0)}(x)\Big)
\end{equation*}
decays exponentially when $\md{x}$ goes to $+\infty$. Also, it is not difficult to verify that $(T_{1}+i)^{-1}$ is a bounded map from $L^{2}(\mathbb{R})$ to $H^{1}(\mathbb{R})\subset L^{\infty}(\mathbb{R})$. The last information and the Banach-Alaoglu Theorem imply that for any bounded sequence $(v_{n})\subset L^{2}(\mathbb{R})$, $\exists\, w \in H^{1}(\mathbb{R})$ and a subsequence that for simplicity we'll still denote by $(v_{n})$ such that
\begin{equation}
    (T_{1}+i)^{-1}(v_{n})\underset{H^{1}}{\rightharpoonup}w.
\end{equation}
Also, from the fact that $(T_{1}+i)^{-1}(v_{n})$ is uniformly bounded in $H^{1}(\mathbb{R})$, it can be verified that for any compact interval $K\subset \mathbb{R}$ that 
\begin{equation*}
(T_{1}+i)^{-1}(v_{n})\underset{L^{\infty}(K)}{\rightarrow}w,
\end{equation*}
and this fact with the exponential decay of $Y$ and $H^{1}(\mathbb{R})\subset L^{\infty}(\mathbb{R})$ implies directly the following convergence
\begin{equation}
    T_{2}(v_{n})\underset{L^{2}}{\rightarrow}Y(w),
\end{equation}
which implies that $T_{1}$ and $D^{2}E_{pot}(H^{x_{1}}_{-1,0})$ have the same essential spectrum, more precisely
\begin{equation}
    \sigma_{ess}\left(D^{2}E_{pot}(H^{x_{1}}_{-1,0})\right)\subset [2,+\infty),
\end{equation}
and so,
\begin{equation}
    \sigma_{ess}\left(D^{2}E_{pot}(H^{x_{2}}_{0,1})\right)\subset [2,+\infty).
\end{equation}
This finishes the proof of Lemma \ref{L}.
\end{proof}
Before starting the demonstration of the Coercitivity Lemma, let's consider from now on the function $0\leq\zeta\leq 1$ to be a smooth function satisfying:
\begin{equation}\label{C}
\zeta(x)=\begin{cases}
 1, \text{ if $x\leq \frac{3}{4}$,}\\
 0, \text{ if $x\geq \frac{4}{5}$.}
\end{cases}
\end{equation}
\begin{proof}[Proof of Coercitivity Lemma]
Our proof follows the scheme of proof of Lemma $2.4$ of \cite{jkl}. Here we denote $\langle,\,\rangle$ to be the scalar product on $L^{2}(\mathbb{R})$. First, because of Lemma \ref{L}, there is a $\lambda>0$ such that for any $v \in H^{1}(\mathbb{R})$
\begin{equation}\label{R13}
    \left \langle D^{2}E_{pot}(H^{x_{1}}_{-1,0})v,\,v\right\rangle \geq \lambda (\norm{v}_{L^{2}(\mathbb{R})} ^{2}-\norm{\partial_{x}H_{-1,0}}_{L^{2}}^{-2}\langle v,\,\partial_{x}H_{-1,0}^{x_{1}}\rangle^{2}).
\end{equation}
Also, because of the identity \eqref{Hess1}, we have
\begin{equation}\label{R14}
   \left \langle D^{2}E_{pot}(H^{x_{1}}_{-1,0})v,\,v\right \rangle= \norm{\partial_{x}v}_{L^{2}(\mathbb{R})}^{2}+\int_{\mathbb{R}}\ddot U(H^{x_{1}}_{-1,0}(x))(v(x))^{2}\,dx.
\end{equation}
Then, inequalities \eqref{R13} and \eqref{R14} imply for any $0<\theta<1$ that
\begin{equation}\label{R15}
       \left\langle D^{2}E_{pot}(H^{x_{1}}_{-1,0})v,\,v\right\rangle\geq  \theta\lambda \big[\norm{v}_{L^{2}(\mathbb{R})}^{2}-\norm{\partial_{x}H_{-1,0}}_{L^{2}}^{-2}\langle v,\,\partial_{x}H_{-1,0}^{x_{1}}\rangle^{2}\big]+(1-\theta)\big[\norm{\partial_{x}v}_{L^{2}(\mathbb{R})}^{2}\\+\int_{\mathbb{R}}\ddot U(H^{x_{2}}_{0,1}(x))(v(x))^{2}\,dx\big].
\end{equation}
Since $\norm{\ddot U(H_{-1,0}(x))}_{L^{\infty}}<\infty$, we can choose $\theta$ close enough to 1, and obtain from \eqref{R15} the following inequality for a positive constant $c>0$ such that
\begin{equation}\label{put1}
   \left\langle D^{2}E_{pot}(H^{x_{1}}_{-1,0})v,\,v\right\rangle \geq c \big[\norm{v}_{H^{1}(\mathbb{R})} ^{2}-\norm{\partial_{x}H_{-1,0}}_{L^{2}}^{-2}\langle v,\,\partial_{x}H_{-1,0}^{x_{1}}\rangle^{2}\big].
\end{equation}
By similar reasoning, we also have 
\begin{equation}\label{put2}
    \langle D^{2}E_{pot}(H^{x_{2}}_{0,1})v,\,v\rangle \geq c \big[\norm{v}_{H^{1}(\mathbb{R})} ^{2}-\norm{\partial_{x}H_{-1,0}}_{L^{2}}^{-2}\langle v,\,\partial_{x}H_{0,1}^{x_{2}}\rangle^{2}\big].
\end{equation}
 Now to study the operator $D^{2}E_{pot}(H^{x_{2}}_{0,1}+H^{x_{1}}_{-1,0})$, consider the function $\zeta_{1}(x)=\zeta(\frac{x-x_{1}}{x_{2}-x_{1}})$ and also 
\begin{align}
    V\coloneqq(\ddot U(H^{x_{1}}_{-1,0}+H^{x_{2}}_{0,1})-\ddot U(H^{x_{2}}_{0,1})),\\ D^{2}E_{pot}(H^{x_{2}}_{0,1}+H^{x_{1}}_{-1,0})=-\partial^{2}_{x}+\ddot U(H^{x_{2}}_{0,1})+V.
\end{align}
It can be verified that the support of $(1-\zeta_{1}(x))$ is included in $\{x \in \mathbb{R}|\,\frac{x-x_{1}}{x_{2}-x_{1}}\geq \frac{3}{4}\}$, and so
\begin{gather}
    (1-\zeta_{1}(x))\md{H^{x_{1}}_{-1,0}(x)}\leq e^{-\frac{3\sqrt{2}(x_{2}-x_{1})}{4}},\\\label{RR}
   \md{\langle V(x)(1-\zeta_{1}(x))v(x),\,(1-\zeta_{1}(x))v(x)\rangle}\lesssim e^{-\frac{3\sqrt{2}(x_{2}-x_{1})}{4}}\norm{v}_{L^{2}}^{2}.
\end{gather}
Therefore, if $\delta>0$ is small enough, from $x_{2}-x_{1}\geq \frac{1}{\delta}$, we obtain from the inequality \eqref{RR} that
\begin{equation}\label{put3}
   \md{\langle V(x)(1-\zeta_{1}(x))v(x),\,(1-\zeta_{1}(x))v(x)\rangle}\leq O(\delta)\norm{v}_{L^{2}}^{2},
\end{equation}
so the inequalities \eqref{put3} and \eqref{put2} imply for a $c>0$
\begin{multline}\label{fe1}
    \left  \langle D^{2}E_{pot}(H^{x_{1}}_{-1,0}+H^{x_{2}}_{0,1} )((1-\zeta_{1}(x))v),\,(1-\zeta_{1}(x))v\right\rangle \geq c \big[\norm{(1-\zeta_{1})v}_{H^{1}(\mathbb{R})} ^{2}-\norm{\partial_{x}H_{0,1}}_{L^{2}}^{-2}\langle (1-\zeta_{1})v,\,\partial_{x}H_{0,1}^{x_{2}}\rangle^{2}\big]\\-O(\delta)\norm{v}_{L^{2}(\mathbb{R})}^{2}.
\end{multline}
Also the support of $\zeta_{1}(x)$ is included $\{x|\,\frac{x-x_{2}}{x_{2}-x_{1}}\leq{-\frac{1}{5}}\}.$ So, by similar arguments, we can verify the analogous inequality
\begin{equation}\label{fe2}
    \left\langle D^{2}E_{pot}(H^{x_{1}}_{-1,0}+H^{x_{2}}_{0,1} )(\zeta_{1}(x)v),\,\zeta_{1}(x)v\right\rangle \geq c \big[\norm{\zeta_{1}v}_{H^{1}(\mathbb{R})} ^{2}-\norm{\partial_{x}H_{-1,0}}_{L^{2}}^{-2}\langle \zeta_{1}v,\,\partial_{x}H_{-1,0}^{x_{1}}\rangle^{2}\big]-O(\delta)\norm{v}_{L^{2}(\mathbb{R})}^{2}.
\end{equation}
Also, we obtain that there is a uniform constant $C>0$ such that if $\delta>0$ is small enough, then we obtain the following estimate for all $v \in H^{1}(\mathbb{R})$ 
\begin{equation}\label{IT1}
    \left\langle \partial_{x}(\zeta_{1}(x)v(x)),\, \partial_{x}\left[(1-\zeta_{1}(x))v(x)\right]\right\rangle\geq - C\delta\norm{v}_{H^{1}(\mathbb{R})}^{2}.
\end{equation}
Also, if $\delta>0$ is small enough, we have that $\ddot U(H_{-1,0}(x))>1$ for $\frac{3(x_{2}-x_{1})}{4}\leq x-x_{1}\leq \frac{4(x_{2}-x_{1})}{5}$. In conclusion, since the support of $(1-\zeta_{1}(x))\zeta_{1}(x)$ is included in $\{x-x_{1} \in [\frac{3(x_{2}-x_{1})}{4},\,\frac{4(x_{2}-x_{1})}{5}]\}$, we have the following inequality
\begin{align}\label{IT2}
    \int_{\mathbb{R}}\ddot U(H^{x_{1}}_{-1,0}(x))\zeta_{1}(x)(1-\zeta_{1}(x))(v(x))^{2}\,dx\geq 0.
\end{align}
Finally, from the mean value theorem, the knowledge of the support of $\zeta_{1}$ and the exponential decay of $H_{0,1}(x)$, we have that 
\begin{align}\label{IT3}
   \md{\int_{\mathbb{R}}\big[\ddot U(H^{x_{1}}_{-1,0}(x)+H^{x_{2}}_{0,1}(x))-\ddot U(H^{x_{1}}_{-1,0}(x))\big]\zeta_{1}(x)(1-\zeta_{1}(x))v(x)^{2}\,dx}
    \leq o(1)\norm{v}_{L^{2}}^{2}.
\end{align}
 Therefore, the inequalities \eqref{IT1}, \eqref{IT2} and \eqref{IT3} imply for a uniform constant $C>0$ that
\begin{equation}\label{IT4}
    \langle D^{2}E_{pot}(H^{x_{1}}_{-1,0}+H^{x_{2}}_{0,1})(\zeta_{1}v),\, (1-\zeta_{1})v\rangle\geq -C\delta\norm{v}_{L^{2}(\mathbb{R})}^{2}.
\end{equation}
Since we know that support of $\zeta_{1}$ is included in $\{x|\,\frac{x-x_{2}}{x_{2}-x_{1}}\leq{-\frac{1}{5}}\}$, we can deduce the estimate
\begin{equation}\label{II4}
    \md{\langle \partial_{x}H^{x_{2}}_{0,1},\,(1-\zeta_{1})v\rangle^{2}-\langle \partial_{x}H^{x_{2}}_{0,1},\,v\rangle^{2}}=\md{\langle \partial_{x} H^{x_{2}}_{0,1},\,\zeta_{1}v\rangle\langle \partial_{x}H^{x_{2}}_{0,1},\,(2-\zeta_{1})v\rangle}
    \leq O(\delta)\norm{v}_{L^{2}}^{2},
\end{equation}
and similarly, 
\begin{equation}\label{II5}
    \md{\langle \partial_{x}H^{x_{1}}_{-1,0},\,\zeta_{1}v\rangle^{2}-\langle \partial_{x}H^{x_{1}}_{-1,0},\,v\rangle^{2}}\leq O(\delta)\norm{v}_{L^{2}}^{2}.
\end{equation}
Therefore, we have that \eqref{fe1}, \eqref{fe2}, \eqref{IT4}, \eqref{II4} and \eqref{II5} imply the inequality \eqref{coerc} of the statement.
\end{proof}
\begin{lemma}\label{DEl2}
There is a constant $C_{2}$, such that if $x_{2}-x_{1}>0$, then
\begin{equation}\label{DE}
\norm{DE_{pot}(H^{x_{2}}_{0,1}+H^{x_{1}}_{-1,0})}_{L^{2}(\mathbb{R})}\leq C_{2}e^{-\sqrt{2}(x_{2}-x_{1})}.
\end{equation}
\begin{proof}
By the definition of the potential energy, the equation \eqref{kinkequation} and the exponential decay of the two kinks functions, we have that
\begin{equation*}
    DE_{pot}(H^{x_{2}}_{0,1}+H^{x_{1}}_{-1,0})=\dot U(H^{x_{2}}_{0,1}+H^{x_{1}}_{-1,0})-\dot U(H^{x_{2}}_{0,1})-\dot U(H^{x_{1}}_{-1,0})
\end{equation*}
as a bounded linear operator from $L^{2}(\mathbb{R})$ to $\mathbb{C}$. So, we have that
\begin{equation*}
    DE_{pot}(H^{x_{2}}_{0,1}+H^{x_{1}}_{-1,0})=-24H^{x_{2}}_{0,1}H^{x_{1}}_{-1,0}\big[H^{x_{2}}_{0,1}+H^{x_{1}}_{-1,0}\big]
    +6\Big[\sum_{j=1}^{4} \begin{pmatrix}
        5\\
        j
    \end{pmatrix}(H^{x_{1}}_{-1,0})^{j} (H^{x_{2}}_{0,1})^{5-j}\Big],
\end{equation*}
and, then, the conclusion follows directly from Lemma \ref{interact}, $(D1)$ and $(D2).$ 
\end{proof}
\end{lemma}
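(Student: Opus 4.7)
My plan is to reduce the statement to a purely algebraic identity plus a pointwise decay estimate. Integrating by parts in \eqref{firstd} identifies the Fréchet derivative $DE_{pot}(\phi)$ with the function $-\partial_x^2 \phi + \dot U(\phi)$ through the $L^2$ pairing, so the $L^2$ norm in the statement means the $L^2$ norm of this function. Applying the kink equation \eqref{kinkequation} (extended by translation and reflection to $H^{x_2}_{0,1}$ and $H^{x_1}_{-1,0}$), the Laplacians cancel exactly against $\dot U$ applied to each single kink, leaving only the pure interaction
\begin{equation*}
DE_{pot}(H^{x_2}_{0,1} + H^{x_1}_{-1,0}) = \dot U(H^{x_2}_{0,1} + H^{x_1}_{-1,0}) - \dot U(H^{x_2}_{0,1}) - \dot U(H^{x_1}_{-1,0}).
\end{equation*}

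Next, writing $a = H^{x_2}_{0,1}$ and $b = H^{x_1}_{-1,0}$ and using $\dot U(\phi) = 2\phi - 8\phi^3 + 6\phi^5$, the linear terms cancel, $(a+b)^3 - a^3 - b^3 = 3(a^2 b + ab^2)$, and $(a+b)^5 - a^5 - b^5 = \sum_{j=1}^{4} \binom{5}{j} a^j b^{5-j}$. Thus the interaction is a finite linear combination of monomials $a^j b^k$ with $j, k \geq 1$ and $j + k \in \{3, 5\}$; in particular $j + k$ is odd and so $j \neq k$ in every contributing term.

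To bound $\|a^j b^k\|_{L^2}$, I would combine the pointwise decay in (D1) and (D2) to get $|a(x)|^{2j} |b(x)|^{2k} \leq e^{-2j\sqrt{2}(x_2 - x)_+} e^{-2k\sqrt{2}(x - x_1)_+}$. Because $j \neq k$ the two exponents $2j\sqrt{2}$ and $2k\sqrt{2}$ are distinct, so the $\alpha \neq \beta$ branch of Lemma \ref{interact} applies and delivers
\begin{equation*}
\|a^j b^k\|_{L^2}^2 \lesssim e^{-2\min(j,k)\sqrt{2}(x_2 - x_1)} \leq e^{-2\sqrt{2}(x_2 - x_1)},
\end{equation*}
since $\min(j,k) \geq 1$, hence $\|a^j b^k\|_{L^2} \lesssim e^{-\sqrt{2}(x_2 - x_1)}$. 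Summing the finitely many terms with their explicit coefficients produces a constant $C_2$ with the claimed bound.

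The main subtlety is precisely this appeal to the $\alpha \neq \beta$ branch of Lemma \ref{interact}: if a monomial $a^j b^k$ with $j = k$ appeared, the $\alpha = \beta$ case would contribute an extra polynomial factor $(1+(x_2-x_1))^{1/2}$ absent from the statement. What saves us is the structural fact that $\dot U$ has only odd powers, so every surviving interaction monomial has odd total degree and therefore genuinely unbalanced exponential decay rates in the two directions.
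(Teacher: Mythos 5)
Your proof is correct and follows the same route as the paper: reduce via the kink equation to the pure interaction $\dot U(a+b)-\dot U(a)-\dot U(b)$, expand into monomials $a^j b^k$, and apply Lemma \ref{interact} together with (D1)--(D2). The one place you add genuine value is the observation that every surviving monomial has odd total degree, hence $j\neq k$, so the $\alpha\neq\beta$ branch of Lemma \ref{interact} always applies with no logarithmic/polynomial loss --- a detail the paper compresses into ``follows directly,'' and exactly the point a careful reader should check.
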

\begin{theorem}[Orbital Stability of a sum of two moving kinks]\label{Stab}
$\exists \delta_{0}>0$ such that if the solution $\phi$ of \eqref{nlww} satisfies $(\phi(0),\partial_{t}\phi(0))\in S\times L^{2}(\mathbb{R})$ and the energy excess $\epsilon=E(\phi)-2E_{pot}(H_{0,1})$ is smaller than $\delta_{0}$, then $\exists x_{1},\, x_{2}:\mathbb{R}\to\mathbb{R}$ functions of class $C^{2}$, such that for all $t \in \mathbb{R}$ denoting $g(t)=\phi(t)-H_{0,1}(x-x_{2}(t))+H_{-1,0}(x-x_{1}(t))$ and $z(t)=x_{2}(t)-x_{1}(t)$, we have:
\begin{enumerate}
    \item $\norm{g(t)}_{H^{1}(\mathbb{R})}=O(\epsilon^{\frac{1}{2}}),$\\
    \item $z(t)\geq \frac{1}{\sqrt{2}}[\ln{(\frac{1}{\epsilon})}+\ln{2}],$\\
    \item $\norm{\partial_{t}\phi(t)}_{L^{2}(\mathbb{R})}^{2}\leq 2\epsilon,$
    \item $\max_{j \in \{1,2\}}|\dot x_{j}(t)|^{2}+\max_{j \in \{1,2\}}|\ddot x_{j}(t)|=O(\epsilon).$
    \end{enumerate}
\end{theorem}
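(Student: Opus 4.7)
The plan is to implement a continuity / bootstrap argument based on energy conservation and the Modulation Lemma, following the strategy indicated in the resume of Section 2. First I would use the Modulation Lemma at $t=0$ (whose hypotheses are satisfied for $\epsilon$ small by Remark \ref{hypot1}) to produce $x_1(0), x_2(0) \in \mathbb{R}$ and a remainder $g(0) \in H^1(\mathbb{R})$ with the two orthogonality relations and $\|g(0)\|_{H^1}\lesssim \epsilon^{1/2}$. By local well-posedness in $S\times L^2$ and the implicit function theorem applied to the two orthogonality conditions, the map $t\mapsto (x_1(t),x_2(t))$ exists on a maximal interval $[0,T^{*})$ and inherits $C^2$ regularity from the wave equation. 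The target is to prove by continuity that $[0,T^{*})=\mathbb{R}$ together with the four bounds. Precisely, I would define a bootstrap interval $I$ on which $\|g(t)\|_{H^1}\le K\epsilon^{1/2}$ and $z(t)\ge \tfrac{1}{\sqrt{2}}[\ln(1/\epsilon)+\ln 2]$, and show both bounds improve strictly.

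The heart of the proof is the static energy estimate. Using $\partial_t\phi = \partial_t g - \dot x_2\,\partial_x H_{0,1}^{x_2} - \dot x_1\,\partial_x H_{-1,0}^{x_1}$ and Taylor expanding $E_{pot}$ around $H^{x_2}_{0,1}+H^{x_1}_{-1,0}$, the conservation law becomes
\begin{equation*}
\epsilon = \tfrac12\|\partial_t\phi(t)\|_{L^2}^2 + \bigl(A(z(t))-2E_{pot}(H_{0,1})\bigr) + \langle DE_{pot}(H^{x_2}_{0,1}+H^{x_1}_{-1,0}),g\rangle + \tfrac12\langle D^2E_{pot}(H^{x_2}_{0,1}+H^{x_1}_{-1,0})g,g\rangle + R,
\end{equation*}
with $|R|\lesssim \|g\|_{H^1}^3$ because $U$ is polynomial and $H^1\hookrightarrow L^\infty$. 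I would then invoke Lemma \ref{LL.1} to replace $A(z)-2E_{pot}(H_{0,1})$ by $2\sqrt{2}e^{-\sqrt{2}z}+O(ze^{-2\sqrt{2}z})$; Lemma \ref{DEl2} plus Cauchy--Schwarz and Young's inequality to bound the linear term by $\tfrac{c}{4}\|g\|_{L^2}^2+Ce^{-2\sqrt{2}z}$; and the Coercitivity Lemma (whose orthogonality hypotheses hold because of the modulation) to get $\tfrac12\langle D^2E_{pot}(\cdot)g,g\rangle\ge \tfrac{c}{2}\|g\|_{H^1}^2$. Under the bootstrap $\|g\|_{H^1}\le K\epsilon^{1/2}$, the cubic remainder $R$ is $O(\epsilon^{3/2})\ll \epsilon$, so one concludes $\tfrac12\|\partial_t\phi\|_{L^2}^2+\tfrac{c}{4}\|g\|_{H^1}^2+2\sqrt{2}e^{-\sqrt{2}z}\le \epsilon+o(\epsilon)$. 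This simultaneously gives $\|g\|_{H^1}^2\lesssim \epsilon$ (improving the first bootstrap for $K$ large enough), $\|\partial_t\phi\|_{L^2}^2\le 2\epsilon$ (for $\epsilon$ small), and $e^{-\sqrt{2}z(t)}\le \epsilon/(2\sqrt{2})+o(\epsilon)<\epsilon/2$, i.e.\ the stated lower bound on $z(t)$.

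For the fourth item, I would derive the modulation equations by differentiating the two orthogonality conditions in $t$. Using $\partial_t g = \partial_t\phi + \dot x_1\,\partial_x H_{-1,0}^{x_1}+\dot x_2\,\partial_x H_{0,1}^{x_2}$, one obtains a $2\times 2$ linear system for $(\dot x_1,\dot x_2)$ whose matrix is $\operatorname{diag}(\|\partial_x H_{-1,0}\|_{L^2}^2,\|\partial_x H_{0,1}\|_{L^2}^2)$ up to off-diagonal entries $O(e^{-\sqrt{2}z})$ from Lemma \ref{interact} and diagonal perturbations $O(\|g\|_{H^1})$ from the $\langle g,\partial_x^2 H\rangle$ terms. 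The right-hand side is $\langle \partial_t\phi,\partial_x H^{x_j}\rangle=O(\|\partial_t\phi\|_{L^2})=O(\epsilon^{1/2})$, so inverting the system yields $|\dot x_j(t)|\lesssim \epsilon^{1/2}$. Differentiating once more and substituting $\partial_t^2 g$ from the wave equation \eqref{nlww} (after expanding $\dot U(H^{x_2}_{0,1}+H^{x_1}_{-1,0}+g)$ around the kink sum and integrating by parts to move derivatives off $g$) gives an analogous near-diagonal system for $(\ddot x_1,\ddot x_2)$ whose right-hand side is $O(\epsilon)$, hence $|\ddot x_j(t)|\lesssim \epsilon$.

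The main obstacle I expect is closing the bootstrap for $z(t)$ cleanly together with $\|g\|$: because the Taylor expansion produces the linear term $\langle DE_{pot},g\rangle$ of size $e^{-\sqrt{2}z}\|g\|_{L^2}$, one must use Young's inequality carefully so that the resulting $Ce^{-2\sqrt{2}z}$ is strictly dominated by the positive $2\sqrt{2}e^{-\sqrt{2}z}$ coming from $A(z)$ at the scale $e^{-\sqrt{2}z}\lesssim \epsilon$; this is what gives the sharp constant $\ln 2$ appearing in the statement. Once both bootstrap bounds are strictly improved on $I$, a standard continuity argument extends $I$ to $[0,+\infty)$, and the case $t\le 0$ is handled by Remark \ref{obvious}.
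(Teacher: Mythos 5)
Your overall strategy mirrors the paper's: energy conservation combined with a second-order Taylor expansion of $E_{pot}$ around the two-kink background, Lemma \ref{LL.1} for the interaction term $A(z)-2E_{pot}(H_{0,1})$, Lemma \ref{DEl2} plus Young's inequality for the linear term, the Coercitivity Lemma for the quadratic form, a bootstrap/continuity argument to propagate the bounds globally, and a near-diagonal $2\times 2$ linear system from the differentiated orthogonality conditions to bound $\dot x_j$ and $\ddot x_j$. All of that is consistent with the paper's Section~2.

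The place where the proposal is genuinely under-argued is the claim that $x_1,x_2$ ``exist by the implicit function theorem'' and ``inherit $C^2$ regularity from the wave equation.'' The IFT applied to the orthogonality map shows that the modulation parameters depend smoothly on $\phi(t)$ \emph{as an element of $H^1$}, and since $\phi\in C^0(\mathbb{R};H^1)\cap C^1(\mathbb{R};L^2)$ this yields only continuity of $t\mapsto x_j(t)$; the time derivative formula involves $\partial_t\phi$ paired against $\partial_x H^{x_j}$, which is not produced by a naive application of the IFT in $H^1$. The paper instead constructs a candidate $(y_1(t),y_2(t))$ as the solution of an explicit ODE system \eqref{eqq1}--\eqref{eqq2} via Picard--Lindel\"of, uses the uniqueness statement in the Modulation Lemma to identify $(y_1,y_2)$ with the modulation parameters and thereby obtains $C^1$ regularity, and then upgrades to $C^2$ by using the weak formulation of \eqref{nlww} to show that the functions $(t,y)\mapsto\langle \partial_x H^{y}_{0,1},\partial_t^2\phi(t)\rangle$ and $(t,y)\mapsto\langle \partial_x H^{y}_{-1,0},\partial_t^2\phi(t)\rangle$ are jointly continuous, so that the right-hand side of the modulation ODE is itself $C^1$. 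This is a real additional argument, needed both for the $C^2$ claim in the statement and to make sense of the second-derivative modulation system \eqref{II1}--\eqref{II2} that you invoke for item 4; you should incorporate it rather than attribute the regularity to the IFT alone. The remaining steps of your proposal (the coercivity estimate, the closing of the bootstrap via $e^{-\sqrt{2}z}<\epsilon/2$, and the $O(\epsilon^{1/2})$, $O(\epsilon)$ bounds on $\dot x_j$, $\ddot x_j$) match the paper's argument and are sound.
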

\begin{proof}
First, from the fact that $E_{total}(\phi(x))>2E_{pot}(H_{0,1}),$ we deduce, from the conservation of total energy, the estimate
\begin{equation}\label{edd1}
    \norm{\partial_{t}\phi(t)}_{L^{2}}^{2}\leq 2\epsilon.
\end{equation}
From Remark \ref{hypot1}, we can assume if $\epsilon\ll 1$ that there are $p_{1},\,p_{2} \in \mathbb{R}$ such that 
\begin{equation*}
    \phi(0,x)=H_{0,1}(x-p_{2})+H_{-1,0}(x-p_{1})+g_{1}(x),
\end{equation*}
such that
\begin{equation*}
    \norm{g_{1}}_{H^{1}(\mathbb{R})}<\delta,\,p_{2}-p_{1}>\frac{1}{\delta},
\end{equation*}
for a small constant $\delta>0$.
Since the equation \ref{nlww} is locally well-posed in the space $S\times L^{2}(\mathbb{R}),$ we conclude that there is a $\delta_{1}>0$ depending only on $\delta$ and $\epsilon$ such that if $-\delta_{1} \leq t \leq \delta_{1},$ then
\begin{equation}\label{boot1}
    \norm{\phi(t,x)-H_{0,1}(x-p_{2})-H_{-1,0}(x-p_{1})}_{H^{1}(\mathbb{R})}\leq 2\delta.
\end{equation}
If $\delta,\epsilon>0$ are small enough, then, from the inequality \eqref{boot1} and the Modulation Lemma, we obtain in the time interval $[-\delta_{1},\delta_{1}]$ the existence of modulations parameters $x_{1}(t),\,x_{2}(t)$ such that for
\begin{equation*}
    g(t)=\phi(t)-H_{0,1}(x-x_{2}(t))-H_{-1,0}(x-x_{1}(t)),
\end{equation*}
we have
\begin{align}\label{occ1}
\left\langle g(t),\,\partial_{x}H_{0,1}(x-x_{2}(t))\right\rangle_{L^{2}}=
\left\langle g(t),\,\partial_{x}H_{-1,0}(x-x_{1}(t))\right\rangle_{L^{2}}=0,\\\label{occ2}
\frac{1}{\md{x_{2}(t)-x_{1}(t)}}+\norm{g(t)}_{H^{1}}\lesssim \delta.
\end{align}
From now on, we denote $z(t)=x_{2}(t)-x_{1}(t).$ From the Energy Conservation Law, we have for $-\delta_{1} \leq t \leq \delta_{1}$ that
\begin{multline*}
    E(\phi(t))=2E_{pot}(H_{0,1})+\epsilon=\frac{\norm{\partial_{t}\phi(t)}_{L^{2}(\mathbb{R})}^{2}}{2}+E_{pot}\big(H^{x_{2}(t)}_{0,1}+H^{x_{1}(t)}_{-1,0}\big)+\big\langle DE_{pot}\big(H^{x_{2}(t)}_{0,1}+H^{x_{1}(t)}_{-1,0}\big),\,g(t)\big\rangle_{L^{2}(\mathbb{R})} \\+\frac{\big\langle D^{2}E_{pot}\big(H^{x_{2}(t)}_{0,1}+H^{x_{1}(t)}_{-1,0}\big)g(t),\,g(t)\big\rangle_{L^{2}(\mathbb{R})}}{2}+ O(\norm{g(t)}_{H^{1}}^{3}).
\end{multline*}
 From Lemma \ref{LL.1} and \eqref{occ2}, the above identity implies that
\begin{multline}\label{MMM1}
    \epsilon= \frac{\norm{\partial_{t}\phi(t)}_{L^{2}(\mathbb{R})}^{2}}{2}+2\sqrt{2}e^{-\sqrt{2}z(t)}+\big\langle DE_{pot}\big(H^{x_{2}(t)}_{0,1}+H^{x_{1}(t)}_{-1,0}\big),\,g(t)\big\rangle_{L^{2}(\mathbb{R})}
    +\frac{\big\langle D^{2}E_{pot}\big(H^{x_{2}(t)}_{0,1}+H^{x_{1}(t)}_{-1,0}\big)g(t),\,g(t)\big\rangle_{L^{2}(\mathbb{R})}}{2}\\+ O\Big(\norm{g(t)}_{H^{1}}^{3}+z(t)e^{-2\sqrt{2}z(t)}\Big)
\end{multline}
for $-\delta_{1}\leq t\leq \delta_{1}.$
From \eqref{DE}, it is not difficult to verify that
$\md{\langle DE_{pot}(H^{x_{2}(t)}_{0,1}+H^{x_{1}(t)}_{-1,0}),\,g(t)\rangle}\leq C_{2}e^{-\sqrt{2}z(t)}\norm{g(t)}_{H^{1}(\mathbb{R})}.$
So, the equation \eqref{MMM1} and the Coercitivity Lemma imply, while $-\delta_{1}\leq t\leq \delta_{1}$, the following inequality
\begin{equation}\label{RRR1}
    \epsilon +C_{2}e^{-\sqrt{2}z(t)}\norm{g(t)}_{H^{1}(\mathbb{R})}\geq \frac{\norm{\partial_{t}\phi(t)}_{L^{2}}^{2}}{2}+2\sqrt{2}e^{-\sqrt{2}z(t)}+\frac{c\norm{g(t)}_{H^{1}(\mathbb{R})}^{2}}{2}+O\big(\norm{g(t)}_{H^{1}}^{3}+z(t)e^{-2\sqrt{2}z(t)}\big).
\end{equation}
Finally, applying the Young Inequality in the term $C_{2}e^{-\sqrt{2}z(t)}\norm{g(t)}_{H^{1}(\mathbb{R})}$, we obtain that the inequality \eqref{RRR1} can be rewritten in the form
\begin{equation}\label{dess}
    \epsilon\geq \frac{\norm{\partial_{t}\phi(t)}_{L^{2}}^{2}}{2}+2\sqrt{2}e^{-\sqrt{2}z(t)}+\frac{c\norm{g(t)}_{H^{1}(\mathbb{R})}^{2}}{4}+O\big(\norm{g(t)}_{H^{1}}^{3}+z(t)e^{-2\sqrt{2}z(t)}+e^{-2\sqrt{2}z(t)}\big).
\end{equation}
Then, the estimates \eqref{dess}, \eqref{occ2} imply for $\delta>0$ small enough the following inequality
\begin{equation}\label{dess2}
   \epsilon\geq \frac{\norm{\partial_{t}\phi(t)}_{L^{2}}^{2}}{2}+2e^{-\sqrt{2}z(t)}+\frac{c\norm{g(t)}_{H^{1}(\mathbb{R})}^{2}}{8}.
\end{equation}
So, the inequality \eqref{dess2} implies the estimates
\begin{align}\label{edd2}
    e^{-\sqrt{2}z(t)}<\frac{\epsilon}{2},\\\label{edd3}
    \norm{g(t)}_{H^{1}(\mathbb{R})}^{2}\lesssim \epsilon,
\end{align}
for $t \in [-\delta_{1},\delta_{1}]$. In conclusion, if $\frac{1}{\delta}\lesssim \ln{(\frac{1}{\epsilon})}^{\frac{1}{2}},$ we can conclude by a bootstrap argument that
the inequalities \eqref{edd1}, \eqref{edd2}, \eqref{edd3} are true for all $t \in \mathbb{R}.$
More precisely, we study the set
\begin{equation*}
    C=\left\{b \in \mathbb{R}_{>0}|\,\epsilon\geq \frac{\norm{\partial_{t}\phi(t)}_{L^{2}}^{2}}{2}+2e^{-\sqrt{2}z(t)}+\frac{c\norm{g(t)}_{H^{1}(\mathbb{R})}^{2}}{8}\text{, if $\md{t}\leq b.$}\right\}
\end{equation*}
and prove that $M=\sup_{b \in C}b=+\infty.$ We already have checked that $C$ is not empty, also $C$ is closed by its definition. Now from the previous argument, we can verify that the set where inequality \eqref{dess2}  holds is open. So, by connectivity, we obtain that $C=\mathbb{R}_{>0}.$ \par In conclusion, it remains to prove that the modulation parameters $x_{1}(t),\,x_{2}(t)$ are of class $C^{2}$ and that the fourth item of the statement of Theorem \ref{Stab} is true. \\
\textbf{(Proof of the $C^{2}$ regularity of $x_{1},\, x_{2},$ and of the fourth item.)}
\par For $\delta_{0}>0$ small enough, we denote $(y_{1}(t),\,  y_{2}(t))$ to be the solution of the following system of ordinary differential equations, with the function $g_{1}(t)=\phi(t,x)-H^{y_{2}(t)}_{0,1}(x)-H^{y_{1}(t)}_{-1,0}(x)$,
\begin{equation}\label{eqq1}
\Big(\norm{\partial_{x}H_{0,1}}_{L^{2}}^{2}-\left\langle g_{1}(t),\,\partial^{2}_{x}H^{y_{1}(t)}_{-1,0}\right\rangle\Big)\dot y_{1}(t)+
\Big(\left\langle \partial_{x}H^{y_{2}(t)}_{0,1},\,\partial_{x}H^{y_{1}(t)}_{-1,0}\right\rangle\Big)\dot y_{2}(t)= -\left\langle \partial_{t}\phi(t),\,\partial_{x}H^{y_{1}(t)}_{-1,0}(x)\right\rangle,
\end{equation}
\begin{equation}\label{eqq2}
\Big(\left\langle \partial_{x}H^{y_{2}(t)}_{0,1},\,\partial_{x}H^{y_{1}(t)}_{-1,0}\right\rangle\Big)\dot y_{1}(t)+\Big(\norm{\partial_{x}H_{0,1}(t)}_{L^{2}}^{2}-\left\langle g_{1}(t),\,\partial^{2}_{x}H^{y_{2}}_{0,1}\right\rangle\Big)\dot y_{2}(t)=-\left\langle \partial_{t}\phi(t),\,\partial_{x}H^{y_{2}(t)}_{0,1}(x)\right\rangle,
\end{equation}
with initial condition $(y_{2}(0),y_{1}(0))=(x_{2}(0),x_{1}(0))$. 
This ordinary differential equation system is motivated from the time derivative of the orthogonality conditions of the Modulation Lemma.
\par Since we have the estimate $\ln{(\frac{1}{\epsilon})}\lesssim x_{2}(0)-x_{1}(0)$ and $g_{1}(0)=g(0),$ Lemma \ref{interact} and the inequality \eqref{edd3} imply that the matrix \begin{equation}\label{Matrix}
\begin{bmatrix}
 \norm{\partial_{x}H_{0,1}}_{L^{2}}^{2}-\left\langle g_{1}(0),\,\partial^{2}_{x}H^{y_{1}(0)}_{-1,0}\right\rangle & \left\langle \partial_{x}H^{y_{2}(0)}_{0,1},\,\partial_{x}H^{y_{1}(0)}_{-1,0}\right\rangle\\
 \left\langle \partial_{x}H^{y_{2}(0)}_{0,1},\,\partial_{x}H^{y_{1}(0)}_{-1,0}\right\rangle & \norm{\partial_{x}H_{0,1}}_{L^{2}}^{2}-\left\langle g_{1}(0),\,\partial^{2}_{x}H^{y_{2}}_{0,1}\right\rangle
\end{bmatrix}
\end{equation}
is positive, so we have from Picard-Lindelöf Theorem that $(y_{2}(t),y_{1}(t))$ are of class $C^{1}$ for some interval $[-\delta,\delta],$ with $\delta>0$ depending on $\md{x_{2}(0)-x_{1}(0)}$ and $\epsilon.$ From the fact that $(y_{2}(0),y_{1}(0))=(x_{2}(0),x_{1}(0)),$ we obtain, from the equations \eqref{eqq1} and \eqref{eqq2}, that $(y_{2}(t),y_{1}(t))$ also satisfies the orthogonality conditions of Modulation Lemma for $t \in [-\delta,\delta].$ In conclusion, the uniqueness of Modulation Lemma implies that $(y_{2}(t),y_{1}(t))=(x_{2}(t),x_{1}(t))$ for $t \in [-\delta,\delta].$ From this argument, we also have for $t \in [-\delta,\delta]$ that
$e^{-\sqrt{2}(y_{2}(t)-y_{1}(t))}\leq \frac{\epsilon}{2\sqrt{2}}.$
By bootstrap, we can show, repeating the argument above, that 
\begin{equation}
    \sup\left\{C>0|\,(y_{2}(t),y_{1}(t))=(x_{2}(t),x_{1}(t)), \text{for $t \in [-C,C]$}\right\}=+\infty.
\end{equation}
Also, the argument above implies that if $(y_{1}(t),y_{2}(t))=(x_{1}(t),x_{2}(t))$ in an instant $t,$ then $y_{1},\,y_{2}$ are of class $C^{1}$ in a neighborhood of $t.$ In conclusion, $x_{1}, \, x_{2}$ are functions in $C^{1}(\mathbb{R}).$
Finally, since $\norm{g(t)}_{H^{1}}=O(\epsilon^{\frac{1}{2}})$ and $e^{-\sqrt{2}z(t)}=O(\epsilon),$ the following matrix  
\begin{equation}\label{Matrix1}
M(t)\coloneqq
\begin{bmatrix}
\norm{\partial_{x}H_{0,1}}_{L^{2}}^{2}-\left\langle g(t),\,\partial^{2}_{x}H^{x_{1}(t)}_{-1,0}\right\rangle &  \left\langle \partial_{x}H^{x_{2}(t)}_{0,1},\,\partial_{x}H^{x_{1}(t)}_{-1,0}\right\rangle\\
 \left\langle \partial_{x}H^{x_{2}(t)}_{0,1},\,\partial_{x}H^{x_{1}(t)}_{-1,0}\right\rangle & \norm{\partial_{x}H_{0,1}}_{L^{2}}^{2}-\left\langle g(t),\,\partial^{2}_{x}H^{x_{2}(t)}_{0,1}\right\rangle
\end{bmatrix}    
\end{equation}
is uniformly positive for all $t \in \mathbb{R}.$ So, from the estimate $\norm{\partial_{t}\phi(t)}_{L^{2}(\mathbb{R})}=O(\epsilon^{\frac{1}{2}}),$ the identities $x_{j}(t)=y_{j}(t)$ for $j=1,2$ and the equations \eqref{eqq1} and \eqref{eqq2}, we obtain
\begin{equation}\label{derivada1}
    \max_{j \in \{1,2\}}\md{\dot x_{j}(t)}=O(\epsilon^{\frac{1}{2}}).
\end{equation}
\par Since the matrix $M(t)$ is invertible for any $t\in\mathbb{R},$ we can obtain from the equations \eqref{eqq1}, \eqref{eqq2} that the functions $\dot x_{1}(t),\dot x_{2}(t)$ are given by
\begin{equation}\label{c2eq}
    \begin{bmatrix}
    \dot x_{1}(t)\\
    \dot x_{2}(t)
    \end{bmatrix}
    =
    M(t)^{-1}
    \begin{bmatrix}
    -\left\langle \partial_{t}\phi(t),\,\partial_{x}H^{x_{1}(t)}_{-1,0}(x)\right\rangle\\
    -\left\langle \partial_{t}\phi(t),\,\partial_{x}H^{x_{2}(t)}_{0,1}(x)\right\rangle
    \end{bmatrix}
.\end{equation}
Now, since we have that $(\phi(t),\partial_{t}\phi(t))\in C(\mathbb{R},S\times L^{2}(\mathbb{R}))$ and $x_{1}(t),\,x_{2}(t)$ are of class $C^{1},$ we can deduce that $(g(t),\partial_{t}g(t))\in C(\mathbb{R},H^{1}(\mathbb{R})\times L^{2}(\mathbb{R})).$ So, by definition, we can verify that $M(t) \in C^{1}(\mathbb{R},\mathbb{R}^{4}).$
\par Also, since $\phi(t,x)$  is the solution in distributional sense of \eqref{nlww}, we have that for any $y_{1},\,y_{2} \in \mathbb{R}$ the following identities hold
\begin{align*}
    \big\langle \partial_{x}H^{y_{2}}_{0,1},\,\partial^{2}_{t}\phi(t)\big\rangle &=\big\langle \partial_{x}H^{y_{2}}_{0,1},\,\partial^{2}_{x}\phi(t)-\dot U(\phi(t))\big\rangle
    =-\big\langle \partial^{2}_{x}H^{y_{2}}_{0,1},\,\partial_{x}\phi(t)\big\rangle-\big\langle \partial_{x}H^{y_{2}(t)}_{0,1}, \, \dot U(\phi(t))\big\rangle, \\
    \big\langle \partial_{x}H^{y_{1}}_{-1,0},\,\partial^{2}_{t}\phi(t)\big\rangle&=\big\langle \partial_{x}H^{y_{1}}_{-1,0},\,\partial^{2}_{x}\phi(t)-\dot U(\phi(t))\big\rangle=-\big\langle \partial^{2}_{x}H^{y_{1}}_{-1,0},\,\partial_{x}\phi(t)\big\rangle-\big\langle \partial_{x}H^{y_{1}}_{-1,0}, \, \dot U(\phi(t))\big\rangle.
\end{align*}
Since \eqref{nlww} is locally well-posed in $S\times L^{2}(\mathbb{R}),$ we obtain from the identities above that the following functions $h(t,y)\coloneqq\big\langle \partial_{x}H^{y}_{0,1},\,\partial^{2}_{t}\phi(t)\big\rangle$ and $l(t,y)\coloneqq\big\langle \partial_{x}H^{y}_{-1,0},\,\partial^{2}_{t}\phi(t)\big\rangle$ are continuous in the domain $\mathbb{R}\times\mathbb{R}.$
\par So, from the continuity of the functions $h(t,y),\,l(t,y)$ and from the fact that $x_{1},\,x_{2}\in C^{1}(\mathbb{R}),$ we obtain that the functions
\begin{equation*}
    h_{1}(t)\coloneqq -\langle \partial_{t}\phi(t),\,\partial_{x}H^{x_{1}(t)}_{-1,0}(x)\rangle,\,
    h_{2}(t)\coloneqq -\langle \partial_{t}\phi(t),\,\partial_{x}H^{x_{2}(t)}_{0,1}(x)\rangle
\end{equation*}
are of class $C^{1}.$ In conclusion, from the equation \eqref{c2eq}, by chain rule and product rule, 
we verify that $x_{1},\,x_{2}$ are in $C^{2}(\mathbb{R}).$
\par Now, since $x_{1},\,x_{2} \in C^{2}(\mathbb{R})$ and $\dot x_{1},\,\dot x_{2}$ satisfy \eqref{c2eq}, we deduce after derive at time the function
\begin{equation*}
    M(t)
    \begin{bmatrix}
    \dot x_{1}(t)\\
    \dot x_{2}(t)
    \end{bmatrix},
\end{equation*}
the following equations
\begin{multline}\label{II1}
    \ddot x_{1}(t)\Big(\norm{\partial_{x}H_{0,1}}_{L^{2}}^{2}+\left\langle \partial_{x}g(t),\, \partial_{x}H^{x_{1}(t)}_{-1,0}\right\rangle\Big)+\ddot x_{2}(t)\Big(\left\langle \partial_{x}H^{x_{1}(t)}_{-1,0},\, \partial_{x}H^{x_{2}(t)}_{0,1}\right\rangle\Big)=
    \dot x_{1}(t)^{2}\Big(\left\langle \partial^{2}_{x}H^{x_{1}(t)}_{-1,0},\,\partial_{x}g(t)\right\rangle\Big)\\+\dot x_{1}(t)\left\langle \partial^{2}_{x}H^{x_{1}(t)}_{-1,0},\,\partial_{t}g(t)\right\rangle
    +\dot x_{1}(t)\dot x_{2}(t)\left\langle \partial^{2}_{x}H^{x_{1}(t)}_{-1,0},\,\partial_{x}H^{x_{2}(t)}_{0,1}\right\rangle+\dot x_{2}(t)^{2}\left\langle \partial_{x}H^{x_{1}(t)}_{-1,0},\,\partial^{2}_{x}H^{x_{2}(t)}_{0,1}\right\rangle 
    +\dot x_{1}(t)\left\langle \partial^{2}_{x}H^{x_{1}(t)}_{-1,0},\,\partial_{t}\phi(t)\right\rangle\\-\left\langle \partial_{x}H^{x_{1}(t)}_{-1,0},\,\partial^{2}_{t}\phi(t)\right\rangle,
\end{multline}
\begin{multline}\label{II2}
    \ddot x_{2}(t)\Big(\norm{\partial_{x}H_{0,1}}_{L^{2}}^{2}+\left\langle \partial_{x}g(t),\, \partial_{x}H^{x_{2}(t)}_{0,1}\right\rangle\Big)+\ddot x_{1}(t)\Big(\big\langle \partial_{x}H^{x_{1}(t)}_{-1,0},\, \partial_{x}H^{x_{2}(t)}_{0,1}\big\rangle\Big)=
    \dot x_{2}(t)^{2}\Big(\left\langle \partial^{2}_{x}H^{x_{2}(t)}_{0,1},\,\partial_{x}g(t)\right\rangle\Big)\\+\dot x_{2}(t)\Big(\left\langle \partial^{2}_{x}H^{x_{2}(t)}_{0,1},\,\partial_{t}g(t)\right\rangle\Big)
    +\dot x_{1}(t)\dot x_{2}(t)\left\langle \partial_{x}H^{x_{1}(t)}_{-1,0},\,\partial^{2}_{x}H^{x_{2}(t)}_{0,1}\right\rangle+(\dot x_{1}(t))^{2}\left\langle \partial_{x}H^{x_{2}(t)}_{0,1},\,\partial^{2}_{x}H^{x_{1}(t)}_{-1,0}\right\rangle
    +\dot x_{2}(t)\left\langle \partial^{2}_{x}H^{x_{2}(t)}_{0,1},\,\partial_{t}\phi(t)\right\rangle\\ -\left\langle \partial_{x}H^{x_{2}(t)}_{0,1},\,\partial^{2}_{t}\phi(t)\right\rangle
.\end{multline}
 
\par Also, from the identity $g(t)=\phi(t)-H^{x_{1}(t)}_{-1,0}-H^{x_{2}(t)}_{0,1},$ we obtain that
$\partial_{t}g(t)=\partial_{t}\phi(t,x)+\dot x_{1}(t)\partial_{x}H^{x_{1}(t)}_{-1,0}+\dot x_{2}(t)\partial_{x}H^{x_{2}(t)}_{0,1},$
so, from the estimates \eqref{edd1} and \eqref{derivada1}, we obtain that
\begin{equation}\label{gt}
    \norm{\partial_{t}g(t)}_{L^{2}}=O(\epsilon^{\frac{1}{2}}).
\end{equation}
\par Now, since $\phi(t)$ is a distributional solution of \eqref{nlww}, we also have, from the global equality $\phi(t)=H^{x_{1}(t)}_{-1,0}+H^{x_{2}(t)}_{0,1}+g(t),$ the following identity
\begin{multline}
    \left\langle \partial_{x}H^{x_{1}(t)}_{-1,0},\,\partial^{2}_{t}\phi(t)\right\rangle=
    \left\langle \partial_{x}H^{x_{1}(t)}_{-1,0},\,\partial^{2}_{x}g(t)-\ddot U\left(H^{x_{1}(t)}_{-1,0}\right)g(t)\right\rangle  -\left\langle \partial_{x}H^{x_{1}(t)}_{-1,0},\left[\ddot U\left(H^{x_{1}(t)}_{-1,0}+H^{x_{2}(t)}_{0,1}\right)-\ddot U\left(H^{x_{1}(t)}_{-1,0}\right)\right]g(t)\right\rangle\\
    +\left\langle \partial_{x}H^{x_{1}(t)}_{-1,0},\,\dot U\left(H^{x_{1}(t)}_{-1,0}\right)+\dot U\left(H^{x_{2}(t)}_{0,1}\right)-\dot U\left(H^{x_{1}(t)}_{-1,0}+H^{x_{2}(t)}_{0,1}\right)\right\rangle\\
    -\left\langle \partial_{x}H^{x_{1}(t)}_{-1,0}, \dot U\left(H^{x_{1}(t)}_{-1,0}+H^{x_{2}(t)}_{0,1}+g(t)\right)-\dot U\left(H^{x_{1}(t)}_{-1,0}+H^{x_{2}(t)}_{0,1}\right)-\ddot U\left(H^{x_{1}(t)}_{-1,0}+H^{x_{2}(t)}_{0,1}\right)g(t)\right\rangle
\end{multline}
Since $\partial_{x}H^{x_{1}(t)}_{-1,0} \in ker D^{2}E_{pot}\left(H^{x_{1}(t)}_{-1,0}\right)$, we have by integration by parts that
$\left \langle \partial_{x}H^{x_{1}(t)}_{-1,0},\,\partial^{2}_{x}g(t)-\ddot U\left(H^{x_{1}(t)}_{-1,0}\right)g(t)\right \rangle=0.$
Since, we have
\begin{equation}\label{intterm}
    \dot U\left(H^{x_{1}(t)}_{-1,0}\right)+\dot U\left(H^{x_{2}(t)}_{0,1}\right)-\dot U\left(H^{x_{1}(t)}_{-1,0}+H^{x_{2}(t)}_{0,1}\right)=24H^{x_{1}(t)}_{-1,0}H^{x_{2}(t)}_{0,1}(H^{x_{1}(t)}_{-1,0}+H^{x_{2}(t)}_{0,1})-6\sum_{j=1}^{4}\begin{pmatrix}
        5\\
        j
    \end{pmatrix}\left(H^{x_{1}(t)}_{-1,0}\right)^{j}\left(H^{x_{2}(t)}_{0,1}\right)^{5-j},
\end{equation}
Lemma \ref{interact} implies that
$\left\langle \partial_{x}H^{x_{1}(t)}_{-1,0},\,\dot U\left(H^{x_{1}(t)}_{-1,0}\right)+\dot U\left(H^{x_{2}(t)}_{0,1}\right)-\dot U\left(H^{x_{1}(t)}_{-1,0}+H^{x_{2}(t)}_{0,1}\right)\right\rangle=O\Big(e^{-\sqrt{2}(z(t))}\Big).$
Also, we have from Taylor's Expansion Theorem
the estimate
\begin{equation*}
    \left\langle \partial_{x}H^{x_{1}(t)}_{-1,0}, \dot U\left(H^{x_{1}(t)}_{-1,0}+H^{x_{2}(t)}_{0,1}+g(t)\right)-\dot U\left(H^{x_{1}(t)}_{-1,0}+H^{x_{2}(t)}_{0,1}\right)-\ddot U\left(H^{x_{1}(t)}_{-1,0}+H^{x_{2}(t)}_{0,1}\right)g(t)\right\rangle=O(\norm{g(t)}_{H^{1}}^{2}).
\end{equation*}
From Lemma \ref{interact}, the fact that $U$ is a smooth function and $H_{0,1}\in L^{\infty}(\mathbb{R})$, we can obtain
\begin{equation*}
    \left\langle \partial_{x}H^{x_{1}(t)}_{-1,0},\left[\ddot U\left(H^{x_{1}(t)}_{-1,0}+H^{x_{2}(t)}_{0,1}\right)-\ddot U\left(H^{x_{1}(t)}_{-1,0}\right)\right]g(t)\right\rangle =O\Big(\int_{\mathbb{R}}\partial_{x}H^{x_{1}(t)}_{-1,0}H^{x_{2}(t)}_{0,1}\md{g(t)}\,dx\Big)
    =O\Big(e^{-\sqrt{2}z(t)}\norm{g(t)}_{H^{1}}z(t)^{\frac{1}{2}}\Big).
\end{equation*}    
In conclusion, we have
\begin{equation}\label{EEE1}
    \left\langle \partial_{x}H^{x_{1}(t)}_{-1,0},\,\partial^{2}_{t}\phi(t)\right\rangle=O\Big(\norm{g(t)}_{H^{1}}^{2}+e^{-\sqrt{2}z(t)}\Big),
\end{equation}
and by similar arguments, we have
\begin{equation}\label{EEE}
    \left\langle \partial_{x}H^{x_{2}(t)}_{0,1},\,\partial^{2}_{t}\phi(t)\right\rangle=O\Big(\norm{g(t)}_{H^{1}}^{2}+e^{-\sqrt{2}z(t)}\Big).
\end{equation}
\par Also, the equations \eqref{II1} and \eqref{II2} form a linear system with $\ddot x_{1}(t),\,\ddot x_{2}(t).$ Recalling that the Matrix $M(t)$ is uniformly positive, we obtain from the estimates \eqref{edd3}, \eqref{derivada1}, \eqref{gt}, \eqref{EEE1} and \eqref{EEE} that
\begin{equation}
    \max_{j \in \{1,2\}}\md{\ddot x_{j}(t)}=O(\epsilon).
\end{equation}
\end{proof}
The Theorem \ref{Stab} can also be improved when the kinetic energy of the solution is included in the computation and additional conditions are added, more precisely:
\begin{theorem}\label{T2}
$\exists \delta_{0}>0,$ such that if $0<\epsilon\leq \delta_{0},\,(\phi(0,x),\partial_{t}\phi(0,x))\in S\times L^{2}(\mathbb{R})$ and $E_{total}((\phi(0,x),\partial_{t}\phi(0,x)))=2E_{pot}(H_{0,1})+\epsilon,$ 
then there are $x_{2},\,x_{1}\in C^{2}(\mathbb{R})$ such that $g(t,x)=\phi(t,x)-H^{x_{2}(t)}_{0,1}(x)-H^{x_{1}(t)}_{-1,0}$ satisfies
\begin{equation*}
    \left\langle g(t,x),\partial_{x}H^{x_{2}(t)}_{0,1}(x)\right\rangle=0,\,\left\langle g(t,x),\partial_{x}H^{x_{1}(t)}_{-1,0}(x)\right\rangle=0,
\end{equation*}
and 
\begin{equation}
    \epsilon \cong e^{-\sqrt{2}(x_{2}(t)-x_{1}(t))}+\norm{(g(t),\partial_{t}g(t))}_{H^{1},L^{2}}^{2}+\md{\dot x_{1}(t)}^{2}+\md{\dot x_{2}(t)}^{2},
\end{equation}
for all $t\in \mathbb{R},$ which means the existence of positive constants $C,c$ independent on $\epsilon$, such that for all $t\in \mathbb{R}$
\begin{equation}
    c\epsilon\leq e^{-\sqrt{2}(x_{2}(t)-x_{1}(t))}+\norm{(g(t),\partial_{t}g(t))}_{H^{1},L^{2}}^{2}+|\dot x_{1}(t)|^{2}+|\dot x_{2}(t)|^{2}\leq C\epsilon.
\end{equation}
\end{theorem}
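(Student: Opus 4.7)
The upper bound $e^{-\sqrt{2}z(t)}+\|(g(t),\partial_{t}g(t))\|_{H^{1}\times L^{2}}^{2}+|\dot x_{1}(t)|^{2}+|\dot x_{2}(t)|^{2}\lesssim \epsilon$ is essentially already contained in Theorem \ref{Stab}: that result supplies the existence and $C^{2}$ regularity of the modulation parameters, the orthogonality conditions, and the bounds $\|g(t)\|_{H^{1}}^{2}\lesssim\epsilon$, $e^{-\sqrt{2}z(t)}\lesssim\epsilon$, and $|\dot x_{j}(t)|^{2}\lesssim\epsilon$. The only piece not explicitly appearing there is $\|\partial_{t}g(t)\|_{L^{2}}^{2}\lesssim\epsilon$, which I would obtain by writing $\partial_{t}g=\partial_{t}\phi+\dot x_{1}\partial_{x}H^{x_{1}}_{-1,0}+\dot x_{2}\partial_{x}H^{x_{2}}_{0,1}$ and combining the triangle inequality with $\|\partial_{t}\phi(t)\|_{L^{2}}^{2}\leq 2\epsilon$ and $|\dot x_{j}|^{2}\lesssim\epsilon$.

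For the nontrivial direction $\epsilon\lesssim e^{-\sqrt{2}z(t)}+\|(g,\partial_{t}g)\|^{2}+|\dot x_{1}|^{2}+|\dot x_{2}|^{2}$, the plan is to start from the conservation of total energy and Taylor-expand in $g(t)$ around the two-kink configuration. Writing $E_{pot}(\phi(t))=A(z(t))+\langle DE_{pot}(H^{x_{2}}_{0,1}+H^{x_{1}}_{-1,0}),g(t)\rangle+\tfrac{1}{2}\langle D^{2}E_{pot}(H^{x_{2}}_{0,1}+H^{x_{1}}_{-1,0})g(t),g(t)\rangle+O(\|g(t)\|_{H^{1}}^{3})$, applying Lemma \ref{LL.1} to rewrite $A(z)-2E_{pot}(H_{0,1})=2\sqrt{2}e^{-\sqrt{2}z}+O(ze^{-2\sqrt{2}z})$, using Lemma \ref{DEl2} to bound the linear term by $Ce^{-\sqrt{2}z(t)}\|g(t)\|_{H^{1}}$, and using the Coercitivity Lemma together with the orthogonality conditions (which force the correction terms in \eqref{coerc} to vanish) to obtain the two-sided bound $\langle D^{2}E_{pot}(H^{x_{2}}_{0,1}+H^{x_{1}}_{-1,0})g(t),g(t)\rangle\cong\|g(t)\|_{H^{1}}^{2}$.

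For the kinetic part I decompose $\partial_{t}\phi(t)=-\dot x_{1}\partial_{x}H^{x_{1}}_{-1,0}-\dot x_{2}\partial_{x}H^{x_{2}}_{0,1}+\partial_{t}g(t)$ and expand $\|\partial_{t}\phi\|_{L^{2}}^{2}$ as a sum of squares plus three cross terms. The diagonal contributions give $(\dot x_{1}^{2}+\dot x_{2}^{2})\|\partial_{x}H_{0,1}\|_{L^{2}}^{2}+\|\partial_{t}g\|_{L^{2}}^{2}$. The kink-kink cross term is controlled by Lemma \ref{interact} as $O(|\dot x_{1}\dot x_{2}|e^{-\sqrt{2}z})$. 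The cross terms of the form $\dot x_{j}\langle\partial_{t}g,\partial_{x}H^{x_{j}}\rangle$ are the key point: differentiating the orthogonality condition $\langle g(t),\partial_{x}H^{x_{j}(t)}\rangle=0$ in time yields $\langle\partial_{t}g,\partial_{x}H^{x_{j}}\rangle=\dot x_{j}\langle g,\partial_{x}^{2}H^{x_{j}}\rangle$, so each such cross term is bounded by $|\dot x_{j}|^{2}\|g\|_{H^{1}}\lesssim\epsilon^{1/2}\cdot|\dot x_{j}|^{2}$, i.e.\ a small multiplicative perturbation of the diagonal term.

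Gathering everything yields an identity of the form $\epsilon=c_{1}(\dot x_{1}^{2}+\dot x_{2}^{2})+\tfrac{1}{2}\|\partial_{t}g\|_{L^{2}}^{2}+2\sqrt{2}e^{-\sqrt{2}z}+c_{2}\|g\|_{H^{1}}^{2}+R(t)$, with $c_{1},c_{2}>0$ and all remainder contributions $R(t)$ bounded by $O(e^{-\sqrt{2}z}\|g\|_{H^{1}}+\|g\|_{H^{1}}^{3}+ze^{-2\sqrt{2}z}+\epsilon^{1/2}(\dot x_{1}^{2}+\dot x_{2}^{2})+|\dot x_{1}\dot x_{2}|e^{-\sqrt{2}z})$; using Young's inequality on the mixed products and the smallness estimates from Theorem \ref{Stab}, each of these is $o(\epsilon)$, hence can be absorbed into the main terms for $\epsilon\leq\delta_{0}$ small. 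The result $\epsilon\cong$ RHS then follows in both directions. The main obstacle I expect is the careful bookkeeping of the cross-term $\dot x_{j}\langle\partial_{t}g,\partial_{x}H^{x_{j}}\rangle$, since it is the only place where $\partial_{t}g$ couples with the velocities $\dot x_{j}$; exploiting the time-differentiated orthogonality identity to turn it into a harmless correction is the crucial observation.
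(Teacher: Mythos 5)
Your proposal is correct and follows essentially the same route as the paper: the paper also Taylor-expands the total energy around the ansatz $\phi_{\sigma}(t)=(H^{x_{1}(t)}_{-1,0}+H^{x_{2}(t)}_{0,1},\,-\dot x_{1}(t)\partial_{x}H^{x_{1}(t)}_{-1,0}-\dot x_{2}(t)\partial_{x}H^{x_{2}(t)}_{0,1})$, uses Lemma \ref{LL.1}, Lemma \ref{DEl2}, and the Coercitivity Lemma for the potential part, and controls the kinetic cross term $\langle\partial_{t}g,\partial_{x}H^{x_{j}}\rangle=\dot x_{j}\langle g,\partial^{2}_{x}H^{x_{j}}\rangle$ exactly by differentiating the orthogonality conditions as you do. The only cosmetic difference is that the paper phrases the whole computation as a single Taylor expansion of $E(\phi,\partial_{t}\phi)$ around $\phi_{\sigma}$ rather than splitting into separate potential and kinetic expansions, but the resulting identities and error terms are identical.
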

\begin{proof}
In this proof, $L^{2},\, H^{1}$ mean, respectively, $L^{2}(\mathbb{R}),\, H^{1}(\mathbb{R})$.
From Modulation Lemma and Theorem \ref{Stab}, we can rewrite the solution $\phi(t)$ in the form
\begin{equation*}
    \phi(t,x)=H^{x_{1}(t)}_{-1,0}(x)+H^{x_{2}(t)}_{0,1}(x)+g(t,x)
\end{equation*}
with $x_{1}(t),\,x_{2}(t),\,g(t)$ satisfying the conclusion of Theorem \ref{Stab}. First we denote 
\begin{equation}
    \phi_{\sigma}(t)=\left(H^{x_{1}(t)}_{-1,0}(x)+H^{x_{2}(t)}_{0,1}(x), -\dot x_{1}(t)\partial_{x}H^{x_{1}(t)}_{-1,0}-\dot x_{2}(t)\partial_{x}H^{x_{2}(t)}_{0,1}\right)\in S\times L^{2}(\mathbb{R}),
\end{equation}
then we apply Taylor's Expansion Theorem in $E(\phi(t))$ around $\phi_{\sigma}(t)$, more precisely for $R_{\sigma}(t)$ the residue of second order of Energy's Taylor Expansion of $E(\phi(t),\partial_{t}\phi(t))$ around $\phi_{\sigma}(t)$, we have:
\begin{multline}
    2E_{pot}(H_{0,1})+\epsilon=E(\phi_{\sigma}(t))+\left\langle DE(\phi_{\sigma}(t)),\,\left (g(t),\partial_{t}g(t)\right)\right\rangle_{L^{2}\times L^{2}}
    +\frac{\left\langle D^{2}E(\phi_{\sigma}(t))\big (g(t),\partial_{t}g(t)\big ),\,\big (g(t),\partial_{t}g(t)\big )\right\rangle_{L^{2}\times L^{2}}}{2}\\+R_{\sigma}(t),
\end{multline}
such that for $(w_{1},w_{2}) \in S\times L^{2}(\mathbb{R})$ and $(v_{1},v_{2})\in H^{1}(\mathbb{R})\times L^{2}(\mathbb{R})$, we have the identities
\begin{equation*}
    E(w_{1},w_{2})=\frac{\norm{\partial_{x}w_{1}}_{L^{2}}^{2}+\norm{w_{2}}_{L^{2}}^{2}}{2}+\int_{\mathbb{R}}U(w_{1}(x))\,dx,
\end{equation*}
\begin{align}\label{DEE}
    \left\langle DE(w_{1},w_{2}),(v_{1},v_{2})\right\rangle_{L^{2}\times L^{2}}=\int_{\mathbb{R}}\partial_{x}w_{1}(x)\partial_{x}v_{1}(x)+\dot U(w_{1})v_{1}+ w_{2}(x)v_{2}(x)\,dx,
    \\\label{DE2}
    D^{2}E(w_{1},w_{2})=
    \begin{bmatrix}
    -\partial^{2}_{x}+ \ddot U(w_{1}) & 0\\
    0 & \mathbb{I}
    \end{bmatrix}
\end{align}
with $D^{2}E(w_{1},w_{2})$ defined as a bilinear operator from $H^{1}\times L^{2}$ to $\mathbb{C}$. So, from identities \eqref{DEE} and \eqref{DE2}, it is not difficult to verify that 
\begin{multline*}
    R_{\sigma}(t)=\int_{\mathbb{R}}U\left(H^{x_{1}(t)}_{-1,0}(x)+H^{x_{2}(t)}_{0,1}(x)+g(t,x)\right)-U\left(H^{x_{1}(t)}_{-1,0}(x)+H^{x_{2}(t)}_{0,1}(x)\right)-\dot U\left(H^{x_{1}(t)}_{-1,0}(x)+H^{x_{2}(t)}_{0,1}(x)\right)g(t,x)\\-\frac{\ddot U\left(H^{x_{1}(t)}_{-1,0}(x)+H^{x_{2}(t)}_{0,1}(x)\right)g(t,x)^{2}}{2}\,dx,
\end{multline*}
and, so,
\begin{equation}\label{Id0}
    \md{R_{\sigma}(t)}=O(\norm{g(t)}_{H^{1}}^{3}).
\end{equation}
Also, we have
\begin{equation}\label{eident}
    \left\langle DE(\phi_{\sigma}(t)),\,(g(t),\partial_{t}g(t))\right\rangle_{L^{2}\times L^{2}}=\left\langle DE_{pot}\left(H^{x_{1}(t)}_{-1,0}+H^{x_{2}(t)}_{0,1}\right),\,g(t)\right\rangle+\left\langle -\dot x_{1}(t)\partial_{x}H^{x_{1}(t)}_{-1,0}-\dot x_{2}(t)\partial_{x}H^{x_{2}(t)}_{0,1}, \, \partial_{t}g(t)\right\rangle.
\end{equation}
The orthogonality conditions satisfied by $g(t)$ also imply for all $t\in\mathbb{R}$ that
\begin{align}\label{ot}
    \left\langle \partial_{t}g(t),\,\partial_{x}H^{x_{1}(t)}_{-1,0}\right\rangle_{L^{2}}=\dot x_{1}(t)\langle g(t),\,\partial^{2}_{x}H^{x_{1}(t)}_{-1,0}\rangle_{L^{2}},\\
    \label{ot2}
    \left\langle \partial_{t}g(t),\,\partial_{x}H^{x_{2}(t)}_{0,1}\right\rangle_{L^{2}}=\dot x_{2}(t)\langle g(t),\,\partial^{2}_{x}H^{x_{2}(t)}_{0,1}\rangle_{L^{2}}.
\end{align}
So, the inequality \eqref{DE} and the identities \eqref{eident}, \eqref{ot}, \eqref{ot2} imply that
\begin{equation}\label{Id1}
    \md{\langle DE(\phi_{\sigma}(t)),\,(g(t),\partial_{t}g(t))\rangle_{L^{2}\times L^{2}}}=O\Big(\sup_{j\in\{1,2\}}\md{\dot x_{j}(t)}^{2}\norm{g(t)}_{H^{1}}+e^{-\sqrt{2}z(t)}\norm{g(t)}_{H^{1}}\Big).
\end{equation}
From the Coercitivity Lemma and the definition of $D^{2}E(\phi_{\sigma}(t))$, we have that
\begin{equation}\label{Id2}
    \left\langle D^{2}E(\phi_{\sigma}(t))(g(t),\partial_{t}g(t)),\,(g(t),\partial_{t}g(t))\right\rangle_{L^{2}\times L^{2}} \cong \norm{(g(t),\partial_{t}g(t))}_{H^{1}\times L^{2}}^{2}.
\end{equation}
Finally, there is the identity
\begin{multline}\label{Id3}
    \norm{\dot x_{1}(t)\partial_{x}H_{-1,0}^{x_{1}(t)}(x)+ \dot x_{2}(t)\partial_{x}H_{0,1}^{x_{2}(t)}(x)}_{L^{2}}^{2}=2\dot x_{1}(t)\dot x_{2}(t)\left\langle \partial_{x}H^{z(t)}_{0,1},\,\partial_{x}H_{-1,0}\right\rangle_{L^{2}}\\+\dot x_{1}(t)^{2}\norm{\partial_{x}H_{0,1}}_{L^{2}}^{2}+\dot x_{2}(t)^{2}\norm{\partial_{x}H_{0,1}}_{L^{2}}^{2}.
\end{multline}
From Lemma \ref{interact}, we have that $\md{\langle \partial_{x}H^{z}_{0,1},\,\partial_{x}H_{-1,0}\rangle_{L^{2}}}=O\big(ze^{-\sqrt{2}z}\big)$ for $z$ big enough.
Then, it is not difficult to verify that Lemma \ref{LL.1}, \eqref{Id0}, \eqref{Id1}, \eqref{Id2} and \eqref{Id3} imply directly the statement of the Theorem \ref{T2} which finishes the proof.
\end{proof}
\begin{remark}
Theorem \ref{T2} implies that it is possible to have a solution $\phi$ of the equation \eqref{nlww} with energy excess $\epsilon>0$ small enough satisfying all the hypotheses of Theorem \ref{T1}. More precisely, in notation of Theorem \ref{T1}, if $\norm{(g(0,x),\partial_{t}g(0,x))}_{H^{1}\times L^{2}}\ll \epsilon^{\frac{1}{2}},$ then, from Theorem \ref{T2}, we have that
\begin{equation*}
    e^{-\sqrt{2}z(0)}+v_{1}^{2}+v_{2}^{2}\cong \epsilon.
\end{equation*}
In conclusion, we obtain that $E(\phi(0),\partial_{t}\phi(0))-2E_{pot}(H_{0,1})\cong \epsilon.$
\end{remark}
\section{Long Time Behavior of Modulation Parameters}
Even though Theorem \ref{Stab} proves the orbital stability of a sum of two kinks with low energy excess, this theorem doesn't explain the movement of the kinks' centers $x_{2}(t),\,x_{1}(t)$ and their speed for long time. More precisely, we still don't know if there is a explicit smooth real function $d(t),$ such that $(z(t),\dot z(t))$ is close to $(d(t),\dot d(t))$ in a large time interval. 
\par But, the global estimates on the modulus of the first and second derivatives of $x_{1}(t),\,x_{2}(t)$ obtained in Theorem \ref{Stab} will be very useful to estimate with high precision the functions $x_{1}(t),\,x_{2}(t)$ during a very large time interval. Moreover, we first have the following auxiliary lemma. 
\begin{lemma} \label{modueq}
Let $0<\theta,\,\gamma<1.$ We recall the function \begin{center}$A(z)=E_{pot}(H^{z}_{0,1}+H_{-1,0})$\end{center} for any $z >0.$ If the same hypothesis of Theorem \ref{Stab} are true and
let $\chi(x)$ be a smooth function such that
\begin{equation}\label{generall}
    \chi(x)=\begin{cases}
        1, \text{ if $x\leq \theta(1-\gamma)$,}\\
        0, \text{ if $x\geq \theta$.}
\end{cases}
\end{equation}
and $0\leq\chi(x)\leq 1$ for all $x\in\mathbb{R}.$ In notation of Theorem \ref{Stab}, we denote
\begin{equation*}
    \chi_{0}(t,x)=\chi\Big(\frac{x-x_{1}(t)}{z(t)}\Big),\,
    \overrightarrow{g(t)}=(g(t),\partial_{t}g(t)) \in H^{1}(\mathbb{R})\times L^{2}(\mathbb{R})
\end{equation*}
 and $\norm{\overrightarrow{g(t)}}=\norm{(g(t),\partial_{t}g(t))}_{H^{1}(\mathbb{R})\times L^{2}(\mathbb{R})},$ 
\begin{multline}\label{alpha}
    \alpha(t)=\max_{j \in \{1,2\}}\dot x_{j}(t)^{2}z(t)e^{-\sqrt{2}z(t)}+\frac{\max_{j\in\{1,\,2\}}\dot x_{j}(t)^{2}}{z(t)\gamma}\Big(e^{-2\sqrt{2}z(t)(\frac{1-\gamma}{2-\gamma})}\Big)
    \\+\norm{\overrightarrow{g(t)}}\max_{j\in\{1,\,2\}}\md{\dot x_{j}(t)}\Big[1+\frac{1}{z(t)\gamma}+\frac{1}{z(t)^{2}\gamma^{2}}\max_{j\in\{1,\,2\}}\md{\dot x_{j}(t)}\Big]\Big(e^{-\sqrt{2}z(t)(\frac{1-\gamma}{2-\gamma})}\Big)
    +\norm{\overrightarrow{g(t)}}^{2}\Big[\frac{1}{\gamma^{2}z(t)^{2}}+\frac{1}{\gamma z(t)}+\Big(e^{-\sqrt{2}z(t)(\frac{1-\gamma}{2-\gamma})}\Big)\Big].
\end{multline}
Then, for $\theta=\frac{1-\gamma}{2-\gamma}$ and the correction terms
\begin{align*}
    p_{1}(t)=-\frac{\big\langle\partial_{t}\phi(t),\,\partial_{x}H^{x_{1}(t)}_{-1,0}(x)+\partial_{x}(\chi_{0}(t,x)g(t))\big\rangle}{\norm{\partial_{x}H_{0,1}}_{L^{2}}^{2}},\\
    p_{2}(t)=-\frac{\big\langle\partial_{t}\phi(t),\,\partial_{x}H^{x_{2}(t)}_{0,1}(x)+\partial_{x}([1-\chi_{0}(t,x)]g(t))\big\rangle}{\norm{\partial_{x}H_{0,1}}_{L^{2}}^{2}},
\end{align*}
we have the estimates, for $j \in \{1,2\},$
\begin{equation}\label{Modul1}
    \md{\dot x_{j}(t)-p_{j}(t)}\lesssim \Big[1+\frac{\norm{\dot \chi}_{L^{\infty}}}{z(t)}\Big]\Big(\max_{j \in \{1,2\}}\md{\dot x_{j}(t)}\norm{\overrightarrow{g(t)}}+\norm{\overrightarrow{g(t)}}^{2}\Big)+\max_{j \in \{1,2\}}\md{\dot x_{j}(t)}z(t)e^{-\sqrt{2}z(t)},
    \end{equation}
\begin{equation}\label{Modul2}
    \md{\dot p_{j}(t)+(-1)^{j}\frac{\dot A(z(t))}{\norm{\partial_{x}H_{0,1}}_{L^{2}}^{2}}}\lesssim\alpha(t).
\end{equation}
\end{lemma}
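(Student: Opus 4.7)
For the first estimate \eqref{Modul1}, I would differentiate in time the orthogonality conditions of the Modulation Lemma, substitute
\[
\partial_{t}g(t) = \partial_{t}\phi(t) + \dot x_{1}(t)\partial_{x}H^{x_{1}(t)}_{-1,0}+\dot x_{2}(t)\partial_{x}H^{x_{2}(t)}_{0,1},
\]
and solve the resulting $2\times 2$ linear system for $(\dot x_{1},\dot x_{2})$, whose matrix is uniformly positive by the argument in the proof of Theorem \ref{Stab}. Subtracting the definition of $p_{j}$ from this solution, the difference $\dot x_{j}-p_{j}$ is made of three pieces: a cross-term proportional to $\dot x_{3-j}\langle\partial_{x}H^{x_{1}(t)}_{-1,0},\partial_{x}H^{x_{2}(t)}_{0,1}\rangle$, bounded via Lemma \ref{interact} by $\max_{k}|\dot x_{k}|\, z(t)e^{-\sqrt{2}z(t)}$; a term of order $\max_{k}|\dot x_{k}|\,\|g(t)\|_{H^{1}}$ generated by $\dot x_{j}\langle g,\partial_{x}^{2}H^{x_{j}(t)}\rangle$; and the cutoff correction $\langle\partial_{t}\phi,\partial_{x}(\chi_{0}g)\rangle$ built into $p_{j}$. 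Splitting $\|\partial_{t}\phi\|_{L^{2}}\lesssim\max_{k}|\dot x_{k}|+\|\partial_{t}g\|_{L^{2}}$ together with
\[
\|\partial_{x}(\chi_{0}g)\|_{L^{2}}\lesssim \tfrac{\|\dot\chi\|_{L^{\infty}}}{z(t)}\|g\|_{L^{2}}+\|\partial_{x}g\|_{L^{2}}
\]
and applying Cauchy--Schwarz yields exactly the bound $[1+\|\dot\chi\|_{L^{\infty}}/z](\max_{k}|\dot x_{k}|\,\|\overrightarrow{g(t)}\|+\|\overrightarrow{g(t)}\|^{2})$ appearing in \eqref{Modul1}.

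For \eqref{Modul2}, I would start from the compact form
\[
p_{1}(t)=-\|\partial_{x}H_{0,1}\|_{L^{2}}^{-2}\,\langle\partial_{t}\phi(t),\partial_{x}(H^{x_{1}(t)}_{-1,0}+\chi_{0}(t,x) g(t,x))\rangle,
\]
differentiate in $t$, and treat the $\langle\partial^{2}_{t}\phi,\cdot\rangle$ piece using the equation $\partial^{2}_{t}\phi=\partial^{2}_{x}\phi-\dot U(\phi)$. One integration by parts in $x$, combined with Taylor expansion of $\dot U(\phi)$ around $H^{x_{1}(t)}_{-1,0}+H^{x_{2}(t)}_{0,1}$ and the kink equations $\partial^{2}_{x}H^{x_{j}(t)}=\dot U(H^{x_{j}(t)})$, isolates the interaction integral
\[
I_{1}=\int\partial_{x}H^{x_{1}(t)}_{-1,0}\bigl[\dot U(H^{x_{1}(t)}_{-1,0})+\dot U(H^{x_{2}(t)}_{0,1})-\dot U(H^{x_{1}(t)}_{-1,0}+H^{x_{2}(t)}_{0,1})\bigr]\,dx.
\]
Using the vanishing identity $\int\partial_{x}H^{x_{j}}\dot U(H^{x_{j}})\,dx=0$, translating the variable, and invoking the reflection symmetry $H_{-1,0}(x)=-H_{0,1}(-x)$, $I_{1}$ matches (up to the sign predicted by the $(-1)^{j}$ in the statement) the quantity $\dot A(z(t))$ produced by formula \eqref{L.1} in Lemma \ref{LL.1}; the analogous calculation for $p_{2}$ produces the opposite sign, giving the required leading term $(-1)^{j+1}\dot A(z)/\|\partial_{x}H_{0,1}\|_{L^{2}}^{2}$ of \eqref{Modul2}.

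The main obstacle is organising all remaining error contributions so that they fit into the precise form of $\alpha(t)$. Derivatives falling on $\chi_{0}$ produce factors $\|\dot\chi\|_{L^{\infty}}/z\lesssim 1/(\gamma z)$, or $1/(\gamma^{2}z^{2})$ after two derivatives, which accounts for the corresponding summands of $\alpha(t)$. Integrals of a kink evaluated in the cutoff transition region $\supp(\chi_{0}')\subset[x_{1}(t)+\theta(1-\gamma)z(t),\,x_{1}(t)+\theta z(t)]$ pick up a factor $e^{-\sqrt{2}z(t)\theta}=e^{-\sqrt{2}z(t)(1-\gamma)/(2-\gamma)}$, because $\theta=(1-\gamma)/(2-\gamma)$ is the scaled distance from this region to the closer kink center, explaining every exponential factor in $\alpha(t)$. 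The quadratic-in-$g$ Taylor remainder is controlled using $\|g\|_{L^{\infty}}\lesssim\|g\|_{H^{1}}$ together with the exponential decay of $\partial_{x}H^{x_{j}}$. Finally, the second piece of $\dot p_{1}$, namely $\langle\partial_{t}\phi,\partial_{t}[\partial_{x}H^{x_{1}(t)}_{-1,0}+\partial_{x}(\chi_{0}g)]\rangle$, is handled by writing
\[
\partial_{t}\chi_{0}=-\tfrac{\dot x_{1}(t)}{z(t)}\chi'-\tfrac{(x-x_{1}(t))\dot z(t)}{z(t)^{2}}\chi'
\]
and again splitting $\partial_{t}\phi$ as in the first paragraph; the case $p_{2}$ is analogous. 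Matching the resulting mixed error terms to the summands of $\alpha(t)$ completes \eqref{Modul2}.
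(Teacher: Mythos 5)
The proposal follows the same overall route as the paper: for \eqref{Modul1}, differentiate the orthogonality conditions and solve the linear system for $(\dot x_1,\dot x_2)$; for \eqref{Modul2}, differentiate $p_j$, handle the $\langle\partial^2_t\phi,\cdot\rangle$ contribution through the wave equation and integration by parts, identify $\dot A(z)$ via the interaction integral, and absorb the rest into $\alpha(t)$.

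However, there is a genuine gap in your treatment of the ``second piece'' $\langle\partial_t\phi,\partial_t[\partial_x H^{x_1(t)}_{-1,0}+\partial_x(\chi_0 g)]\rangle$. The term arising from $\partial_t\partial_x H^{x_1(t)}_{-1,0}=-\dot x_1\partial^2_x H^{x_1(t)}_{-1,0}$, namely
\begin{equation*}
\frac{\dot x_1(t)}{\norm{\partial_x H_{0,1}}_{L^2}^2}\left\langle\partial_t\phi(t),\,\partial^2_x H^{x_1(t)}_{-1,0}\right\rangle,
\end{equation*}
is \emph{not} $O(\alpha(t))$ by itself. After writing $\partial_t\phi=-\dot x_1\partial_x H^{x_1(t)}_{-1,0}-\dot x_2\partial_x H^{x_2(t)}_{0,1}+\partial_t g$, the cross product $\dot x_1\langle\partial_t g,\partial^2_x H^{x_1(t)}_{-1,0}\rangle$ is of order $\max_j|\dot x_j|\,\norm{\overrightarrow{g(t)}}$ with \emph{no} exponential factor $e^{-\sqrt{2}z(t)(1-\gamma)/(2-\gamma)}$, whereas every $\norm{\overrightarrow{g(t)}}\max_j|\dot x_j|$ summand in $\alpha(t)$ carries that factor. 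The estimate \eqref{Modul2} therefore cannot be completed by ``matching error terms'' alone: this contribution must be cancelled. The cancellation is exactly the reason the correction $\chi_0 g$ is built into $p_1$. When one expands $\partial_x(\chi_0\partial_t g)$ and substitutes $\partial^2_{t,x}g=\partial^2_{t,x}\phi+\dot x_1\partial^2_x H^{x_1(t)}_{-1,0}+\dot x_2\partial^2_x H^{x_2(t)}_{0,1}$, the $\dot x_1\chi_0\partial^2_x H^{x_1(t)}_{-1,0}$ term combines with $-\dot x_1\partial^2_x H^{x_1(t)}_{-1,0}$ to give $-\dot x_1(1-\chi_0)\partial^2_x H^{x_1(t)}_{-1,0}$, and only \emph{that} localized remainder (supported where $(1-\chi_0)$ lives, hence carrying the factor $e^{-\sqrt{2}z\theta(1-\gamma)}$) lands inside $\alpha(t)$. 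In the paper this is the cancellation of the terms $I$ and $III$ in the decomposition of $\dot p_1$. Your sketch says ``split $\partial_t\phi$'' but never mentions substituting for $\partial^2_{t,x}g$ inside $\chi_0\partial^2_{t,x}g$, which is where the cancelling partner actually comes from, so the argument as stated does not close.
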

\begin{remark}
We will take $\gamma=\frac{\ln{\ln{(\frac{1}{\epsilon})}}}{\ln{(\frac{1}{\epsilon})}}.$ With this value of $\gamma$ and the estimates of Theorem \ref{Stab}, we will see in Lemma \ref{aux22}  that $\exists C>0$ such that \begin{equation*}\alpha(t)\lesssim \frac{\Big(\norm{(g_{0},g_{1})}_{H^{1}\times L^{2}}+\epsilon\ln{(\frac{1}{\epsilon})}\Big)^{2}}{\ln{\ln{(\frac{1}{\epsilon})}}}\exp\Big(\frac{2C\md{t}\epsilon^{\frac{1}{2}}}{\ln{(\frac{1}{\epsilon})}}\Big).\end{equation*}
 \end{remark}
\begin{proof}
For $\gamma\ll 1$ enough and from the definition of $\chi(x),$ it is not difficult to verify that
\begin{equation}\label{linfty}
    \norm{\dot \chi}_{L^{\infty}(\mathbb{R})}\lesssim \frac{1}{\gamma},\, \norm{\ddot \chi}_{L^{\infty}(\mathbb{R})}\lesssim \frac{1}{\gamma^{2}}.
\end{equation}
We will only do the proof of the estimates \eqref{Modul1} and \eqref{Modul2} for $j=1,$ the proof for the case $j=2$ is completely analogous. From the proof of Theorem \ref{Stab}, we know that $\dot x_{1}(t),\,\dot x_{2}(t)$ solve the linear system
\begin{equation*}
    M(t)
    \begin{bmatrix}
    \dot x_{1}(t)\\
    \dot x_{2}(t)
    \end{bmatrix}=
    \begin{bmatrix}
    -\langle \partial_{t}\phi(t),\,\partial_{x}H^{x_{1}(t)}_{-1,0}\rangle\\
    -\langle \partial_{t}\phi(t),\,\partial_{x}H^{x_{2}(t)}_{0,1}\rangle
    \end{bmatrix},
\end{equation*}
where $M(t)$ is the matrix defined by\eqref{Matrix1}. Then, from Cramer's rule, we obtain that
\begin{equation}\label{identx1}
    \dot x_{1}(t)=\frac{-\left\langle\partial_{t}\phi(t),\,\partial_{x}H^{x_{1}(t)}_{-1,0}\right\rangle\Big(\left\langle \partial_{x}H^{x_{2}(t)}_{0,1},\,\partial_{x}g(t)\right\rangle +\norm{\partial_{x}H_{0,1}}_{L^{2}}^{2}
\Big)}{\det(M(t))}
    +\frac{\left\langle\partial_{t}\phi(t),\,\partial_{x}H^{x_{2}(t)}_{0,1}\right\rangle\left\langle\partial_{x}H^{x_{2}(t)}_{0,1},\,\partial_{x}H^{x_{1}(t)}_{-1,0}\right\rangle}{\det(M(t))}.
\end{equation}
Using the definition \eqref{Matrix1} of the matrix $M(t)$, $\norm{\overrightarrow{g(t)}}=O(\epsilon^{\frac{1}{2}})$ and Lemma \ref{interact} which implies the following estimate
\begin{equation}\label{int2}
    \left\langle\partial_{x}H^{x_{2}(t)}_{0,1},\,\partial_{x}H^{x_{1}(t)}_{-1,0}\right\rangle=O(z(t)e^{-\sqrt{2}z(t)}), 
\end{equation}
we obtain that
\begin{equation}\label{FirstEst}
    \md{\det(M(t))-\norm{\partial_{x}H_{0,1}}_{L^{2}}^{4}}=O\Big(\norm{\overrightarrow{g(t)}}+z(t)^{2}e^{-2\sqrt{2}z(t)}\Big)=O(\epsilon^{\frac{1}{2}}).
\end{equation}
So, from the estimate \eqref{FirstEst} and the identity \eqref{identx1}, we obtain that
\begin{multline}\label{FirstEst1}
\md{\dot x_{1}(t)+\frac{1}{\norm{\partial_{x}H_{0,1}}_{L^{2}(\mathbb{R})}^{2}}\left\langle \partial_{t}\phi(t),\,\partial_{x}H^{x_{1}(t)}_{-1,0}\right\rangle}= O\Big(\md{\left\langle \partial_{x}H^{x_{1}(t)}_{-1,0},\,\partial_{x}g(t)\right\rangle \left\langle\partial_{t}\phi(t),\,\partial_{x}H^{x_{1}(t)}_{-1,0}\right\rangle}\Big)\\    
+O\Big(\md{\left\langle \partial_{x}H^{x_{1}(t)}_{-1,0},\,\partial_{x}H^{x_{2}(t)}_{0,1}\right\rangle \left\langle\partial_{t}\phi(t),\,\partial_{x}H^{x_{2}(t)}_{0,1}\right\rangle}\Big)
+O\Big(\md{\left\langle\partial_{t}\phi(t),\,\partial_{x}H^{x_{1}(t)}_{-1,0}(x)\right\rangle}\Big[\norm{\overrightarrow{g(t)}}+z(t)^{2}e^{-2\sqrt{2}z(t)}\Big]).
\end{multline}
Finally, from the definition of $g(t,x)$ in Theorem \ref{Stab} we know that
\begin{equation*}
    \partial_{t}\phi(t,x)=-\dot x_{1}(t)\partial_{x}H^{x_{1}(t)}_{-1,0}(x)-\dot x_{2}(t)\partial_{x}H^{x_{2}(t)}_{0,1}(x)+\partial_{t}g(t,x),
\end{equation*}
from the Modulation Lemma we also have verified that
\begin{equation*}
    \left\langle \partial_{t}g(t),\,\partial_{x}H^{x_{1}(t)}_{-1,0}\right\rangle=O\Big(\norm{\overrightarrow{g(t)}}\md{\dot x_{1}(t)}\Big),\,
    \left\langle \partial_{t}g(t),\,\partial_{x}H^{x_{2}(t)}_{0,1}\right\rangle=O\Big(\norm{\overrightarrow{g(t)}}\md{\dot x_{2}(t)}\Big)
\end{equation*}
 and from Theorem \ref{Stab} we have that $\norm{\overrightarrow{g(t)}}+\max_{j \in \{1,2\}}|\dot x_{j}(t)|\ll 1.$ In conclusion, we can rewrite the estimate \eqref{FirstEst1} as
\begin{equation}\label{parameter1}
    \md{\dot x_{1}(t)+\frac{1}{\norm{\partial_{x}H_{0,1}}_{L^{2}(\mathbb{R})}^{2}}\left\langle \partial_{t}\phi(t),\,\partial_{x}H^{x_{1}(t)}_{-1,0}\right\rangle}=O\Big(\max_{j \in \{1,2\}}\md{\dot x_{j}(t)}\norm{\overrightarrow{g(t)}}+\norm{\overrightarrow{g(t)}}^{2}+z(t)e^{-\sqrt{2}z(t)}\max_{j \in \{1,2\}}\md{\dot x_{j}(t)}\Big).
\end{equation}
By a similar reasoning, we can also deduce that
\begin{equation}\label{parameter2}
    \md{\dot x_{2}(t)+\frac{1}{\norm{\partial_{x}H_{0,1}}_{L^{2}(\mathbb{R})}^{2}}\left\langle \partial_{t}\phi(t),\,\partial_{x}H^{x_{2}(t)}_{0,1}\right\rangle}=O\Big(\max_{j \in \{1,2\}}\md{\dot x_{j}(t)}\norm{\overrightarrow{g(t)}}+\norm{\overrightarrow{g(t)}}^{2}+z(t)e^{-\sqrt{2}z(t)}\max_{j \in \{1,2\}}\md{\dot x_{j}(t)}\Big).
\end{equation}
\par Following the reasoning of Lemma 3.5 of \cite{jkl}, we will use the terms $p_{1}(t),\,p_{2}(t)$ with the objective of obtaining the estimates \eqref{Modul2}, which have high precision and will be useful later to approximate $x_{j}(t),\,\dot x_{j}(t)$ by explicit smooth functions during a long time interval. 
\par First, it is not difficult to verify that
\begin{equation*}
    \left\langle \partial_{t}\phi(t),\,\partial_{x}(\chi_{0}(t)g(t)) \right\rangle=O\Big(\Big[1+\frac{\norm{\dot \chi}_{L^{\infty}}}{z(t)}\Big]\norm{\overrightarrow{g(t)}}^{2}+\max_{j \in \{1,2\}} \md{\dot x_{j}(t)}\norm{\overrightarrow{g(t)}}\Big),
\end{equation*}
which clearly implies with estimate \eqref{parameter1} the inequality \eqref{Modul1} for $j=1.$ The proof of inequality \eqref{Modul1} for $j=2$ is completely analog.
\par Now, the demonstration of the inequality \eqref{Modul2} is similar to the proof of the second inequality of Lemma 3.5 of \cite{jkl}. First, we have
\begin{multline}\label{principalid}
    \dot p_{1}(t)=-\frac{\left\langle \partial_{t}\phi(t),\,\partial_{t}\big(\partial_{x}H^{x_{1}(t)}_{-1,0}(x)\big)\right\rangle}{\norm{\partial_{x}H_{0,1}}_{L^{2}}^{2}}-\frac{\left\langle \partial_{t}\phi(t),\,\partial_{x}\big(\partial_{t}\chi_{0}(t)g(t)\big)\right\rangle}{\norm{\partial_{x}H_{0,1}}_{L^{2}}^{2}}
    -\frac{\left\langle\partial_{x}\big(\chi_{0}(t)\partial_{t}g(t)\big),\,\partial_{t}\phi(t)\right\rangle}{\norm{\partial_{x}H_{0,1}}_{L^{2}}^{2}}-\frac{\left\langle \partial_{x}H^{x_{1}(t)}_{-1,0},\,\partial^{2}_{t}\phi(t)\right\rangle}{\norm{\partial_{x}H_{0,1}}_{L^{2}}^{2}}\\
    -\frac{\left\langle \partial_{x}\chi_{0}(t)g(t),\,\partial^{2}_{t}\phi(t)\right\rangle}{\norm{\partial_{x}H_{0,1}}_{L^{2}}^{2}}-\frac{\left\langle \chi_{0}(t)\partial_{x}g(t),\,\partial^{2}_{t}\phi(t)\right\rangle}{\norm{\partial_{x}H_{0,1}}_{L^{2}}^{2}}=I+II+III+IV+V+VI,
\end{multline}
and we will estimate each term one by one. More precisely, from now on, we will work with a general cut function $\chi(x),$ that is a smooth function $0\leq\chi\leq 1$ satisfying
\begin{equation}\label{S}
    \chi(x)=\begin{cases}
        1, \text{ if $x\leq \theta(1-\gamma)$,}\\
        0, \text{ if $x\geq \theta$.}
\end{cases}
\end{equation}
with $0<\theta,\,\gamma<1$ and 
\begin{equation}\label{S2}
    \chi_{0}(t,x)=\chi\Big(\frac{x-x_{1}(t)}{z(t)}\Big).
\end{equation}
The reason for this notation is to improve the precision of the estimate of $\dot p_{1}(t)$ by the searching of the $\gamma,\,\theta$ which minimize $\alpha(t)$. \\
\textbf{Step 1.}(Estimate of $I$)
We will only use the identity
$ I=\dot x_{1}(t)\frac{\big\langle\partial_{t}\phi(t),\,\partial^{2}_{x}H^{x_{1}(t)}_{-1,0}\big\rangle}{\norm{\partial_{x}H_{0,1}}_{L^{2}}^{2}}.$\\
\textbf{Step 2.}(Estimate of $II.$)
We have, by chain rule and definition of $\chi_{0},$ that
\begin{align*}
 II &=-\frac{\big\langle \partial_{t}\phi(t),\,\partial_{x}\big(\partial_{t}\chi_{0}(t)g(t)\big)\big\rangle}{\norm{\partial_{x}H_{0,1}}_{L^{2}}^{2}}
 =-\frac{\Big\langle\partial_{t}\phi(t),\,\partial_{x}\Big[\frac{d}{dt}\Big[\chi\Big(\frac{x-x_{1}(t)}{z(t)}\Big)\Big]g(t,x)\Big]\Big\rangle}{\norm{\partial_{x}H_{0,1}}_{L^{2}}^{2}}\\
 &=-\frac{\Big\langle\partial_{t}\phi(t),\,\partial_{x}\Big(\dot\chi\Big(\frac{x-x_{1}(t)}{z(t)}\Big)\frac{d}{dt}\Big[\frac{x-x_{1}(t)}{z(t)}\Big]g(t)\Big)\Big\rangle}{\norm{\partial_{x}H_{0,1}}_{L^{2}}^{2}}
 =\frac{\Big\langle\partial_{t}\phi(t),\,\partial_{x}\Big(\dot \chi\Big(\frac{x-x_{1}(t)}{z(t)}\Big)\Big[\frac{\dot x_{1}(t)z(t)+(x-x_{1}(t))\dot z(t)}{z(t)^{2}}\Big]g(t)\Big)\Big\rangle}{\norm{\partial_{x}H_{0,1}}_{L^{2}}^{2}}.
\end{align*}
So, we obtain that
\begin{multline}\label{II.1}
    II=\frac{\Big\langle \partial_{t}\phi(t),\,\ddot \chi\Big(\frac{x-x_{1}(t)}{z(t)}\Big)\Big[\frac{\dot x_{1}(t)}{z(t)}+\frac{(x-x_{1}(t))\dot z(t)}{z(t)^{2}}\Big]g(t)\Big\rangle}{z(t)\norm{\partial_{x}H_{0,1}}_{L^{2}}^{2}}
    +\frac{\Big\langle\partial_{t}\phi(t),\,\dot\chi\Big(\frac{x-x_{1}(t)}{z(t)}\Big)\frac{\dot z(t)}{z(t)^{2}}g(t)\Big\rangle}{\norm{\partial_{x}H_{0,1}}_{L^{2}}^{2}}\\
    +\frac{\Big\langle \partial_{t}\phi(t),\,\dot\chi\Big(\frac{x-x_{1}(t)}{z(t)}\Big)\Big[\frac{\dot x_{1}(t)}{z(t)}+\frac{(x-x_{1}(t))\dot z(t)}{z(t)^{2}}\Big]\partial_{x}g(t)\Big\rangle}{\norm{\partial_{x}H_{0,1}}_{L^{2}}^{2}}.
\end{multline}
First, note that since the support of $\dot \chi$ is contained in $[\theta(1-\gamma),\,\theta],$ from the estimates (D3) and (D4) we obtain that 
\begin{align}\label{cutest1}
    \norm{\partial_{x}H^{x_{1}(t)}_{-1,0}}_{L^{2}_{x}\Big(\supp \partial_{x}\chi_{0}(t,x)\Big)}^{2}=O\Big(e^{-2\sqrt{2}\theta(1-\gamma)z(t)}\Big),\\ \label{cutest2}
    \norm{\partial_{x}H^{x_{2}(t)}_{0,1}}_{L^{2}_{x}\Big(\supp \partial_{x}\chi_{0}(t,x)\Big)}^{2}=O\Big(e^{-2\sqrt{2}(1-\theta)z(t)}\Big),
\end{align}
Now, we recall the identity 
$ \partial_{t}\phi(t,x)=-\dot x_{1}(t)\partial_{x}H^{x_{1}(t)}_{-1,0}-\dot x_{2}(t)\partial_{x}H^{x_{2}(t)}_{0,1}+\partial_{t}g(t),$
by using the estimates \eqref{cutest1}, \eqref{cutest2} in the identity \eqref{II.1}, we deduce that 
\begin{multline}\label{m22}
    II=O\Bigg(\norm{\dot \chi}_{L^{\infty}(\mathbb{R})}\frac{ \max_{j \in \{1,\,2\}}\md{\dot x_{j}(t)}}{z(t)}\norm{\overrightarrow{g(t)}}^{2}+\norm{\ddot\chi}_{L^{\infty}(\mathbb{R})}\norm{\overrightarrow{g(t)}}^{2}\frac{\max_{j\in\{1,\,2\}}\md{\dot x_{j}(t)}}{z(t)^{2}}
    \\+(e^{-\sqrt{2}\theta(1-\gamma)z(t)}+e^{-\sqrt{z(t)}(1-\theta)})\norm{\ddot\chi}_{L^{\infty}(\mathbb{R})}\frac{\max_{j \in \{1,\,2\} }\dot x_{j}(t)^{2}}{z(t)^{2}}\norm{\overrightarrow{g(t)}}
    \\+\norm{\overrightarrow{g(t)}}(e^{-\sqrt{2}z(t)(1-\theta)}+e^{-\sqrt{2}\theta(1-\gamma)z(t)})\left[\frac{\norm{\ddot\chi}_{L^{\infty}(\mathbb{R})}}{z(t)^{2}}+\frac{\norm{\dot \chi}_{L^{\infty}(\mathbb{R})}}{z(t)}\right]\max_{j \in \{1,\,2\}}\dot x_{j}(t)^{2}\Bigg).
\end{multline}
Since $\frac{1-\gamma}{2-\gamma}\leq \max((1-\theta),\theta(1-\gamma))$ for $0<\gamma,\theta<1,$ we have that the estimate \eqref{m22} is minimal when $\theta=\frac{1-\gamma}{2-\gamma}.$ So, from now on, we consider
\begin{equation}\label{ttheta}
\theta=\frac{1-\gamma}{2-\gamma},
\end{equation}
which with \eqref{linfty} and \eqref{m22} imply that 
$II=O(\alpha(t)).$\\
\textbf{Step 3.}(Estimate of $III.$)
We deduce from the identity
\begin{equation*}
    III=-\frac{\langle\partial_{x}(\chi_{0}(t)\partial_{t}g(t)),\,\partial_{t}\phi(t)\rangle}{\norm{\partial_{x}H_{0,1}}_{L^{2}}^{2}}
\end{equation*}
that
\begin{multline}\label{IdIII}
III=-\frac{\Big\langle\dot\chi\Big(\frac{x-x_{1}(t)}{z(t)}\Big)\partial_{t}g(t),\,-\dot x_{1}(t)\partial_{x}H^{x_{1}(t)}_{-1,0}-\dot x_{2}(t)\partial_{x}H^{x_{2}(t)}_{0,1}+\partial_{t}g(t,x)\Big\rangle}{z(t)\norm{\partial_{x}H_{0,1}}_{L^{2}}^{2}}\\
-\frac{\Big\langle\chi_{0}(t,x)\partial^{2}_{t,x}g(t,x),\,-\dot x_{1}(t)\partial_{x}H^{x_{1}(t)}_{-1,0}-\dot x_{2}(t)\partial_{x}H^{x_{2}(t)}_{0,1}+\partial_{t}g(t,x)\Big\rangle}{\norm{\partial_{x}H_{0,1}}_{L^{2}}^{2}}
=III.1+III.2.
\end{multline}
The identity \eqref{ttheta} and the estimates \eqref{linfty}, \eqref{cutest1} and \eqref{cutest2} imply by Cauchy-Schwarz inequality that
\begin{equation}\label{III.1est}
    III.1=O\Bigg(\frac{\max_{j\in\{1,\,2\}}\md{\dot x_{j}(t)}e^{-\sqrt{2}z(t)(\frac{1-\gamma}{2-\gamma})}}{\gamma z(t)}\norm{\overrightarrow{g(t)}}
    +\frac{1}{z(t)\gamma}\norm{\overrightarrow{g(t)}}^{2}\Bigg).
\end{equation}
In conclusion, we have the estimate that
$III.1=O(\alpha(t)).$
Also, from condition \eqref{S} and the estimate \eqref{kinkestimate}, we can deduce that
\begin{equation}\label{supp1}
    \norm{(1-\chi_{0}(t))\partial^{2}_{x}H^{x_{1}(t)}_{-1,0}}_{L^{2}_{x}(\mathbb{R})}+
    \norm{\chi_{0}(t)\partial^{2}_{x}H^{x_{2}(t)}_{0,1}}_{L^{2}_{x}(\mathbb{R})}=O\Big(e^{-\sqrt{2}z(t)(\frac{1-\gamma}{2-\gamma})}\Big).
\end{equation}
Also, we have that 
\begin{equation}\label{estIII.2}
    III.2=-\frac{\Big\langle\chi_{0}(t,x)\Big[\partial^{2}_{t,x}\phi(t)+\dot x_{1}(t)\partial^{2}_{x}H^{x_{1}(t)}_{-1,0}+\dot x_{2}(t)\partial^{2}_{x}H^{x_{2}(t)}_{0,1}\Big],\,\partial_{t}\phi(t) \Big\rangle}{\norm{\partial_{x}H_{0,1}}_{L^{2}}^{2}}.
\end{equation}
By integration by parts, we have that
\begin{equation*}
   \md{\Big\langle\chi\Big(\frac{x-x_{1}(t)}{z(t)}\Big)\partial^{2}_{t,x}\phi(t,x),\,\partial_{t}\phi(t,x)\Big\rangle}=O\Big(\frac{1}{\gamma z(t)}\norm{\partial_{t}\phi(t)}_{L^{2}_{x}(\supp\partial_{x}\chi_{0}(t))}^{2}\Big).
\end{equation*}
In conclusion, from the estimates \eqref{linfty}, \eqref{cutest1}, \eqref{cutest2} and identity \eqref{ttheta}, we obtain that
\begin{equation}\label{quadratic}
    \md{\Big\langle\chi\Big(\frac{x-x_{1}(t)}{z(t)}\Big)\partial^{2}_{t,x}\phi(t,x),\,\partial_{t}\phi(t,x)\Big\rangle}=O\Big(\frac{1}{\gamma z(t)}\norm{\overrightarrow{g(t)}}^{2}+\max_{j \in \{1,\,2\}}\dot x_{j}(t)^{2}\frac{1}{\gamma z(t)}\Big[e^{-2\sqrt{2}z(t)(\frac{1-\gamma}{2-\gamma})}\Big]\Big).
\end{equation}
Also, from Lemma \eqref{interact}, the estimate \eqref{kinkestimate} and the fact of $0\leq\chi_{0}\leq 1,$ we deduce that
\begin{align}\label{innt1}
     \md{\Big\langle\chi_{0}(t,x)\partial^{2}_{x}H^{x_{2}(t)}_{0,1},\,\partial_{x}H^{x_{1}(t)}_{-1,0}\Big\rangle}=O\Big(z(t)e^{-\sqrt{2}z(t)}\Big),\\ \label{innt2}
     \md{\Big\langle(1-\chi_{0}(t,x))\partial^{2}_{x}H^{x_{1}(t)}_{-1,0},\,\partial_{x}H^{x_{2}(t)}_{0,1}\Big\rangle}=O\Big(z(t)e^{-\sqrt{2}z(t)}\Big).
\end{align}
From the estimates \eqref{cutest1}, \eqref{cutest2} and identity \eqref{ttheta}, we can verify by integration by parts the following estimates
\begin{equation}\label{nn1}
    \Big \langle (1-\chi_{0}(t))\dot x_{1}(t)\partial^{2}_{x}H^{x_{1}(t)}_{-1,0},\,\dot x_{1}(t)\partial_{x}H^{x_{1}(t)}_{-1,0}\Big\rangle=O\Big(\frac{\dot x_{1}(t)^{2}}{\gamma z(t)}e^{-2\sqrt{2}z(t)(\frac{1-\gamma}{2-\gamma})}\Big),
\end{equation}
\begin{equation}\label{nn2}
     \Big \langle \chi_{0}(t)\dot x_{2}(t)\partial^{2}_{x}H^{x_{2}(t)}_{0,1},\,\dot x_{2}(t)\partial_{x}H^{x_{2}(t)}_{0,1}\Big\rangle=
     O\Big(\frac{\dot x_{2}(t)^{2}}{\gamma z(t)}e^{-2\sqrt{2}z(t)(\frac{1-\gamma}{2-\gamma})}\Big).
\end{equation}
Finally, from Cauchy-Schwarz inequality and the estimate \eqref{supp1} we obtain that
\begin{align}\label{conc1}
    \Big\langle(1-\chi_{0}(t))\dot x_{1}(t)\partial^{2}_{x}H^{x_{1}(t)}_{-1,0},\,\partial_{t}g(t)\Big\rangle=O\Big(\md{\dot x_{1}(t)}\norm{\overrightarrow{g(t)}}e^{-\sqrt{2}z(t)(\frac{1-\gamma}{2-\gamma})}\Big),\\\label{conc2}
    \Big\langle \chi_{0}(t)\dot x_{1}(t)\partial^{2}_{x}H^{x_{2}(t)}_{0,1},\,\partial_{t}g(t)\Big\rangle=O\Big(\md{\dot x_{2}(t)}\norm{\overrightarrow{g(t)}}e^{-\sqrt{2}z(t)(\frac{1-\gamma}{2-\gamma})}\Big).
\end{align}
In conclusion, we obtain from the estimates \eqref{innt1}, \eqref{innt2}, \eqref{nn1}, \eqref{nn2} \eqref{conc1} and \eqref{conc2} that
\begin{equation}
    III.2=-\dot x_{1}(t)\frac{\Big\langle\partial^{2}_{x}H^{x_{1}(t)}_{-1,0},\,\partial_{t}\phi(t)\Big\rangle}{\norm{\partial_{x}H_{0,1}}^{2}}+O(\alpha(t)).
\end{equation}
This estimate of $III.2$ and the estimate \eqref{III.1est} of $III.1$ imply
\begin{equation}\label{IIIestimate}
    III=-\dot x_{1}(t)\frac{\Big\langle\partial^{2}_{x}H^{x_{1}(t)}_{-1,0},\,\partial_{t}\phi(t)\Big\rangle}{\norm{\partial_{x}H_{0,1}}^{2}}+O(\alpha(t)).
\end{equation}
In conclusion, from the estimates $II=O(\alpha(t)),$ \eqref{IIIestimate} and the definition of $I,$ we have that
$I+II+III=O(\alpha(t)).$ \\
 \textbf{Step 4.}(Estimate of $V.$)
 We recall that
 $V=-\frac{\langle\partial_{x}\chi_{0}(t)g(t),\,\partial^{2}_{t}\phi(t) \rangle}{\norm{\partial_{x}H_{0,1}}_{L^{2}}^{2}},$
and that
\begin{equation}\label{stst}
    \partial^{2}_{t}\phi(t)=\partial^{2}_{x}g(t)+\Big[\dot U\left(H^{x_{1}(t)}_{-1,0}\right)+\dot U\left(H^{x_{2}(t)}_{0,1}\right)-\dot U\left(H^{x_{1}(t)}_{-1,0}+H^{x_{2}(t)}_{0,1}\right)\Big]+\Big[\dot U\left(H^{x_{1}(t)}_{-1,0}+H^{x_{2}(t)}_{0,1}\right)-\dot U\left(H^{x_{1}(t)}_{-1,0}+H^{x_{2}(t)}_{0,1}+g(t)\right)\Big].
\end{equation}
First, by integration by parts, using estimate \eqref{linfty}, we have the following estimate
\begin{equation}\label{V.1}
   -\frac{1}{\norm{\partial_{x}H_{0,1}}_{L^{2}}^{2}}\langle\partial_{x}\chi_{0}(t) \partial^{2}_{x}g(t),\,g(t)\rangle=O\Big(\Big[\frac{1}{\gamma z(t)}+\frac{1}{\gamma^{2}z(t)^{2}}\Big]\norm{\overrightarrow{g(t)}}^{2}\Big)=O(\alpha(t)).
\end{equation}
Second, since $U$ is smooth and $\norm{g(t)}_{L^{\infty}}=O\big(\epsilon^{\frac{1}{2}}\big)$ for all $t\in\mathbb{R},$ we deduce that 
\begin{equation}\label{V2}
   \left \langle \dot U\left(H^{x_{1}(t)}_{-1,0}+H^{x_{2}(t)}_{0,1}\right)-\dot U\left(H^{x_{1}(t)}_{-1,0}+H^{x_{2}(t)}_{0,1}+g(t)\right),\partial_{x}\chi_{0}(t)g(t)\right\rangle=O\Big(\frac{1}{z(t)\gamma}\norm{\overrightarrow{g(t)}}^{2}\Big)=O(\alpha(t)). 
\end{equation}
Next, from equation \eqref{intterm} and Lemma \ref{interact}, we have that
\begin{equation}\label{estimatecc}
    \norm{\dot U\left(H^{x_{1}(t)}_{-1,0}\right)+\dot U\left(H^{x_{2}(t)}_{0,1}\right)-\dot U\left(H^{x_{1}(t)}_{-1,0}+H^{x_{2}(t)}_{0,1}\right) }_{L^{2}(\mathbb{R})}=O(e^{-\sqrt{2}z(t)}),
\end{equation}
then, by Hölder inequality we have that
\begin{equation}\label{V3}
    \left \langle \dot U\left(H^{x_{1}(t)}_{-1,0}\right)+\dot U\left(H^{x_{2}(t)}_{0,1}\right)-\dot U\left(H^{x_{1}(t)}_{-1,0}+H^{x_{2}(t)}_{0,1}\right),\partial_{x}\chi_{0}(t)\partial_{x}g(t)\right \rangle=O\Big(\frac{1}{\gamma z(t)}\norm{\overrightarrow{g(t)}}e^{-\sqrt{2}z(t)}\Big)
    =O(\alpha(t)).
\end{equation}
Clearly, the estimates \eqref{V.1}, \eqref{V2} and \eqref{V3} imply that $V=O(\alpha(t)).$\\
\textbf{Step 5.}(Estimate of $VI.$)
We know that
\begin{equation*}
    VI=-\frac{\big\langle \partial_{x}g(t)\chi_{0}(t),\,\partial^{2}_{t}\phi(t)\big\rangle}{\norm{\partial_{x}H_{0,1}}_{L^{2}}^{2}}.
\end{equation*}
We recall the equation \eqref{stst} which implies that
\begin{multline*}
   \norm{\partial_{x}H_{0,1}}_{L^{2}}^{2} VI=-\left\langle \partial_{x}g(t)\chi_{0}(t),\,\partial^{2}_{x}g(t)\right\rangle
   +\left\langle \partial_{x}g(t)\chi_{0}(t),\,\dot U\left(H^{x_{1}(t)}_{-1,0}+H^{x_{2}(t)}_{0,1}+g(t)\right)-\dot U\left(H^{x_{1}(t)}_{-1,0}+H^{x_{2}(t)}_{0,1}\right)\right\rangle\\
   +\left \langle \partial_{x}g(t)\chi_{0}(t),\, \dot U\left(H^{x_{1}(t)}_{-1,0}+H^{x_{2}(t)}_{0,1}\right)-\dot U\left(H^{x_{1}(t)}_{-1,0}\right)-\dot U\left(H^{x_{2}(t)}_{0,1}\right)\right\rangle.
\end{multline*}
By integration by parts, we have from estimate \eqref{linfty} that
\begin{equation}\label{VI.1}
    \langle \partial_{x}g(t,x)\chi_{0}(t,x),\,\partial^{2}_{x}g(t,x)\rangle=O\Big(\frac{1}{\gamma z(t)}\norm{\overrightarrow{g(t)}}^{2}\Big).
\end{equation}
From the estimate \eqref{estimatecc} and Cauchy-Schwarz inequality, we can obtain the following estimate
\begin{equation}\label{VI.2}
    \left\langle \partial_{x}g(t)\chi_{0}(t),\, \dot U\left(H^{x_{1}(t)}_{-1,0}+H^{x_{2}(t)}_{0,1}\right)-\dot U\left(H^{x_{1}(t)}_{-1,0}\right)-\dot U\left(H^{x_{2}(t)}_{0,1}\right)\right\rangle=O\Big(e^{-\sqrt{2}z(t)}\norm{\overrightarrow{g(t)}}\Big).
\end{equation}
Then, to conclude the estimate of $VI$ we just need to study the following term
$C(t)\coloneqq\big\langle \partial_{x}g(t)\chi_{0}(t),\,\dot U(H^{x_{1}(t)}_{-1,0}+H^{x_{2}(t)}_{0,1}+g(t))-\dot U(H^{x_{1}(t)}_{-1,0}+H^{x_{2}(t)}_{0,1})\big\rangle.$
Since we have from the Taylor's theorem that
\begin{equation*}
   \dot U\left(H^{x_{1}(t)}_{-1,0}+H^{x_{2}(t)}_{0,1}+g(t)\right)-\dot U\left(H^{x_{1}(t)}_{-1,0}+H^{x_{2}(t)}_{0,1}\right)=\sum_{k=2}^{6} U^{(k)}\left(H^{x_{1}(t)}_{-1,0}+H^{x_{2}(t)}_{0,1}\right)\frac{g(t)^{k-1}}{(k-1)!},
\end{equation*}
from estimate \eqref{linfty}, we can deduce by integration by parts that
\begin{equation*}
    C(t)=-\Big\langle\chi_{0}(t)\partial_{x}\left(H^{x_{1}(t)}_{-1,0}+H^{x_{2}(t)}_{0,1}\right),\,\sum_{k=3}^{6} U^{(k)}\left(H^{x_{1}(t)}_{-1,0}+H^{x_{2}(t)}_{0,1}\right)\frac{g(t)^{k-1}}{(k-1)!} \Big\rangle+O\left(\frac{1}{\gamma z(t)}\norm{\overrightarrow{g(t)}}^{2}\right).
\end{equation*}
Since
\begin{equation*}
    \norm{\chi_{0}(t)\partial_{x}H^{x_{2}(t)}_{0,1}}_{L^{\infty}}+
    \norm{(1-\chi_{0}(t))\partial_{x}H^{x_{1}(t)}_{-1,0}}_{L^{\infty}}=O\Big(e^{-\sqrt{2}z(t)(\frac{1-\gamma}{2-\gamma})}\Big),
\end{equation*}
we obtain that
\begin{multline*}
    C(t)=-\Big\langle\partial_{x}H^{x_{1}(t)}_{-1,0},\,\sum_{k=3}^{6} U^{(k)}\left(H^{x_{1}(t)}_{-1,0}+H^{x_{2}(t)}_{0,1}\right)\frac{g(t)^{k-1}}{(k-1)!} \Big\rangle+O\left(\frac{1}{\gamma z(t)}\norm{\overrightarrow{g(t)}}^{2}+e^{-\sqrt{2}z(t)(\frac{1-\gamma}{2-\gamma})}\norm{\overrightarrow{g(t)}}^{2}\right).
\end{multline*}
Also, from Lemma \ref{interact} and the fact that $\norm{g(t)}_{L^{\infty}}\lesssim\norm{\overrightarrow{g(t)}}$, we deduce that
\begin{equation}
    \Big\langle\partial_{x}H^{x_{1}}_{-1,0} ,\Big[\ddot U\left(H^{x_{1}(t)}_{-1,0}\right)-\ddot U\left(H^{x_{1}(t)}_{-1,0}+H^{x_{2}(t)}_{0,1}\right)\Big]g(t)\Big\rangle=O\Big(e^{-\sqrt{2}z(t)}\norm{\overrightarrow{g(t)}}\Big).
\end{equation}
In conclusion, we obtain that
\begin{multline}
    C(t)=-\int_{\mathbb{R}}\partial_{x}H^{x_{1}(t)}_{-1,0}\Big(\dot U\left(H^{x_{1}(t)}_{-1,0}+H^{x_{2}(t)}_{0,1}+g(t)\right)-\dot U\left(H^{x_{1}(t)}_{-1,0}+H^{x_{2}(t)}_{0,1}\right)\Big)\,dx
    +\int_{\mathbb{R}}\partial_{x}H^{x_{1}(t)}_{-1,0}\ddot U\left(H^{x_{1}(t)}_{-1,0}\right)g(t,x)\,dx\\
    +O(\alpha(t)).
\end{multline}
So
\begin{multline}\label{VI.est}
    VI=\frac{-\int_{\mathbb{R}}\partial_{x}H^{x_{1}(t)}_{-1,0}\Big(\dot U\left(H^{x_{1}(t)}_{-1,0}+H^{x_{2}(t)}_{0,1}+g(t)\right)-\dot U(H^{x_{1}(t)}_{-1,0}+H^{x_{2}(t)}_{0,1})\Big)\,dx}{\norm{\partial_{x}H_{0,1}}_{L^{2}}^{2}}\\
    +\frac{\int_{\mathbb{R}}\partial_{x}H^{x_{1}(t)}_{-1,0}\ddot U\left(H^{x_{1}(t)}_{-1,0}\right)g(t,x)\,dx}{\norm{\partial_{x}H_{0,1}}_{L^{2}}^{2}}+O(\alpha(t)).
\end{multline}
\textbf{Step 6.}(Sum of $IV,\,VI.$)
From the identities \eqref{stst} and
\begin{equation*}
IV= -\frac{\big\langle \partial_{x}H^{x_{1}(t)}_{-1,0},\,\partial^{2}_{t}\phi(t)\big\rangle}{\norm{\partial_{x}H_{0,1}}_{L^{2}}^{2}},
\end{equation*}
we obtain that
\begin{multline}\label{Iv.est}
IV=-\frac{\Big\langle \partial^{2}_{x}g(t)-\Big(\dot U\left(H^{x_{1}(t)}_{-1,0}+H^{x_{2}(t)}_{0,1}+g(t)\right)-\dot U\left(H^{x_{1}(t)}_{-1,0}+H^{x_{2}(t)}_{0,1}\right)\Big),\,\partial_{x}H^{x_{1}(t)}_{-1,0}\Big\rangle}{\norm{\partial_{x}H_{0,1}}_{L^{2}}^{2}}\\
-\frac{\Big\langle\dot U\left(H^{x_{1}(t)}_{-1,0}\right)+\dot U\left(H^{x_{2}(t)}_{0,1}\right)-U\left(H^{x_{1}(t)}_{-1,0}+H^{x_{2}(t)}_{0,1}\right),\,\partial_{x}H^{x_{1}(t)}_{-1,0}\Big\rangle}{\norm{\partial_{x}H_{0,1}}_{L^{2}}^{2}}.
\end{multline}
In conclusion, from the identity
\begin{equation*}
   \Big[ \partial_{x}^{2}-\ddot U\left(H^{x_{1}(t)}_{-1,0}\right)\Big]\partial_{x}H^{x_{1}(t)}_{-1,0}=0
\end{equation*}
and by integration by parts we have that
\begin{equation}
    IV+VI=-\frac{\Big\langle\dot U\left(H^{x_{1}(t)}_{-1,0}\right)+\dot U\left(H^{x_{2}(t)}_{0,1}\right)-U\left(H^{x_{1}(t)}_{-1,0}+H^{x_{2}(t)}_{0,1}\right),\,\partial_{x}H^{x_{1}(t)}_{-1,0}\Big\rangle}{\norm{\partial_{x}H_{0,1}}_{L^{2}}^{2}}+O(\alpha(t)).
\end{equation}
From our previous results,
we conclude that
\begin{equation}\label{fini}
    I+II+III+IV+V+VI=\\
    -\frac{\Big\langle\dot U\left(H^{x_{1}(t)}_{-1,0}\right)+\dot U\left(H^{x_{2}(t)}_{0,1}\right)-\dot U\left(H^{x_{1}(t)}_{-1,0}+H^{x_{2}(t)}_{0,1}\right),\,\partial_{x}H^{x_{1}(t)}_{-1,0}\Big\rangle}{\norm{\partial_{x}H_{0,1}}_{L^{2}}^{2}}+O(\alpha(t)).
\end{equation}
The conclusion of the lemma follows from estimate \eqref{fini} with identity 
\begin{equation*}
    \dot A(z(t))=-\Big\langle\dot U\left(H_{-1,0}\right)+\dot U\left(H^{z(t)}_{0,1}\right)-\dot U\left(H_{-1,0}+H^{z(t)}_{0,1}\right),\,\partial_{x}H_{-1,0}\Big\rangle,
\end{equation*}
which can be obtained from \eqref{L.1} by integration by parts with the fact that 
\begin{equation*}
\left\langle \dot U\left(H_{-1,0}+H^{z(t)}_{0,1}\right),\,\partial_{x} H_{-1,0}+\partial_{x}H^{z(t)}_{0,1}\right\rangle=0.
\end{equation*}
\end{proof}
\begin{remark}\label{odez}
Since, we know from Lemma \ref{interact} that
\begin{equation*}
   \md{\dot A(z(t))+4e^{-\sqrt{2}z(t)}}\lesssim z(t)e^{-2\sqrt{2}z(t)},
\end{equation*}
and, by elementary calculus with change of variables, that
$\norm{\partial_{x}H_{0,1}}_{L^{2}}^{2}=\frac{1}{2\sqrt{2}},$
then the estimates \eqref{Modul1} and \eqref{Modul2} obtained in Lemma \ref{modueq} motivate us to study the following ordinary differential equation
\begin{equation}\label{theode}
    \ddot z(t)=16\sqrt{2}e^{-\sqrt{2}z(t)}.
\end{equation}
Clearly, the solution of \eqref{theode} satisfies the equation
\begin{equation}\label{odeident}
    \frac{d}{dt}\Big[\frac{\dot z(t)^{2}}{4}+8e^{-\sqrt{2}z(t)}\Big]=0.
\end{equation}
As a consequence, it can be verified that if $z(t_{0})>0$ for some $t_{0}\in\mathbb{R},$ then there are real constants $v> 0,\,c$ such that
\begin{equation}\label{smooth}
    z(t)=\frac{1}{\sqrt{2}}\ln{\Big(\frac{8}{v^{2}}\cosh{\big(\sqrt{2}vt+c\big)}^{2}\Big)} \text{ for all $t\in\mathbb{R}.$}
\end{equation}
In conclusion, the solution of the equations
\begin{align*}
    \ddot d_{1}(t)=-8\sqrt{2}e^{-\sqrt{2}z(t)},\\
    \ddot d_{2}(t)=8\sqrt{2}e^{-\sqrt{2}z(t)},\\
    d_{2}(t)-d_{1}(t)=z(t)>0,
\end{align*}
are given by
\begin{align}\label{sol1}
    d_{2}(t)=a+b t+\frac{1}{2\sqrt{2}}\ln{\Big(\frac{8}{v^{2}}\cosh{\big(\sqrt{2}vt+c\big)}^{2}\Big)},\\\label{sol2}
    d_{1}(t)=a+b t-\frac{1}{2\sqrt{2}}\ln{\Big(\frac{8}{v^{2}}\cosh{\big(\sqrt{2}vt+c\big)}^{2}\Big)},
\end{align}
for $a,\,b$ real constants. So, we now are motivated to study how close the modulations parameters $x_{1},\,x_{2}$ of Theorem \ref{Stab} can be to functions $d_{1},\,d_{2}$ satisfying, respectively the identities \eqref{sol2} and \eqref{sol1} for constants $v\neq 0,\, a,\,b$.
\end{remark}
At first view, the statement of the Lemma \ref{modueq} seems too complex and unnecessary for use and that a simplified version should be more useful for our objectives. However, we will show later that for a suitable choice of $\gamma$ depending on the energy excess of the solution $\phi(t),$ we can get a high precision in the approximation of the modulation parameters $x_{1},\,x_{2}$ by smooth functions $d_{1},\,d_{2}$ satisfying \eqref{sol2} and \eqref{sol1} for a large time interval.
\section{Energy Estimate Method}
\par Before applying Lemma \ref{modueq}, we  need to construct a functional $F(t)$ to get lower estimate on the value of $\norm{(g(t),\partial_{t}g(t))}_{H^{1}\times L^{2}}$ than that obtained in Theorem \ref{Stab}.  
\par From now on, we consider $\phi(t)=H_{0,1}(x-x_{2}(t))+H_{-1,0}(x-x_{1}(t))+g(t,x)$, with $x_{1}(t),\,x_{2}(t)$ satisfying the orthogonality conditions of the Modulation Lemma and $x_{1},\,x_{2}$,  $(g(t),\partial_{t}g(t))$ and $\epsilon>0$ satisfying all the properties of Theorem \ref{Stab}. Before the enunciation of the main theorem of this section, to simplify the notation in computations, we denote:  
\begin{equation*}
    D^{2}E(H^{x_{2}(t)}_{0,1}+H^{x_{1}(t)}_{-1,0})=
    \begin{bmatrix}
    -\partial^{2}_{x}+ \ddot U(H^{x_{2}(t)}_{0,1}+H^{x_{1}(t)}_{-1,0}) & 0\\
    0 & \mathbb{I}
    \end{bmatrix}
\end{equation*}
as a bilinear operator from $H^{1}(\mathbb{R})\times L^{2}(\mathbb{R})$ to $\mathbb{C}.$
We also denote $\omega_{1}(t,x)=\omega\big(\frac{x-x_{1}(t)}{x_{2}(t)-x_{1}(t)}\big)$ for $\omega$ a smooth cut function with image contained in the interval $[0,1]$, satisfying the following condition 
\begin{equation*}\label{C}
\omega(x)=\begin{cases}
 1, \text{ if $x\leq \frac{3}{4}$,}\\
 0, \text{ if $x\geq \frac{4}{5}$.}
\end{cases}
\end{equation*} We consider now the following functional
\begin{multline}\label{F(t)}
    F(t)=\big\langle D^{2}E(H^{x_{2}(t)}_{0,1}+H^{x_{1}(t)}_{-1,0})\overrightarrow{g(t)},\,\overrightarrow{g(t)}\big\rangle_{L^{2}\times L^{2}}+2\int_{\mathbb{R}}\partial_{t}g(t,x)\partial_{x}g(t,x)\Big[\dot x_{1}(t)\omega_{1}(t,x)+\dot x_{2}(t)(1-\omega_{1}(t,x))\Big]\,dx\\
    -2\int_{\mathbb{R}}g(t,x)\Big(\dot U(H^{x_{1}(t)}_{-1,0}(x))+\dot U(H^{x_{2}(t)}_{0,1}(x))-\dot U(H^{x_{2}(t)}_{0,1}(x)+H^{x_{1}(t)}_{-1,0}(x))\Big)\,dx\\
    +2\int_{\mathbb{R}}g(t,x)\Big[(\dot x_{1}(t))^{2}\partial^{2}_{x}H^{x_{1}(t)}_{-1,0}(x)+(\dot x_{2}(t))^{2}\partial^{2}_{x}H^{x_{2}(t)}_{0,1}(x)\Big]\,dx
    +\frac{1}{3}\int_{\mathbb{R}}U^{(3)}(H^{x_{2}(t)}_{0,1}(x)+H^{x_{1}(t)}_{-1,0}(x))g(t,x)^{3}\,dx.
\end{multline}
Since $x_{1},\,x_{2}$ are functions of class $C^{2}$, is not difficult to verify that $(g(t),\partial_{t}g(t))$ solves the integral equation associated to the following partial differential equation 
\begin{multline}\label{NLW2}\tag{II}
    \partial^{2}_{t}g(t,x)-\partial^{2}_{x}g(t,x)+\ddot U(H^{x_{2}(t)}_{0,1}(x)+H^{x_{1}(t)}_{-1,0}(x))g(t,x)=\\-\left[
\dot U(H^{x_{2}(t)}_{0,1}(x)+H^{x_{1}(t)}_{-1,0}(x)+g(t,x))-\dot U(H^{x_{2}(t)}_{0,1}(x)+H^{x_{1}(t)}_{-1,0}(x))-\ddot U(H^{x_{2}(t)}_{0,1}(x)+H^{x_{1}(t)}_{-1,0}(x))g(t,x)\right]\\+\dot U(H^{x_{1}(t)}_{-1,0}(x))+\dot U(H^{x_{2}(t)}_{0,1}(x))-\dot U(H^{x_{2}(t)}_{0,1}(x)+H^{x_{1}(t)}_{-1,0}(x))\\
    -\dot x_{1}(t)^{2}\partial^{2}_{x}H^{x_{1}(t)}_{-1,0}(x)-\dot x_{2}(t)^{2}\partial^{2}_{x}H^{x_{2}(t)}_{0,1}(x)
    +\ddot x_{1}(t)\partial_{x}H^{x_{1}(t)}_{-1,0}(x)+\ddot x_{2}(t)\partial_{x}H^{x_{2}(t)}_{0,1}(x)
\end{multline}
in the space $H^{1}(\mathbb{R})\times L^{2}(\mathbb{R}).$
\begin{theorem}\label{EnergyE}
Assuming the hypotheses of Theorem \ref{Stab} and recalling its notation, let $\delta(t)$ be the following quantity
\begin{multline*}
    \delta(t)=\norm{\overrightarrow{g(t)}}\Big(e^{-\sqrt{2}z(t)}\max_{j \in \{1,2\}}\md{\dot x_{j}(t)}+\max_{j \in \{1,2\}}\md{\dot x_{j}(t)}^{3}e^{-\frac{\sqrt{2}z(t)}{5}}+\max_{j \in \{1,2\}}\md{\dot x_{j}(t) \ddot x_{j}(t)}\Big)\\+\norm{\overrightarrow{g(t)}}^{2}\Big(\frac{\max_{j\in\{1,\,2\}}\md{\dot x_{j}(t)}}{z(t)}+\max_{j\in\{1,\,2\}}\dot x_{j}(t)^{2}+\max_{j\in\{1,\,2\}}\md{\ddot x_{j}(t)}\Big)+\norm{\overrightarrow{g(t)}}^{4}.
\end{multline*}
Then, $\exists$ positive constants $A_{1},\,A_{2},\,A_{3}$ such that the functional $F(t)$ satisfies the inequalities
    \begin{equation*}
        F(t)+A_{1}\epsilon^{2}\geq A_{2}\norm{\overrightarrow{g(t)}}^{2},
         \,\md{\dot F(t)}\leq A_{3}\delta(t).
    \end{equation*}
\end{theorem}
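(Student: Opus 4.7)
The proof splits into two independent parts: the coercive lower bound $F(t)+A_{1}\epsilon^{2}\geq A_{2}\|\overrightarrow{g(t)}\|^{2}$, and the derivative estimate $|\dot F(t)|\leq A_{3}\delta(t)$. The lower bound is routine. Writing $H_{\mathrm{tot}}=H^{x_{1}(t)}_{-1,0}+H^{x_{2}(t)}_{0,1}$ for brevity, the first summand of $F(t)$ is bounded below by $c\|\overrightarrow{g(t)}\|^{2}$ via the Coercitivity Lemma, since the obstruction inner products $\langle g,\partial_{x}H^{x_{j}}\rangle^{2}$ vanish by the orthogonality conditions of the Modulation Lemma. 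The interaction summand obeys $|\int g(\dot U(H^{x_{1}(t)}_{-1,0})+\dot U(H^{x_{2}(t)}_{0,1})-\dot U(H_{\mathrm{tot}}))|\lesssim e^{-\sqrt{2}z(t)}\|g\|_{L^{2}}\lesssim\epsilon\|g\|$ by Lemma \ref{interact} together with (D1)--(D4); the summand in $\dot x_{j}^{2}\partial_{x}^{2}H^{x_{j}}$ is also $O(\epsilon\|g\|)$ since $\dot x_{j}^{2}\lesssim\epsilon$; the mixed summand $2\int\partial_{t}g\,\partial_{x}g\,[\dot x_{1}\omega_{1}+\dot x_{2}(1-\omega_{1})]$ is $O(\epsilon^{1/2}\|\overrightarrow{g(t)}\|^{2})$; and the cubic summand is $O(\|g\|_{L^{\infty}}\|g\|_{L^{2}}^{2})=O(\epsilon^{1/2}\|\overrightarrow{g(t)}\|^{2})$. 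Young's inequality applied to the two $\epsilon\|g\|$ contributions produces the $A_{1}\epsilon^{2}$ term, and the two $\epsilon^{1/2}\|\overrightarrow{g(t)}\|^{2}$ summands are absorbed into the main coercive term once $\epsilon$ is small enough.

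The derivative estimate is the substantial part. The plan is to differentiate $F(t)$ term by term and replace $\partial_{t}^{2}g$ using equation \eqref{NLW2} whenever it arises, writing its right-hand side as $\mathcal{N}(g)+\mathcal{R}$, where $\mathcal{N}(g)=\dot U(H_{\mathrm{tot}}+g)-\dot U(H_{\mathrm{tot}})-\ddot U(H_{\mathrm{tot}})g$ is the superquadratic nonlinear remainder and $\mathcal{R}$ collects the interaction source, the $-\dot x_{j}^{2}\partial_{x}^{2}H^{x_{j}}$ source, and the $\ddot x_{j}\partial_{x}H^{x_{j}}$ source. Differentiating $\langle D^{2}E(H_{\mathrm{tot}})\overrightarrow{g},\overrightarrow{g}\rangle$ and integrating by parts yields, modulo an unwanted residual $2\langle\partial_{tx}g,\partial_{x}g\rangle$, an expression of the form $-2\int\partial_{t}g\,\mathcal{R}-2\int\partial_{t}g\,\mathcal{N}(g)+\int\partial_{t}[\ddot U(H_{\mathrm{tot}})]g^{2}$. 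The third summand of $F(t)$ is constructed precisely so that its time derivative cancels the interaction component of $-2\int\partial_{t}g\,\mathcal{R}$; the fourth summand cancels the $\dot x_{j}^{2}\partial_{x}^{2}H^{x_{j}}$ component; the $\ddot x_{j}\partial_{x}H^{x_{j}}$ component is handled using $\int\partial_{t}g\,\partial_{x}H^{x_{j}}=\dot x_{j}\int g\,\partial_{x}^{2}H^{x_{j}}$, obtained from time-differentiating the orthogonality conditions, which yields the $|\dot x_{j}||\ddot x_{j}|\|\overrightarrow{g}\|$ contribution in $\delta(t)$; the fifth summand $\tfrac{1}{3}\int U^{(3)}(H_{\mathrm{tot}})g^{3}$, modelled on Lemma 4.2 of \cite{jkl}, cancels the leading quadratic-in-$g$ part $\tfrac{1}{2}U^{(3)}(H_{\mathrm{tot}})g^{2}$ of $\mathcal{N}(g)$ against $-2\int\partial_{t}g\,\mathcal{N}(g)$, leaving only cubic and higher powers of $g$; and the second summand of $F(t)$ is designed so that its time derivative absorbs the residual $2\langle\partial_{tx}g,\partial_{x}g\rangle$ up to boundary-type terms localized on $\supp(\partial_{x}\omega_{1})$.

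The main obstacle is the bookkeeping. Each of the five differentiations produces several subterms, and one must verify that every surviving expression falls into one of the categories listed in $\delta(t)$. The two most delicate points are: (a) on $\supp(\partial_{x}\omega_{1})\subset\{\tfrac{3}{4}\leq\tfrac{x-x_{1}}{z}\leq\tfrac{4}{5}\}$, both kink derivatives $\partial_{x}H^{x_{j}}$ decay like $e^{-\sqrt{2}z(t)/5}$ by (D3)--(D4), producing the $e^{-\sqrt{2}z(t)/5}\max_{j}|\dot x_{j}|^{3}\|\overrightarrow{g}\|$ term in $\delta(t)$; and (b) the cubic and quartic tails of $\mathcal{N}(g)$ contribute an $\|\overrightarrow{g}\|^{4}$ term via the embedding $H^{1}\hookrightarrow L^{\infty}$. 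All other residuals are controlled by combining Lemma \ref{interact}, the smoothness of $U$, and the global bounds $\max_{j}|\dot x_{j}|^{2}+\max_{j}|\ddot x_{j}|+e^{-\sqrt{2}z(t)}\lesssim\epsilon$ and $\|\overrightarrow{g(t)}\|^{2}\lesssim\epsilon$ from Theorem \ref{Stab}, which ensures that every remaining expression fits into the prescribed form of $\delta(t)$.
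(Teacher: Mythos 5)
The coercivity half of your proof is sound and agrees with the paper. The derivative half is structured correctly at the top level (decompose $F$ into its five summands, use the PDE \eqref{NLW2} to eliminate $\partial_{t}^{2}g$, and organize the computation as a cascade of cancellations), and you correctly identify the roles of the interaction correction, the $\dot x_{j}^{2}\partial_{x}^{2}H^{x_{j}}$ correction, the $\ddot x_{j}$ contribution via the orthogonality conditions, and the cubic summand. However, you have misidentified what the momentum correction term $F_{4}(t)=2\int\partial_{t}g\,\partial_{x}g[\dot x_{1}\omega_{1}+\dot x_{2}(1-\omega_{1})]\,dx$ is for, and that gap is fatal to your bound on $|\dot F(t)|$.

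You write that $F_{4}$ is there to "absorb the residual $2\langle\partial_{tx}g,\partial_{x}g\rangle$." But $2\int\partial_{tx}g\,\partial_{x}g\,dx=-2\int\partial_{t}g\,\partial_{x}^{2}g\,dx$ by a plain integration by parts, and this is exactly what lets you reconstitute the wave operator $\partial_{t}^{2}-\partial_{x}^{2}+\ddot U(H_{\mathrm{tot}})$ acting on $g$ so as to invoke \eqref{NLW2}; there is no leftover "boundary" residual on $\supp(\partial_{x}\omega_{1})$ from this step. What is genuinely left over after differentiating $F_{1}$ is the term $\int\partial_{t}[\ddot U(H_{\mathrm{tot}})]g^{2}\,dx=-\int U^{(3)}(H_{\mathrm{tot}})(\dot x_{1}\partial_{x}H^{x_{1}(t)}_{-1,0}+\dot x_{2}\partial_{x}H^{x_{2}(t)}_{0,1})g^{2}\,dx$, which you list but never assign to any summand. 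This term has size $O(\max_{j}|\dot x_{j}|\,\|\overrightarrow{g}\|^{2})=O(\epsilon^{1/2}\|\overrightarrow{g}\|^{2})$, which exceeds the corresponding contribution $\max_{j}|\dot x_{j}|\|\overrightarrow{g}\|^{2}/z(t)$ in $\delta(t)$ by a factor of $z(t)\gtrsim\ln(1/\epsilon)$, so it cannot simply be absorbed into $\delta(t)$: it must be cancelled. The momentum correction $F_{4}$ exists precisely to effect this cancellation: the localization by $\omega_{1}$ (supported away from $x_{2}$) and $1-\omega_{1}$ (supported away from $x_{1}$) ensures that when one substitutes the PDE into $2\dot x_{1}\int\omega_{1}\partial_{t}^{2}g\,\partial_{x}g\,dx$ and integrates by parts the $\ddot U(H_{\mathrm{tot}})g\,\partial_{x}g$ piece, one recovers $\int\dot x_{1}\partial_{x}H^{x_{1}(t)}_{-1,0}U^{(3)}(H_{\mathrm{tot}})g^{2}\,dx+O(\delta)$ (and similarly for the $\dot x_{2}$ piece), which cancels the problematic term from $\dot F_{1}$; this is the content of the paper's equations \eqref{F43}--\eqref{F44}. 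In addition, $\dot F_{4}$ produces a $\tfrac{1}{3}\int U^{(4)}(H_{\mathrm{tot}})[\dot x_{j}\partial_{x}H^{x_{j}}]g^{3}\,dx$ term that cancels against the corresponding piece of $\dot F_{5}$ (your outline treats $\dot F_{5}$ as only producing the $U^{(3)}g^{2}\partial_{t}g$ term, omitting this one), and $-2\dot x_{j}^{3}\int\partial_{x}^{2}H^{x_{j}}\partial_{x}g\,dx$ terms that cancel the $\partial_{x}^{3}H^{x_{j}}$ contributions from $\dot F_{3}$. Without tracking these three cancellations that $F_{4}$ provides, the estimate $|\dot F(t)|\leq A_{3}\delta(t)$ does not follow.
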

\begin{remark}\label{ttt1}
Theorem \ref{Stab} and Theorem \ref{EnergyE} imply
\begin{equation*}
    \md{\dot F(t)}\lesssim \frac{\epsilon^{\frac{1}{2}}}{\ln{(\frac{1}{\epsilon})}}\norm{\overrightarrow{g(t)}}^{2}+\norm{\overrightarrow{g(t)}}\epsilon^{\frac{3}{2}}.
\end{equation*}
\end{remark}
\begin{proof}
Since the formula defining function $F(t)$ is very large, we decompose the function in a sum of five terms $F_{1},\, F_{2},\, F_{3},\,F_{4}$ and $F_{5}$. More specifically:
\begin{align*}
    F_{1}(t)=\int_{\mathbb{R}}\partial_{t}g(t,x)^{2}+\partial_{x}g(t,x)^{2}+\ddot U(H^{x_{1}(t)}_{-1,0}(x)+H^{x_{2}(t)}_{0,1}(x))g(t,x)^{2}\,dx,\\
    F_{2}(t)=-2\int_{\mathbb{R}}g(t,x)\big[\dot U(H^{x_{1}(t)}_{-1,0}(x))+\dot U(H^{x_{2}(t)}_{0,1}(x))-\dot U(H^{x_{2}(t)}_{0,1}(x)+H^{x_{1}(t)}_{-1,0}(x))\big]\,dx,\\
    F_{3}(t)=2\int_{\mathbb{R}}g(t,x)\big[\dot x_{1}(t)^{2}\partial^{2}_{x}H^{x_{1}(t)}_{-1,0}(x)+\dot x_{2}(t)^{2}\partial^{2}_{x}H^{x_{2}(t)}_{0,1}(x)\big]\,dx,\\
    F_{4}(t)=2\int_{\mathbb{R}}\partial_{t}g(t,x)\partial_{x}g(t,x)(\dot x_{1}(t)\omega_{1}(t,x)+\dot x_{2}(t)(1-\omega_{1}(t,x)))\,dx,\\
    F_{5}(t)=\frac{1}{3}\int_{\mathbb{R}}U^{(3)}(H^{x_{2}(t)}_{0,1}(x)+H^{x_{1}(t)}_{-1,0}(x))g(t,x)^{3}\,dx.
\end{align*}
 First, We prove that $\md{\dot F(t)}\lesssim \delta(t).$ 
The main idea of the proof of this item is to estimate each derivative $\frac{dF_{j}(t)}{dt},$ for $1\leq j\leq 5,$ with an error of size $O(\delta(t)),$ then we will check that the sum of these estimates are going to be a value of order $O(\delta(t)),$ which means that the estimates of these derivatives cancel.\\
\textbf{Step 1.}(The derivative of $F_{1}(t).$) By definition of $F_{1}(t),$ we have that
    \begin{multline}
        \frac{dF_{1}(t)}{dt}=2\int_{\mathbb{R}}\left(\partial^{2}_{t}g(t,x)-\partial^{2}_{x}g(t,x)+\ddot U(H^{x_{2}(t)}_{0,1}(x)+H^{x_{1}(t)}_{-1,0}(x))g(t,x)\right)\partial_{t}g(t,x)\,dx\\
        -\int_{\mathbb{R}}\left(\dot x_{1}(t)\partial_{x}H^{x_{1}(t)}_{-1,0}(x)+\dot x_{2}(t)\partial_{x}H^{x_{2}(t)}_{0,1}(x)\right)U^{(3)}\left(H^{x_{2}(t)}_{0,1}(x)+H^{x_{1}(t)}_{-1,0}(x)\right)g(t,x)^{2}\,dx.
    \end{multline}
Moreover, from the identity \eqref{NLW2} satisfied by $g(t,x)$, we can rewrite the value of $\frac{dF_{1}(t)}{dt}$ as
\begin{multline*}
    \frac{dF_{1}(t)}{dt}=2\int_{\mathbb{R}}\left[\dot U\left(H^{x_{1}(t)}_{-1,0}(x)\right)+\dot U\left(H^{x_{2}(t)}_{0,1}(x)\right)-\dot U\left(H^{x_{1}(t)}_{-1,0}(x)+H^{x_{2}(t)}_{0,1}(x)\right)\right]\partial_{t}g(t,x)\,dx\\
    -2\int_{\mathbb{R}}\left[U\left(H^{x_{2}(t)}_{0,1}(x)+H^{x_{1}(t)}_{-1,0}(x)+g(t,x)\right)-\dot U\left(H^{x_{1}(t)}_{-1,0}(x)+H^{x_{2}(t)}_{0,1}(x)\right)-\ddot U\left(H^{x_{2}(t)}_{0,1}(x)+H^{x_{1}(t)}_{-1,0}(x)\right)g(t,x)\right]\partial_{t}g(t,x)\,dx
    \\-2\int_{\mathbb{R}} \Big[\dot x_{1}(t)^{2}\partial^{2}_{x}H^{x_{1}(t)}_{-1,0}(x)+\dot x_{2}(t)^{2}\partial^{2}_{x}H^{x_{2}(t)}_{0,1}(x)\Big]\partial_{t}g(t,x)\,dx
    +2\int_{\mathbb{R}}\Big[\ddot x_{1}(t)\partial_{x}H^{x_{1}(t)}_{-1,0}(x)+\ddot x_{2}(t)\partial_{x}H^{x_{2}(t)}_{0,1}(x)\Big]\partial_{t}g(t,x)\,dx\\
    -\int_{\mathbb{R}}\Big[\dot x_{1}(t)\partial_{x}H^{x_{1}(t)}_{-1,0}(x)+\dot x_{2}(t)\partial_{x}H^{x_{2}(t)}_{0,1}(x)\Big]U^{(3)}(H^{x_{2}(t)}_{0,1}(x)+H^{x_{1}(t)}_{-1,0}(x))g(t,x)^{2}\,dx,
\end{multline*}
and, from the orthogonality conditions of the Modulation Lemma, we obtain
\begin{multline*}
    \frac{dF_{1}(t)}{dt}=2\int_{\mathbb{R}}\left[\dot U\left(H^{x_{1}(t)}_{-1,0}(x)\right)+\dot U\left(H^{x_{2}(t)}_{0,1}(x)\right)-\dot U\left(H^{x_{1}(t)}_{-1,0}(x)+H^{x_{2}(t)}_{0,1}(x)\right)\right]\partial_{t}g(t,x)\,dx\\
    -2\int_{\mathbb{R}}\left[U\left(H^{x_{2}(t)}_{0,1}(x)+H^{x_{1}(t)}_{-1,0}(x)+g(t,x)\right)-\dot U\left(H^{x_{1}(t)}_{-1,0}(x)+H^{x_{2}(t)}_{0,1}(x)\right)-\ddot U\left(H^{x_{2}(t)}_{0,1}(x)+H^{x_{1}(t)}_{-1,0}(x)\right)g(t,x)\right]\partial_{t}g(t,x)\,dx\\
    -2\int_{\mathbb{R}} \Big[\dot x_{1}(t)^{2}\partial^{2}_{x}H^{x_{1}(t)}_{-1,0}(x)+\dot x_{2}(t)^{2}\partial^{2}_{x}H^{x_{2}(t)}_{0,1}(x)\Big]\partial_{t}g(t,x)\,dx
    +2\int_{\mathbb{R}}\Big[\ddot x_{1}(t)\dot x_{1}(t)\partial^{2}_{x}H^{x_{1}(t)}_{-1,0}(x)+\ddot x_{2}(t)\dot x_{2}(t)\partial^{2}_{x}H^{x_{2}(t)}_{0,1}(x)\Big]g(t,x)\,dx\\
    -\int_{\mathbb{R}}\Big[\dot x_{1}(t)\partial_{x}H^{x_{1}(t)}_{-1,0}(x)+\dot x_{2}(t)\partial_{x}H^{x_{2}(t)}_{0,1}(x)\Big]U^{(3)}\left(H^{x_{2}(t)}_{0,1}(x)+H^{x_{1}(t)}_{-1,0}(x)\right)g(t,x)^{2}\,dx,
\end{multline*}
which implies
\begin{multline}\label{F1}
     \frac{dF_{1}(t)}{dt}=2\int_{\mathbb{R}}\left[\dot U\left(H^{x_{1}(t)}_{-1,0}(x)\right)+\dot U\left(H^{x_{2}(t)}_{0,1}(x)\right)-\dot U\left(H^{x_{1}(t)}_{-1,0}(x)+H^{x_{2}(t)}_{0,1}(x)\right)\right]\partial_{t}g(t,x)\,dx\\
    -2\int_{\mathbb{R}}\left[U\left(H^{x_{2}(t)}_{0,1}(x)+H^{x_{1}(t)}_{-1,0}(x)+g(t,x)\right)-\dot U\left(H^{x_{1}(t)}_{-1,0}(x)+H^{x_{2}(t)}_{0,1}(x)\right)-\ddot U\left(H^{x_{2}(t)}_{0,1}(x)+H^{x_{1}(t)}_{-1,0}(x)\right)g(t,x)\right]\partial_{t}g(t,x)\,dx\\
    -2\int_{\mathbb{R}} \Big[\dot x_{1}(t)^{2}\partial^{2}_{x}H^{x_{1}(t)}_{-1,0}(x)+\dot x_{2}(t)^{2}\partial^{2}_{x}H^{x_{2}(t)}_{0,1}(x)\Big]\partial_{t}g(t,x)\,dx
    \\-\int_{\mathbb{R}}\Big[\dot x_{1}(t)\partial_{x}H^{x_{1}(t)}_{-1,0}(x)+\dot x_{2}(t)\partial_{x}H^{x_{2}(t)}_{0,1}(x)\Big]U^{(3)}\left(H^{x_{2}(t)}_{0,1}(x)+H^{x_{1}(t)}_{-1,0}(x)\right)g(t,x)^{2}\,dx
    +O(\delta(t)).
\end{multline}
\textbf{Step 2.}(The derivative of $F_{2}(t).$)
It is not difficult to verify that
\begin{multline*}
    \frac{dF_{2}(t)}{dt}=-2\int_{\mathbb{R}}\partial_{t}g(t,x)\left[\dot U\left(H^{x_{1}(t)}_{-1,0}(x)\right)+\dot U\left(H^{x_{2}(t)}_{0,1}(x)\right)-\dot U\left(H^{x_{2}(t)}_{0,1}(x)+H^{x_{1}(t)}_{-1,0}(x)\right)\right]\,dx\\
    +2\int_{\mathbb{R}}g(t,x)\left[\ddot U\left(H^{x_{1}(t)}_{-1,0}(x)\right)\partial_{x}H^{x_{1}(t)}_{-1,0}(x)\dot x_{1}(t)+\ddot U\left(H^{x_{2}(t)}_{0,1}(x)\right)\partial_{x}H^{x_{2}(t)}_{0,1}(x)\dot x_{2}(t)\right]\,dx\\
    -2\int_{\mathbb{R}} \ddot U\left(H^{x_{2}(t)}_{0,1}(x)+H^{x_{1}(t)}_{-1,0}(x)\right)\left[\partial_{x}H^{x_{1}(t)}_{-1,0}(x)\dot x_{1}(t)+\partial_{x}H^{x_{2}(t)}_{0,1}(x)\dot x_{2}(t)\right]g(t,x)\,dx.
\end{multline*}
Since from the definition of the function $U$, we can deduce that
\begin{align*}
    \left|\ddot U\left(H^{x_{2}(t)}_{0,1}(x)+H^{x_{1}(t)}_{-1,0}(x)\right)-\ddot U\left(H^{x_{1}(t)}_{-1,0}(x)\right)\right|=O\Big(\left|H^{x_{1}(t)}_{-1,0}(x)H^{x_{2}(t)}_{0,1}(x)\right|+\left|H^{x_{2}(t)}_{0,1}(x)\right|^{2}\Big),\\
    |\ddot U\left(H^{x_{2}(t)}_{0,1}(x)+H^{x_{1}(t)}_{-1,0}(x)\right)-\ddot U\left(H^{x_{2}(t)}_{0,1}(x)\right)|=O\Big(\left|H^{x_{1}(t)}_{-1,0}(x)H^{x_{2}(t)}_{0,1}(x)\right|+\left|H^{x_{1}(t)}_{-1,0}(x)\right|^{2}\Big),
\end{align*}
we obtain from Lemma \ref{interact} and Cauchy-Schwarz Inequality that
\begin{align*}
    \left|\int_{\mathbb{R}}\left[\ddot U\left(H^{x_{2}(t)}_{0,1}(x)\right)-\ddot U\left(H^{x_{2}(t)}_{0,1}(x)+H^{x_{1}(t)}_{-1,0}(x)\right)\right]\partial_{x}H^{x_{2}(t)}_{0,1}(x)g(t,x)\,dx\right|=O\Big(\norm{\overrightarrow{g(t)}}e^{-\sqrt{2}z(t)}\Big),\\
    \left|\int_{\mathbb{R}}\left[\ddot U\left(H^{x_{1}(t)}_{-1,0}(x)\right)-\ddot U\left(H^{x_{2}(t)}_{0,1}(x)+H^{x_{1}(t)}_{-1,0}(x)\right)\right]\partial_{x}H^{x_{1}(t)}_{-1,0}(x)g(t,x)\,dx\right|=O\Big(\norm{\overrightarrow{g(t)}}e^{-\sqrt{2}z(t)}\Big).
\end{align*}
In conclusion, we obtain from the identity satisfied by $\frac{dF_{2}(t)}{dt}$ that 
\begin{equation}\label{F2}
    \frac{dF_{2}(t)}{dt}=-2\int_{\mathbb{R}}\partial_{t}g(t,x)\left[\dot U\left(H^{x_{1}(t)}_{-1,0}(x)\right)+\dot U\left(H^{x_{2}(t)}_{0,1}(x)\right)-\dot U\left(H^{x_{2}(t)}_{0,1}(x)+H^{x_{1}(t)}_{-1,0}(x)\right)\right]\,dx+O(\delta(t)).
\end{equation}
\textbf{Step 3.}(The derivative of $F_{3}(t).$)
From the definition of $F_{3}(t),$ we obtain that
\begin{multline*}
    \frac{dF_{3}(t)}{dt}=2\int_{\mathbb{R}}\partial_{t}g(t,x)\big[\dot x_{1}(t)^{2}\partial^{2}_{x}H^{x_{1}(t)}_{-1,0}(x)+\dot x_{2}(t)^{2}\partial^{2}_{x}H^{x_{2}(t)}_{0,1}(x)\big]\,dx
    \\-2\int_{\mathbb{R}}g(t,x)\big[\dot x_{1}(t)^{3}\partial^{3}_{x}H^{x_{1}(t)}_{-1,0}(x)+\dot x_{2}(t)^{3}\partial^{3}_{x}H^{x_{2}(t)}_{0,1}(x)\big]\,dx+4\int_{\mathbb{R}}g(t,x)\big[\dot x_{1}(t)\ddot x_{1}(t)\partial^{2}_{x}H^{x_{1}(t)}_{-1,0}(x)+\dot x_{2}(t)\ddot x_{2}(t)\partial^{2}_{x}H^{x_{2}(t)}_{0,1}(x)\big]\,dx,
\end{multline*}
which can be rewritten as
\begin{multline}\label{F3}
    \frac{dF_{3}(t)}{dt}=2\int_{\mathbb{R}}\partial_{t}g(t,x)\big[\dot x_{1}(t)^{2}\partial^{2}_{x}H^{x_{1}(t)}_{-1,0}(x)+\dot x_{2}(t)^{2}\partial^{2}_{x}H^{x_{2}(t)}_{0,1}(x)\big]\,dx-2\int_{\mathbb{R}}g(t,x)\big[\dot x_{1}(t)^{3}\partial^{3}_{x}H^{x_{1}(t)}_{-1,0}(x)+\dot x_{2}(t)^{3}\partial^{3}_{x}H^{x_{2}(t)}_{0,1}(x)\big]\,dx\\+O(\delta(t)).
\end{multline}
\textbf{Step 4.}(Sum of $\frac{dF_{1}}{dt}, \frac{dF_{2}}{dt}, \frac{dF_{3}}{dt}.$) If we sum the estimates \eqref{F1}, \eqref{F2} and \eqref{F3}, we obtain that
\begin{multline}
    \sum_{i=1}^{3}\frac{dF_{i}(t)}{dt}=-2\int_{\mathbb{R}}\Big[\dot U\left(H^{x_{2}(t)}_{0,1}(x)+H^{x_{1}(t)}_{-1,0}(x)+g(t,x)\right)-\dot U\left(H^{x_{2}(t)}_{0,1}(x)+H^{x_{1}(t)}_{-1,0}(x)\right)\\-\ddot U\left(H^{x_{2}(t)}_{0,1}(x)+H^{x_{1}(t)}_{-1,0}(x)\right)g(t,x)\Big]\partial_{t}g(t,x)\,dx\\
     -\int_{\mathbb{R}}\left[\dot x_{1}(t)\partial_{x}H^{x_{1}(t)}_{-1,0}(x)+\dot x_{2}(t)\partial_{x}H^{x_{2}(t)}_{0,1}(x)\right]U^{(3)}\left(H^{x_{2}(t)}_{0,1}(x)+H^{x_{1}(t)}_{-1,0}(x)\right)g(t,x)^{2}\,dx\\
     -2\int_{\mathbb{R}}g(t,x)\left[\dot x_{1}(t)^{3}\partial^{3}_{x}H^{x_{1}(t)}_{-1,0}(x)+\dot x_{2}(t)^{3}\partial^{3}_{x}H^{x_{2}(t)}_{0,1}(x)\right]\,dx
     +O(\delta(t)).
\end{multline}
More precisely, from Taylor's Expansion Theorem and since $\norm{\overrightarrow{g(t)}}^{4}\leq \delta(t),$
\begin{multline}\label{sum1}
    \sum_{i=1}^{3}\frac{dF_{i}(t)}{dt}=-\int_{\mathbb{R}}\left[U^{(3)}\left(H^{x_{2}(t)}_{0,1}(x)+H^{x_{1}(t)}_{-1,0}(x)\right)g(t,x)^{2}\right]\partial_{t}g(t,x)\,dx
     \\-\int_{\mathbb{R}}\left[\dot x_{1}(t)\partial_{x}H^{x_{1}(t)}_{-1,0}(x)+\dot x_{2}(t)\partial_{x}H^{x_{2}(t)}_{0,1}(x)\right]U^{(3)}\left(H^{x_{2}(t)}_{0,1}(x)+H^{x_{1}(t)}_{-1,0}(x)\right)g(t,x)^{2}\,dx
     \\-2\int_{\mathbb{R}}g(t,x)\left[\dot x_{1}(t)^{3}\partial^{3}_{x}H^{x_{1}(t)}_{-1,0}(x)+\dot x_{2}(t)^{3}\partial^{3}_{x}H^{x_{2}(t)}_{0,1}(x)\right]\,dx
     +O(\delta(t)).
\end{multline}
\textbf{Step 5.}(The derivative of $F_{4}(t).$) The computation of the derivative of $F_{4}(t)$ will be more careful, since the motivation for the addition of this term is to cancel with the expression
\begin{equation*}
     -\int_{\mathbb{R}}\left[\dot x_{1}(t)\partial_{x}H^{x_{1}(t)}_{-1,0}(x)+\dot x_{2}(t)\partial_{x}H^{x_{2}(t)}_{0,1}(x)\right]U^{(3)}\left(H^{x_{2}(t)}_{0,1}(x)+H^{x_{1}(t)}_{-1,0}(x)\right)g(t,x)^{2}\,dx
\end{equation*}
of \eqref{sum1}. The construction of functional $F_{4}(t)$ is based on the \textit{momentum correction term} of Lemma 4.2 of \cite{jkl}. To estimate $\frac{dF_{4}(t)}{dt}$ with precision of $O(\delta(t))$, it is just necessary to study the time derivative of
\begin{equation}\label{pp1}
    2\int_{\mathbb{R}}\partial_{t}g(t,x)\partial_{x}g(t,x)\dot x_{1}(t)\omega_{1}(t,x)\,dx,
\end{equation}
since the estimate of the other term in $F_{4}(t)$ is completely analogous.
 First, we have the identity
\begin{multline}
    \frac{d}{dt}\Big[2\int_{\mathbb{R}}\partial_{t}g(t,x)\partial_{x}g(t,x)\dot x_{1}(t)\omega_{1}(t,x)\,dx\Big]=
    2\ddot x_{1}(t)\int_{\mathbb{R}}\omega_{1}(t,x)\partial_{t}g(t,x)\partial_{x}g(t,x)\,dx\\+2\dot x_{1}(t)\int_{\mathbb{R}}\omega_{1}(t,x)\partial^{2}_{t}g(t,x)\partial_{x}g(t,x)\,dx
    +2\dot x_{1}(t)\int_{\mathbb{R}}\partial_{t}\omega_{1}(t,x)\partial_{t}g(t,x)\partial_{x}g(t,x)\,dx+
    2\dot x_{1}(t)\int_{\mathbb{R}} \omega_{1}(t,x)\partial^{2}_{t,x}g(t,x)\partial_{t}g(t,x)\,dx.
\end{multline}
From the definition of $\omega_{1}(t,x)=\omega\big(\frac{x-x_{1}(t)}{x_{2}(t)-x_{1}(t)}\big)$, we have
\begin{equation}
    \partial_{t}\omega_{1}(t,x)=\dot \omega\Big(\frac{x-x_{1}(t)}{x_{2}(t)-x_{1}(t)}\Big)\Big(\frac{-\dot x_{1}(t)z(t)-\dot z(t) (x-x_{1}(t))}{z(t)^{2}}\Big).
\end{equation}
Since in the support of $\dot\omega(x)$ is contained in the set $\frac{3}{4}\leq x \leq \frac{4}{5}$, we obtain the following estimate:
\begin{equation}\label{F4.1}
    2\dot x_{1}(t)\int_{\mathbb{R}}\partial_{t}\omega_{1}(t,x)\partial_{t}g(t,x)\partial_{x}g(t,x)\,dx=O\Big(\max_{j \in \{1,2\}}\frac{|\dot x_{j}(t)|}{z(t)}\norm{\overrightarrow{g(t)}}^{2}\Big)=O(\delta(t)).
\end{equation}
Clearly from integration by parts, we deduce that
\begin{equation}\label{F4.2}
    2\dot x_{1}(t)\int_{\mathbb{R}} \omega_{1}(t,x)\partial^{2}_{t,x}g(t,x)\partial_{t}g(t,x)\,dx=O\Big(\max_{j \in \{1,2\}}\frac{\md{\dot x_{j}(t)}}{z(t)}\norm{\overrightarrow{g(t)}}^{2}\Big)=O(\delta(t)).
\end{equation}
 Also, we have
\begin{equation}\label{F4.3}
    2\ddot x_{1}(t)\int_{\mathbb{R}}\omega_{1}(t,x)\partial_{t}g(t,x)\partial_{x}g(t,x)\,dx=O\Big(\max_{j \in \{1,2\}}\md{\ddot x_{j}(t)}\norm{\overrightarrow{g(t)}}^{2}\Big)=O(\delta(t)).
\end{equation}
So, to estimate the time derivative of \eqref{pp1} with precision $O(\delta(t)),$ it is enough to estimate
\begin{equation*}
2\dot x_{1}(t)\int_{\mathbb{R}}\omega_{1}(t,x)\partial^{2}_{t}g(t,x)\partial_{x}g(t,x)\,dx.
\end{equation*}
We have that
\begin{multline}\label{ff}
    2\dot x_{1}(t)\int_{\mathbb{R}}\omega_{1}(t,x)\partial^{2}_{t}g(t,x)\partial_{x}g(t,x)\,dx=
    2\dot x_{1}(t)\int_{\mathbb{R}}\omega_{1}(t,x)\partial^{2}_{x}g(t,x)\partial_{x}g(t,x)\,dx
    \\-2\dot x_{1}(t)\int_{\mathbb{R}}\omega_{1}(t,x) \ddot U\left(H^{x_{1}(t)}_{-1,0}(x)+H^{x_{2}(t)}_{0,1}(x)\right)g(t,x)\partial_{x}g(t,x)\,dx\\
    +2\dot x_{1}(t)\int_{\mathbb{R}}\omega_{1}(t,x)\left[\partial^{2}_{t}g(t,x)-\partial^{2}_{x}g(t,x)+\ddot U\left(H^{x_{1}(t)}_{-1,0}(x)+H^{x_{2}(t)}_{0,1}(x)\right)g(t,x)\right]\partial_{x}g(t,x)\,dx.
\end{multline}
From integration by parts, the first term of the equation \eqref{ff} satisfies
\begin{equation}\label{F4.4}
    2\dot x_{1}(t)\int_{\mathbb{R}}\omega_{1}(t,x)\partial^{2}_{x}g(t,x)\partial_{x}g(t,x)\,dx
    =O\Big(\max_{j \in \{1,2\}}\frac{\md{\dot x_{j}(t)}}{z(t)}\norm{\overrightarrow{g(t)}}^{2}\Big)=O(\delta(t)).
\end{equation}
From Taylor's Expansion Theorem, we have that
\begin{multline}\label{F4.5}
   \norm{\dot U\left(H^{x_{2}(t)}_{0,1}+H^{x_{1}(t)}_{-1,0}+g(t)\right)-\dot U\left(H^{x_{2}(t)}_{0,1}+H^{x_{1}(t)}_{-1,0}\right)-\ddot U\left(H^{x_{2}(t)}_{0,1}+H^{x_{1}(t)}_{-1,0}\right)g(t)-U^{(3)}\left(H^{x_{2}(t)}_{0,1}+H^{x_{1}(t)}_{-1,0}\right)\frac{g(t)^{2}}{2}}_{L^{2}(\mathbb{R})}\\=O\Big(\norm{\overrightarrow{g(t)}}^{3}\Big).
\end{multline}
Also, we have verified the identity
\begin{equation*}
    \dot U(\phi)+\dot U(\theta)-\dot U(\phi+\theta)=24\phi\theta(\phi+\theta)-6\Big(\sum_{j=1}^{4}\begin{pmatrix}
        5\\
        j
    \end{pmatrix}\phi^{j}\theta^{5-j}\Big ),
\end{equation*}
which clearly with the inequalities (D1), (D2) and Lemma \ref{interact} imply the estimate
\begin{equation}\label{F4.6}
    \norm{\dot U\left(H^{x_{2}(t)}_{0,1}\right)+\dot U\left(H^{x_{1}(t)}_{-1,0}\right)-\dot U\left(H^{x_{2}(t)}_{0,1}+H^{x_{1}(t)}_{-1,0}\right)}_{L^{2}(\mathbb{R})}=O(e^{-\sqrt{2}z(t)}).
\end{equation}
Finally, is not difficult to verify that
\begin{equation}\label{F4.7}
    \norm{-\dot x_{1}(t)^{2}\partial^{2}_{x}H^{x_{1}(t)}_{-1,0}-\dot x_{2}(t)^{2}\partial^{2}_{x}H^{x_{2}(t)}_{0,1}
    +\ddot x_{1}(t)\partial_{x}H^{x_{1}(t)}_{-1,0}+\ddot x_{2}(t)\partial_{x}H^{x_{2}(t)}_{0,1}}_{L^{2}(\mathbb{R})}=O\left(\max_{j\in \{1,2\}}\md{\dot x_{j}(t)}^{2}+\md{\ddot x_{j}(t)}\right).
\end{equation}
Then, from estimates \eqref{F4.5}, \eqref{F4.6} and \eqref{F4.7} and the Partial Differential Equation \eqref{NLW2} satisfied by $g(t,x)$, we can obtain the estimate
\begin{multline*}
    2\dot x_{1}(t)\int_{\mathbb{R}}\omega_{1}(t,x)\left[\partial^{2}_{t}g(t,x)-\partial^{2}_{x}g(t,x)+\ddot U\left(H^{x_{1}(t)}_{-1,0}(x)+H^{x_{2}(t)}_{0,1}(x)\right)g(t,x)\right]\partial_{x}g(t,x)\,dx=\\
    -\dot x_{1}(t)\int_{\mathbb{R}}\omega_{1}(t,x)U^{(3)}\left(H^{x_{1}(t)}_{-1,0}(x)+H^{x_{2}(t)}_{0,1}(x)\right)g(t,x)^{2}\partial_{x}g(t,x)\,dx
    -2\dot x_{1}(t)^{3}\int_{\mathbb{R}}\partial^{2}_{x}H^{x_{1}(t)}_{-1,0}(x)\partial_{x}g(t,x)\,dx\\
    -2\dot x_{1}(t)^{3}\int_{\mathbb{R}}(\omega_{1}(t,x)-1)\partial^{2}_{x}H^{x_{1}(t)}_{-1,0}(x)\partial_{x}g(t,x)\,dx-2\dot x_{1}(t)\dot x_{2}(t)^{2}\int_{\mathbb{R}}\omega_{1}(t,x)\partial^{2}_{x}H^{x_{2}(t)}_{0,1}(x)\partial_{x}g(t,x)\,dx\\
    +O\Big(\max_{j\in \{1,2\}}\md{\ddot x_{j}(t)\dot x_{j}(t)}\norm{\overrightarrow{g(t)}}+e^{-\sqrt{2}z(t)}\max_{j\in \{1,2\}}\md{\dot x_{j}(t)}\norm{\overrightarrow{g(t)}}+\norm{\overrightarrow{g(t)}}^{4}\max_{j\in \{1,2\}}\md{\dot x_{j}(t)}\Big),
\end{multline*}
which, by integration by parts and by Cauchy-Schwarz inequality using the estimate \eqref{supp1} for $\omega_{1},$ we obtain that
\begin{multline}\label{F4.7.7}
    2\dot x_{1}(t)\int_{\mathbb{R}}\omega_{1}(t,x)\left[\partial^{2}_{t}g(t,x)-\partial^{2}_{x}g(t,x)+\ddot U\left(H^{x_{1}(t)}_{-1,0}(x)+H^{x_{2}(t)}_{0,1}(x)\right)g(t,x)\right]\partial_{x}g(t,x)\,dx=\\
    \frac{\dot x_{1}(t)}{3}\int_{\mathbb{R}}\omega_{1}(t)U^{(4)}\left(H^{x_{1}(t)}_{-1,0}+H^{x_{2}(t)}_{0,1}\right)\left[\partial_{x}H^{x_{1}(t)}_{-1,0}+\partial_{x}H^{x_{2}(t)}_{0,1}\right]g(t)^{3}\,dx+O\Big(\max_{j\in \{1,2\}}\frac{\md{\dot x_{j}(t)}}{z(t)}\norm{\overrightarrow{g(t)}}^{3}\Big)\\
    -2\dot x_{1}(t)^{3}\int_{\mathbb{R}}\partial^{2}_{x}H^{x_{1}(t)}_{-1,0}(x)\partial_{x}g(t,x)\,dx+O\Big(\max_{j\in \{1,2\}}\md{\dot x_{j}(t)}^{3}e^{-\frac{\sqrt{2}z(t)}{5}}\norm{\overrightarrow{g(t)}}\Big)
    +O(\delta(t)).
\end{multline}
Now, to finish the estimate of
$2\dot x_{1}(t)\int_{\mathbb{R}}\omega_{1}(t,x)\partial^{2}_{t}g(t,x)\partial_{x}g(t,x)\,dx,$
it remains to study the integral given by
\begin{equation}
    -2\dot x_{1}(t)\int_{\mathbb{R}}\omega_{1}(t,x) \ddot U\left(H^{x_{1}(t)}_{-1,0}(x)+H^{x_{2}(t)}_{0,1}(x)\right)g(t,x)\partial_{x}g(t,x)\,dx,
\end{equation}
which by integration by parts is equal to 
\begin{equation}\label{F4F}
     \dot x_{1}(t)\int_{\mathbb{R}}\omega_{1}(t,x)U^{(3)}\left(H^{x_{1}(t)}_{-1,0}(x)+H^{x_{2}(t)}_{0,1}(x)\right)\big[\partial_{x}H^{x_{1}(t)}_{-1,0}(x)+\partial_{x}H^{x_{2}(t)}_{0,1}(x)\big]g(t,x)^{2}\,dx+O(\delta(t)).
\end{equation}
Since the support of $\omega_{1}(t,x)$ is included in $\{x|\,(x-x_{2}(t))\leq-\frac{z(t)}{5}\}$ and the support of $1-\omega_{1}(t,x)$ is included in $\{x|\,(x-x_{1}(t))\geq\frac{3z(t)}{4}\},$ from the exponential decay properties of the kink solutions in (D1), (D2), (D3), (D4) we obtain the estimates
\begin{align}\label{Fin1}
   \md{\dot x_{1}(t)\int_{\mathbb{R}}(\omega_{1}(t,x)-1)U^{(3)}\left(H^{x_{1}(t)}_{-1,0}(x)+H^{x_{2}(t)}_{0,1}(x)\right)(\partial_{x}H^{x_{1}(t)}_{-1,0}(x))g(t,x)^{2}\,dx} =O(\delta(t)),\\
\label{Fin2}
    \md{\dot x_{2}(t)\int_{\mathbb{R}}\omega_{1}(t,x)U^{(3)}\left(H^{x_{1}(t)}_{-1,0}(x)+H^{x_{2}(t)}_{0,1}(x)\right)\partial_{x}H^{x_{2}(t)}_{0,1}(x)g(t,x)^{2}\,dx}
    =O(\delta(t)),\\ \label{Fin5}
    \md{\frac{1}{3}\dot x_{1}(t)\int_{\mathbb{R}}(1-\omega_{1}(t))U^{(4)}(H^{x_{1}(t)}_{-1,0}+H^{x_{2}(t)}_{0,1})\partial_{x}H^{x_{1}(t)}_{-1,0}g(t)^{3}\,dt}
    = O(\delta(t)),\\
\label{Fin6}
    \md{\frac{1}{3}\dot x_{2}(t)\int_{\mathbb{R}}(\omega_{1}(t))U^{(4)}\left(H^{x_{1}(t)}_{-1,0}+H^{x_{2}(t)}_{0,1}\right)\partial_{x}H^{x_{2}(t)}_{0,1}g(t)^{3}\,dt}
    =O(\delta(t)).
\end{align}
In conclusion, we obtain that the estimates \eqref{Fin1}, \eqref{Fin2} imply the following estimate 
\begin{multline}\label{F43}
    -2\dot x_{1}(t)\int_{\mathbb{R}}\omega_{1}(t,x) \ddot U\left(H^{x_{1}(t)}_{-1,0}(x)+H^{x_{2}(t)}_{0,1}(x)\right)g(t,x)\partial_{x}g(t,x)\,dx=
    \int_{\mathbb{R}}\dot x_{1}(t)\partial_{x}H^{x_{1}(t)}_{-1,0}(x)U^{(3)}\left(H^{x_{2}(t)}_{0,1}+H^{x_{1}(t)}_{-1,0}\right)g(t)^{2}\,dx\\+O(\delta(t)).
\end{multline}
Then, the estimates \eqref{ff}, \eqref{F4.7.7}, \eqref{Fin5}, \eqref{Fin6} and \eqref{F43} imply that
\begin{gather*}
    2\frac{d}{dt}\left(\int_{\mathbb{R}}\partial_{t}g(t,x)\partial_{x}g(t,x)\dot x_{1}(t)\omega_{1}(t,x)\,dx\right)=\frac{1}{3}\int_{\mathbb{R}}U^{(4)} \left(H^{x_{1}(t)}_{-1,0}+H^{x_{2}(t)}_{0,1}\right)\big(\dot x_{1}(t)\partial_{x}H^{x_{1}(t)}_{-1,0}\big)g(t)^{3}\,dx\\+\int_{\mathbb{R}}\big(\dot x_{1}(t)\partial_{x}H^{x_{1}(t)}_{-1,0})U^{(3)}(H^{x_{2}(t)}_{0,1}+H^{x_{1}(t)}_{-1,0})g(t)^{2}\,dx-2\dot x_{1}(t)^{3}\int_{\mathbb{R}}\partial^{2}_{x}H^{x_{1}(t)}_{-1,0}(x)\partial_{x}g(t,x)\,dx.+O(\delta(t)).
\end{gather*}
By an analogous argument, we deduce that
\begin{gather*}
    2\frac{d}{dt}\left(\int_{\mathbb{R}}\partial_{t}g(t,x)\partial_{x}g(t,x)\dot x_{2}(t)(1-\omega_{1}(t,x))\,dx\right)=\frac{1}{3}\int_{\mathbb{R}}U^{(4)}\left(H^{x_{1}(t)}_{-1,0}+H^{x_{2}(t)}_{0,1}\right)\big(\dot x_{2}(t)\partial_{x}H^{x_{2}(t)}_{0,1}\big)g(t)^{3}\,dx\\+\int_{\mathbb{R}}\big(\dot x_{2}(t)\partial_{x}H^{x_{2}(t)}_{0,1})U^{(3)}\left(H^{x_{2}(t)}_{0,1}+H^{x_{1}(t)}_{-1,0}\right)g(t)^{2}\,dx-2\dot x_{2}(t)^{3}\int_{\mathbb{R}}\partial^{2}_{x}H^{x_{2}(t)}_{0,1}(x)\partial_{x}g(t,x)\,dx+O(\delta(t)).
\end{gather*}
In conclusion, we have that
\begin{multline}\label{F44}
    \frac{dF_{4}(t)}{dt} =\int_{\mathbb{R}}\left[\dot x_{1}(t)\partial_{x}H^{x_{1}(t)}_{-1,0}+\dot x_{2}(t)\partial_{x}H^{x_{2}(t)}_{0,1}\right]U^{(3)}\left(H^{x_{2}(t)}_{0,1}+H^{x_{1}(t)}_{-1,0}\right)g(t)^{2}\,dx-2\dot x_{2}(t)^{3}\int_{\mathbb{R}}\partial^{2}_{x}H^{x_{2}(t)}_{0,1}(x)\partial_{x}g(t,x)\,dx
    \\+\frac{1}{3}\int_{\mathbb{R}}U^{(4)} \left(H^{x_{1}(t)}_{-1,0}+H^{x_{2}(t)}_{0,1}\right)\left[\dot x_{1}(t)\partial_{x}H^{x_{1}(t)}_{-1,0}+\dot x_{2}(t)\partial_{x}H^{x_{2}(t)}_{0,1}\right]g(t)^{3}\,dx
    -2\dot x_{1}(t)^{3}\int_{\mathbb{R}}\partial^{2}_{x}H^{x_{1}(t)}_{-1,0}(x)\partial_{x}g(t,x)\,dx+O(\delta(t)).
\end{multline}
\textbf{Step 6.}(The derivative of $F_{5}(t).$) We have that
\begin{equation}\label{F55}
    \frac{dF_{5}(t)}{dt}=\int_{\mathbb{R}}U^{(3)} \left(H^{x_{1}(t)}_{-1,0}+H^{x_{2}(t)}_{0,1} \right)g(t)^{2}\partial_{t}g(t)\,dx
    -\frac{1}{3}\int_{\mathbb{R}}U^{(4)} \left(H^{x_{1}(t)}_{-1,0}+H^{x_{2}(t)}_{0,1}\right)\big(\dot x_{1}(t)\partial_{x}H^{x_{1}(t)}_{-1,0}+\dot x_{2}(t)\partial_{x}H^{x_{2}(t)}_{0,1}\big 
    )g(t)^{3}\,dx.
\end{equation}
\textbf{Step 7.}(Conclusion of estimate of $|\dot F(t)|$)
From the identities \eqref{F55} and \eqref{F44}, we obtain that
\begin{multline}\label{sum2}
    \frac{dF_{4}(t)}{dt}+\frac{dF_{5}(t)}{dt}=\int_{\mathbb{R}}\big(\dot x_{1}(t)\partial_{x}H^{x_{1}(t)}_{-1,0}+\dot x_{2}(t)\partial_{x}H^{x_{2}(t)}_{0,1}\big)U^{(3)}\left(H^{x_{2}(t)}_{0,1}+H^{x_{1}(t)}_{-1,0}\right)g(t)^{2}\,dx
   -2\dot x_{1}(t)^{3}\int_{\mathbb{R}}\partial^{2}_{x}H^{x_{1}(t)}_{-1,0}(x)\partial_{x}g(t,x)\,dx\\
    -2\dot x_{2}(t)^{3}\int_{\mathbb{R}}\partial^{2}_{x}H^{x_{2}(t)}_{0,1}(x)\partial_{x}g(t,x)\,dx+\int_{\mathbb{R}}U^{(3)} \left(H^{x_{1}(t)}_{-1,0}+H^{x_{2}(t)}_{0,1} \right)g(t)^{2}\partial_{t}g(t)\,dx+O(\delta(t)).
\end{multline}
Then, the sum of identities \eqref{sum1} and \eqref{sum2} implies
$\sum_{i=1}^{5}\frac{dF_{i}(t)}{dt}=O(\delta(t)),$
this finishes the proof of inequality $\md{\dot F(t)}=O(\delta(t)).\\$
\textbf{Proof of $F(t)+A_{1}\epsilon^{2}\geq A_{2}\epsilon^{2}.$} The Coercitivity Lemma implies that $\exists\,c>0,$ such that
$F_{1}(t)\geq c\norm{\overrightarrow{g(t)}}^{2}.$
Also, from Theorem \ref{Stab}, we have the global estimate
\begin{equation}
    \max_{j\in\{1,2\}}\md{\dot x_{j}(t)}^{2}+\md{\ddot x_{j}(t)}+e^{-\sqrt{2}z(t)}+\norm{\overrightarrow{g(t)}}^{2}=O(\epsilon)
\end{equation}
that implies that $\md{F_{3}(t)}=O\Big(\norm{\overrightarrow{g(t)}}\epsilon\Big),\,\md{F_{4}(t)}=O\Big(\norm{\overrightarrow{g(t)}}^{2}\epsilon^{\frac{1}{2}}\Big),\,\md{F_{5}(t)}=O\Big(\norm{\overrightarrow{g(t)}}^{2}\epsilon^{\frac{1}{2}}\Big).$
Also, since
\begin{equation*}
    \md{ U\left(H^{x_{1}(t)}_{-1,0}(x)\right)+\dot U\left(H^{x_{2}(t)}_{0,1}(x)\right)-\dot U\left(H^{x_{2}(t)}_{0,1}(x)+H^{x_{1}(t)}_{-1,0}(x)\right)}=O\Big(\md{H^{x_{1}(t)}_{-1,0}(x)H^{x_{2}(t)}_{0,1}(x)\big[H^{x_{2}(t)}_{0,1}(x)+H^{x_{1}(t)}_{-1,0}(x)\big]}\Big),
\end{equation*}
the Lemma \ref{interact} and Cauchy-Schwarz Inequality imply that \begin{center}$|F_{2}(t)|=O\Big(\norm{\overrightarrow{g(t)}}e^{-\sqrt{2}z(t)}\Big).$\end{center}
Then, the conclusion of $F(t)+A_{1}\epsilon^{2}\geq A_{2}\norm{\overrightarrow{g(t)}}^{2}$ follows from Young Inequality for $\epsilon$ small enough.
\end{proof}
\begin{remark}\label{help1}
In the proof of Theorem \ref{EnergyE}, from Theorem \ref{Stab} we have $\md{F_{2}(t)}+\md{F_{3}(t)}=O\left(\norm{\overrightarrow{g(t)}}\epsilon\right).$ Since $\md{F_{4}(t)}+\md{F_{5}(t)}=O\left(\norm{\overrightarrow{g(t)}}^{2}\epsilon^{\frac{1}{2}}\right)$ and $\md{F_{1}(t)}\lesssim \norm{\overrightarrow{g(t)}}^{2},$ then Young Inequality implies that
\begin{equation*}
    \md{F(t)}\lesssim \norm{\overrightarrow{g(t)}}^{2}+\epsilon^{2}.
\end{equation*}
\end{remark}
\begin{remark}[General Energy Estimate]\label{generalE}
For any $0<\theta, \gamma<1,$ we can create a smooth cut function  $0\leq\chi(x)\leq 1$ such that
\begin{equation*}
    \chi(x)=\begin{cases}
        0,\text{ if $x\leq \theta(1-\gamma),$}\\
        1, \text {if $x\geq \theta.$}
    \end{cases}
\end{equation*}
We define
\begin{equation*}
    \chi_{0}(t,x)=\chi\left(\frac{x-x_{1}(t)}{x_{2}(t)-x_{1}(t)}\right).
\end{equation*}
If we consider the following functional
\begin{multline}
    L(t)=\big\langle D^{2}E(H^{x_{2}(t)}_{0,1}+H^{x_{1}(t)}_{-1,0})\overrightarrow{g(t)},\,\overrightarrow{g(t)}\big\rangle_{L^{2}\times L^{2}}+2\int_{\mathbb{R}}\partial_{t}g(t,x)\partial_{x}g(t,x)\Big[\dot x_{1}(t)\chi_{0}(t,x)+\dot x_{2}(t)(1-\chi_{0}(t,x))\Big]\,dx\\
    -2\int_{\mathbb{R}}g(t,x)\Big(\dot U(H^{x_{1}(t)}_{-1,0}(x))+\dot U(H^{x_{2}(t)}_{0,1}(x))-\dot U(H^{x_{2}(t)}_{0,1}(x)+H^{x_{1}(t)}_{-1,0}(x))\Big)\,dx\\
    +2\int_{\mathbb{R}}g(t,x)\Big[(\dot x_{1}(t))^{2}\partial^{2}_{x}H^{x_{1}(t)}_{-1,0}(x)+(\dot x_{2}(t))^{2}\partial^{2}_{x}H^{x_{2}(t)}_{0,1}(x)\Big]\,dx
    +\frac{1}{3}\int_{\mathbb{R}}U^{(3)}(H^{x_{2}(t)}_{0,1}(x)+H^{x_{1}(t)}_{-1,0}(x))g(t,x)^{3}\,dx,
\end{multline}
then, by a similar proof to the Theorem \ref{EnergyE}, we obtain that if $0<\epsilon\ll 1$ and
\begin{equation}\label{newE}
   \delta_{1}(t)=\delta(t)+\max_{j\in\{1,2\}}\md{\dot x_{j}(t)}^{3}\max(e^{-\sqrt{2}z(t)(1-\theta)},e^{-\sqrt{2}z(t)\theta(1-\gamma)})\norm{\overrightarrow{g(t)}}- \max_{j\in\{1,2\}}\md{\dot x_{j}(t)}^{3}e^{-\frac{\sqrt{2}}{5}z(t)}\norm{\overrightarrow{g(t)}},
\end{equation}
then there are positive constants $A_{1},\,A_{2}>0$ such that 
\begin{equation*}
    \md{\dot L(t)}=O(\delta_{1}(t)),\,L(t)+A_{1}\epsilon^{2}\geq A_{2}\epsilon^{2}.
\end{equation*}
\end{remark}
Our first application of Theorem \ref{EnergyE} is to estimate the size of the remainder $\norm{\overrightarrow{g(t)}}$ during a long time interval. More precisely, this corresponds to the following theorem, which is a weaker version of Theorem \ref{T1}.
\begin{theorem}\label{TT1}
There is $\delta>0,$ such that if $0<\epsilon<\delta$ enough, $(\phi(0),\partial_{t}\phi(0)) \in S\times L^{2}(\mathbb{R})$
and $E_{total}(\phi(0),\partial_{t}\phi(0))=2E_{pot}(H_{0,1})+\epsilon,$ then 
there are $x_{2},x_{1} \in C^{2}(\mathbb{R})$  functions such that the unique global time solution of \eqref{nlww} is given, for
\begin{equation}
    \phi(t)=H_{0,1}(x-x_{2}(t))+ H_{-1,0}(x-x_{1}(t))+g(t),
\end{equation}
with $g(t)$ satisfying orthogonality conditions of the Modulation Lemma and
\begin{equation}\label{evolution}
\norm{(g(t),\partial_{t}g(t))}_{H^{1}\times L^{2}}^{2}\leq C\left[\norm{(g(0,x),\partial_{t}g(0,x))}_{H^{1}\times L^{2}}^{2}+\epsilon^{2}\ln{\Big(\frac{1}{\epsilon}\Big)}^{2}\right]\exp\left(C\md{t}\left(\frac{\epsilon^{\frac{1}{2}}}{\ln{(\frac{1}{\epsilon})}}\right)\right).
\end{equation}
\end{theorem}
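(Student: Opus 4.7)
The plan is to read off Theorem \ref{TT1} from the coercivity/derivative estimates on the functional $F(t)$ constructed in Theorem \ref{EnergyE}, via a Gronwall argument. First I would apply Theorem \ref{Stab} directly: under the hypothesis $E_{total}(\phi(0),\partial_{t}\phi(0))=2E_{pot}(H_{0,1})+\epsilon$ with $\epsilon$ small, it already produces the $C^{2}$ modulation parameters $x_{1}(t),x_{2}(t)$, the remainder $g(t)$ satisfying the two orthogonality conditions of the Modulation Lemma, and the global bounds $\norm{\overrightarrow{g(t)}}^{2}+\max_{j}|\dot x_{j}(t)|^{2}+\max_{j}|\ddot x_{j}(t)|+e^{-\sqrt{2}z(t)}\lesssim \epsilon$. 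Only the evolution estimate \eqref{evolution} remains to be proved.

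For this I would set $G(t)\coloneqq F(t)+A_{1}\epsilon^{2}$, where $F(t)$ is the functional defined in \eqref{F(t)} and $A_{1},A_{2}$ are the positive constants provided by Theorem \ref{EnergyE}. That theorem gives the coercivity $G(t)\geq A_{2}\norm{\overrightarrow{g(t)}}^{2}$, and Remark \ref{help1} gives the matching upper bound $G(t)\lesssim \norm{\overrightarrow{g(t)}}^{2}+\epsilon^{2}$, so $G(t)$ is equivalent to $\norm{\overrightarrow{g(t)}}^{2}+\epsilon^{2}$ up to universal constants. The key input is then the derivative estimate stated in Remark \ref{ttt1}:
\begin{equation*}
|\dot F(t)|\lesssim \frac{\epsilon^{\frac{1}{2}}}{\ln{(\frac{1}{\epsilon})}}\norm{\overrightarrow{g(t)}}^{2}+\norm{\overrightarrow{g(t)}}\epsilon^{\frac{3}{2}}.
\end{equation*}
I would apply Young's inequality to the cross term, writing
\begin{equation*}
\norm{\overrightarrow{g(t)}}\epsilon^{\frac{3}{2}}\leq \frac{1}{2}\frac{\epsilon^{\frac{1}{2}}}{\ln{(\frac{1}{\epsilon})}}\norm{\overrightarrow{g(t)}}^{2}+\frac{1}{2}\epsilon^{\frac{5}{2}}\ln{\left(\frac{1}{\epsilon}\right)},
\end{equation*}
and then invoke coercivity to absorb $\norm{\overrightarrow{g(t)}}^{2}$ into $G(t)$, obtaining a differential inequality of the form
\begin{equation*}
|\dot G(t)|\leq K\,\frac{\epsilon^{\frac{1}{2}}}{\ln{(\frac{1}{\epsilon})}}\,G(t)+K\epsilon^{\frac{5}{2}}\ln{\left(\frac{1}{\epsilon}\right)}
\end{equation*}
for a universal constant $K>0$.

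The final step is a standard linear Gronwall argument: integrating this inequality from $0$ to $t$ yields
\begin{equation*}
G(t)\leq \left(G(0)+K'\epsilon^{2}\ln{\left(\frac{1}{\epsilon}\right)}^{2}\right)\exp\left(K\frac{\epsilon^{\frac{1}{2}}|t|}{\ln{(\frac{1}{\epsilon})}}\right),
\end{equation*}
since $\epsilon^{\frac{5}{2}}\ln{(1/\epsilon)}$ divided by $\epsilon^{\frac{1}{2}}/\ln{(1/\epsilon)}$ equals $\epsilon^{2}\ln{(1/\epsilon)}^{2}$. Using $G(0)\lesssim \norm{\overrightarrow{g(0)}}^{2}+\epsilon^{2}$ and the coercivity $\norm{\overrightarrow{g(t)}}^{2}\leq A_{2}^{-1}G(t)$ then gives exactly \eqref{evolution}, since $\epsilon^{2}$ is absorbed in $\epsilon^{2}\ln{(1/\epsilon)}^{2}$. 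The proof for $t\leq 0$ is identical by Remark \ref{obvious} (or by reversing time). No step is truly hard here, because the heavy lifting — constructing a coercive almost-conserved functional whose derivative carries an extra factor of $\epsilon^{1/2}/\ln(1/\epsilon)$ — has already been accomplished in Theorem \ref{EnergyE}; the only point demanding care is the Young-inequality rebalancing, where one must match the factor $\epsilon^{1/2}/\ln(1/\epsilon)$ on the left to produce precisely the $\epsilon^{2}\ln{(1/\epsilon)}^{2}$ forcing term on the right, so that the exponential rate in $|t|$ coincides with the one written in the statement.
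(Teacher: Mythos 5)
Your proof is correct and follows essentially the same route as the paper's: both rely on the coercivity bound $A_{2}\norm{\overrightarrow{g(t)}}^{2}\leq F(t)+A_{1}\epsilon^{2}$ and the upper bound $|F(t)|\lesssim\norm{\overrightarrow{g(t)}}^{2}+\epsilon^{2}$ from Theorem \ref{EnergyE} and Remark \ref{help1}, together with the derivative estimate of Remark \ref{ttt1}, Young's inequality, and Gronwall. The only difference is cosmetic: the paper inflates the additive constant in its auxiliary quantity to $A_{1}\epsilon^{2}\ln(\tfrac{1}{\epsilon})^{2}$ so that the forcing term is absorbed as a multiple of $G(t)$ itself, whereas you keep the forcing term $K\epsilon^{5/2}\ln(\tfrac{1}{\epsilon})$ explicit and apply the inhomogeneous Gronwall lemma; both produce the identical bound \eqref{evolution}.
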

\begin{proof}[Proof of Theorem \ref{TT1}]
In notation of Theorem \ref{EnergyE}, from Theorem \ref{EnergyE} and Remark \ref{help1}, there are uniform positive constants $A_{2},\,A_{1}$ such that for all $t\geq 0$
\begin{equation}\label{CC1}
    A_{2}\norm{\overrightarrow{g(t)}}^{2}\leq F(t)+A_{1}\epsilon^{2}\leq C\Big(\norm{\overrightarrow{g(t)}}^{2}+\epsilon^{2}\Big).
\end{equation}
From now on, we denote
$G(t)\coloneqq F(t)+A_{1}\epsilon^{2}\ln{(\frac{1}{\epsilon})}^{2}.$ From the inequality \eqref{CC1} and Remark \ref{ttt1}, there is a constant $C>0$ such that, for all $t \geq 0,$ $G(t)$ satisfies
\begin{equation}
G(t)\leq G(0)+C\Big(\int_{0}^{t} G(s)\frac{\epsilon^{\frac{1}{2}}}{\ln{(\frac{1}{\epsilon})}},ds\Big).
\end{equation}
In conclusion, from the Fundamental Theorem of Calculus, we obtain that
$G(t)\leq G(0)\exp\Big(\frac{Ct\epsilon^{\frac{1}{2}}}{\ln{(\frac{1}{\epsilon})}}\Big).$
Then, from the definition of $G$ and inequality \eqref{CC1}, we verify the inequality \eqref{evolution}.
\end{proof}
\section{Global Dynamics of Modulation Parameters}
\begin{lemma}\label{aux22}
In notation of Theorem \ref{T1}, $\exists C>0,$ such that if the hypotheses of Theorem \ref{T1} are true,
then for $(g_{0}(x),g_{1}(x))=\left(g(0,x),\partial_{t}g(0,x)\right)$ we have that there are functions $p_{1}(t),\,p_{2}(t) \in C^{1}(R_{\geq 0}),$ such that for $j \in \{1,\,2\},$ we have:
\begin{equation}\label{modest1}
    \md{\dot x_{j}(t)-p_{j}(t)}\lesssim \Big(\norm{(g_{0},g_{1})}_{H^{1}\times L^{2}}+\epsilon\ln{\Big(\frac{1}{\epsilon}\Big)}\Big)\epsilon^{\frac{1}{2}}\exp\Big(\frac{2C\epsilon^{\frac{1}{2}}t}{\ln{(\frac{1}{\epsilon})}}\Big),
    \end{equation}
\begin{equation}\label{modest2}
    \md{\dot p_{j}(t)-(-1)^{j}8\sqrt{2}e^{-\sqrt{2}z(t)}}\lesssim\frac{\Big(\norm{(g_{0},g_{1})}_{H^{1}\times L^{2}}+\epsilon\ln{\Big(\frac{1}{\epsilon}\Big)}\Big)^{2}}{\ln{\ln{(\frac{1}{\epsilon})}}}\exp\Big(\frac{2Ct\epsilon^{\frac{1}{2}}}{\ln{(\frac{1}{\epsilon})}}\Big).
\end{equation}
\end{lemma}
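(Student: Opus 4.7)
The plan is to take the functions $p_1(t), p_2(t)$ produced by Lemma \ref{modueq} with a carefully tuned cut-off parameter $\gamma$, and to substitute into its two estimates the a priori bounds from Theorem \ref{Stab} together with the sharpened remainder bound from Theorem \ref{TT1}. The only genuine input beyond this substitution is the choice of $\gamma$, which must balance the $1/(\gamma z(t))$ losses in $\alpha(t)$ against the exponentially small terms $e^{-\sqrt{2}z(t)(1-\gamma)/(2-\gamma)}$.

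First I would fix $\gamma=\ln\ln(1/\epsilon)/\ln(1/\epsilon)$ and $\theta=(1-\gamma)/(2-\gamma)$ as suggested by the remark after Lemma \ref{modueq}, and let $p_1,p_2$ be the corresponding correction terms. By Theorem \ref{Stab}, for all $t\ge 0$ one has $\max_j|\dot x_j(t)|=O(\epsilon^{1/2})$, $z(t)\gtrsim \ln(1/\epsilon)$, and $e^{-\sqrt{2}z(t)}=O(\epsilon)$. By Theorem \ref{TT1},
\begin{equation*}
\norm{\overrightarrow{g(t)}}\lesssim \bigl(\norm{(g_0,g_1)}_{H^1\times L^2}+\epsilon\ln(1/\epsilon)\bigr)\exp\!\bigl(C\epsilon^{1/2}t/\ln(1/\epsilon)\bigr).
\end{equation*}
For the first estimate \eqref{modest1}, I would plug these bounds into \eqref{Modul1} of Lemma \ref{modueq}. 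The terms $\max_j|\dot x_j(t)|\norm{\overrightarrow{g(t)}}+\norm{\overrightarrow{g(t)}}^2$ contribute at most $\epsilon^{1/2}\norm{\overrightarrow{g(t)}}$, the factor $1+\norm{\dot\chi}_{L^\infty}/z(t)$ is $O(1)$ since $\norm{\dot\chi}_{L^\infty}\lesssim 1/\gamma$ and $1/(\gamma z(t))\lesssim 1/\ln\ln(1/\epsilon)\ll 1$, and the tail $\max_j|\dot x_j(t)|z(t)e^{-\sqrt{2}z(t)}=O(\epsilon^{3/2}\ln(1/\epsilon))$ is absorbed into $\bigl(\norm{(g_0,g_1)}+\epsilon\ln(1/\epsilon)\bigr)\epsilon^{1/2}$.

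For the second estimate \eqref{modest2}, I would use \eqref{Modul2} together with Lemma \ref{LL.1}, which gives $|\dot A(z(t))+4e^{-\sqrt{2}z(t)}|\lesssim z(t)e^{-2\sqrt{2}z(t)}=O(\epsilon^2\ln(1/\epsilon))$, and the identity $\norm{\partial_x H_{0,1}}_{L^2}^2=1/(2\sqrt{2})$, so that $-\dot A(z(t))/\norm{\partial_x H_{0,1}}_{L^2}^2=8\sqrt 2\, e^{-\sqrt 2 z(t)}+O(\epsilon^2\ln(1/\epsilon))$. It therefore remains to bound $\alpha(t)$. Writing out the four groups of terms defining $\alpha(t)$ and inserting the bounds above, every term of the shape $\norm{\overrightarrow{g(t)}}^2/(\gamma z(t))$ or $\norm{\overrightarrow{g(t)}}^2 e^{-\sqrt{2}z(t)(1-\gamma)/(2-\gamma)}$ is at most $\norm{\overrightarrow{g(t)}}^2/\ln\ln(1/\epsilon)$ with our choice of $\gamma$ (for the exponential one, because $z(t)(1-\gamma)/(2-\gamma)\gtrsim \ln(1/\epsilon)/2$, so the exponential is $O(\epsilon^{1/2})\ll 1/\ln\ln(1/\epsilon)$); the mixed terms $\norm{\overrightarrow{g(t)}}\max_j|\dot x_j(t)|[1+(\gamma z(t))^{-1}+(\gamma z(t))^{-2}\max_j|\dot x_j(t)|]e^{-\sqrt{2}z(t)(1-\gamma)/(2-\gamma)}$ are $O(\epsilon\,\norm{\overrightarrow{g(t)}})$ after the same simplifications, which is absorbed into the quadratic; and the purely kink terms $\max_j|\dot x_j(t)|^2 z(t)e^{-\sqrt 2 z(t)}$ and $\max_j|\dot x_j(t)|^2 e^{-2\sqrt 2 z(t)(1-\gamma)/(2-\gamma)}/(\gamma z(t))$ are $O(\epsilon^2\ln(1/\epsilon))$, absorbed into the $(\epsilon\ln(1/\epsilon))^2$ piece. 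Putting everything together and using the bound on $\norm{\overrightarrow{g(t)}}$ from Theorem \ref{TT1} squared yields the claimed exponential factor $\exp(2C\epsilon^{1/2}t/\ln(1/\epsilon))$.

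The main obstacle I expect is the accounting step: $\alpha(t)$ contains several terms mixing $\norm{\overrightarrow{g(t)}}$, $\max_j|\dot x_j(t)|$, $1/\gamma$ and small exponentials, and the final bound requires both the $\ln\ln(1/\epsilon)$ denominator and the squared Gronwall factor. One must verify that $\gamma=\ln\ln(1/\epsilon)/\ln(1/\epsilon)$ is indeed optimal (the two competing exponents in $\alpha(t)$, namely $1/(\gamma z(t))\sim 1/\ln\ln(1/\epsilon)$ and $e^{-\sqrt 2 z(t)\gamma/(2-\gamma)}\sim\epsilon^{\gamma/2}$, become comparable exactly at this value) and that no remaining term beats the $1/\ln\ln(1/\epsilon)$ gain. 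The regularity $p_j\in C^1$ follows automatically from the formulas defining $p_j$ in terms of $\phi,\partial_t\phi,x_1,x_2$, since these quantities are $C^1$ in $t$ by Theorem \ref{Stab} and the local well-posedness of \eqref{nlww}.
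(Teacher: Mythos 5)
Your proposal is correct and follows essentially the same route as the paper: apply Lemma \ref{modueq} with $\theta=(1-\gamma)/(2-\gamma)$ and $\gamma=\ln\ln(1/\epsilon)/\ln(1/\epsilon)$, translate $-\dot A(z)/\norm{\partial_x H_{0,1}}_{L^2}^2$ into $8\sqrt{2}e^{-\sqrt{2}z}$ up to $O(z e^{-2\sqrt{2}z})$ via Lemma \ref{LL.1}, and bound $\alpha(t)$ by inserting $\max_j|\dot x_j|=O(\epsilon^{1/2})$, $z(t)\gtrsim\ln(1/\epsilon)$, and the Gronwall bound on $\norm{\overrightarrow{g(t)}}$ from Theorem \ref{TT1}. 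The only difference is cosmetic: the paper spends a few lines deriving the value of $\gamma$ by comparing the competing contributions $\beta_1,\beta_2,\beta_3$ before committing to it, whereas you fix $\gamma$ upfront and verify a posteriori, which yields an equivalent argument.
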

\begin{proof}
In the notation of Lemma \ref{modueq}, we consider the functions $p_{j}(t)$ for $j \in \{1,\,2\}$ and we consider $\theta=\frac{1-\gamma}{2-\gamma},$ the value of $\gamma$ will be chosen later. From Lemma \ref{modueq}, we have
that
\begin{equation*}
    \md{\dot x_{j}(t)-p_{j}(t)}\lesssim \Big[1+\frac{1}{\gamma z(t)}\Big]\Big(\max_{j \in \{1,2\}}\md{\dot x_{j}(t)}\norm{\overrightarrow{g(t)}}+\norm{\overrightarrow{g(t)}}^{2}\Big)+\max_{j \in \{1,2\}}\md{\dot x_{j}(t)}z(t)e^{-\sqrt{2}z(t)}.
\end{equation*}
We recall from Theorem \ref{Stab} the  estimates $\max_{j\in\{1,\,2\}}\md{\dot x_{j}(t)}=O(\epsilon^{\frac{1}{2}}),\,e^{-\sqrt{2}z(t)}=O(\epsilon).$
From Theorem \ref{TT1}, we have that
\begin{equation*}
    \norm{\overrightarrow{g(t)}}\lesssim \left(\norm{\overrightarrow{g(0)}}+\epsilon\ln{\Big(\frac{1}{\epsilon}\Big)}\right) \exp\Big(\frac{C\epsilon^{\frac{1}{2}}t}{\ln{(\frac{1}{\epsilon})}}\Big).
\end{equation*}
To simplify our computations we denote $c_{0}=\frac{\norm{\overrightarrow{g(0)}}+\epsilon\ln(\frac{1}{\epsilon})}{\epsilon\ln(\frac{1}{\epsilon})}.$
Then, we obtain for $j\in\{1,\,2\}$ that
\begin{equation}\label{modest11}
    \md{\dot x_{j}(t)-p_{j}(t)}\lesssim \left[1+\frac{1}{\gamma \ln{(\frac{1}{\epsilon})}}\right]\Big(c_{0}\epsilon^{\frac{3}{2}}\ln{\Big(\frac{1}{\epsilon}\Big)}\exp\Big(\frac{C\epsilon^{\frac{1}{2}}t}{\ln{(\frac{1}{\epsilon})}}\Big)+c_{0}^{2}\epsilon^{2}\ln{\Big(\frac{1}{\epsilon}\Big)}^{2}\exp\Big(\frac{2C\epsilon^{\frac{1}{2}}t}{\ln{\frac{1}{\epsilon}}}\Big)\Big).
\end{equation}
Since $e^{-\sqrt{2}z(t)}\lesssim \epsilon,$ we deduce for $\epsilon\ll 1$ that $z(t)e^{-\sqrt{2}z(t)}\lesssim \epsilon\ln{(\frac{1}{\epsilon})}<\epsilon^{1-\frac{\gamma}{(2-\gamma)2}}\ln{(\frac{1}{\epsilon})}.$  Then, we obtain from the same estimates and the definition \eqref{alpha} of $\alpha(t),$ that
\begin{multline}\label{alpha11}
    \alpha(t)\lesssim c_{0}^{2}\Big(\epsilon \ln{\Big(\frac{1}{\epsilon}\Big)}\Big)^{2}\left[\max_{k\in\{1,\,2\}}\Big(\frac{1}{\gamma z(t)}\Big)^{k}+\epsilon^{\frac{1-\gamma}{2-\gamma}}\right]\exp\Big(2\frac{C\epsilon^{\frac{1}{2}}t}{\ln{(\frac{1}{\epsilon})}}\Big)
    \\+ c_{0}\epsilon^{2-\frac{\gamma}{(2-\gamma)2}}\ln{\Big(\frac{1}{\epsilon}\Big)}\exp\Big(\frac{C\epsilon^{\frac{1}{2}}t}{\ln{(\frac{1}{\epsilon})}}\Big)\left[1+\frac{1}{\gamma z(t)}+\frac{\epsilon^{\frac{1}{2}}}{(\gamma z(t))^{2}}\right]+\frac{\epsilon^{1+\frac{2(1-\gamma)}{2-\gamma}}}{z(t)\gamma}.
\end{multline}
However, if 
$\gamma \ln{(\frac{1}{\epsilon})}\leq 1$ and $z(0)\cong \ln{(\frac{1}{\epsilon})},$ which is possible, then the right-hand side of inequality \eqref{alpha11} is greater than or equivalent to
$\epsilon^{2}\ln{(\frac{1}{\epsilon})}^{2}$while $t\lesssim \frac{\ln{(\frac{1}{\epsilon})}}{\epsilon^{\frac{1}{2}}}.$
But, it is not difficult to verify that for 
$\gamma=\frac{\ln{\ln{(\frac{1}{\epsilon})}}}{\ln{(\frac{1}{\epsilon})}},$
the right-hand side of inequality \eqref{alpha11} is smaller than $\epsilon^{2}\ln{(\frac{1}{\epsilon})}^{2}$. In conclusion, from now on, we are going to study the right-hand side of \eqref{alpha11} for
$\frac{1}{\ln(\frac{1}{\epsilon})}<\gamma<1$. Since we know that $\ln{(\frac{1}{\epsilon})}\lesssim z(t)$ from Theorem \ref{Stab}, the inequality \eqref{alpha11} implies for $\frac{1}{\ln{(\frac{1}{\epsilon})}}<\gamma<1$ that
\begin{multline}\label{betha}
   \alpha(t)\lesssim \beta(t)\coloneqq\Big(c_{0}\epsilon \ln{\Big(\frac{1}{\epsilon}\Big)}\Big)^{2}\left[\frac{1}{\gamma \ln{(\frac{1}{\epsilon}})}+\epsilon^{\frac{1-\gamma}{2-\gamma}}\right]\exp\Big(2\frac{C\epsilon^{\frac{1}{2}}t}{\ln{(\frac{1}{\epsilon})}}\Big)
    + c_{0}\epsilon^{2-\frac{\gamma}{2(2-\gamma)}}\ln{\Big(\frac{1}{\epsilon}\Big)}\exp\Big(\frac{C\epsilon^{\frac{1}{2}}t}{\ln{(\frac{1}{\epsilon})}}\Big)+\frac{\epsilon^{1+\frac{2(1-\gamma)}{2-\gamma}}}{\gamma\ln{(\frac{1}{\epsilon})}}\\=\beta_{1}(t)+\beta_{2}(t)+\beta_{3}(t), \text{ respectively.}
\end{multline}
For $\epsilon>0$ small enough, it is not difficult to verify that if $
\beta_{3}(t)\geq \beta_{1}(t),$ then $\gamma\geq \frac{\ln{\ln{(\frac{1}{\epsilon})}}}{\ln{(\frac{1}{\epsilon})}}.$ Moreover, if we have that
$1>\gamma>8\frac{\ln{\ln{(\frac{1}{\epsilon})}}}{\ln{(\frac{1}{\epsilon})}},$ we obtain from the following estimate
\begin{equation*}
    \beta_{3}(t)=\frac{\epsilon^{2}\epsilon^{\frac{-\gamma}{2-\gamma}}}{\gamma \ln{(\frac{1}{\epsilon})}}
    > \frac{\epsilon^{2}}{\ln{(\frac{1}{\epsilon})}}\exp\left(\frac{8\ln{\ln\left(\frac{1}{\epsilon}\right)}}{2-\gamma}\right)= \frac{\epsilon^{2}}{\ln{(\frac{1}{\epsilon})}}\ln{\left(\frac{1}{\epsilon}\right)}^{\frac{8}{2-\gamma}},
\end{equation*}
that $\beta_{3}(t)>\frac{\epsilon^{2}\ln{(\frac{1}{\epsilon})}^{2}}{\ln{\ln{(\frac{1}{\epsilon})}}}.$ If $\gamma\leq \frac{\ln{\ln{(\frac{1}{\epsilon})}}}{\ln{(\frac{1}{\epsilon})}},$ then $\frac{\epsilon^{2}\ln{(\frac{1}{\epsilon})}^{2}}{\ln{\ln{(\frac{1}{\epsilon})}}}\lesssim\beta_{1}(t).$ In conclusion, for any case we have that $\frac{\epsilon^{2}\ln{(\frac{1}{\epsilon})}^{2}}{\ln{\ln{(\frac{1}{\epsilon})}}}\lesssim\beta(t),$
so we choose
$\gamma=\frac{\ln{\ln{(\frac{1}{\epsilon})}}}{\ln{(\frac{1}{\epsilon})}}.$ As a consequence, $\alpha(t)$ is less than or equivalent to
\begin{equation}\label{alpha22}
     c_{0}^{2}\epsilon^{2}\frac{\ln{(\frac{1}{\epsilon}})^{2}}{\ln{\ln{(\frac{1}{\epsilon})}}}\exp\Big(\frac{2C\epsilon^{\frac{1}{2}}t}{\ln{(\frac{1}{\epsilon})}}\Big).
\end{equation}
So, the estimates \eqref{modest11}, \eqref{alpha22}, Remark \ref{odez} and our choice of $\gamma$ imply the inequalities \eqref{modest1} and \eqref{modest2}.
\end{proof}
\begin{remark}\label{worst}
If $\frac{\epsilon^{\frac{1}{2}}}{\ln{(\frac{1}{\epsilon})}^{m}}\lesssim\norm{\overrightarrow{g(0)}}$ for a constant $m>0,$ then, for $\gamma=\frac{1}{8},$ we have from  Lemma \ref{modueq} that there is $p(t) \in C^{2}(\mathbb{R})$ such that for all $t\geq 0$
\begin{align}\label{moddd1}
    \md{\dot z(t)-p(t)}\lesssim \epsilon^{\frac{1}{2}}\norm{\overrightarrow{g(0)}},\\ \label{moddd2}
    \md{\dot p(t) -16\sqrt{2} e^{-\sqrt{2}z(t)}}\lesssim \frac{\norm{\overrightarrow{g(0)}}^{2}}{z(t)}.
\end{align}
Then, for the smooth real function $d(t)$ satisfying 
\begin{equation*}
    \ddot d(t)=16\sqrt{2}e^{-\sqrt{2}d(t)},\,(d(0),\dot d(0))=(z(0),\dot z(0)),
\end{equation*}
and since $e^{-\sqrt{2}z(t)}\lesssim\epsilon,\,\ln{(\frac{1}{\epsilon})}\lesssim z(t),$ we can deduce that $Y(t)=(z(t)-d(t))$ satisfies the following integral inequality for a constant $K>0$
\begin{equation}\label{grol}
    \md{Y(t)}\leq K\left (\epsilon^{\frac{1}{2}}\norm{\overrightarrow{g(0)}}t+\frac{\norm{\overrightarrow{g(0)}}^{2}}{\ln{(\frac{1}{\epsilon})}}t^{2}+\int_{0}^{t}\int_{0}^{s}\epsilon \md{Y(s_{1})}\,ds_{1}\,ds\right),\\
    Y(0)=0,\,\dot Y(0)=0.
\end{equation}
In conclusion, from the Gronwall Lemma, we obtain that
$\md{Y(t)}\lesssim Q(t K^{\frac{1}{2}}),$ where $Q(t)$ is the solution of the following integral equation
\begin{equation*}
    Q(t)=\epsilon^{\frac{1}{2}}\norm{\overrightarrow{g(0)}}t+\frac{\norm{\overrightarrow{g(0)}}^{2}}{\ln{(\frac{1}{\epsilon})}}t^{2}+\int_{0}^{t}\int_{0}^{s}\epsilon Q(s_{1})\,ds_{1}\,ds.
\end{equation*}
By standard ordinary differential equation techniques,
we deduce that
\begin{equation}\label{cdd1}
    \md{z(t)-d(t)}\lesssim Q(t K^{\frac{1}{2}})=\left(\frac{\norm{\overrightarrow{g(0)}}}{2}+\frac{\norm{\overrightarrow{g(0)}}^{2}}{\epsilon\ln{(\frac{1}{\epsilon})}}\right)e^{\epsilon^{\frac{1}{2}}t K^{\frac{1}{2}}}+\left(\frac{-\norm{\overrightarrow{g(0)}}}{2}+\frac{\norm{\overrightarrow{g(0)}}^{2}}{\epsilon\ln{(\frac{1}{\epsilon})}}\right)e^{-\epsilon^{\frac{1}{2}}t K^{\frac{1}{2}}}-2\frac{\norm{\overrightarrow{g(0)}}^{2}}{\epsilon\ln{(\frac{1}{\epsilon})}},
\end{equation}
and from $\dot z(0)=\dot d(0)$ and the estimates \eqref{moddd1} and \eqref{moddd2}, we obtain that
\begin{equation}\label{fim1}
    \md{\dot z(t)-\dot d(t)}\lesssim \md{p(0)-\dot z(0)}+\int_{0}^{t}\epsilon \md{z(s)-d(s)}\,ds,
\end{equation}
from which with \eqref{cdd1}, we obtain that
\begin{equation}\label{fim2}
    \md{\dot z(t)-\dot d(t)}\lesssim e^{\epsilon^{\frac{1}{2}}\md{t}K^{\frac{1}{2}}}\epsilon^{\frac{1}{2}}\left(\norm{\overrightarrow{g(0)}}+\frac{\norm{\overrightarrow{g(0)}}^{2}}{\epsilon\ln{(\frac{1}{\epsilon})}}\right).
\end{equation}
However, the precision of the estimates \eqref{cdd1} and \eqref{fim2} is very bad when $\epsilon^{-\frac{1}{2}}\ll t,$ which motivate us to apply Lemma \eqref{modueq} to estimate the modulations parameters $x_{1}(t),\,x_{2}(t)$ for $t\lesssim \frac{\ln{(\frac{1}{\epsilon})}}{\epsilon^{\frac{1}{2}}}.$    
\end{remark}
We recall from Theorem \ref{trueTheo2} the definitions of the functions $d_{1}(t),\,d_{2}(t).$ If $\norm{\overrightarrow{g(0)}}\geq \frac{\epsilon^{\frac{1}{2}}}{\ln{(\frac{1}{\epsilon})}^{5}},$ because Theorem \ref{Stab} and $\max_{j\in\{1,2\}}\md{\dot d_{j}(0)-\dot x_{j}(0)}=0$ imply that $\max_{j\in\{1,\,2\}}\md{d_{j}(t)-x_{j}(t)}=O(\min(\epsilon t,\epsilon^{\frac{1}{2}}t)),\,\max_{j\in\{1,\,2\}}\md{\dot d_{j}(t)-\dot x_{j}(t)}=O(\epsilon t),$ we deduce for a constant $C>0$ large enough the estimates \eqref{oded1} and \eqref{oded2} of Theorem \ref{trueTheo2}. For the case $\norm{\overrightarrow{g(0)}}\leq \frac{\epsilon^{\frac{1}{2}}}{\ln{(\frac{1}{\epsilon})}^{5}}$, the estimates of $\max_{j\in\{1,2\}}\md{x_{j}(t)-d_{j}(t)},\,\max_{j\in\{1,\,2\}}\md{\dot x_{j}(t)-\dot d_{j}(t)}$ will be done by studying separated cases depending on the initial data $z(0),\,\dot z(0).$ 
\begin{lemma}\label{t21} $\exists K>0$ such that
if $\norm{\overrightarrow{g(0)}}\leq \frac{\epsilon^{\frac{1}{2}}}{\ln{(\frac{1}{\epsilon})}^{5}},\,(g_{0}(x),g_{1}(x))=\left(g(0,x),\partial_{t}g(0,x)\right)$ and all the hypotheses of Theorem \ref{trueTheo2} are true and $\frac{\epsilon}{\ln{(\frac{1}{\epsilon})}^{8}}\lesssim e^{-\sqrt{2}z(0)}\lesssim \epsilon,$ then we have for $0\leq t$ that
\begin{gather}\label{theo2d1}
    \max_{j \in \{1,\,2\}}\md{x_{j}(t)-d_{j}(t)}=O\left(\frac{\max\Big(\norm{(g_{0},g_{1})},\epsilon\ln{(\frac{1}{\epsilon})}\Big)^{2}\ln{(\frac{1}{\epsilon})}^{6}}{\epsilon\ln{\ln(\frac{1}{\epsilon})}}\exp\Big(\frac{K\epsilon^{\frac{1}{2}}t}{\ln{(\frac{1}{\epsilon})}}\Big)\right),\\ \label{theo2d2}
    \max_{j \in \{1,\,2\}}\md{\dot x_{j}(t)-\dot d_{j}(t)}=O\left(\max\Big(\norm{(g_{0},g_{1})},\epsilon\ln{\Big(\frac{1}{\epsilon}\Big)}\Big)^{2}\frac{\ln{(\frac{1}{\epsilon})}^{6}}{\epsilon^{\frac{1}{2}}\ln{\ln{(\frac{1}{\epsilon})}}}\exp\Big(\frac{K\epsilon^{\frac{1}{2}}t}{\ln{(\frac{1}{\epsilon})}}\Big)\right).
\end{gather}
\end{lemma}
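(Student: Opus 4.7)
The plan is to combine the refined modulation estimates of Lemma \ref{aux22} with a Volterra-type integral inequality for the discrepancies $y_j(t) \coloneqq x_j(t)-d_j(t)$. Setting $M \coloneqq \max(\norm{\overrightarrow{g(0)}},\,\epsilon\ln(1/\epsilon))$ and $\beta \coloneqq 2C\epsilon^{1/2}/\ln(1/\epsilon)$, the choice $\gamma = \ln\ln(1/\epsilon)/\ln(1/\epsilon)$ in Lemma \ref{aux22} provides $C^{1}$ functions $p_j$ satisfying $|\dot x_j - p_j| \lesssim M\epsilon^{1/2} e^{\beta t}$ and $|\dot p_j - (-1)^j 8\sqrt{2}\,e^{-\sqrt{2}z(t)}| \lesssim M^2 e^{\beta t}/\ln\ln(1/\epsilon)$. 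By Theorem \ref{trueTheo2}, $d_j$ solves $\ddot d_j = (-1)^j 8\sqrt{2}\,e^{-\sqrt{2}d(t)}$ with $(d_j(0),\dot d_j(0)) = (x_j(0),-v_j)$, and the orthogonality conditions imposed on $g_1$ force $\dot x_j(0) = -v_j$ up to an error of order $M\epsilon^{1/2}$, so $(y_j,\dot y_j)(0)$ is negligible.

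I decompose $\ddot y_j = (\ddot x_j - \dot p_j) + (\dot p_j - (-1)^j 8\sqrt{2}\,e^{-\sqrt{2}z}) + (-1)^j 8\sqrt{2}(e^{-\sqrt{2}z}-e^{-\sqrt{2}d})$ and integrate. The first bracket telescopes to boundary values of $\dot x_j - p_j$; the second contributes a direct integral of the second Lemma \ref{aux22} estimate; and the third is controlled by the mean value theorem: since $e^{-\sqrt{2}z(s)} \lesssim \epsilon$ from Theorem \ref{Stab} and $e^{-\sqrt{2}d(s)} \leq e^{-\sqrt{2}d(0)} + \dot d(0)^2/32 \lesssim \epsilon$ from the conservation law along \eqref{theode}, one has $|e^{-\sqrt{2}z(s)}-e^{-\sqrt{2}d(s)}| \lesssim \epsilon\,|z(s)-d(s)| \leq 2\epsilon\,Y(s)$ with $Y(t) \coloneqq \max_j|y_j(t)|$. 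Integrating a second time yields
\begin{equation*}
Y(t) \leq R(t) + C_1 \epsilon \int_0^t (t-s)\,Y(s)\,ds, \qquad R(t) = \frac{C_2 M^2\ln(1/\epsilon)^2}{\epsilon\ln\ln(1/\epsilon)}\,e^{\beta t},
\end{equation*}
the polylog prefactor of $R$ arising from the two time-integrations of $e^{\beta s}$ each contributing $1/\beta \sim \ln(1/\epsilon)/\epsilon^{1/2}$.

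The main obstacle is closing this Volterra inequality while preserving the target growth rate $K\epsilon^{1/2}/\ln(1/\epsilon)$: a naive Gronwall produces growth $e^{\sqrt{\epsilon}\,t}$, which on the natural time scale $t\sim\ln(1/\epsilon)/\epsilon^{1/2}$ degenerates to $\epsilon^{-O(1)}$ and is too weak. The plan is to close the estimate in two stages. For $t\lesssim 1/\epsilon^{1/2}$ the Gronwall kernel $\epsilon\int_0^t(t-s)\,ds = \epsilon t^2/2$ is bounded, so a direct bootstrap with ansatz $Y \leq 2R$ closes. For $t \gtrsim 1/\epsilon^{1/2}$, I exploit the Pöschl--Teller structure of the linearized relative motion $\ddot y + 4v^2\sech^2(\sqrt{2}vt+c)\,y = \mathcal{F}$: its homogeneous solutions, a multiple of $\tanh(\sqrt{2}vt+c)$ together with its Wronskian partner, grow only linearly in $t$, so variation of parameters provides a Green's function obeying $|G(t,s)| \lesssim 1 + |t-s|$. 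Together with the integrability $\int_{\mathbb{R}} 4v^2\sech^2(\sqrt{2}vt+c)\,dt \lesssim \epsilon^{1/2}$, this converts the $\epsilon(t-s)$ kernel into an effectively bounded convolution, and the polylog factors accumulated at each stage consolidate into the $\ln(1/\epsilon)^6$ prefactor in \eqref{theo2d1}. The velocity estimate \eqref{theo2d2} then follows from the integral representation of $\dot y_j$ using the bound just obtained for $Y$, the saved power $\epsilon^{-1/2}$ instead of $\epsilon^{-1}$ reflecting that $\dot y_j$ requires only one time-integration of the source.
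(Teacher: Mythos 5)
Your high-level strategy is the same as the paper's: recognize that the naive Gronwall on $\ddot y\approx-32\,e^{-\sqrt{2}d(t)}y+\mathcal{F}$ loses too much because it turns the Pöschl--Teller potential into the blunt bound $\lesssim\epsilon|y|$, and instead exploit the exact homogeneous solutions $m(t)=\tanh(\sqrt{2}vt+c)$ and its Wronskian partner $n(t)=(\sqrt{2}vt+c)\tanh(\sqrt{2}vt+c)-1$ via variation of parameters. The paper does exactly this in Step 1, writing $W=z-d$, $V=p-\dot d$ and inverting the $2\times 2$ Wronskian matrix. That structural insight is correct and is the heart of the argument.

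There are, however, several gaps in the execution. First, your claimed Green's function bound $|G(t,s)|\lesssim 1+|t-s|$ is not correct as stated: the Wronskian of $(m,n)$ is $\sqrt{2}v$, and the honest bound is $|G(t,s)|\lesssim|t-s|+\frac{1+|c|}{v}$. The second term is only manageable because the hypothesis $\frac{\epsilon}{\ln(1/\epsilon)^{8}}\lesssim e^{-\sqrt{2}z(0)}$ (which you never invoke) forces $v\gtrsim\epsilon^{1/2}/\ln(1/\epsilon)^{4}$ and $|c|\lesssim\ln\ln(1/\epsilon)$ via $8e^{-\sqrt{2}z(0)}=v^{2}\sech^{2}(c)$. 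Without this lower bound on $v$ the $1/v$ factor is uncontrolled, and in fact the complementary regime $v\lesssim\epsilon^{1/2}/\ln(1/\epsilon)^{4}$ has to be treated separately (Lemma \ref{Theo22}), where $e^{-\sqrt{2}d(t)}\lesssim\epsilon/\ln(1/\epsilon)^{8}$ is small enough that a crude Gronwall actually suffices. Second, you do not carry out the bootstrap needed to control the quadratic term $O(e^{-\sqrt{2}d(t)}W(t)^{2})$: the paper imposes the running hypothesis \eqref{hold} on $\norm{W}_{L^{\infty}_{s}[0,t]}$, closes it up to $T_{M}\gtrsim\ln\ln(1/\epsilon)\ln(1/\epsilon)/\epsilon^{1/2}$, and then patches in the trivial a priori bound $|W(t)|\lesssim\epsilon^{1/2}t$ for $t\geq T_{M}$ by re-absorbing it into the exponential factor. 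Third, and structurally, the Pöschl--Teller linearization only governs the difference $W=z-d$; to get the individual $|x_{j}(t)-d_{j}(t)|$ as required by \eqref{theo2d1} you must also estimate the sum $M(t)=(x_{1}+x_{2})-(d_{1}+d_{2})$, for which the linear part vanishes and a double integration of the Lemma \ref{aux22} estimates is enough (paper's Step 2). Your decomposition $\ddot y_{j}=\dots$ addresses each coordinate directly, but then the coupling $(-1)^{j}32\,e^{-\sqrt{2}d}(y_{2}-y_{1})$ is rank-one and not diagonal, so a direct Volterra bound on $Y(t)=\max_{j}|y_{j}|$ does not feed back linearly in $Y$; splitting into $W$ and $M$ is what makes the argument close.
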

\begin{proof}[Proof of Lemma \ref{t21}]
First, in notation of Lemma \ref{aux22}, we define
\begin{equation*}
    p(t)\coloneqq p_{2}(t)-p_{1}(t),\,
    z(t)\coloneqq x_{2}(t)-x_{1}(t),\,
    \dot z(t)\coloneqq \dot x_{2}(t)-\dot x_{1}(t).
\end{equation*}
Also, motivated by Remark \ref{odez}, we consider the smooth function $d(t)$  
solution of the following ordinary differential equation
\[\begin{cases*}
\ddot d(t)=16\sqrt{2}e^{-\sqrt{2}d(t)},\\
(d(0),\dot d(0))=(z(0),\dot z(0)).
\end{cases*}\]
\textbf{Step 1.}(Estimate of $z(t),\,\dot z(t)$)
From now on, we denote the functions $W(t)=z(t)-d(t),\,V(t)=p(t)-\dot d(t).$
Then, Lemma \ref{aux22} implies that $W,\,V$ satisfy the following ordinary differential estimates
\begin{align*}
    \md{\dot W(t)-V(t)}=O\left(\max\Big(\norm{(g_{0},g_{1})},\epsilon\ln{\Big(\frac{1}{\epsilon}\Big)}\Big)\epsilon^{\frac{1}{2}}\exp\Big(\frac{2C\epsilon^{\frac{1}{2}}t}{\ln{(\frac{1}{\epsilon})}}\Big)\right),\\
   \md{\dot V(t)+16\sqrt{2}e^{-\sqrt{2}d(t)}-16\sqrt{2}e^{-\sqrt{2}z(t)}}=O\left(\frac{\max\Big(\norm{(g_{0},g_{1})},\epsilon\ln{\Big(\frac{1}{\epsilon}\Big)}\Big)^{2}}{\ln{\ln{(\frac{1}{\epsilon})}}}\exp\Big(\frac{2C\epsilon^{\frac{1}{2}}t}{\ln{(\frac{1}{\epsilon})}}\Big)\right).
\end{align*}
From the above estimates and the Taylor's Expansion Theorem, we deduce the following almost ordinary differential system of equations, while $\md{W(t)}<1:$
\[\begin{cases}\label{almostode}
\dot W(t)=V(t)+O\Big(\max\Big(\norm{(g_{0},g_{1})},\epsilon\ln{\Big(\frac{1}{\epsilon}\Big)}\Big)\epsilon^{\frac{1}{2}}\exp\Big(\frac{2C\epsilon^{\frac{1}{2}}t}{\ln{(\frac{1}{\epsilon})}}\Big)\Big),\\
\dot V(t)=-32e^{-\sqrt{2}d(t)}W(t)+O\left(e^{-\sqrt{2}d(t)}W(t)^{2}\right)+O\left(\frac{\max\Big(\norm{(g_{0},g_{1})},\epsilon\ln{(\frac{1}{\epsilon})}\Big)^{2}}{\ln{\ln{(\frac{1}{\epsilon})}}}\exp\Big(\frac{2C\epsilon^{\frac{1}{2}}t}{\ln{(\frac{1}{\epsilon})}}\Big)\right).
\end{cases}\]
Recalling Remark \ref{odez},
we have that 
\begin{equation}\label{d(t)}
d(t)=\frac{1}{\sqrt{2}}\ln{\Big(\frac{8}{v^{2}}\cosh{(\sqrt{2}vt+c)}^{2}\Big)},
\end{equation}
where $v>0$ and $c\in\mathbb{R}$ are chosen such that $(d(0),\dot d(0))=(z(0),\dot z(0)).$
Moreover, it is not difficult to verify that
\begin{align*}
    v=\Big(\frac{\dot z(0)^{2}}{4}+8 e^{-\sqrt{2}z(0)}\Big)^{\frac{1}{2}},\,
    c=\arctanh{\left(\frac{\dot z(0)}{\big[32 e^{-\sqrt{2}z(0)}+\dot z(0)^{2}\big]^{\frac{1}{2}}}\right)}.
\end{align*}
Moreover, since
$
8e^{-\sqrt{2}z(0)}=v^{2}\sech{(c)}^{2}\leq 4v^{2}e^{-2\md{c}},    
$ 
we obtain
from the hypothesis for $e^{-\sqrt{2}z(0)}$ that
$\frac{\epsilon^{\frac{1}{2}}}{\ln{(\frac{1}{\epsilon})}^{4}}\lesssim v \lesssim \epsilon^{\frac{1}{2}}$ and as a consequence the estimate $|c|\lesssim \ln{(\ln{(\frac{1}{\epsilon})})}.$
Also, it is not difficult to verify that the functions
\begin{equation*}
n(t)=(\sqrt{2}vt+c)\tanh{(\sqrt{2}vt+c)}-1,\,m(t)=\tanh{(\sqrt{2}vt+c)}
\end{equation*}
generate all solutions of the following ordinary differential equation
\begin{equation}\label{almostode2}
\ddot y(t)=-32e^{-\sqrt{2}d(t)}y(t),
\end{equation}
which is obtained from the linear part of the system \eqref{almostode}.
\par To simplify our computations we use the following notation
\begin{align*}
    error_{1}(t)=\max\Big(\norm{(g_{0},g_{1})},\epsilon\ln{\Big(\frac{1}{\epsilon}\Big)}\Big)\epsilon^{\frac{1}{2}}\exp\Big(\frac{2C\epsilon^{\frac{1}{2}}t}{\ln{(\frac{1}{\epsilon})}}\Big),\\
    error_{2}(t)=e^{-\sqrt{2}d(t)}(z(t)-d(t))^{2}+\frac{\max\Big(\norm{(g_{0},g_{1})},\epsilon\ln{\Big(\frac{1}{\epsilon}\Big)}\Big)^{2}}{\ln{\ln{(\frac{1}{\epsilon})}}}\exp\Big(\frac{2C\epsilon^{\frac{1}{2}}t}{\ln{(\frac{1}{\epsilon})}}\Big).
\end{align*}
From the variation of parameters technique for ordinary differential equation, we can write that
\begin{equation}\label{solutionode}
    \begin{bmatrix}
    W(t)\\
    V(t)
    \end{bmatrix}=
    c_{1}(t)\begin{bmatrix}
    m(t)\\
    \dot m(t)
    \end{bmatrix}
    +c_{2}(t)
    \begin{bmatrix}
    n(t)\\
    \dot n(t)
    \end{bmatrix},
\end{equation}
such that
\[\begin{cases}
    \begin{bmatrix}
    m(t) & n(t)\\
    \dot m(t) & \dot n(t)
    \end{bmatrix}
    \begin{bmatrix}
        \dot c_{1}(t)\\
        \dot c_{2}(t)
    \end{bmatrix}
    =\begin{bmatrix}
        O(error_{1}(t))\\
        O(error_{2}(t))
    \end{bmatrix},\\
    \\
     \begin{bmatrix}
    m(0) & n(0)\\
    \dot m(0) & \dot n(0)
    \end{bmatrix}
    
    \begin{bmatrix}
         c_{1}(0)\\
         c_{2}(0)
    \end{bmatrix}
    =\begin{bmatrix}
        0\\
        O\Big(\Big[\norm{(g_{0},g_{1})}+\epsilon\ln{(\frac{1}{\epsilon})}\Big]\epsilon^{\frac{1}{2}}\Big)
    \end{bmatrix}.
\end{cases}\]
The presence of an error in the condition of the initial data $c_{1}(0),\,c_{2}(0)$ comes from estimate \eqref{modest1} of Lemma \ref{aux22}.
Since for all $t \in \mathbb{R}$ $m(t)\dot n(t)-\dot m(t)n(t)=\sqrt{2}v$, we can verify by Cramer's rule and from the fact that $\frac{\epsilon^{\frac{1}{2}}}{\ln{(\frac{1}{\epsilon})}^{4}}\lesssim v$ that 
\begin{align}\label{c100}
    c_{1}(0)=O\left(\max\Big(\norm{\overrightarrow{g(0)}},\epsilon\ln{\Big(\frac{1}{\epsilon}\Big)}\Big)\md{c\tanh{(c)}-1}\ln{\Big(\frac{1}{\epsilon}\Big)}^{4}\right),\\ \label{c200}
    c_{2}(0)=O\left(\max\Big(\norm{\overrightarrow{g(0)}},\epsilon\ln{\Big(\frac{1}{\epsilon}\Big)}\Big)\md{\tanh{(c)}}\ln{\Big(\frac{1}{\epsilon}\Big)}^{4}\right),
\end{align}
and
\begin{multline}\label{dc1}
    \md{\dot c_{1}(t)}=O\left(\md{m(t)+(\sqrt{2}vt+c)\sech{(\sqrt{2}vt+c)}^{2}}\max\left(\norm{(g_{0},g_{1})},\epsilon\ln{\Big(\frac{1}{\epsilon}\Big)}\right)\epsilon^{\frac{1}{2}}\exp\Big(\frac{2C\epsilon^{\frac{1}{2}}t}{\ln{(\frac{1}{\epsilon})}}\Big)\right)\\+O\left(\left[v\sech{(\sqrt{2}vt+c)}^{2}\md{W(t)}^{2}+\frac{\max\Big(\norm{(g_{0},g_{1})},\epsilon\ln{(\frac{1}{\epsilon})}\Big)^{2}}{v\ln{\ln{(\frac{1}{\epsilon})}}}\exp\Big(\frac{2C\epsilon^{\frac{1}{2}}t}{\ln{(\frac{1}{\epsilon})}}\Big)\right]\md{n(t)}\right),
\end{multline}
\begin{multline}\label{dc2}
    \md{\dot c_{2}(t)}=O\left(\left[v\sech{(\sqrt{2}vt+c)}^{2}\md{W(t)}^{2}+\frac{\max\Big(\norm{(g_{0},g_{1})},\epsilon\ln{\Big(\frac{1}{\epsilon}\Big)}\Big)^{2}}{v\ln{\ln{(\frac{1}{\epsilon})}}}\exp\Big(\frac{2C\epsilon^{\frac{1}{2}}t}{\ln{(\frac{1}{\epsilon})}}\Big)\right]\md{m(t)}\right)\\
    +O\left(\max\left(\norm{(g_{0},g_{1})},\epsilon\ln{\Big(\frac{1}{\epsilon}\Big)}\right)\exp\Big(\frac{2C\epsilon^{\frac{1}{2}}t}{\ln{(\frac{1}{\epsilon})}}\Big)\epsilon^{\frac{1}{2}}\sech(\sqrt{2}vt+c)^{2}\right).
\end{multline}
Since we have for all $x\geq 0$ that
\begin{equation*}
    \frac{d}{dx}\left(-\frac{\sech{(x)}^{2}x}{2}+\frac{3\tanh{(x)}}{2}\right)=\frac{\sech{(x)}^{2}}{2}+x\tanh{(x)}\sech{(x)}^{2}\geq \frac{\md{x\tanh{(x)}-1}\sech{(x)}^{2}}{2},
\end{equation*}
we deduce from the Fundamental Theorem of Calculus, the identity $n(t)=(\sqrt{2}vt+c)\tanh(\sqrt{2}vt+c)-1,$ the inequality $\frac{\epsilon^{\frac{1}{2}}}{\ln{(\frac{1}{\epsilon})}^{4}}\lesssim v$ and the estimates \eqref{dc1}, \eqref{dc2} that
\begin{multline}\label{c1estimate}
    |c_{1}(t)-c_{1}(0)|=O\left(\max\left(\norm{(g_{0},g_{1})},\epsilon\ln{\Big(\frac{1}{\epsilon}\Big)}\right)\ln{\Big(\frac{1}{\epsilon}\Big)}\left[\exp\Big(\frac{2Ct\epsilon^{\frac{1}{2}}}{\ln{(\frac{1}{\epsilon})}}\Big)-1\right]\right)\\+O\left(\left[\exp\Big(\frac{2C\epsilon^{\frac{1}{2}}t}{\ln{(\frac{1}{\epsilon})}}\Big)-1\right]\norm{n(s)}_{L^{\infty}_{s}[0,t]}\max\Big(\norm{(g_{0},g_{1})},\epsilon\ln{\Big(\frac{1}{\epsilon}\Big)}\Big)^{2}\frac{\ln{(\frac{1}{\epsilon})}^{5}}{\epsilon\ln{\ln(\frac{1}{\epsilon})}}\right)\\+O\left(\md{-\frac{\sech{(x)}^{2}x}{2}+\frac{3\tanh{(x)}}{2}\Big\vert_{c}^{\sqrt{2}vt+c}}\norm{W(s)}_{L^{\infty}_{s}[0,t]}^{2}\right).
\end{multline}
From a similar argument, we deduce that
\begin{multline}\label{c2estimate}
    \md{c_{2}(t)-c_{2}(0)}=O\left(\norm{W(s)}_{L^{\infty}_{s}[0,t]}^{2}\Big[\tanh{(\sqrt{2}vt+c)}-\tanh{(c)}\Big]\right)\\+O\left(\max\Big(\norm{(g_{0},g_{1})},\epsilon\ln{\Big(\frac{1}{\epsilon}\Big)}\Big)^{2}\left[\exp\left(\frac{2C\epsilon^{\frac{1}{2}}t}{\ln{(\frac{1}{\epsilon})}}\right)-1\right]\frac{\ln{(\frac{1}{\epsilon})}^{5}}{\epsilon\ln{\ln{(\frac{1}{\epsilon})}}}+\max\Big(\norm{(g_{0},g_{1})},\epsilon\ln{\Big(\frac{1}{\epsilon}\Big)}\Big)\ln{\Big(\frac{1}{\epsilon}\Big)}\exp\left(2Ct\frac{\epsilon^{\frac{1}{2}}}{\ln{(\frac{1}{\epsilon})}}\right)\right).
\end{multline}
From the estimates $v\lesssim \epsilon^{\frac{1}{2}},\,|c|\lesssim \ln{\ln{(\frac{1}{\epsilon})}},$ we obtain for $\epsilon\ll 1$ 
while
\begin{equation}\label{hold}
    \norm{W(s)}_{L^{\infty}_{s}[0,t]}\Big[\epsilon^{\frac{1}{2}}t+\ln{\ln{\Big(\frac{1}{\epsilon}\Big)}}\Big]\ln{\ln{\Big(\frac{1}{\epsilon}\Big)}}\leq 1,
\end{equation}
that
\begin{equation}\label{boot}
     \norm{W(s)}_{L^{\infty}_{s}[0,t]}^{2}(1+\md{n(t)})\lesssim  \norm{W(s)}_{L^{\infty}_{s}[0,t]}\frac{1}{\ln{\ln{(\frac{1}{\epsilon})}}}.
\end{equation}
Also, from
\begin{equation*}
    \md{n(t)}\leq (\sqrt{2}v|t|+|c|),
\end{equation*}
we deduce for $t\geq 0$ that
\begin{equation}\label{n(t)ineq}
    \md{n(t)}\lesssim \epsilon^{\frac{1}{2}}t+\ln{\ln{\left(\frac{1}{\epsilon}\right)}}\lesssim \ln{\left(\frac{1}{\epsilon}\right)}\exp\left(\frac{\epsilon^{\frac{1}{2}}t}{\ln{(\frac{1}{\epsilon})}}\right)
\end{equation}
In conclusion, the estimates \eqref{c1estimate}, \eqref{c2estimate}, \eqref{boot}, \eqref{n(t)ineq} and the definition of $W(t)=z(t)-d(t)$ implies that while the condition \eqref{hold} is true, then
\begin{equation}\label{y1t}
    \md{W(t)}\lesssim f(t)=\frac{\max\Big(\norm{(g_{0},g_{1})},\epsilon\ln{(\frac{1}{\epsilon})}\Big)^{2}\ln{(\frac{1}{\epsilon})}^{6}}{\epsilon\ln{\ln(\frac{1}{\epsilon})}}\exp\left(\frac{(2C+1)\epsilon^{\frac{1}{2}}t}{\ln{(\frac{1}{\epsilon})}}\right)
\end{equation}
Then, from the expression for $V(t)$ in the equation \eqref{solutionode} and the estimates \eqref{c1estimate}, \eqref{c2estimate}, \eqref{n(t)ineq}, we obtain that if inequality \eqref{y1t} is true, then
\begin{multline}\label{y2t}
\md{V(t)}\lesssim  \max\Big(\norm{(g_{0},g_{1})},\epsilon\ln{\Big(\frac{1}{\epsilon}\Big)}\Big)^{2}\frac{\ln{(\frac{1}{\epsilon})}^{6}}{\epsilon^{\frac{1}{2}}\ln{\ln{(\frac{1}{\epsilon})}}}\exp\left(\frac{(4C+3)\epsilon^{\frac{1}{2}}t}{\ln{(\frac{1}{\epsilon})}}\right)
    \\+\max\Big(\norm{(g_{0},g_{1})},\epsilon\ln{\Big(\frac{1}{\epsilon}\Big)}\Big)^{4}\frac{\ln{(\frac{1}{\epsilon})}^{12}}{\epsilon^{\frac{3}{2}}\big[\ln{\ln{(\frac{1}{\epsilon})}}\big]^{2}}\exp\Big(\frac{(4C+3)\epsilon^{\frac{1}{2}}t}{\ln{(\frac{1}{\epsilon})}}\Big),
\end{multline}
which implies the following estimate
\begin{equation}\label{doty1t}
    \md{\dot W(t)}\lesssim \max\Big(\norm{\overrightarrow{g(0)}},\epsilon\ln{\Big(\frac{1}{\epsilon}\Big)}\Big)^{2}\frac{\ln{(\frac{1}{\epsilon})}^{6}}{\epsilon^{\frac{1}{2}}\ln{\ln{(\frac{1}{\epsilon})}}}\exp\Big(\frac{(4C+3)\epsilon^{\frac{1}{2}}t}{\ln{(\frac{1}{\epsilon})}}\Big).
\end{equation}
Indeed, from the bound $\norm{\overrightarrow{g(0)}}\lesssim \frac{\epsilon^{\frac{1}{2}}}{\ln{(\frac{1}{\epsilon})}^{4}},$ we deduce that \eqref{hold} is true for $0\leq t\leq \frac{\ln{\ln{(\frac{1}{\epsilon})}}\ln{(\frac{1}{\epsilon})}}{(4C+2)\epsilon^{\frac{1}{2}}}.$ As a consequence, the estimates \eqref{y1t} and \eqref{doty1t} are true for $0\leq t\leq \frac{\ln{\ln{(\frac{1}{\epsilon})}}\ln{(\frac{1}{\epsilon})}}{(4C+2)\epsilon^{\frac{1}{2}}}.$ But, for $t\geq 0,$ we have that 
\begin{equation}\label{badglobal}
\md{W(t)}\lesssim \epsilon^{\frac{1}{2}}t\lesssim 3\ln{\left(\frac{1}{\epsilon}\right)}\exp\left(\frac{\epsilon^{\frac{1}{2}}t}{3\ln{(\frac{1}{\epsilon})}}\right),\,\md{\dot W(t)}\lesssim\epsilon t\lesssim 3\epsilon^{\frac{1}{2}}\ln{\left(\frac{1}{\epsilon}\right)}\exp\left(\frac{\epsilon^{\frac{1}{2}}t}{3\ln{(\frac{1}{\epsilon})}}\right).
\end{equation}
Since $f(t)$ defined in inequality \eqref{y1t} is strictly increasing and $f(0)\lesssim \frac{1}{\ln{(\frac{1}{\epsilon})}^{2}\ln{\ln{(\frac{1}{\epsilon})}}}$, there is an instant $T_{M}>0$ such that
\begin{equation}\label{defT}
    \exp\left(\frac{\epsilon^{\frac{1}{2}}T_{M}}{\ln{(\frac{1}{\epsilon})}}\right)f(T_{M})=\frac{1}{\ln{(\frac{1}{\epsilon})}\ln{\ln{(\frac{1}{\epsilon})}}^{2}},
\end{equation}
from which with estimate \eqref{y1t} and condition \eqref{hold} we deduce that \eqref{y1t} is true for $0\leq t\leq T_{M}.$ Also, from the identity \eqref{defT} and the fact that $\norm{\overrightarrow{g(0)}}\lesssim \frac{\epsilon^{\frac{1}{2}}}{\ln{(\frac{1}{\epsilon})}^{4}}$ we deduce 
\begin{equation*}
   \frac{1}{\ln{(\frac{1}{\epsilon})}\ln{\ln{(\frac{1}{\epsilon})}}^{2}}\lesssim \frac{1}{\ln{(\frac{1}{\epsilon})}^{2}\ln{\ln{(\frac{1}{\epsilon})}}}\exp\left(\frac{(2C+2)\epsilon^{\frac{1}{2}}T_{M}}{\ln{(\frac{1}{\epsilon})}}\right),
\end{equation*}
from which we obtain that $T_{M}\geq \frac{3}{8(C+1)}\frac{\ln{\ln{(\frac{1}{\epsilon})}}\ln{(\frac{1}{\epsilon})}}{\epsilon^{\frac{1}{2}}}$ for $\epsilon \ll 1.$
In conclusion, since $f(t)$ is an increasing function, we have for $t\geq T_{M}$ and $\epsilon \ll 1$ that
\begin{equation*}
    f(t)\exp\left(\frac{[17(C+1)+4]\epsilon^{\frac{1}{2}}t}{3\ln{(\frac{1}{\epsilon})}}\right)\geq \frac{1}{\ln{(\frac{1}{\epsilon})}\ln{\ln{(\frac{1}{\epsilon})}}^{2}} \exp\left(\frac{[17(C+1)+1]\epsilon^{\frac{1}{2}}t}{3\ln{(\frac{1}{\epsilon})}}\right)\geq \frac{\ln{(\frac{1}{\epsilon})}^{1+\frac{1}{8}}}{\ln{\ln{(\frac{1}{\epsilon})}}^{2}}\exp\left(\frac{\epsilon^{\frac{1}{2}}t}{3\ln{(\frac{1}{\epsilon}})}\right),
\end{equation*}
from which with the estimates \eqref{badglobal} and \eqref{y1t} we deduce for all $t\geq 0$ that
\begin{equation}\label{r1t}
    \md{W(t)}\lesssim \frac{\max\Big(\norm{(g_{0},g_{1})},\epsilon\ln{(\frac{1}{\epsilon})}\Big)^{2}\ln{(\frac{1}{\epsilon})}^{6}}{\epsilon\ln{\ln(\frac{1}{\epsilon})}}\exp\left(\frac{(8C+9)\epsilon^{\frac{1}{2}}t}{\ln{(\frac{1}{\epsilon})}}\right).
\end{equation}
As consequence, we obtain from the estimates \eqref{c100}, \eqref{c200}, \eqref{c1estimate}, \eqref{c2estimate} and \eqref{r1t} that
\begin{equation}\label{r2t}
    \md{\dot W(t)}\lesssim \frac{\max\Big(\norm{(g_{0},g_{1})},\epsilon\ln{(\frac{1}{\epsilon})}\Big)^{2}\ln{(\frac{1}{\epsilon})}^{6}}{\epsilon^{\frac{1}{2}}\ln{\ln(\frac{1}{\epsilon})}}\exp\left(\frac{(16C+18)\epsilon^{\frac{1}{2}}t}{\ln{(\frac{1}{\epsilon})}}\right)
\end{equation}
for all $t\geq 0.$
\\
\textbf{Step 2.}(Estimate of $\md{x_{1}(t)+x_{2}(t)},\,\md{\dot x_{1}(t)+\dot x_{2}(t)}.$)
First, we define
\begin{equation}
    M(t)\coloneqq (x_{1}(t)+x_{2}(t))-(d_{1}(t)+d_{2}(t)),\,
    N(t)\coloneqq (p_{1}(t)+p_{2}(t))-(\dot d_{1}(t)+\dot d_{2}(t)).
\end{equation}
From the inequalities \eqref{modest1}, \eqref{modest2} of Lemma \ref{aux22}, we obtain, respectively: 
\begin{equation*}\md{\dot M(t)-N(t)}\lesssim\max\Big(\norm{(g_{0},g_{1})},\epsilon\ln{\left(\frac{1}{\epsilon}\right)}\Big)\epsilon^{\frac{1}{2}}\exp\Big(\frac{C\epsilon^{\frac{1}{2}}t}{\ln{\left(\frac{1}{\epsilon}\right)}}\Big),\, \md{\dot N(t)}\lesssim\frac{\max\Big(\norm{(g_{0},g_{1})},\epsilon\ln{(\frac{1}{\epsilon})}\Big)^{2}}{\ln{\ln(\frac{1}{\epsilon})}}\exp\Big(\frac{2C\epsilon^{\frac{1}{2}}t}{\ln{(\frac{1}{\epsilon})}}\Big).
\end{equation*}
Also, from inequality \eqref{modest1} and the fact that for $j\in\{1,2\}\, d_{j}(0)=x_{j}(0),\,\dot d_{j}(0)=\dot x_{j}(0),$ we deduce that $M(0)=0$ and $\md{N(0)}\lesssim \max\left(\norm{\overrightarrow{g(0)}},\epsilon\ln{(\frac{1}{\epsilon})}\right)\epsilon^{\frac{1}{2}}.$ Then,
from the Fundamental Theorem of Calculus, we obtain that
\begin{gather}\label{Nt}
    N(t)=O\left(\frac{\max\Big(\norm{(g_{0},g_{1})},\epsilon\ln{(\frac{1}{\epsilon})}\Big)^{2}\ln{(\frac{1}{\epsilon})}}{\epsilon^{\frac{1}{2}}\ln{\ln{(\frac{1}{\epsilon})}}}\exp\Big(\frac{4C\epsilon^{\frac{1}{2}}t}{\ln{(\frac{1}{\epsilon})}}\Big)\right),
\\ \label{Mt}
    M(t)=O\left(\frac{\max\Big(\norm{(g_{0},g_{1})},\epsilon\ln{(\frac{1}{\epsilon})}\Big)^{2}\ln{(\frac{1}{\epsilon})}^{2}}{\epsilon\ln{\ln{(\frac{1}{\epsilon})}}}\exp\Big(\frac{4C\epsilon^{\frac{1}{2}}t}{\ln{(\frac{1}{\epsilon})}}\Big)\right).
\end{gather}
In conclusion, for $K=16C+18,$ we verify from triangle inequality that the estimates \eqref{r1t} and \eqref{Mt} imply \eqref{theo2d1} and the estimates \eqref{r2t} and \eqref{Nt} imply \eqref{theo2d2}.
\end{proof}
\begin{remark}\label{MN}
The estimates \eqref{Mt} and \eqref{Nt} are true for any initial data $(g_{0},g_{1})\in H^{1}(\mathbb{R})\times L^{2}(\mathbb{R})$ such that the hypothesis of Theorem \ref{trueTheo2} are true.
\end{remark}
\begin{remark}[Similar Case]\label{similarcase}
If we add the following conditions
\begin{equation*}
    e^{-\sqrt{2}z(0)}\ll \frac{\epsilon}{\ln{(\frac{1}{\epsilon})}^{8}},\,\frac{\epsilon^{\frac{1}{2}}}{\ln{(\frac{1}{\epsilon})}^{4}}\lesssim v \lesssim \epsilon^{\frac{1}{2}},\, -\ln{\Big(\frac{1}{\epsilon}\Big)}^{2}<c<0,
\end{equation*}
to the hypotheses of Theorem \ref{trueTheo2}, then, by repeating the above proof of Lemma \ref{t21}, we would still obtain \eqref{dc1}, \eqref{dc2}, \eqref{c1estimate} and \eqref{c2estimate}.
\par However, since now $\md{c}\leq \ln{(\frac{1}{\epsilon})}^{2},$ if $\epsilon\ll 1$ enough, we can verify while
\begin{equation}\label{conditioncc}
\norm{W(s)}_{L^{\infty}_{s}[0,t]}\left(\epsilon^{\frac{1}{2}}t+\ln{\left(\frac{1}{\epsilon}\right)}^{2}\right)\ln{\ln{\left(\frac{1}{\epsilon}\right)}}\leq 1,
\end{equation}  
that
\begin{equation*}
    \norm{W(s)}_{L^{\infty}_{s}[0,t]}^{2}(1+\md{n(t)})\lesssim  \norm{W(s)}_{L^{\infty}_{s}[0,t]}\frac{1}{\ln{\ln{(\frac{1}{\epsilon})}}},
\end{equation*}
which implies by a similar reasoning to the proof of Lemma \ref{t21} for a uniform constant $C>1$ the following estimates
\begin{gather}\label{mmt}
    \md{W(t)}\lesssim \frac{\max\Big(\norm{(g_{0},g_{1})},\epsilon\ln{(\frac{1}{\epsilon})}\Big)^{2}\ln{(\frac{1}{\epsilon})}^{7}}{\epsilon\ln{\ln(\frac{1}{\epsilon})}}\exp\Big(\frac{C\epsilon^{\frac{1}{2}}t}{\ln{(\frac{1}{\epsilon})}}\Big)=f_{1}(t,C),\\ \label{dmmt}
    \md{\dot W(t)}\lesssim \max\Big(\norm{(g_{0},g_{1})},\epsilon\ln{\Big(\frac{1}{\epsilon}\Big)}\Big)^{2}\frac{\ln{(\frac{1}{\epsilon})}^{7}}{\epsilon^{\frac{1}{2}}\ln{\ln{(\frac{1}{\epsilon})}}}\exp\Big(\frac{C\epsilon^{\frac{1}{2}}t}{\ln{(\frac{1}{\epsilon})}}\Big)=f_{2}(t,C).
\end{gather}
From the estimates \eqref{mmt}, \eqref{dmmt} and $\norm{\overrightarrow{g(0)}}\leq \frac{\epsilon^{\frac{1}{2}}}{\ln{(\frac{1}{\epsilon})}^{5}}$, we deduce that the condition \eqref{conditioncc} holds while $0\leq t\leq \frac{\ln{\ln{(\frac{1}{\epsilon})}}\ln{(\frac{1}{\epsilon})}}{4(C+1)\epsilon^{\frac{1}{2}}}.$
Indeed, since $\norm{\overrightarrow{g(0)}}^{2}\leq\frac{\epsilon}{\ln{(\frac{1}{\epsilon})}^{10}},$ we can verify that there is an instant $\frac{\ln{\ln{(\frac{1}{\epsilon})}}\ln{(\frac{1}{\epsilon})}}{4(C+1)\epsilon^{\frac{1}{2}}}\leq T_{M}$ such that \eqref{conditioncc} and \eqref{mmt} are true for $0\leq t \leq T_{M}$ and
\begin{equation*}
    f_{1}(T_{M},C)\exp\left(\frac{\epsilon^{\frac{1}{2}}T_{M}}{\ln{(\frac{1}{\epsilon})}}\right)=\frac{1}{\ln{(\frac{1}{\epsilon})}^{2+\frac{1}{2}}\ln{\ln{(\frac{1}{\epsilon})}}}.
\end{equation*}
In conclusion, we can repeat the argument in the proof of step $1$ of Lemma \ref{t21} and deduce that there is $1<K\lesssim C+1$ such that for all $t\geq 0$
\begin{equation}\label{case2}
    |W(t)|\lesssim f_{1}(t,K),\,|\dot W(t)|\lesssim f_{2}(t,K).
\end{equation}
\end{remark}
\begin{lemma}\label{Theo22}
In notation of Theorem \ref{trueTheo2}, $\exists K>1,\,\delta>0$ such that if $0<\epsilon<\delta,\,0<v\leq \frac{\epsilon^{\frac{1}{2}}}{\ln{(\frac{1}{\epsilon})}^{4}},\, (g_{0}(x),g_{1}(x))=\left(g(0,x),\partial_{t}g(0,x)\right)$ and $\norm{\overrightarrow{g(0)}}\leq \frac{\epsilon^{\frac{1}{2}}}{\ln{(\frac{1}{\epsilon})}^{5}},$ then we have for $0\leq t$ that
\begin{align}\label{dddd1}
    \max_{j \in\{1,\,2\}}\md{d_{j}(t)-x_{j}(t)}=O\left(\frac{\max\Big(\norm{(g_{0},g_{1})},\epsilon\ln{(\frac{1}{\epsilon})}\Big)^{2}}{\epsilon\ln{\ln{(\frac{1}{\epsilon})}}}\ln{\Big({\frac{1}{\epsilon}}\Big)}^{2}\exp\Big(\frac{Kt\epsilon^{\frac{1}{2}}}{\ln{\frac{1}{\epsilon}}}\Big)\right),\\ \label{dddd2}
     \max_{j \in\{1,\,2\}}\md{\dot d_{j}(t)-\dot x_{j}(t)}=O\left(\frac{\max\Big(\norm{(g_{0},g_{1})},\epsilon\ln{(\frac{1}{\epsilon})}\Big)^{2}}{\epsilon^{\frac{1}{2}}\ln{\ln{(\frac{1}{\epsilon})}}}\ln{\Big({\frac{1}{\epsilon}}\Big)}\exp\Big(\frac{Kt\epsilon^{\frac{1}{2}}}{\ln{\frac{1}{\epsilon}}}\Big)\right).
\end{align}
\end{lemma}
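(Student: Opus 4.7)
The plan is to mirror the structure of the proof of Lemma \ref{t21}, decomposing the discrepancies into antisymmetric and symmetric combinations. Define
\begin{equation*}
W(t)=z(t)-d(t),\quad V(t)=p(t)-\dot d(t),\quad M(t)=(x_1+x_2)-(d_1+d_2),\quad N(t)=(p_1+p_2)-(\dot d_1+\dot d_2),
\end{equation*}
with $p=p_2-p_1$ and $p_1,p_2$ the modulation speeds of Lemma \ref{aux22}. Since $\max_{j}|x_j-d_j|\le \tfrac12(|W|+|M|)$ and $\max_j|\dot x_j-\dot d_j|\le \tfrac12(|V|+|N|)+\max_j|\dot x_j-p_j|$, it suffices to bound each of $W,V,M,N$ and to use \eqref{modest1} for the last discrepancy. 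For the symmetric pair, Remark \ref{MN} asserts that the bounds \eqref{Mt}, \eqref{Nt} from Step 2 of Lemma \ref{t21} hold unconditionally, and these already match the required growth.

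The new content concerns the antisymmetric pair $(W,V)$. The strategy of Lemma \ref{t21}, based on variation of parameters with the fundamental pair $m(t)=\tanh(\sqrt{2}vt+c)$, $n(t)=(\sqrt{2}vt+c)\tanh(\sqrt{2}vt+c)-1$, fails here because its Wronskian $\sqrt{2}v$ is too small to divide by: the hypothesis $v\le \epsilon^{1/2}/\ln(1/\epsilon)^{4}$ would introduce uncontrollable factors. However, the same smallness yields $8e^{-\sqrt{2}d(t)}=v^{2}\sech^{2}(\sqrt{2}vt+c)\le v^{2}\le \epsilon/\ln(1/\epsilon)^{8}$, so the linearized equation $\ddot W=-32 e^{-\sqrt{2}d(t)}W$ is essentially $\ddot W=0$. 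I exploit this by discarding the $(m,n)$ basis altogether and integrating the Lemma \ref{aux22} estimates directly. Writing $A(t)=\max(\norm{(g_0,g_1)},\epsilon\ln(1/\epsilon))\exp(Ct\epsilon^{1/2}/\ln(1/\epsilon))$, which dominates $\norm{\overrightarrow{g(t)}}$ by Theorem \ref{TT1}, Lemma \ref{aux22} combined with Taylor expansion gives
\begin{equation*}
|\dot W(t)-V(t)|\lesssim A(t)\epsilon^{1/2},\qquad |\dot V(t)+32 e^{-\sqrt{2}d(t)}W(t)|\lesssim e^{-\sqrt{2}d(t)}W(t)^{2}+\frac{A(t)^{2}}{\ln\ln(1/\epsilon)}.
\end{equation*}
Inserting $e^{-\sqrt{2}d(t)}\le v^{2}/8$, the initial data $W(0)=0$ and $|V(0)|\lesssim A(0)\epsilon^{1/2}$ from \eqref{modest1}, and integrating produces the coupled bound
\begin{equation*}
|V(t)|\lesssim A(0)\epsilon^{1/2}+\frac{A(t)^{2}\ln(1/\epsilon)}{\epsilon^{1/2}\ln\ln(1/\epsilon)}+v^{2}\int_{0}^{t}|W(s)|\,ds,\qquad |W(t)|\lesssim\int_{0}^{t}|V(s)|\,ds+A(t)\ln(1/\epsilon).
\end{equation*}
On the range $t\lesssim\ln(1/\epsilon)^{4}/\epsilon^{1/2}$, which comfortably contains the target horizon, one has $v^{2}t^{2}\le 1/2$, so a bootstrap absorbs the feedback $v^{2}\int|W|$ and yields
\begin{equation*}
\|W\|_{L^{\infty}_s[0,t]}\lesssim tA(0)\epsilon^{1/2}+\frac{tA(t)^{2}\ln(1/\epsilon)}{\epsilon^{1/2}\ln\ln(1/\epsilon)}+A(t)\ln(1/\epsilon).
\end{equation*}
For $t\lesssim\ln(1/\epsilon)/\epsilon^{1/2}$ the middle term dominates because $A(t)\ge \epsilon\ln(1/\epsilon)$, and beyond that range the exponential factor in $A(t)^{2}$ continues to swallow the others; combining with the bound on $M$ from Remark \ref{MN} gives \eqref{dddd1}. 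The parallel estimate for $V(t)$ together with $|\dot x_j-p_j|\lesssim A(t)\epsilon^{1/2}$ and the bound on $N$ yields \eqref{dddd2}, with $K=2C+O(1)$ inherited from the square of the growth rate of $A(t)$.

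The principal obstacle is exactly the breakdown of the Lemma \ref{t21} framework: when $v$ falls below $\epsilon^{1/2}/\ln(1/\epsilon)^{4}$, the Wronskian of the standard fundamental pair is too small to serve as a denominator, and naive application of the earlier estimates would produce a divergent constant. The resolution is to replace the $(m,n)$ decomposition by direct Grönwall-style integration that uses the very same smallness of $v$ as a smallness of the linear potential, reducing the problem to an almost free-motion bootstrap. The only delicate bookkeeping is ensuring that the polynomial-in-$\ln(1/\epsilon)$ factors generated by the integrations are absorbed into the exponential, so that the final exponent is indeed a fixed multiple of $\epsilon^{1/2}t/\ln(1/\epsilon)$.
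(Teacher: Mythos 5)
Your proposal is correct in its essential content and follows the same route as the paper's proof: after splitting into the antisymmetric pair $(W,V)$ and the symmetric pair $(M,N)$ and invoking Remark~\ref{MN} for the latter, you correctly identify that the $(m,n)$ basis of Lemma~\ref{t21} is unusable here because the Wronskian $\sqrt{2}v$ is too small, and you correctly replace it by the observation that $e^{-\sqrt{2}d(t)}\le v^{2}/8\le\epsilon/\ln(1/\epsilon)^{8}$ makes the linear potential negligible, so that direct Gr\"onwall-type integration of the Lemma~\ref{aux22} estimates closes the argument. This is precisely the paper's mechanism.

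Two points of imprecision are worth flagging. First, you locate the binding constraint in the bootstrap at $v^{2}t^{2}\le 1/2$, which gives $t\lesssim\ln(1/\epsilon)^{4}/\epsilon^{1/2}$. In fact the genuinely binding constraint is $\norm{W}_{L^{\infty}[0,t]}<1$, needed both to linearize $e^{-\sqrt{2}z(t)}-e^{-\sqrt{2}d(t)}$ and to absorb the quadratic term $e^{-\sqrt{2}d(t)}W^{2}$ into the linear one; because your derived bound on $\norm{W}$ grows like $\exp\big(2C\epsilon^{1/2}t/\ln(1/\epsilon)\big)$, this constraint is saturated already near $t\sim\frac{\ln(1/\epsilon)\ln\ln(1/\epsilon)}{\epsilon^{1/2}}$, which is the range the paper actually uses. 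Second, the passage from this finite window to all $t\ge 0$ is asserted in one clause (``the exponential factor in $A(t)^{2}$ continues to swallow the others'') but not justified: what is actually needed, and what the paper supplies, is the coarse a priori bound $|W(t)|\lesssim\epsilon^{1/2}t$, $|\dot W(t)|\lesssim\epsilon t$ coming from Theorem~\ref{Stab}, together with the remark that beyond the bootstrap window the right-hand sides of \eqref{dddd1} and \eqref{dddd2} exceed this trivial bound after enlarging the constant $K$. Neither defect is conceptual --- both are repaired by the same routine manipulations that appear in Step~1 of Lemma~\ref{t21} --- but they should be made explicit for the argument to close.
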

\begin{proof}[Proof of Lemma \ref{Theo22}]
First, we recall that 
\begin{equation*}
    d(t)=\frac{1}{\sqrt{2}}\ln{\left(\frac{8}{v^{2}}\cosh{\left(\sqrt{2}vt+c\right)}\right)},
\end{equation*}
which implies that
\begin{equation}\label{decayyy}
    e^{-\sqrt{2}d(t)}=\frac{v^{2}}{8}\sech{\left(\sqrt{2}vt+c\right)}^{2}.
\end{equation}
We recall the notation
$W(t)=z(t)-d(t),\,V(t)=p(t)-\dot d(t).$ From the first inequality of Lemma \ref{aux22}, we have that 
\begin{equation}\label{V(0)}
    \md{V(0)}\lesssim \max\left(\norm{(g_{0},g_{1})},\epsilon\ln{\left(\frac{1}{\epsilon}\right)}\right)\epsilon^{\frac{1}{2}}.
\end{equation}
We already verified that $W,\, V$ satisfy the following ordinary differential system
\[\begin{cases}\label{oddd}
\dot W(t)=V(t)+O\Big(\max\Big(\norm{(g_{0},g_{1})},\epsilon\ln{\Big(\frac{1}{\epsilon}\Big)}\Big)\epsilon^{\frac{1}{2}}\exp\Big(\frac{C\epsilon^{\frac{1}{2}}t}{\ln{(\frac{1}{\epsilon})}}\Big)\Big),\\ \\
\dot V(t)=-32e^{-\sqrt{2}d(t)}W(t)+O(e^{-\sqrt{2}z(t)}(W(t))^{2})+O\Big(\frac{\max(\norm{(g_{0},g_{1})},\epsilon\ln{(\frac{1}{\epsilon})})^{2}}{\ln{\ln{(\frac{1}{\epsilon})}}}\exp\Big(\frac{2C\epsilon^{\frac{1}{2}}t}{\ln{(\frac{1}{\epsilon})}}\Big)\Big).
\end{cases}\]
However, since $v^{2}\leq \frac{\epsilon}{\ln{(\frac{1}{\epsilon})}^{8}},$ we deduce from \eqref{decayyy} that $e^{-\sqrt{2}d(t)}\lesssim \frac{\epsilon}{\ln{(\frac{1}{\epsilon})}^{8}}$ for all $t\geq 0.$
So, while $\norm{W(s)}_{L^{\infty}[0,t]}<1,$ we have from the differential ordinary system \eqref{oddd} for $t\geq 0$ and some constant $C>0$ independent of $\epsilon$ that
\begin{equation*}
    \md{\dot V(t)}\lesssim \frac{\epsilon}{\ln{(\frac{1}{\epsilon})}^{8}}\norm{W(s)}_{L^{\infty}[0,t]}+\frac{\max\Big(\norm{(g_{0},g_{1})},\epsilon\ln{(\frac{1}{\epsilon})}\Big)^{2}}{\ln{\ln{(\frac{1}{\epsilon})}}}\exp\Big(\frac{2C\epsilon^{\frac{1}{2}}t}{\ln{(\frac{1}{\epsilon})}}\Big),
\end{equation*}
from which we deduce the following estimate
\begin{equation*}
    \md{V(t)- V(0)}=O\left(\frac{\epsilon t}{\ln{(\frac{1}{\epsilon})}^{8}}\norm{W(s)}_{L^{\infty}[0,t]}\right)
    +O\left(\frac{\max\Big(\norm{(g_{0},g_{1})},\epsilon\ln{(\frac{1}{\epsilon})}\Big)^{2}\ln{(\frac{1}{\epsilon})}}{\epsilon^{\frac{1}{2}}\ln{\ln{(\frac{1}{\epsilon})}}}\exp\Big(\frac{2C\epsilon^{\frac{1}{2}}t}{\ln{(\frac{1}{\epsilon})}}\Big)\right).
\end{equation*}
In conclusion, while $\norm{W(s)}_{L^{\infty}[0,t]}<1,$ we have that
\begin{equation}\label{difference1}
    \md{\dot W(t)}\leq \md{V(0)}
    +O\left(\frac{\max\Big(\norm{(g_{0},g_{1})},\epsilon\ln{(\frac{1}{\epsilon})}\Big)^{2}\ln{(\frac{1}{\epsilon})}}{\epsilon^{\frac{1}{2}}\ln{\ln{(\frac{1}{\epsilon})}}}\exp\Big(\frac{2C\epsilon^{\frac{1}{2}}t}{\ln{(\frac{1}{\epsilon})}}\Big)\right)+O\left(\frac{\epsilon t}{\ln{(\frac{1}{\epsilon})}^{8}}\norm{W(s)}_{L^{\infty}[0,t]}\right).
\end{equation}
Finally, since $W(0)=0,$ the fundamental theorem of calculus and \eqref{difference1} imply the following estimate
\begin{equation}\label{difference2}
    \norm{W(s)}_{L^{\infty}[0,t]}\leq \md{V(0)}t+O\left(\frac{\epsilon t^{2}}{\ln{(\frac{1}{\epsilon})}^{8}}\norm{W(s)}_{L^{\infty}[0,t]}
    +\frac{\max\Big(\norm{(g_{0},g_{1})},\epsilon\ln{(\frac{1}{\epsilon})}\Big)^{2}\ln{(\frac{1}{\epsilon})}^{2}}{\epsilon\ln{\ln{(\frac{1}{\epsilon})}}}\exp\Big(\frac{2C\epsilon^{\frac{1}{2}} t}{\ln{(\frac{1}{\epsilon})}}\Big)\right).
\end{equation}
Then, the estimates \eqref{V(0)}  and \eqref{difference2} imply if $\epsilon\ll 1$  that 
\begin{equation}\label{ww1}
    \md{W(t)}\lesssim \frac{\max\Big(\norm{(g_{0},g_{1})},\epsilon\ln{(\frac{1}{\epsilon})}\Big)^{2}\ln{(\frac{1}{\epsilon})}^{2}}{\epsilon\ln{\ln{(\frac{1}{\epsilon})}}}\exp\Big(\frac{(2C+1)\epsilon^{\frac{1}{2}} t}{\ln{(\frac{1}{\epsilon})}}\Big),
\end{equation}
for $0 \leq t \leq \frac{\ln{(\frac{1}{\epsilon})}\ln{\ln(\frac{1}{\epsilon})}}{(8C+4)\epsilon^{\frac{1}{2}}}.$
From \eqref{ww1} and \eqref{difference1}, we deduce for $0 \leq t \leq  \frac{\ln{(\frac{1}{\epsilon})}\ln{\ln{(\frac{1}{\epsilon})}}}{(8C+4)\epsilon^{\frac{1}{2}}}$ that
\begin{equation}\label{ww2}
    \md{\dot W(t)}\lesssim \frac{\max\Big(\norm{(g_{0},g_{1})},\epsilon\ln{(\frac{1}{\epsilon})}\Big)^{2}\ln{(\frac{1}{\epsilon})}^{2}}{\epsilon^{\frac{1}{2}}\ln{\ln{(\frac{1}{\epsilon})}}}\exp\Big(\frac{(2C+1)\epsilon^{\frac{1}{2}} t}{\ln{(\frac{1}{\epsilon})}}\Big).
\end{equation}
Since $\md{W(t)}\lesssim \epsilon^{\frac{1}{2}}t,\, \md{\dot W(t)}\lesssim \epsilon t$ for all $t\geq 0,$ we can verify by a similar argument to the proof of Step $1$ of Lemma \ref{t21} that for all $t\geq 0$ there is a constant $1<K\lesssim (C+1)$ such that 
\begin{align}\label{fff1}
     \md{W(t)}\lesssim \frac{\max\Big(\norm{(g_{0},g_{1})},\epsilon\ln{(\frac{1}{\epsilon})}\Big)^{2}\ln{(\frac{1}{\epsilon})}^{2}}{\epsilon\ln{\ln{(\frac{1}{\epsilon})}}}\exp\Big(\frac{K\epsilon^{\frac{1}{2}} t}{\ln{(\frac{1}{\epsilon})}}\Big),\\ \label{fff2}
     \md{\dot W(t)}\lesssim \frac{\max\Big(\norm{(g_{0},g_{1})},\epsilon\ln{(\frac{1}{\epsilon})}\Big)^{2}\ln{(\frac{1}{\epsilon})}^{2}}{\epsilon^{\frac{1}{2}}\ln{\ln{(\frac{1}{\epsilon})}}}\exp\Big(\frac{K\epsilon^{\frac{1}{2}} t}{\ln{(\frac{1}{\epsilon})}}\Big).
\end{align}
In conclusion, estimates \eqref{dddd1} and \eqref{dddd2} follow from Remark \ref{MN}, inequalities \eqref{fff1}, \eqref{fff2} and triangle inequality.
\end{proof}
\begin{remark}\label{rfinal}
We recall the definition \eqref{d(t)} of $d(t).$ It is not difficult to verify that if $\norm{\overrightarrow{g(0)}}\leq \frac{\epsilon^{\frac{1}{2}}}{\ln(\frac{1}{\epsilon})^{5}},\, \frac{\epsilon^{\frac{1}{2}}}{\ln(\frac{1}{\epsilon})^{4}}\lesssim v$
and one of the following statements 
\begin{enumerate}
    \item $e^{-\sqrt{2}z(0)}\ll \frac{\epsilon}{\ln{(\frac{1}{\epsilon})}^{8}}$ and $c>0,$
    \item $e^{-\sqrt{2}z(0)}\ll \frac{\epsilon}{\ln{(\frac{1}{\epsilon})}^{8}}$ and $c\leq-\ln{(\frac{1}{\epsilon})}^{2}$
\end{enumerate}
were true, then we would have that $e^{-\sqrt{2}d(t)}\ll \frac{\epsilon}{\ln{(\frac{1}{\epsilon})}^{8}}$ for $0\leq t \lesssim \frac{\ln{(\frac{1}{\epsilon})}^{2}}{\epsilon^{\frac{1}{2}}}.$ 
Moreover, assuming $e^{-\sqrt{2}z(0)}\ln{(\frac{1}{\epsilon})}^{8}\ll\epsilon ,$ if $c>0,$ then we have for all $t\geq 0$ that
\begin{equation*}
    e^{-\sqrt{2}d(t)}=\frac{v^{2}}{8}sech{(\sqrt{2}vt+c)}^{2}\leq \frac{v^{2}}{8}sech{(c)}^{2}=e^{-\sqrt{2}z(0)}\ll \frac{\epsilon}{\ln{(\frac{1}{\epsilon})}^{8}},
\end{equation*}
otherwise if $c\leq -\ln{(\frac{1}{\epsilon})}^{2},$ since $v\lesssim \epsilon^{\frac{1}{2}},$ then there is $1\lesssim K$ such that for $0\leq t \leq \frac{K\ln{(\frac{1}{\epsilon})}^{2}}{\epsilon^{\frac{1}{2}}},$ then $2\md{\sqrt{2}vt+c}>\md{c},$ and so
\begin{equation*}
    e^{-\sqrt{2}d(t)}\leq v^{2}\sech{\left(-\frac{c}{2}\right)}^{2}\ll \frac{\epsilon}{\ln{(\frac{1}{\epsilon})}^{8}}.
\end{equation*}
In conclusion, the result of Lemma \ref{Theo22} would be true for these two cases.
\end{remark}
From the following inequality
\begin{equation*}
\max\Big(\norm{\overrightarrow{g(0)}},\epsilon\ln{\Big(\frac{1}{\epsilon}\Big)}\Big)\leq \ln{\Big(\frac{1}{\epsilon}\Big)}   \max\Big(\norm{\overrightarrow{g(0)}},\epsilon\Big),
\end{equation*}
we deduce from  Lemmas \ref{t21}, \ref{Theo22} and Remarks \ref{MN}, \ref{similarcase} and \ref{rfinal} the statement of Theorem \ref{trueTheo2}.
\section{Proof of Theorem \ref{T1}}
If $\norm{\overrightarrow{g(0)}}\geq \epsilon\ln{(\frac{1}{\epsilon})}$ the result of Theorem \ref{T1} is a direct consequence of Theorem \ref{TT1}. So, from now on, we assume that $\norm{\overrightarrow{g(0)}}<\epsilon\ln{(\frac{1}{\epsilon})}.$
We recall from Theorem \ref{trueTheo2} the notations $v,\,c,\,d_{1}(t),\,d_{2}(t)$ and we denote $d(t)=d_{2}(t)-d_{1}(t)$ that satisfies
\begin{equation*}
    d(t)=\frac{1}{\sqrt{2}}\ln{\Big(\frac{8}{v^{2}}\cosh{(\sqrt{2}vt+c)}^{2}\Big)},\, e^{-\sqrt{2}d(t)}=\frac{v^{2}}{8}\sech{(\sqrt{2}vt+c)}^{2}.
\end{equation*} 
From the definition of $d_{1}(t),\,d_{2}(t),\,d(t)$, we know that $\max_{j\{1,\,2\}}\md{\ddot d_{j}(t)}+e^{-\sqrt{2}d(t)}=O\Big(v^{2}\sech{(\sqrt{2}vt+c)}^{2}\Big)$ and since $z(0)=d(0),\,\dot z(0)= \dot d(0),$ we have that $v,\,c $ satisfy the following identities
\begin{equation*}
    v=\left(e^{-\sqrt{2}z(0)}+\Big(\frac{\dot x_{2}(0)-\dot x_{1}(0)}{2}\Big)^{2}\right)^{\frac{1}{2}},\,
    c= \arctanh{\Big(\frac{\dot x_{2}(0)-\dot x_{1}(0)}{2v}\Big)},
\end{equation*}
so Theorem \ref{Stab} implies that $v\lesssim \epsilon^{\frac{1}{2}}.$ 
From the Corollary \ref{colo2} and the Theorem \ref{trueTheo2}, we deduce that $\exists C>0$ such that if $\epsilon\ll1$ and $0\leq t \leq \frac{\ln{\ln{(\frac{1}{\epsilon})}}\ln{(\frac{1}{\epsilon})}}{\epsilon^{\frac{1}{2}}},$ then we have that
\begin{align}\label{refined1}
    \max_{j\in\{1,\,2\}}\md{\ddot x_{j}(t)}=O\Big(\max_{j\in\{1,2\}}\md{\ddot d_{j}(t)}\Big)+O\left(\epsilon^{\frac{3}{2}}\ln{\Big(\frac{1}{\epsilon}\Big)}^{9}\exp\Big(\frac{Ct\epsilon^{\frac{1}{2}}}{\ln{(\frac{1}{\epsilon})}}\Big)\right),\\ \label{refined2}
    e^{-\sqrt{2}z(t)}=e^{-\sqrt{2}d(t)}+O\left(\max\left(e^{-\sqrt{2}d(t)},e^{-\sqrt{2}z(t)}\right)\md{z(t)-d(t)}\right)=e^{-\sqrt{2}d(t)}+O\left(\epsilon^{2}\ln{\Big(\frac{1}{\epsilon}\Big)}^{9}\exp\Big(\frac{Ct\epsilon^{\frac{1}{2}}}{\ln{(\frac{1}{\epsilon})}}\Big)\right).
\end{align}
Next, we consider a smooth function $0\leq \chi_{2}(x)\leq 1$ that satisfies
\begin{equation*}
\chi_{2}(x)=\begin{cases}
 1, \text{ if $x\leq \frac{9}{20}$,}\\
 0, \text{ if $x\geq \frac{1}{2}$.}
\end{cases}
\end{equation*}
We denote 
\begin{equation*}
    \chi_{2}(t,x)=\chi_{2}\Big(\frac{x-x_{1}(t)}{x-x_{2}(t)}\Big).
\end{equation*}From Theorem \ref{EnergyE} and Remark \ref{generalE}, the estimates \eqref{refined1} and \eqref{refined2} of the modulation parameters imply that for the following functional
\begin{multline*}\label{refinedenergy}
    L_{1}(t)=\big\langle D^{2}E\left(H^{x_{2}(t)}_{0,1}+H^{x_{1}(t)}_{-1,0}\right)\overrightarrow{g(t)},\,\overrightarrow{g(t)}\big\rangle_{L^{2}\times L^{2}}+2\int_{\mathbb{R}}\partial_{t}g(t,x)\partial_{x}g(t,x)\Big[\dot x_{1}(t)\chi_{2}(t,x)+\dot x_{2}(t)(1-\chi_{2}(t,x))\Big]\,dx\\
    -2\int_{\mathbb{R}}g(t,x)\Big(\dot U\left(H^{x_{1}(t)}_{-1,0}(x)\right)+\dot U\left(H^{x_{2}(t)}_{0,1}(x)\right)-\dot U\left(H^{x_{2}(t)}_{0,1}(x)+H^{x_{1}(t)}_{-1,0}(x)\right)\Big)\,dx\\
    +2\int_{\mathbb{R}}g(t,x)\Big[(\dot x_{1}(t))^{2}\partial^{2}_{x}H^{x_{1}(t)}_{-1,0}(x)+(\dot x_{2}(t))^{2}\partial^{2}_{x}H^{x_{2}(t)}_{0,1}(x)\Big]\,dx
    +\frac{1}{3}\int_{\mathbb{R}}U^{(3)}\left(H^{x_{2}(t)}_{0,1}(x)+H^{x_{1}(t)}_{-1,0}(x)\right)g(t,x)^{3}\,dx,
\end{multline*}
and the following quantity $\delta_{1}(t)$ denoted by
\begin{multline*}
\delta_{1}(t)=\norm{\overrightarrow{g(t)}}\Big(e^{-\sqrt{2}z(t)}\max_{j \in \{1,2\}}\md{\dot x_{j}(t)}+\max_{j \in \{1,2\}}\md{\dot x_{j}(t)}^{3}e^{-\frac{9\sqrt{2}z(t)}{20}}+\max_{j \in \{1,2\}}\md{\dot x_{j}(t)} \md{\ddot x_{j}(t)}\Big)\\+\norm{\overrightarrow{g(t)}}^{2}\Big(\frac{\max_{j\in\{1,\,2\}}\md{\dot x_{j}(t)}}{z(t)}+\max_{j\in\{1,\,2\}}\dot x_{j}(t)^{2}+\max_{j\in\{1,\,2\}}\md{\ddot x_{j}(t)}\Big)+\norm{\overrightarrow{g(t)}}^{4},
\end{multline*}
we have $\md{\dot L_{1}(t)}=O(\delta_{1}(t))$ for $t\geq 0.$
Moreover, estimates \eqref{refined1}, \eqref{refined2} and the bound $\dot L_{1}(t)=O(\delta_{1}(t))$ implies that for 
\begin{multline*}
       \delta_{2}(t)=\norm{\overrightarrow{g(t)}}v^{2}\epsilon^{\frac{1}{2}}\sech{(\sqrt{2}v t+c)}^{2}+\norm{\overrightarrow{g(t)}}\epsilon^{2}\ln{\Big(\frac{1}{\epsilon}\Big)}^{9}\exp\Big(\frac{Ct\epsilon^{\frac{1}{2}}}{\ln{(\frac{1}{\epsilon})}}\Big)
   \\+\epsilon^{\frac{3}{2}}e^{-\frac{9\sqrt{2}z(t)}{20}}\norm{\overrightarrow{g(t)}}+
   \max_{j \in \{1,2\}}\frac{\md{\dot x_{j}(t)}}{z(t)}\norm{\overrightarrow{g(t)}}^{2}+\norm{\overrightarrow{g(t)}}^{4},
\end{multline*}
$\md{\dot L_{1}(t)}=O(\delta_{2}(t))$ if $0\leq t \leq \frac{\ln{\ln{(\frac{1}{\epsilon})}}\ln{(\frac{1}{\epsilon})}}{\epsilon^{\frac{1}{2}}}.$
Now, similarly to the proof of Theorem \ref{TT1}, we denote $G(s)=\max\big(\norm{\overrightarrow{g(s)}},\epsilon\big).$
From Theorem \ref{EnergyE} and Remark \ref{generalE}, we have that there are positive constants $K,k>0$ independent of $\epsilon$ such that
\begin{equation*}
    k\norm{\overrightarrow{g(t)}}^{2}\leq L_{1}(t)+K\epsilon^{2}.
\end{equation*}
We recall that Theorem \ref{Stab} implies that
\begin{equation*}
   \ln{\Big(\frac{1}{\epsilon}\Big)}\lesssim z(t),\, e^{-\sqrt{2}z(t)}+\max_{j}\md{\dot x_{j}(t)}^{2}+\max_{j\in\{1,\,2\}}\md{\ddot x_{j}(t)}=O(\epsilon),
\end{equation*}
from which with the definition of $G(s)$ and estimates \eqref{refined1} and \eqref{refined2} we deduce that
\begin{equation*}
    \delta_{1}(t)\lesssim G(t)v^{2}\sech{(\sqrt{2}vt+c)}^{2}\epsilon^{\frac{1}{2}}+G(t)\epsilon^{\frac{39}{20}}+G(t)^{2}\frac{\epsilon^{\frac{1}{2}}}{\ln{(\frac{1}{\epsilon})}},
\end{equation*}
while $0\leq t \leq \frac{\ln{\ln{(\frac{1}{\epsilon})}}\ln{(\frac{1}{\epsilon})}}{\epsilon^{\frac{1}{2}}}.$
In conclusion, the Fundamental Theorem of Calculus implies that $\exists K>0$ independent of $\epsilon$ such that 
\begin{equation}\label{integralll}
    G(t)^{2}\leq K\left(G(0)^{2} +\int_{0}^{t} G(s)v^{2}\sech{(\sqrt{2}vs+c)}^{2}\epsilon^{\frac{1}{2}}+G(s)\epsilon^{\frac{39}{20}}+G(s)^{2}\frac{\epsilon^{\frac{1}{2}}}{\ln{(\frac{1}{\epsilon})}}\,ds\right),
\end{equation}
while $0\leq t \leq \frac{\ln{\ln{(\frac{1}{\epsilon})}}\ln{(\frac{1}{\epsilon})}}{\epsilon^{\frac{1}{2}}}.$ Since $\frac{d}{dt} [\tanh{(\sqrt{2}vt+c)}]=\sqrt{2}v\sech{(\sqrt{2}vt+c)}^{2},$ we verify that while the term $G(s)v^{2}\sech{(\sqrt{2}vt+c)}^{2}\epsilon^{\frac{1}{2}}$ is the dominant in the integral of the estimate \eqref{integralll}, then $G(t)\lesssim G(0).$
The remaining case corresponds when $G(s)^{2}\frac{\epsilon^{\frac{1}{2}}}{\ln{(\frac{1}{\epsilon})}}$ is the dominant term in the integral of \eqref{integralll} from an instant $0\leq t_{0}\leq \frac{\ln{\ln{(\frac{1}{\epsilon})}}\ln{(\frac{1}{\epsilon})}}{\epsilon^{\frac{1}{2}}}.$ Similarly to the proof of \ref{TT1}, we have for $t_{0}\leq t\leq \frac{\ln{\ln{(\frac{1}{\epsilon})}}\ln{(\frac{1}{\epsilon})}}{\epsilon^{\frac{1}{2}}}$ that
$G(t)\lesssim G(t_{0})\exp\Big(C\frac{(t-t_{0})\epsilon^{\frac{1}{2}}}{\ln{(\frac{1}{\epsilon})}}\Big).
$
In conclusion, in any case we have for $0\leq t \leq \frac{\ln{\ln{(\frac{1}{\epsilon})}}\ln{(\frac{1}{\epsilon})}}{\epsilon^{\frac{1}{2}}}$ that
\begin{equation}\label{Compppp2}
    G(t)\lesssim G(0)\exp\Big(C\frac{t\epsilon^{\frac{1}{2}}}{\ln{(\frac{1}{\epsilon})}}\Big).
\end{equation}
However, for $T\geq \frac{\ln{\ln{(\frac{1}{\epsilon})}}\ln{(\frac{1}{\epsilon})}}{\epsilon^{\frac{1}{2}}}$ and $K>2$ we have that
\begin{equation*}
    \epsilon \ln{\Big(\frac{1}{\epsilon}\Big)}\exp \left(K\frac{\epsilon^{\frac{1}{2}}T}{\ln{(\frac{1}{2})}}\right)\leq \epsilon \exp \left(\frac{2K\epsilon^{\frac{1}{2}}T}{\ln{(\frac{1}{2})}}\right).
\end{equation*}
In conclusion, from the result of Theorem \ref{TT1}, we can exchange the constant $C>0$ by a larger constant such that estimate \eqref{Compppp2} is true for all $t\geq 0.$
\appendix
\section{Auxiliary Results}\label{auxil}
We start the Appendix Section by presenting the following lemma: 
\begin{lemma}\label{LemmaLL}
With the same hypothesis as in Theorem \ref{trueTheo2} and using its notation, we have while $\max_{j\in\{1\,,2\}}\md{d_{j}(t)-x_{j}(t)}<1$ that 
$\max_{j\in\{1,\,2\}}\md{\ddot d_{j}(t)-\ddot x_{j}(t)}=O\Big(\max_{j\in\{1,\,2\}}\md{d_{j}(t)-x_{j}(t)}\epsilon+\epsilon z(t)e^{-\sqrt{2}z(t)}
 +\norm{\overrightarrow{g(t)}}\epsilon^{\frac{1}{2}}\Big).$
\end{lemma}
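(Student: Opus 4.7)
The plan is to extract $\ddot x_j(t)$ directly from the linear system \eqref{II1}--\eqref{II2} obtained in the proof of Theorem~\ref{Stab}, isolate its leading contribution, and compare with the explicit value $\ddot d_j(t)=(-1)^{j+1}8\sqrt{2}\,e^{-\sqrt{2}d(t)}$ recorded in Remark~\ref{odez}. Writing \eqref{II1}--\eqref{II2} as $M(t)(\ddot x_1,\ddot x_2)^{T}=R(t)$ with $M(t)$ the matrix \eqref{Matrix1}, the a priori bounds $\norm{\overrightarrow{g(t)}}+|\dot x_j(t)|=O(\epsilon^{1/2})$, $|\ddot x_j(t)|+e^{-\sqrt{2}z(t)}=O(\epsilon)$ from Theorem~\ref{Stab}, together with Lemma~\ref{interact}, give $M(t)=\norm{\partial_x H_{0,1}}_{L^{2}}^{2}\,\mathrm{Id}+O(\norm{\overrightarrow{g(t)}}+z(t)e^{-\sqrt{2}z(t)})$. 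Every term on the right-hand side of \eqref{II1} except the principal term $-\langle \partial_x H^{x_1(t)}_{-1,0},\partial_t^{2}\phi(t)\rangle$ is estimated, after expanding $\partial_t\phi=-\dot x_1\partial_x H^{x_1}_{-1,0}-\dot x_2\partial_x H^{x_2}_{0,1}+\partial_t g$ and using $\langle \partial_x^{2} H^{x_1}_{-1,0},\partial_x H^{x_1}_{-1,0}\rangle=0$, by $O(\epsilon^{1/2}\norm{\overrightarrow{g(t)}}+\epsilon z(t)e^{-\sqrt{2}z(t)})$ via Cauchy--Schwarz and Lemma~\ref{interact}.

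The heart of the argument is the computation of the principal term. Substituting $\partial_t^{2}\phi=\partial_x^{2}\phi-\dot U(\phi)$ from \eqref{nlww}, using the kink equations $\partial_x^{2}H^{x_k}=\dot U(H^{x_k})$, Taylor-expanding $\dot U(H^{x_1}_{-1,0}+H^{x_2}_{0,1}+g)$ around $H^{x_1}_{-1,0}+H^{x_2}_{0,1}$, and invoking the cancellation $\langle\partial_x H^{x_1}_{-1,0},\partial_x^{2}g-\ddot U(H^{x_1}_{-1,0})g\rangle=0$ (which follows by integrating by parts twice and using $\partial_x^{3}H^{x_1}_{-1,0}=\ddot U(H^{x_1}_{-1,0})\partial_x H^{x_1}_{-1,0}$, i.e., $\partial_x H^{x_1}_{-1,0}\in\ker(-\partial_x^{2}+\ddot U(H^{x_1}_{-1,0}))$), one obtains
\[ -\langle\partial_x H^{x_1}_{-1,0},\partial_t^{2}\phi\rangle=\langle\partial_x H^{x_1}_{-1,0},\dot U(H^{x_1}_{-1,0})+\dot U(H^{x_2}_{0,1})-\dot U(H^{x_1}_{-1,0}+H^{x_2}_{0,1})\rangle+E(t), \]
where $E(t)=O(\norm{\overrightarrow{g(t)}}^{2}+\norm{\overrightarrow{g(t)}}e^{-\sqrt{2}z(t)}z(t)^{1/2})$ collects the quadratic-in-$g$ Taylor remainder and the cross-potential contribution $\langle\partial_x H^{x_1}_{-1,0},[\ddot U(H^{x_1}_{-1,0})-\ddot U(H^{x_1}_{-1,0}+H^{x_2}_{0,1})]g\rangle$. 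The first term equals $-\dot A(z(t))$ by the identity recorded at the end of the proof of Lemma~\ref{modueq}, and by Lemma~\ref{LL.1} equals $4e^{-\sqrt{2}z(t)}+O(z(t)e^{-2\sqrt{2}z(t)})=4e^{-\sqrt{2}z(t)}+O(\epsilon z(t)e^{-\sqrt{2}z(t)})$. Both pieces of $E(t)$ are dominated by $\epsilon^{1/2}\norm{\overrightarrow{g(t)}}$: $\norm{\overrightarrow{g(t)}}^{2}\leq\epsilon^{1/2}\norm{\overrightarrow{g(t)}}$ by Theorem~\ref{Stab}, and $\norm{\overrightarrow{g(t)}}e^{-\sqrt{2}z}z^{1/2}\lesssim\norm{\overrightarrow{g(t)}}e^{-\sqrt{2}z/2}\lesssim\epsilon^{1/2}\norm{\overrightarrow{g(t)}}$ since $e^{-\sqrt{2}z/2}z^{1/2}$ is uniformly bounded on $z>0$ and $e^{-\sqrt{2}z}=O(\epsilon)$.

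Inverting $M(t)$ (the Neumann perturbation contributes only $O(\epsilon\norm{\overrightarrow{g(t)}}+\epsilon z(t)e^{-\sqrt{2}z(t)})$, and the off-diagonal contribution $M_{12}(t)\ddot x_2(t)$ is $O(\epsilon z(t)e^{-\sqrt{2}z(t)})$), together with $\norm{\partial_x H_{0,1}}_{L^{2}}^{2}=1/(2\sqrt{2})$, yields
\[ \ddot x_j(t)=(-1)^{j+1}8\sqrt{2}\,e^{-\sqrt{2}z(t)}+O\!\left(\epsilon^{1/2}\norm{\overrightarrow{g(t)}}+\epsilon z(t)e^{-\sqrt{2}z(t)}\right),\qquad j\in\{1,2\}, \]
the case $j=2$ being handled analogously from \eqref{II2}. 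Subtracting $\ddot d_j(t)=(-1)^{j+1}8\sqrt{2}e^{-\sqrt{2}d(t)}$ and applying the mean value theorem to $\xi\mapsto e^{-\sqrt{2}\xi}$ on the interval between $z(t)$ and $d(t)$ (where $|z(t)-d(t)|\leq 2\max_j|x_j(t)-d_j(t)|<2$ by hypothesis and $e^{-\sqrt{2}\min(z,d)}\lesssim e^{2\sqrt{2}}e^{-\sqrt{2}z}=O(\epsilon)$) gives $|e^{-\sqrt{2}z(t)}-e^{-\sqrt{2}d(t)}|\lesssim\epsilon\max_j|d_j(t)-x_j(t)|$, closing the proof. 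The main obstacle is the careful bookkeeping required to squeeze every small contribution in $R(t)$ and in $E(t)$ into the tight budget $\epsilon^{1/2}\norm{\overrightarrow{g(t)}}+\epsilon z(t)e^{-\sqrt{2}z(t)}$; this is not deep but requires systematic use of Theorem~\ref{Stab}'s uniform bounds together with Lemma~\ref{interact}.
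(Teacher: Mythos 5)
Your argument is substantively correct, lands on the right error budget, and follows essentially the same mechanism as the paper (pair the equation satisfied by $g$ against $\partial_x H^{x_j}$, use $\partial_x H^{x_j}\in\ker\big(-\partial_x^{2}+\ddot U(H^{x_j})\big)$ to kill the linear-in-$g$ term, isolate $\ddot x_j$ via the orthogonality conditions, and compare $e^{-\sqrt{2}z(t)}$ to $e^{-\sqrt{2}d(t)}$ by the mean value theorem). The one genuine divergence in route is the identification of the leading constant: you re-use $\langle \dot U(H^{x_1}_{-1,0})+\dot U(H^{x_2}_{0,1})-\dot U(H^{x_1}_{-1,0}+H^{x_2}_{0,1}),\,\partial_x H^{x_1}_{-1,0}\rangle=-\dot A(z)$ together with Lemma~\ref{LL.1}, whereas the paper invokes the separate decomposition of Lemma~\ref{Lint} and the integral identity~\eqref{MMMM3}; both extract the same $4e^{-\sqrt{2}z(t)}$, and your route is arguably leaner since $\dot A$ is already available. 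Starting from the system~\eqref{II1}--\eqref{II2} instead of testing the rewritten equation~\eqref{rewnlw} is cosmetically different but amounts to the same pairing.

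One caveat you should fix: the signs. Remark~\ref{odez} records $\ddot d_1=-8\sqrt{2}e^{-\sqrt{2}d}$ and $\ddot d_2=+8\sqrt{2}e^{-\sqrt{2}d}$, i.e.\ $\ddot d_j=(-1)^{j}8\sqrt{2}e^{-\sqrt{2}d}$, not $(-1)^{j+1}$. Correspondingly, tracing signs through your pairing: from $\partial_t^{2}\phi=\big[\dot U(H^{x_1}_{-1,0})+\dot U(H^{x_2}_{0,1})-\dot U(H^{x_1}_{-1,0}+H^{x_2}_{0,1})\big]+\partial_x^{2}g-\ddot U(\cdot)g+O(g^{2})$ one gets $\langle\partial_x H^{x_1}_{-1,0},\partial_t^{2}\phi\rangle=-\dot A(z)+E(t)\approx+4e^{-\sqrt{2}z}$, so the term $-\langle\partial_x H^{x_1}_{-1,0},\partial_t^{2}\phi\rangle$ on the right-hand side of~\eqref{II1} is $\approx-4e^{-\sqrt{2}z}$, yielding $\ddot x_1\approx-8\sqrt{2}e^{-\sqrt{2}z}$, consistent with $\ddot d_1$. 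Your intermediate display reverses the sign of the interaction term, and this propagates into the sign of $\ddot x_j$. Since you applied the same erroneous $(-1)^{j+1}$ to both $\ddot x_j$ and $\ddot d_j$, the two slips cancel in the difference, so the final estimate on $|\ddot x_j-\ddot d_j|$ is unaffected; nevertheless the intermediate identities as stated are incorrect and should be repaired.
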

\begin{lemma}\label{Lint}
For $U(\phi)=\phi^{2}(1-\phi^{2})^{2},$ we have that
\begin{multline*}
    \dot U\left(H^{x_{1}(t)}_{-1,0}(x)+H^{x_{2}(t)}_{0,1}(x)\right)- \dot U\left(H^{x_{1}(t)}_{-1,0}(x)\right)-\dot U\left(H^{x_{2}(t)}_{0,1}(x)\right)=
    24e^{-\sqrt{2}z(t)}\left(\frac{H^{x_{1}(t)}_{-1,0}(x)}{(1+e^{-2\sqrt{2}(x-x_{1}(t))})^{\frac{1}{2}}}+\frac{H^{x_{2}(t)}_{0,1}(x)}{(1+e^{2\sqrt{2}(x-x_{2}(t))})^{\frac{1}{2}}}\right)
    \\-30e^{-\sqrt{2}z(t)}\left(\frac{H^{x_{1}(t)}_{-1,0}(x)^{3}}{(1+e^{-2\sqrt{2}(x-x_{1}(t))})^{\frac{1}{2}}}+\frac{H^{x_{2}(t)}_{0,1}(x)^{3}}{(1+e^{2\sqrt{2}(x-x_{2}(t))})^{\frac{1}{2}}}\right)+r(t,x),
\end{multline*}
such that $\norm{r(t)}_{L^{2}_{x}(\mathbb{R})}=O(e^{-2\sqrt{2}z(t)}).$ 
\end{lemma}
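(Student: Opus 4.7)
The plan is to start from the polynomial identity already recorded in the excerpt, namely
\[
\dot U(\phi)+\dot U(\theta)-\dot U(\phi+\theta)=24\phi\theta(\phi+\theta)-6\sum_{j=1}^{4}\binom{5}{j}\phi^{j}\theta^{5-j},
\]
and apply it with $\phi=H^{x_{1}(t)}_{-1,0}$ and $\theta=H^{x_{2}(t)}_{0,1}$, abbreviated throughout as $H_{1}$ and $H_{2}$. The six resulting summands $-24H_{1}^{2}H_{2}$, $-24H_{1}H_{2}^{2}$, $30H_{1}H_{2}^{4}$, $60H_{1}^{2}H_{2}^{3}$, $60H_{1}^{3}H_{2}^{2}$, $30H_{1}^{4}H_{2}$ must then be distributed into two groups: those matching one of the four explicit leading terms of the statement (modulo an admissible $L^{2}$ remainder) and those going directly into $r(t,x)$.

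The key algebraic observation is that the explicit formulas for $H_{0,1}$ and $H_{-1,0}$ yield the exact identity
\[
H_{1}(x)\,H_{2}(x)=-\frac{e^{-\sqrt{2}z(t)}}{\sqrt{(1+e^{-2\sqrt{2}(x-x_{1})})(1+e^{2\sqrt{2}(x-x_{2})})}},
\]
so the factor $e^{-\sqrt{2}z(t)}$ is produced from the outset. Factoring $H_{1}$ (resp.\ $H_{2}$) out of $-24H_{1}^{2}H_{2}$ (resp.\ $-24H_{1}H_{2}^{2}$) matches the first target term of the lemma up to the ``spurious'' factor $(1+e^{2\sqrt{2}(x-x_{2})})^{-1/2}$ (resp.\ $(1+e^{-2\sqrt{2}(x-x_{1})})^{-1/2}$). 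An entirely analogous rewriting, based on the identity $e^{-\sqrt{2}(x-x_{1})}=e^{-\sqrt{2}z(t)}e^{-\sqrt{2}(x-x_{2})}$, shows that $30H_{1}^{4}H_{2}$ and $30H_{1}H_{2}^{4}$ produce the two $-30$ target terms of the lemma, again up to the same type of spurious factor.

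To remove these spurious factors I use the elementary pointwise bound $|(1+a)^{-1/2}-1|\lesssim \min(a,1)$ for $a\ge 0$. Combined with the exponential decay estimates (D1)--(D4), the model $L^{2}$ error to be controlled is (after pulling $e^{-\sqrt{2}z(t)}$ out of the outer prefactor and squaring)
\[
e^{-2\sqrt{2}z(t)}\int e^{-2\sqrt{2}(x-x_{1})_{+}}\min\!\big(e^{4\sqrt{2}(x-x_{2})},1\big)\,dx,
\]
and splitting the integration region at $x_{1}$ and $x_{2}$ shows that the inner integral is $\lesssim e^{-2\sqrt{2}z(t)}$. Taking square roots returns an $L^{2}$ remainder of size $e^{-2\sqrt{2}z(t)}$, which is absorbed into $r(t,x)$ as required. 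The ``middle'' summands $60H_{1}^{2}H_{2}^{3}$ and $60H_{1}^{3}H_{2}^{2}$ fall straight into $r(t,x)$: the pointwise bound $|H_{1}^{j}H_{2}^{5-j}|\lesssim e^{-j\sqrt{2}(x-x_{1})_{+}}e^{-(5-j)\sqrt{2}(x_{2}-x)_{+}}$ together with Lemma \ref{interact} gives $\|H_{1}^{j}H_{2}^{5-j}\|_{L^{2}}\lesssim e^{-2\sqrt{2}z(t)}$ for $j\in\{2,3\}$.

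The main obstacle is algebraic bookkeeping rather than analysis: one must track the signs carefully (since $H_{-1,0}$ is negative and $H_{1}H_{2}$ carries a $-1$) and apply the identities $e^{\pm\sqrt{2}(x-x_{j})}=e^{\mp\sqrt{2}z(t)}e^{\pm\sqrt{2}(x-x_{k})}$ ($j\neq k$) in the correct places so that the $e^{-\sqrt{2}z(t)}$ prefactor of the lemma is produced and the leading brackets collapse to the advertised form $H_{j}^{\ell}/\sqrt{1+e^{\pm 2\sqrt{2}(x-x_{j})}}$. Once this algebra is set up correctly, every $L^{2}$ estimate needed reduces to a direct application of Lemma \ref{interact}.
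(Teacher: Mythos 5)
Your proof is correct and takes essentially the same approach as the paper: expand $\dot U(H_1+H_2)-\dot U(H_1)-\dot U(H_2)$ via the degree-five binomial identity, absorb the ``middle'' terms $60H_1^{j}H_2^{5-j}$ ($j=2,3$) directly into $r$ using Lemma \ref{interact}, and peel off the four advertised leading terms from $-24H_1^2H_2$, $-24H_1H_2^2$, $30H_1^4H_2$, $30H_1H_2^4$ using the exact product formula $H_1H_2=-e^{-\sqrt{2}z(t)}\big/\sqrt{(1+e^{-2\sqrt{2}(x-x_1)})(1+e^{2\sqrt{2}(x-x_2)})}$. The only cosmetic difference is that the paper controls the spurious factor via the exact identity $1-(1+e^{2\sqrt{2}x})^{-1/2}=e^{2\sqrt{2}x}\big/\big((1+e^{2\sqrt{2}x})^{1/2}+(1+e^{2\sqrt{2}x})\big)$ where you use the equivalent elementary bound $|(1+a)^{-1/2}-1|\lesssim\min(a,1)$.
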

\begin{proof}
By directly computations, we verify that
\begin{multline*}
    \dot U\left(H^{x_{1}(t)}_{-1,0}(x)+H^{x_{2}(t)}_{0,1}(x)\right)- \dot U\left(H^{x_{1}(t)}_{-1,0}(x)\right)-\dot U\left(H^{x_{2}(t)}_{0,1}(x)\right)=
   -24H^{x_{1}(t)}_{-1,0}H^{x_{2}(t)}_{0,1}(H^{x_{1}(t)}_{-1,0}+H^{x_{2}(t)}_{0,1})
   \\+30H^{x_{1}(t)}_{-1,0}H^{x_{2}(t)}_{0,1}((H^{x_{1}(t)}_{-1,0})^{3}+(H^{x_{2}(t)}_{0,1})^{3})
   +60(H^{x_{1}(t)}_{-1,0}H^{x_{2}(t)}_{0,1})^{2}(H^{x_{1}(t)}_{-1,0}+H^{x_{2}(t)}_{0,1}).
\end{multline*}
First, from the definition of $H_{0,1}(x),$ we verify that
\begin{gather*}
60(H^{x_{1}(t)}_{-1,0}H^{x_{2}(t)}_{0,1})^{2}(H^{x_{1}(t)}_{-1,0}+H^{x_{2}(t)}_{0,1})=60e^{-2\sqrt{2}z(t)}\left(\frac{H^{x_{2}(t)}_{0,1}}{(1+e^{2\sqrt{2}(x-x_{2}(t))})(1+e^{-2\sqrt{2}(x-x_{1}(t))})}\right)\\+60e^{-2\sqrt{2}z(t)}\left(\frac{H^{x_{1}(t)}_{-1,0}}{(1+e^{-2\sqrt{2}(x-x_{1}(t))})(1+e^{2\sqrt{2}(x-x_{2}(t))})}\right).
\end{gather*}
Using \eqref{kinkestimate}, we can verify using by induction for any $k\in \mathbb{N}$ that
\begin{equation}\label{bbound}
    \md{\frac{d^{k}}{dx^{k}}\left[\frac{1}{(1+e^{2\sqrt{2}x})}\right]}=\md{\frac{d^{k}}{dx^{k}}\left[1-\frac{e^{2\sqrt{2}x}}{(1+e^{2\sqrt{2}x})}\right]}=\md{\frac{d^{k}}{dx^{k}}\left[H_{0,1}(x)^{2}\right]}=O(1),
\end{equation}
and since
$
    \frac{H_{0,1}(x)}{(1+e^{2\sqrt{2}x})}=\frac{e^{\sqrt{2}x}}{(1+e^{2\sqrt{2}x})^{\frac{3}{2}}}
$
is a Schwartz function, we deduce that 
$60(H^{x_{1}(t)}_{-1,0}H^{x_{2}(t)}_{0,1})^{2}(H^{x_{1}(t)}_{-1,0}+H^{x_{2}(t)}_{0,1})$ is in $H^{k}_{x}(\mathbb{R})$ for all $k>0$ and
\begin{equation}
    \norm{(H^{x_{1}(t)}_{-1,0}H^{x_{2}(t)}_{0,1})^{2}(H^{x_{1}(t)}_{-1,0}+H^{x_{2}(t)}_{0,1})}_{H^{k}(\mathbb{R})}=O(e^{-2\sqrt{2}z(t)}). 
\end{equation}
Next, using the identity
\begin{equation}\label{identinter}
    H^{x_{1}(t)}_{-1,0}(x)H^{x_{2}(t)}_{0,1}(x)=-\frac{e^{-\sqrt{2}z(t)}}{(1+e^{2\sqrt{2}(x-x_{2}(t))})^{\frac{1}{2}}(1+e^{-2\sqrt{2}(x-x_{1}(t))})^{\frac{1}{2}}},
\end{equation}
the identity
\begin{equation*}
    1-\frac{1}{(1+e^{2\sqrt{2}x})^{\frac{1}{2}}}=\frac{e^{2\sqrt{2}x}}{(1+e^{2\sqrt{2}x})^{\frac{1}{2}}+(1+e^{2\sqrt{2}x})},
\end{equation*}
and Lemma \ref{interact}, we deduce that
\begin{align}\label{MMMM1}
    \norm{24(H^{x_{1}(t)}_{-1,0})^{2}H^{x_{2}(t)}_{0,1}+24e^{-\sqrt{2}z(t)}\frac{H^{x_{1}(t)}_{-1,0}(x)}{(1+e^{-2\sqrt{2}(x-x_{1}(t))})^{\frac{1}{2}}}}_{L^{2}_{x}(\mathbb{R})}=O(e^{-2\sqrt{2}z(t)}),\\ \label{MMMM2}
    \norm{30(H^{x_{1}(t)}_{-1,0})^{4}H^{x_{2}(t)}_{0,1}+30e^{-\sqrt{2}z(t)}\left(\frac{(H^{x_{1}(t)}_{-1,0}(x))^{3}}{(1+e^{-2\sqrt{2}(x-x_{1}(t))})^{\frac{1}{2}}}\right)}_{L^{2}_{x}(\mathbb{R})}=O(e^{-3\sqrt{2}z(t)}).
\end{align}
The estimate of the remaining terms $-24H^{x_{1}(t)}_{-1,0}(H^{x_{2}(t)}_{0,1})^{2},\,30H^{x_{1}(t)}_{-1,0}(H^{x_{2}(t)}_{0,1})^{4}$ is completely analogous to \eqref{MMMM1} and \eqref{MMMM2} respectively. In conclusion, all of the estimates above imply the estimate stated in the Lemma \ref{Lint}. 
 \end{proof}
\begin{proof}[Proof of Lemma \ref{LemmaLL}]
First, we recall the global estimate $e^{-\sqrt{2}z(t)}\lesssim \epsilon.$ We also recall the identity \eqref{Kenergy}
\begin{equation*}
    \int_{\mathbb{R}}\big (8(H_{0,1}(x))^{3}-6(H_{0,1}(x))^{5}\big )e^{-\sqrt{2}x}\,dx=2\sqrt{2},
\end{equation*}
and the global estimate $e^{-\sqrt{2}z(t)}\lesssim \epsilon.$
which, by integration by parts, implies that
\begin{equation}\label{MMMM3}
    \int_{\mathbb{R}}24\frac{H_{0,1}(x)\partial_{x}H_{0,1}(x)}{(1+e^{2\sqrt{2}(x)})^{\frac{1}{2}}}-30\frac{(H_{0,1}(x))^{3}\partial_{x}H_{0,1}(x)}{(1+e^{2\sqrt{2}(x)})^{\frac{1}{2}}}\,dx=4.
\end{equation}
We recall $d_{1}(t),\, d_{2}(t)$ defined in \eqref{d1} and \eqref{d2} respectively and $d(t)=d_{2}(t)-d_{1}(t).$ Since, we have for $j \in \{1,\, 2\}$ that
$ \ddot d_{j}(t)=(-1)^{j}8\sqrt{2}e^{-\sqrt{2}d(t)},$ we have $\ddot d(t)=16\sqrt{2}e^{-\sqrt{2}d(t)},$ which clearly with the fact that 
\begin{center}$\norm{\partial_{x}H_{0,1}}_{L^{2}}^{2}=\norm{\partial^{2}_{x}H_{0,1}}_{L^{2}}^{2}=\frac{1}{2\sqrt{2}},$\end{center}
imply that $\ddot d_{j}(t)\norm{\partial_{x}H_{0,1}}_{L^{2}}^{2}=(-1)^{j}4e^{-\sqrt{2}d(t)}.$
We also recall the partial differential equation satisfied by the remainder $g(t,x)$ \eqref{NLW2}, which can be rewritten as
\begin{multline}\label{rewnlw}
   \dot U\left(H^{x_{2}(t)}_{0,1}(x)+H^{x_{1}(t)}_{-1,0}(x)\right)-\dot U\left(H^{x_{1}(t)}_{-1,0}(x)\right)-\dot U\left(H^{x_{2}(t)}_{0,1}(x)\right)-\ddot x_{2}(t)\partial_{x}H^{x_{2}(t)}_{0,1}(x)=\\
    -\left(\partial^{2}_{t}g(t,x)-\partial^{2}_{x}g(t,x)+\ddot U\left(H^{x_{2}(t)}_{0,1}(x)+H^{x_{1}(t)}_{-1,0}(x)\right)g(t,x)\right)+\sum_{k=3}^{6} U^{(k)}\left(H^{x_{1}(t)}_{-1,0}+H^{x_{2}(t)}_{0,1}\right)\frac{g(t)^{k-1}}{(k-1)!}\\
    -\dot x_{1}(t)^{2}\partial^{2}_{x}H^{x_{1}(t)}_{-1,0}(x)-\dot x_{2}(t)^{2}\partial^{2}_{x}H^{x_{2}(t)}_{0,1}(x)
    +\ddot x_{1}(t)\partial_{x}H^{x_{1}(t)}_{-1,0}(x).
\end{multline}
In conclusion, from the estimate \eqref{MMMM3}, Lemma \ref{Lint} and Lemma \ref{interact}, we obtain that 
\begin{multline}\label{ddx2}
    \left\langle \dot U\left(H^{x_{1}(t)}_{-1,0}+H^{x_{2}(t)}_{0,1}\right)- \dot U\left(H^{x_{1}(t)}_{-1,0}\right)-\dot U\left(H^{x_{2}(t)}_{0,1}\right),\,\partial_{x}H^{x_{2}(t)}_{0,1}\right\rangle_{L^{2}(\mathbb{R})}-\ddot x_{2}(t)\norm{\partial_{x}H_{0,1}}_{L^{2}}^{2}=
    -(\ddot x_{2}(t)-\ddot d_{2}(t))\norm{\partial_{x}H_{0,1}}_{L^{2}}^{2}\\+O\Big(\md{\ddot x_{1}(t)}z(t)e^{-\sqrt{2}z(t)}+e^{-\sqrt{2}z(t)} \max_{j\in\{1,\,2\}}\md{x_{j}(t)-d_{j}(t)}
    +e^{-2\sqrt{2}z(t)}z(t)\Big).
\end{multline}
We recall from the proof of Theorem \ref{EnergyE} the following estimate 
\begin{equation*}
   \md{\int_{\mathbb{R}}\left[\ddot U\left(H^{x_{2}(t)}_{0,1}(x)\right)-\ddot U\left(H^{x_{2}(t)}_{0,1}(x)+H^{x_{1}(t)}_{-1,0}(x)\right)\right]\partial_{x}H^{x_{2}(t)}_{0,1}(x)g(t,x)\,dx}=O\Big(\norm{\overrightarrow{g(t)}}e^{-\sqrt{2}z(t)}\Big).
\end{equation*}  
Also, from the Modulation Lemma, we have that
\begin{align*}
    \langle \partial^{2}_{t}g(t),\partial_{x}H^{x_{2}(t)}_{0,1}\rangle_{L^{2}}&=\frac{d}{dt}\left[ \langle \partial_{t}g(t),\partial_{x}H^{x_{2}(t)}_{0,1}\rangle_{L^{2}} \right]+\dot x_{2}(t)\langle \partial_{t}g(t),\partial_{x}H^{x_{2}(t)}_{0,1}\rangle_{L^{2}} \\
    &=\frac{d}{dt}\Big[\dot x_{2}(t)\langle g(t),\partial^{2}_{x}H^{x_{2}(t)}_{0,1}\rangle_{L^{2}} \Big]+\dot x_{2}(t)\langle \partial_{t}g(t),\partial_{x}H^{x_{2}(t)}_{0,1}\rangle_{L^{2}}\\
    &=\ddot x_{2}(t)\langle g(t),\partial^{2}_{x}H^{x_{2}(t)}_{0,1}\rangle_{L^{2}} +2\dot x_{2}(t)\langle \partial_{t}g(t),\partial_{x}H^{x_{2}(t)}_{0,1}\rangle_{L^{2}}.
\end{align*}
In conclusion, since $\partial_{x}H^{x_{2}(t)}_{0,1}\in ker D^{2}E_{pot}\left(H^{x_{2}(t)}_{0,1}\right),$ we obtain from \eqref{ddx2} and \eqref{rewnlw} that
\begin{equation*}
    \md{\ddot x_{2}(t)-\ddot d_{2}(t)}=O\Big(\max_{j\in\{1,\,2\}}\md{d_{j}(t)-x_{j}(t)}\epsilon+\epsilon z(t)e^{-\sqrt{2}z(t)}
 \\+\norm{\overrightarrow{g(t)}}e^{-\sqrt{2}z(t)}+\norm{\overrightarrow{g(t)}}\epsilon^{\frac{1}{2}}\Big),
\end{equation*}
the estimate of $\md{\ddot x_{1}(t)-\ddot d_{1}(t)}$ is completely analogous, which finishes the demonstration. 
\end{proof}
\begin{lemma}\label{Lgta}
For any $\delta>0$ there is a $\epsilon(\delta)>0$ such that if 
\begin{equation}\label{mmmm1}
    \norm{\phi(x) -H_{0,1}(x)}_{H^{1}(\mathbb{R})}<+\infty,\,
    0<E_{pot}(\phi(x))-E_{pot}(H_{0,1})<\epsilon(\delta),
\end{equation}
then there is a real number $y$ such that
\begin{equation*}
    \norm{\phi(x)-H_{0,1}(x-y)}_{H^{1}}\leq \delta.
\end{equation*}
\end{lemma}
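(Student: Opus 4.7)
My plan is to argue by contradiction, using the Bogomolny identity as the central tool that converts potential-energy smallness into a first-order differential smallness. The hypothesis $\phi-H_{0,1}\in H^{1}(\mathbb{R})$ forces $\phi$ to be continuous with $\phi(-\infty)=0$ and $\phi(+\infty)=1$. Denoting the primitive $\Phi(s):=\int_{0}^{s}\sqrt{2U(\tau)}\,d\tau$ (so that $\Phi(1)=E_{pot}(H_{0,1})$ by the standard substitution $s=H_{0,1}(x)$ in $2\int U(H_{0,1})\,dx$), the elementary identity
$\tfrac{1}{2}(\partial_{x}\phi)^{2}+U(\phi)=\tfrac{1}{2}(\partial_{x}\phi-\sqrt{2U(\phi)})^{2}+\partial_{x}\phi\,\sqrt{2U(\phi)}$
combined with $\int_{\mathbb{R}}\partial_{x}\phi\,\sqrt{2U(\phi)}\,dx=\Phi(1)-\Phi(0)=E_{pot}(H_{0,1})$ yields the Bogomolny identity $E_{pot}(\phi)-E_{pot}(H_{0,1})=\tfrac{1}{2}\norm{\partial_{x}\phi-\sqrt{2U(\phi)}}_{L^{2}}^{2}$. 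Consequently the hypothesis provides a Bogomolny defect of size $O(\sqrt{\epsilon(\delta)})$ in $L^{2}$.

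Suppose the conclusion fails: there exist $\delta_{0}>0$ and a sequence $(\phi_{n})$ with $\phi_{n}-H_{0,1}\in H^{1}$, $E_{pot}(\phi_{n})\to E_{pot}(H_{0,1})$, yet $\inf_{y}\norm{\phi_{n}-H_{0,1}(\cdot-y)}_{H^{1}}\geq \delta_{0}$. By the intermediate value theorem I translate each $\phi_{n}$ so that $\phi_{n}(0)=H_{0,1}(0)=1/\sqrt{2}$, which forces $\norm{\phi_{n}-H_{0,1}}_{H^{1}}\geq \delta_{0}$. To extract a useful subsequential limit I would first prove a uniform $L^{\infty}$ bound on $\phi_{n}$: the energy estimate $\norm{\partial_{x}\phi_{n}}_{L^{2}}^{2}\leq 2E_{pot}(\phi_{n})$ combined with the growth $U(s)\gtrsim s^{6}$ at infinity prevents $\phi_{n}$ from being large at any point, since otherwise the Sobolev-type bound $|\phi_{n}(x)-\phi_{n}(x_{0})|\leq |x-x_{0}|^{1/2}\norm{\partial_{x}\phi_{n}}_{L^{2}}$ would keep $\phi_{n}$ large on an interval long enough to force $\int U(\phi_{n})\,dx$ to exceed the energy. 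With this bound and Arzelà-Ascoli, a subsequence satisfies $\phi_{n}\to \phi_{\infty}$ locally uniformly with $\phi_{\infty}(0)=1/\sqrt{2}$, and passing the Bogomolny defect to the limit yields $\phi_{\infty}'=\sqrt{2U(\phi_{\infty})}$. Since $\sqrt{2U(\cdot)}$ is $C^{1}$ near $1/\sqrt{2}$, ODE uniqueness identifies $\phi_{\infty}\equiv H_{0,1}$, so $\phi_{n}\to H_{0,1}$ in $H^{1}_{\mathrm{loc}}$ (using $\partial_{x}\phi_{n}-\sqrt{2U(\phi_{n})}\to 0$ in $L^{2}$ together with continuity of $\sqrt{2U(\cdot)}$ on the uniform range of $\phi_{n}$).

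The main obstacle is to upgrade this local convergence to convergence in $H^{1}(\mathbb{R})$, which contradicts $\norm{\phi_{n}-H_{0,1}}_{H^{1}}\geq \delta_{0}$. The crucial point is that $0$ and $1$ are hyperbolic fixed points of the ODE $f'=\sqrt{2U(f)}$: linearizing $G(s):=\sqrt{2U(s)}=\sqrt{2}s(1-s^{2})$ on $[0,1]$ gives $G'(0)=\sqrt{2}>0$ (contractive as $x\to -\infty$) and $G'(1)=-2\sqrt{2}<0$ (contractive as $x\to +\infty$). Fix $R$ large enough that $H_{0,1}(\pm R)$ lies in the basin where the linear part dominates the nonlinearity. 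The remainder $w_{n}:=\phi_{n}-H_{0,1}$ satisfies $w_{n}'=G(\phi_{n})-G(H_{0,1})+\rho_{n}$ with $\rho_{n}:=\partial_{x}\phi_{n}-G(\phi_{n})$ small in $L^{2}$; writing $G(\phi_{n})-G(H_{0,1})=G'(H_{0,1})w_{n}+O(w_{n}^{2})$ and applying a standard Gronwall/energy estimate on each tail $[R,\infty)$ and $(-\infty,-R]$ — where the sign of the linear coefficient ensures decay away from the endpoint — controls $\norm{w_{n}}_{L^{2}(|x|>R)}$ by a multiple of $|w_{n}(\pm R)|+\norm{\rho_{n}}_{L^{2}}$, both of which tend to zero (the first by local uniform convergence, the second by the Bogomolny estimate). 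Combined with the local $H^{1}$ convergence on $[-R,R]$ and with the identity $w_{n}'=G(\phi_{n})-G(H_{0,1})+\rho_{n}$ to control $\norm{w_{n}'}_{L^{2}}$, this gives $\phi_{n}\to H_{0,1}$ in $H^{1}(\mathbb{R})$, the desired contradiction.
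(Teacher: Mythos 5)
Your general outline (Bogomolny identity to control the defect $\rho_n := \partial_x\phi_n - \sqrt{2U(\phi_n)}$ in $L^2$, local compactness, ODE uniqueness to identify the limit, then a tail estimate) mirrors the paper's proof, and your use of the Bogomolny identity directly with $\partial_x\phi$ rather than $|\partial_x\phi|$ — so that $E_{pot}(\phi) - E_{pot}(H_{0,1}) = \tfrac12\norm{\rho}_{L^2}^2$ without any sign restriction — is actually cleaner than the paper's route, which works with $|\partial_x\phi_n|$ and needs a non-obvious cut-and-paste rearrangement argument to first bound $\int(\partial_x\phi_n)_-^2$. The gap is in the tail step. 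The Taylor expansion $G(\phi_n) - G(H_{0,1}) = G'(H_{0,1})w_n + O(w_n^2)$ presupposes that $G = \sqrt{2U}$ is $C^2$ on the segment from $H_{0,1}(x)$ to $\phi_n(x)$, but $G$ has a corner at $s=1$ (and at $s=0$): $G(s)=\sqrt{2}\,s(1-s^2)$ for $s\in(0,1)$ while $G(s)=\sqrt{2}\,s(s^2-1)$ for $s>1$, so $G'(1^-)=-2\sqrt{2}$ but $G'(1^+)=+2\sqrt{2}$. Your uniform $L^\infty$ bound gives $|\phi_n|\leq M$ but does not place $\phi_n$ in $(0,1)$, and if $\phi_n$ overshoots $1$ the drift in your Gronwall estimate changes sign: a short computation shows $w_n\big[G(\phi_n)-G(H_{0,1})\big]>0$ whenever $w_n(x)>2\big(1-H_{0,1}(x)\big)$, so the claimed contraction fails precisely where it is needed.

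The paper closes this gap with two steps your sketch omits. First it reduces to $0\leq\phi_n\leq 1$ by showing the potential energy carried by $\{\phi_n>1\}\cup\{-\tfrac12<\phi_n<0\}$ is $O(\epsilon_n)$ and that $\phi_n(x_0)\leq-\tfrac12$ somewhere is energetically impossible. Second — and this is the essential missing idea — it upgrades local uniform convergence to a global bound $\norm{\phi_n-H_{0,1}}_{L^\infty(\mathbb{R})}\to 0$ by a co-area/energy contradiction: if $\phi_n$ stayed a definite distance from the correct asymptote at points $x_n$ with $|x_n|\to\infty$, the $\sqrt{2U}$-weighted variation over the tails would force extra potential energy and contradict $E_{pot}(\phi_n)\to E_{pot}(H_{0,1})$. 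Only after that uniform bound is the quadratic lower bound $U(\phi_n)\gtrsim(1-\phi_n)^2$ available on the whole right tail, and the $L^2$ tail control follows from a plain energy comparison rather than from a Gronwall argument. To make your version close, you would need to supply at least an a priori overshoot control — for instance $\sup_{\mathbb{R}}(\phi_n-1)_+\lesssim\epsilon_n^{1/4}$ via the Bogomolny identity restricted to a component of $\{\phi_n>1\}$ together with a H\"older estimate, and the companion bound $\int_{\{\phi_n>1\}}(\phi_n-1)^2\,dx\lesssim\epsilon_n$ — so that the wrong-sign contribution to the tail energy identity can be isolated and shown negligible; as written the step does not go through.
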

\begin{proof}[Proof of Lemma \ref{Lgta}.]
 The proof of Lemma \ref{L} will follow by a contradiction argument. We assume that there is a $c>0$ and sequence of real functions $\left(\phi_{n}(x)\right)_{n}$ satisfying 
\begin{align}\label{e1}
    \lim_{n\to +\infty} E_{pot}(\phi_{n})=E_{pot}(H_{0,1}),\\\label{e2} \norm{\phi_{n}(x)-H_{0,1}(x)}_{H^{1}(\mathbb{R})}<+\infty,
\end{align}
such that
\begin{equation}\label{contradict}
   \inf_{y\in\mathbb{R}} \norm{\phi_{n}(x)-H_{0,1}(x+y)}_{H^{1}(\mathbb{R})}>c.
\end{equation}
 Since $H_{0,1}$ has minimum potential energy $E_{pot}$ for all real functions $\phi(x)$ with $\norm{\phi(x)-H_{0,1}(x)}_{H^{1}_{x}}<+\infty,$
 we deduce that following condition 
 \begin{equation*}
     E_{pot}(H_{0,1})\leq E_{pot}(\phi_{n})
 \end{equation*}
 holds for all $n\in\mathbb{N}.$
 Also, for each $n\in \mathbb{N},$ the function $h_{n}(x)=\min\left(\phi_{n}(x),1\right)$ satisfies
 \begin{equation*}
     \frac{d h_{n}(x)}{dx}=\begin{cases}
        0, \text{if $\phi_{n}(x)\geq 1,$}\\
        \frac{d\phi_{n}(x)}{dx} \text{, if $\phi_{n}(x)<1,$}
     \end{cases}
 \end{equation*}
 for almost every $x\in \mathbb{R}$ with respect to the Lebesgue measure.
 In conclusion, from the definition of the potential energy $E_{pot}$ and that $U(\phi)=\phi^{2}(1-\phi^{2})^{2},$ we obtain that
 \begin{equation*}
     E_{pot}(H_{0,1})\leq E_{pot}(h_{n})\leq E_{pot}(\phi_{n}),
 \end{equation*}
from which we conclude that
\begin{equation*}
    \norm{\phi_{n}(x)-1}_{L^{2}\left( \{x|\phi_{n}(x)>1\}\right)}^{2}+\norm{\frac{d\phi_{n}(x)}{dx}}_{L^{2}\left(\{x|\phi_{n}(x)>1\}\right)}^{2}\lesssim \md{E_{pot}(\phi_{n})-E_{pot}(H_{0,1})}.
\end{equation*}
By an analogous argument, we can verify that
\begin{equation*}
    \norm{\phi_{n}(x)}_{L^{2}\left( \{x\vert-\frac{1}{2}<\phi_{n}(x)<0\}\right)}^{2}+\norm{\frac{d\phi_{n}(x)}{dx}}_{L^{2}\left( \{x\vert-\frac{1}{2}<\phi_{n}(x)<0\}\right)}^{2}\lesssim \md{E_{pot}(\phi_{n})-E_{pot}(H_{0,1})},
\end{equation*}
and if there is $x_{0}\in\mathbb{R}$ such that $\phi_{n}(x_{0})\leq -\frac{1}{2},$ we would obtain that
\begin{gather*}
    \int_{x_{0}}^{+\infty}\frac{1}{2}\frac{d\phi_{n}(x)}{dx}^{2}+U(\phi_{n}(x))\,dx =\int_{x_{0}}^{+\infty}\sqrt{2U(\phi_{n}(x))}\md{\frac{d\phi_{n}(x)}{dx}}\,dx+\frac{1}{2}\int_{x_{0}}^{+\infty}\left(\md{\frac{d\phi_{n}(x)}{dx}}-\sqrt{2U(\phi_{n}(x))}\right)^{2}\,dx\\\geq \int_{-\frac{1}{2}}^{1}\sqrt{2U(\phi)}\,d\phi= E_{pot}(H_{0,1})+\int_{-\frac{1}{2}}^{0}\sqrt{2U(\phi)}\,d\phi>E_{pot}(H_{0,1}),
\end{gather*}
which contradicts \eqref{e1} if $n\gg 1.$
In conclusion, after we replace $\phi_{n}(x)$ with $\phi_{1,n}=\max\left(\min\left(\phi_{n}(x),1\right),0\right),$ we can restrict the proof to the case where $0\leq\phi_{n}(x)\leq 1$ and $n\gg 1.$ Now, from the density of $H^{2}(\mathbb{R})$ in $H^{1}(\mathbb{R})$ and with the use of a mollifier distribution, we can also restrict the contradiction hypotheses to the situation where $0\leq \phi_{n}(x)\leq 1$ and $\frac{d\phi_{n}}{dx}(x)$ is a continuous function for all $n\in\mathbb{N}.$
Also, we have that if $\norm{\phi(x)-H_{0,1}(x)}_{H^{1}(\mathbb{R})}<+\infty,$ then $E_{pot}(\phi(x))\geq E_{pot}(H_{0,1}(x)).$ In conclusion, there is a sequence of positive numbers $\left(\epsilon_{n}\right)_{n}$ such that
\begin{equation*}
    E_{pot}(\phi_{n})=E_{pot}(H_{0,1})+\epsilon_{n},\,\lim_{n\to+\infty}\epsilon_{n}=0.
\end{equation*}
Also, $\tau_{y}\phi(x)=\phi(x-y)$ satisfies $E_{pot}(\phi(x))=E_{pot}(\tau_{y}\phi(x))$ for any $y\in\mathbb{R}.$ In conclusion, since for all $n\in \mathbb{N},$ $\lim_{x\to+\infty}\phi_{n}(x)=1$ and $\lim_{x\to-\infty}\phi_{n}(x)=0,$ we can restrict to the case where
\begin{equation*}
    \phi_{n}(0)=\frac{1}{\sqrt{2}},
\end{equation*}
for all $n\in\mathbb{N}.$ For $(v)_{+}=\max(v,0)$ and $(v)_{-}=-\left(v-(v)_{+}\right),$ since $\frac{d\phi_{n}(x)}{dx} $ is a continuous function on $x,$ we deduce that $\left(\frac{d\phi_{n}(x)}{dx} \right)_{+}$ and $\left(\frac{d\phi_{n}(x)}{dx}\right)_{-}$ are also continuous functions on $x$ for all $n \in \mathbb{N}.$ In conclusion, for any $n\in \mathbb{N},$ we have that the set 
\begin{equation}
    U=\left\{x\in\mathbb{R}\vert\,\frac{d\phi_{n}(x)}{dx}<0\right\}
\end{equation}
is an enumerable union of disjoint open intervals $(a_{k,n},b_{k,n})_{k \in \mathbb{N}}$, which are bounded, since $\lim_{x\to +\infty}\phi_{n}(x)=1,\,\lim_{x\to -\infty}\phi_{n}(x)=0$ and $0\leq\phi_{n}(x)\leq 1.$ Now, let $E$ be a set of open bounded intervals $(h_{i,n},l_{i,n})\subset \mathbb{R}$ satisfying the conditions
\begin{equation}\label{condcond}
    \phi_{n}(h_{i,n})=\phi_{n}(l_{i,n}),
\end{equation}
$\{i\vert\,(h_{i,n},l_{i,n})\in E \}=I\subset \mathbb{Z}$ and $l_{i,n}<h_{i+1,n}.$ For any $i\in I,$ the following function 
\begin{equation*}
    f_{i,n}(x)=\begin{cases}
    \phi_{n}(x)\text{ if $x\leq h_{i,n}$},\\
    \phi_{n}(x+l_{i,n}-h_{i,n}) \text{ if $x>h_{i,n},$}\\
    \end{cases}
\end{equation*}
satisfies $E_{pot}(H_{0,1})\leq E_{pot}(f_{i,n})\leq E_{pot}(\phi_{n})=E_{pot}(H_{0,1})+\epsilon_{n},$ which implies that
\begin{equation*}
   \int_{h_{i,n}}^{l_{i,n}}\frac{1}{2}\frac{d\phi_{n}(x)}{dx}^{2}+U(\phi_{n}(x))\leq \epsilon_{n}. 
\end{equation*}
Furthermore, we can deduce from Lebesgue's dominated convergence theorem that
\begin{equation}\label{conddd3}
   \sum_{i\in I} \int_{h_{i,n}}^{l_{i,n}}\frac{1}{2}\frac{d\phi_{n}(x)}{dx}^{2}+U(\phi_{n}(x))\leq \epsilon_{n},
\end{equation}
for every finite or enumerable collection $E$ of disjoint open bounded intervals $(h_{i,n},l_{i,n})\subset \mathbb{R},\,i\in I\subset \mathbb{Z}$ such that
$\phi_{n}(h_{i,n})=\phi_{n}(l_{i,n}).$ In conclusion, we can deduce from \eqref{conddd3} that
\begin{equation}\label{fimalmost}
    \int_{\mathbb{R}}\left(\frac{d\phi_{n}(x)}{dx}\right)_{-}^{2}\,dx\leq 2\epsilon_{n},
\end{equation}
and so for $1 \ll n$ we have that
\begin{equation}\label{almostivp}
   \norm{ \frac{d\phi_{n}(x)}{dx}-\md{\frac{d\phi_{n}(x)}{dx}}}_{L^{2}(\mathbb{R})}^{2}\leq 8\epsilon_{n},\,\phi_{n}(0)=\frac{1}{\sqrt{2}}.
\end{equation}
Moreover, we can verify that
\begin{equation*}
    E_{pot}(\phi_{n})=\frac{1}{2}\left[\int_{\mathbb{R}}\left(\md{\frac{d\phi_{n}(x)}{dx}}-\sqrt{2U(\phi_{n}(x))}\right)^{2}\,dx\right]+\int_{\mathbb{R}}\sqrt{2U(\phi_{n}(x))}\md{\frac{d\phi_{n}(x)}{dx}}\,dx,
\end{equation*}
from which we deduce with $\lim_{x\to-\infty}\phi_{n}(x)=0$ and $\lim_{x\to +\infty}\phi_{n}(x)=1$ that
\begin{align*}
    E_{pot}(H_{0,1})+\epsilon_{n}=E_{pot}(\phi_{n})&\geq \frac{1}{2}\left[\int_{\mathbb{R}}\left(\md{\frac{d\phi_{n}(x)}{d(x)}}-\sqrt{2U(\phi_{n}(x))}\right)^{2}\,dx\right]+\int_{0}^{1}\sqrt{2 U(\phi)}\,d\phi\\&=\frac{1}{2}\left[\int_{\mathbb{R}}\left(\md{\frac{d\phi_{n}(x)}{d(x)}}-\sqrt{2U(\phi_{n}(x))}\right)^{2}\,dx\right]+E_{pot}(H_{0,1}).
\end{align*}
Then, from estimate \eqref{almostivp}, we have that
\begin{equation}\label{IVPF}
   \frac{d\phi_{n}(x)}{dx}=\sqrt{2U(\phi_{n}(x))}+r_{n}(x),\,\phi_{n}(0)=\frac{1}{\sqrt{2}},
\end{equation}
with $\norm{r_{n}}_{L^{2}(\mathbb{R})}\lesssim \epsilon_{n}$ for all $1\ll n.$ We recall that $U(\phi)=\phi^{2}(1-\phi^{2})^{2}$ is a Lipschitz function in the set $\{\phi\vert\,0\leq \phi\leq 1\}.$ Then, because $H_{0,1}(x)$ is the unique solution of the following ordinary differential equation
\begin{equation*}
    \begin{cases}
    \frac{d\phi(x)}{dx}=\sqrt{2U(\phi(x))},\\
    \phi(0)=\frac{1}{\sqrt{2}},
    \end{cases}
\end{equation*}
we deduce from Gronwall Lemma that for any $K>0$ we have
\begin{equation}\label{compact}
   \lim_{n\to+\infty} \norm{\phi_{n}(x)-H_{0,1}(x)}_{L^{\infty}[-K,K]}=0,\,\lim_{n\to +\infty}\norm{\frac{d\phi_{n}(x)}{dx}-\dot H_{0,1}(x)}_{L^{2}[-K,K]}=0.
\end{equation}
Also, if $1\ll n,$ then $\norm{\frac{d\phi_{n}(x)}{dx}}_{L^{2}_{x}(\mathbb{R})}^{2}<2E_{pot}(H_{0,1})+1,$ and so we obtain from Cauchy-Schwartz inequality that
\begin{equation}\label{holder}
    \md{\phi_{n}(x)-\phi_{n}(y)}\leq \md{x-y}^{\frac{1}{2}}\norm{\frac{d\phi_{n}}{dx}}_{L^{2}(\mathbb{R})}^{2}<M\md{x-y}^{\frac{1}{2}},
\end{equation}
for a constant $M>0.$ The inequality \eqref{holder} implies that for any $1>\omega>0$ there is a number $h(\omega)\in \mathbb{N}$ such that
if $n\geq h(\omega)$ then
\begin{equation}\label{uniformbb}
    \norm{\phi_{n}(x)-H_{0,1}(x)}_{L^{\infty}\{x|\,\frac{1}{\omega}<\md{x}\}}<\omega,
\end{equation}
otherwise we would obtain that there are $0<\theta<\frac{1}{4},$ a subsequence $(m_{n})_{n\in \mathbb{N}}$ and a sequence of real numbers $(x_{n})_{n\in\mathbb{N}}$ with $\lim_{n\to+\infty}m_{n}=+\infty,\,\md{x_{n}}> n+1$ such that 
\begin{align}\label{Lcont}
   \md{\phi_{m_{n}}(x_{n})-1}>\theta \text{ if $x_{n}>0,$}\\ \label{Lcont1}
   \md{\phi_{m_{n}}(x_{n})}|>\theta \text{ if $x_{n}<0.$}
\end{align}
However, since we are considering $\phi_{n}(x)\in C^{1}(\mathbb{R})$ and $0\leq \phi_{n}\leq 1,$ we would obtain from the mean value theorem that there would exist a sequence $(y_{n})_{n}$ with $y_{n}>x_{n}>n+1$ or $y_{n}<x_{n}<-n-1$ such that
\begin{align}\label{lcont2}
    1-\theta \leq \phi_{m_{n}}(y_{n})\leq 1+\theta, \text{ if $y_{n}>0,$}\\ \label{lcont3}
    \phi_{m_{n}}(y_{n})= \theta \text{ otherwise.}
\end{align}
But, estimates \eqref{holder}, \eqref{lcont2}, \eqref{lcont3} and identity $U(\phi)=\phi^{2}(1-\phi^{2})^{2}$ would imply that 
\begin{equation}\label{contra2}
    1\lesssim\int_{\md{x}\geq n-2}U(\phi_{m_{n}}(x))\,dx \text{ for all $n\gg 1,$}
\end{equation}
and because of estimate \eqref{compact} and the following identity
\begin{equation}
    \lim_{K\to+\infty}\int_{-K}^{K}\frac{1}{2}\dot H_{0,1}(x)^{2}+U(H_{0,1}(x))=E_{pot}(H_{0,1}(x)),
\end{equation}
estimate \eqref{contra2} would imply that $\lim_{n\to+\infty}E_{pot}(\phi_{m_{n}})>E_{pot}(H_{0,1})$ which contradicts our hypotheses. In conclusion, for any $1>\omega>0$ there is a number $h(\omega)$ such that if $n\geq h(\omega)$ then \eqref{uniformbb} holds. So we deduce for any $0<\omega<1$ that there is a number $h_{1}(\omega)$ such that 
\begin{equation}\label{omegaga}
 \text{if $n\geq h_{1}(\omega),$ then }  \md{\phi_{n}(x)-H_{0,1}(x)}\leq \omega \text{ for all $x\in\mathbb{R}.$}
\end{equation}
Then, if $\omega \leq \frac{1}{100},\,n\geq h(\omega)$ and $K\geq 200,$ estimates \eqref{omegaga} and \eqref{compact} imply that
\begin{align}\label{fimmm1}
    \int_{K}^{+\infty}U(\phi_{n}(x))+\frac{1}{2}\frac{d\phi_{n}(x)}{dx}^{2}\, dx\geq \frac{1}{2}\int_{K}^{+\infty}\left(1-\phi_{n}(x)\right)^{2}+\frac{d\phi_{n}(x)}{dx}^{2}\,dx,\\ \label{fimmm2}
    \int_{-\infty}^{-K}U(\phi_{n}(x))+\frac{1}{2}\frac{d\phi_{n}(x)}{dx}^{2}\, dx\geq \frac{1}{2}\int_{-\infty}^{-K}\phi_{n}(x)^{2}+\frac{d\phi_{n}(x)}{dx}^{2}\,dx.
\end{align}
In conclusion, from estimates \eqref{omegaga}, \eqref{fimmm1}, \eqref{fimmm2} and
\begin{equation*}
    \lim_{K\to+\infty}\int_{|x|\geq K}\frac{1}{2}\dot H_{0,1}(x)^{2}+U(H_{0,1}(x))\,dx=0,
\end{equation*}
we obtain that $\lim_{n\to +\infty}\norm{\phi_{n}-H_{0,1}(x)}_{L^{2}(\mathbb{R})}=0$ and, from the equation in \eqref{IVPF} is satisfied for each $\phi_{n},$ we conclude that $\lim_{n\to +\infty}\norm{\frac{d\phi_{n}}{dx}-\dot H_{0,1}(x)}_{L^{2}(\mathbb{R})}=0.$ In conclusion, if $1\ll n,$ inequality \eqref{contradict} is false.  
\end{proof}
From Lemma \ref{Lgta}, we obtain the following corollary:
\begin{corollary}\label{remarkestimate}
For any $\delta>0$ there is a $\epsilon_{0}>0$ such that if $\epsilon\leq \epsilon_{0},\,\norm{\phi(x)-H_{0,1}(x)-H_{-1,0}(x)}_{H^{1}(\mathbb{R})}<+\infty$ and $E_{pot}(\phi)=2E_{pot}(H_{0,1})+\epsilon,$ then there are $x_{2},x_{1} \in \mathbb{R}$ such that
\begin{equation}
    x_{2}-x_{1}\geq \frac{1}{\delta},\,\norm{\phi(x)-H_{0,1}(x-x_{2})+H_{-1,0}(x-x_{1})}_{H^{1}_{x}(\mathbb{R})}\leq \delta.
\end{equation}
\end{corollary}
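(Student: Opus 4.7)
The plan is to reduce the two-kink approximation to Lemma \ref{Lgta} by splitting $\phi$ at one of its zeros into two one-kink problems, and then to extract the separation $x_{2}-x_{1}\geq 1/\delta$ from the energy analysis of $A(z)$ in Lemma \ref{LL.1}. Since $\phi-H_{0,1}-H_{-1,0}\in H^{1}(\mathbb{R})\hookrightarrow C_{0}(\mathbb{R})$, $\phi$ is continuous with $\lim_{x\to\pm\infty}\phi(x)=\pm 1$, so the intermediate value theorem provides $x_{0}$ with $\phi(x_{0})=0$. I then set
\begin{equation*}
    \phi_{+}(x)=\phi(x)\mathbbm{1}_{[x_{0},+\infty)}(x),\qquad \phi_{-}(x)=\phi(x)\mathbbm{1}_{(-\infty,x_{0}]}(x),
\end{equation*}
which are continuous at $x_{0}$ because $\phi(x_{0})=0$, so no delta appears in their weak derivatives; the exponential decay of the kinks at $\pm\infty$ gives $\phi_{+}-H_{0,1}(\cdot-x_{0})\in H^{1}(\mathbb{R})$ and $\phi_{-}-H_{-1,0}(\cdot-x_{0})\in H^{1}(\mathbb{R})$, and $U(0)=0$ yields $E_{pot}(\phi)=E_{pot}(\phi_{+})+E_{pot}(\phi_{-})$.

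Next, applying the Bogomolny inequality $\tfrac{1}{2}(\phi')^{2}+U(\phi)\geq\sqrt{2U(\phi)}|\phi'|$ on each half and using $\phi_{+}(x_{0})=0$, $\lim_{x\to+\infty}\phi_{+}(x)=1$ gives $E_{pot}(\phi_{+})\geq\int_{0}^{1}\sqrt{2U(s)}\,ds=E_{pot}(H_{0,1})$, and symmetrically $E_{pot}(\phi_{-})\geq E_{pot}(H_{0,1})$; combined with the energy identity, this forces $E_{pot}(\phi_{\pm})-E_{pot}(H_{0,1})\in[0,\epsilon]$, and strict positivity holds because $\phi_{\pm}$ vanishes on a half-line while any true kink is everywhere nonzero. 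Fixing $\delta'>0$ (to be calibrated against $\delta$ below) and assuming $\epsilon\leq\epsilon(\delta')$, where $\epsilon(\delta')$ is the threshold of Lemma \ref{Lgta}, I apply that lemma to $\phi_{+}(\cdot+x_{0})$ to produce $x_{2}\in\mathbb{R}$ with $\norm{\phi_{+}-H_{0,1}(\cdot-x_{2})}_{H^{1}}\leq\delta'$; the reflection symmetry $H_{-1,0}(y)=-H_{0,1}(-y)$ converts the analogous statement applied to $-\phi_{-}(-\cdot)$ into $\norm{\phi_{-}-H_{-1,0}(\cdot-x_{1})}_{H^{1}}\leq\delta'$ for some $x_{1}\in\mathbb{R}$. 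The triangle inequality and $\phi=\phi_{+}+\phi_{-}$ then give $\norm{\phi-H_{0,1}(\cdot-x_{2})-H_{-1,0}(\cdot-x_{1})}_{H^{1}}\leq 2\delta'$.

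Finally, to extract the separation, I invoke Lemma \ref{LL.1}: $A(z)$ is continuous, $A(z)>2E_{pot}(H_{0,1})$ for every finite $z\geq 0$ (by the Bogomolny argument applied to $H_{0,1}(\cdot-z)+H_{-1,0}$), and $A(z)\to 2E_{pot}(H_{0,1})$ as $z\to+\infty$; hence $c(\delta)\coloneqq\inf_{z\in[0,1/\delta]}(A(z)-2E_{pot}(H_{0,1}))>0$. Using the local Lipschitz property of $E_{pot}$ on $L^{\infty}$-bounded subsets of $H^{1}$ with the approximation above,
\begin{equation*}
    A(x_{2}-x_{1})\leq E_{pot}(\phi)+O(\delta')=2E_{pot}(H_{0,1})+\epsilon+O(\delta'),
\end{equation*}
so choosing $\delta'$ and $\epsilon_{0}$ small enough that the excess on the right-hand side above $2E_{pot}(H_{0,1})$ is strictly less than $c(\delta)$ forces $x_{2}-x_{1}>1/\delta$. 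The main obstacle is the simultaneous calibration $\delta'\leq\min(\delta/2,c(\delta)/(2C))$ and $\epsilon_{0}\leq\min(\epsilon(\delta'),c(\delta)/2)$, but every such quantity depends only on $\delta$, so shrinking $\epsilon_{0}$ sufficiently closes the argument.
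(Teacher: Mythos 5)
Your approach matches the paper's almost exactly: split $\phi$ at a zero $x_{0}$ (whose existence follows from $\phi-H_{0,1}-H_{-1,0}\in H^{1}\hookrightarrow C_{0}$ and the intermediate value theorem), use $U(0)=0$ to split the energy additively, apply Lemma \ref{Lgta} to each half after a reflection, and then appeal to Lemma \ref{LL.1} to force the large separation. The one place where your argument has a genuine hole is in the final step: you define $c(\delta)=\inf_{z\in[0,1/\delta]}\bigl(A(z)-2E_{pot}(H_{0,1})\bigr)>0$ and deduce $x_{2}-x_{1}>1/\delta$ from $A(x_{2}-x_{1})\leq 2E_{pot}(H_{0,1})+\epsilon+O(\delta')$. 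But this only excludes $x_{2}-x_{1}\in[0,1/\delta]$; it does not exclude $x_{2}-x_{1}<0$, and $A$ is only defined on $\mathbb{R}_{+}$ in the first place (and its natural extension to negative $z$ is \emph{not} bounded below by $2E_{pot}(H_{0,1})$ — as $z\to-\infty$ the sum $H^{z}_{0,1}+H_{-1,0}$ tends to $1+H_{-1,0}$, whose energy is just that of a single transition from $0$ to $1$). So you must first establish $x_{2}>x_{1}$.

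Fortunately, this is exactly the observation the paper makes and it is immediate from your own construction: since $\norm{\phi_{+}-H_{0,1}(\cdot-x_{2})}_{H^{1}}\leq\delta'$ and $H^{1}\hookrightarrow L^{\infty}$, one has $|\phi_{+}(x_{2})-\tfrac{1}{\sqrt{2}}|\lesssim\delta'$; but $\phi_{+}\equiv 0$ on $(-\infty,x_{0}]$, so for $\delta'$ small this forces $x_{2}>x_{0}$. Symmetrically $x_{1}<x_{0}$, hence $x_{2}>x_{1}$, and your infimum argument over $[0,1/\delta]$ then closes. (Two minor remarks, neither fatal: your justification that $E_{pot}(\phi_{\pm})>E_{pot}(H_{0,1})$ strictly — needed to enter Lemma \ref{Lgta} — is a little informal, though the conclusion is right because the Bogomolny ODE $\phi'=\sqrt{2U(\phi)}$ has a locally Lipschitz right-hand side near $\phi=0$, so a nonnegative solution vanishing at a point vanishes identically; and the final estimate on the separation uses the Lipschitz continuity of $E_{pot}$ near $H^{x_{2}}_{0,1}+H^{x_{1}}_{-1,0}$, whereas the paper invokes the sharper $L^{2}$ bound on $DE_{pot}$ from Lemma \ref{DEl2} — either works.)
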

\begin{proof}[proof of Corollary \ref{remarkestimate}.]
First, from a similar reasoning to the proof of Lemma \ref{Lgta} we can assume by density that $\frac{d\phi(x)}{dx}\in H^{1}_{x}(\mathbb{R}).$ Next, from hypothesis $\norm{\phi(x)-H_{0,1}(x)-H_{-1,0}(x)}_{H^{1}(\mathbb{R})}<+\infty,$ we deduce using the mean value theorem that there is an $y\in\mathbb{R}$ such that $\phi(y)=0.$ Now, we consider the functions
\begin{equation*}
    \phi_{-}(x)=\begin{cases}
    \phi(x) \text{ if $x\leq y,$}\\
    0 \text{ otherwise,}
    \end{cases}
\end{equation*}
and
\begin{equation*}
    \phi_{+}(x)=\begin{cases}
    0 \text{ if $x\leq y,$}\\
    \phi(x) \text{ otherwise.}
    \end{cases}
\end{equation*}
Clearly, $\phi(x)=\phi_{-}(x)$ for $x<y$ and $\phi(x)=\phi_{+}(x)$ for $x>y.$ From identity $U(0)=0,$ we deduce that 
\begin{equation*}
E_{pot}(\phi)=E_{pot}(\phi_{-})+E_{pot}(\phi_{+}),    
\end{equation*}
also we have that
\begin{equation*}
   E_{pot}(H_{-1,0}) < E_{pot}(\phi_{-}),\,E_{pot}(H_{0,1})<E_{pot}(\phi_{+}).
\end{equation*}
In conclusion, since $E_{pot}(\phi)=2E_{pot}(H_{0,1})+\epsilon,$ Lemma \ref{Lgta} implies that if $\epsilon<\epsilon_{0}\ll 1,$ then there is $x_{2},\,x_{1} \in \mathbb{R}$ such that
\begin{equation}\label{orbity}
  \norm{\phi(x)-H_{0,1}(x-x_{2})-H_{-1,0}(x-x_{1})}_{H^{1}}\leq \norm{\phi_{+}-H_{0,1}(x-x_{2})}_{H^{1}}+\norm{\phi_{-}-H_{-1,0}(x-x_{1})}_{H^{1}}\leq\delta.
\end{equation}
So, to finish the proof of Corollary \ref{remarkestimate}, we need only to verify that we have $x_{2}-x_{1}\geq \frac{1}{\delta}$ if $0<\epsilon_{0}\ll 1.$ But, we recall that $H_{0,1}(0)=\frac{1}{\sqrt{2}},$ from which with estimate \eqref{orbity} we deduce that
\begin{equation}
   \md{\phi_{+}(x_{2})-\frac{1}{\sqrt{2}}}\lesssim\delta, \md{\phi_{-}(x_{1})+\frac{1}{\sqrt{2}}}\lesssim\delta,
\end{equation}
so if $\epsilon_{0}\ll 1,$ then $x_{1}<y<x_{2}.$ From Lemma \ref{DEl2}, we can verify that $f(z)=\norm{DE_{pot}(H_{0,1}^{z}(x)+H_{-1,0}(x))}_{L^{2}}$ is a bounded function in $\mathbb{R}_{+},$ from which with estimate \eqref{orbity} we deduce that if $0<\epsilon_{0}\ll 1,$ then  
\begin{equation*}
   \md{ E_{pot}(\phi)-E_{pot}\left(H_{0,1}(x-x_{2})+H_{-1,0}(x-x_{1})\right)}< e^{-2\sqrt{2}\frac{1}{\delta}}.
\end{equation*}
In conclusion, we obtain from Lemma \ref{LL.1} and the estimate above that $x_{2}-x_{1}\geq \frac{1}{\delta}$ if $0<\epsilon_{0}\ll1$ and $\epsilon<\epsilon_{0}.$
\end{proof}
Now, we complement our material by presenting the proof of Identity \eqref{Kenergy} and the proof of The Modulation Lemma.
\begin{proof}[Proof of Identity \eqref{Kenergy}]
From the definition of the function $H_{0,1}(x),$ we have 
\begin{equation*}
    \int_{\mathbb{R}}\big (8(H_{0,1}(x))^{3}-6(H_{0,1}(x))^{5}\big )e^{-\sqrt{2}x}\,dx=\int_{\mathbb{R}}\frac{8e^{2\sqrt{2}x}+2e^{4\sqrt{2}x}}{(1+e^{2\sqrt{2}x})^{\frac{5}{2}}}\,dx,
\end{equation*}
by the change of variable $y(x)=(1+e^{2\sqrt{2}x}),$ we obtain
\begin{align*}
     \int_{\mathbb{R}}\big (8(H_{0,1}(x))^{3}-6(H_{0,1}(x))^{5}\big )e^{-\sqrt{2}x}\,dx&=\frac{1}{2\sqrt{2}}\int_{1}^{\infty}\frac{8}{y^{\frac{5}{2}}}+\frac{2(y-1)}{y^{\frac{5}{2}}}\,dy
     =\frac{1}{2\sqrt{2}}\int_{1}^{\infty}\frac{6}{y^{\frac{5}{2}}}+\frac{2}{y^{\frac{3}{2}}}\,dy,\\
     &=\frac{1}{2\sqrt{2}}(-4y^{-\frac{3}{2}}-4y^{-\frac{1}{2}})\Big \vert_1^\infty
     =2\sqrt{2}.
\end{align*}
\end{proof}
\begin{proof}[Proof of the Modulation Lemma]
First, let $x_{2},\,x_{1}\in \mathbb{R}$ and $g \in H^{1}(\mathbb{R})$  such that $x_{2}-x_{1}\geq \frac{1}{\delta_{0}}$ with $\delta_{0}>0$ small enough to be chosen later. Then, we define the following map $F:\mathbb{R}^{2}\times H^{1}(\mathbb{R})\to\mathbb{R}^{2}$ by
\begin{equation*}
    F((h_{2},h_{1}),g(x))=
    \begin{bmatrix}
    \langle \partial_{x}H^{x_{2}+h_{2}}_{0,1},\,H^{x_{2}}_{0,1}+H^{x_{1}}_{-1,0}-H^{x_{1}+h_{1}}_{-1,0}+g\rangle_{L^{2}}\\
    \langle \partial_{x}H^{x_{1}+h_{1}}_{-1,0},\,H^{x_{2}}_{0,1}+H^{x_{1}}_{-1,0}-H^{x_{2}+h_{2}}_{0,1}+g\rangle_{L^{2}}\\
    \end{bmatrix}
\end{equation*}
for any $((h_{1},h_{2}),g)\in \mathbb{R}^{2}\times H^{1}(\mathbb{R}).$ Clearly, $F(0,0,0)=(0,0),$ also, 
we can verify that the Derivative $DF_{h_{2},h_{1}}((0,0),g)$ is given by
\begin{equation*}
    \begin{bmatrix}
    \norm{\partial_{x}H_{0,1}}_{L^{2}}^{2}+\langle \partial_{x}H^{x_{2}}_{0,1},\,\frac{dg}{dx}\rangle & \langle \partial_{x}H^{x_{2}}_{0,1},\,\partial_{x}H^{x_{1}}_{-1,0}\rangle\\
    \langle \partial_{x}H^{x_{2}}_{0,1},\,\partial_{x}H^{x_{1}}_{-1,0}\rangle & \norm{\partial_{x}H_{0,1}}_{L^{2}}^{2}+\langle \partial_{x}H^{x_{1}}_{-1,0},\,\frac{dg}{dx}\rangle
    \end{bmatrix}.
\end{equation*}
Then, $R_{g}(h_{2},h_{1})=F(h_{2},h_{1},g)-F(0,0,g)-DF_{h_{2},h_{1}}(0,0,g)(h_{2},h_{1})$ satisfies the following identity
\begin{multline}
    R_{g}(h_{2},h_{1})=
    \begin{bmatrix}
    \langle \partial_{x}H^{x_{2}+h_{2}}_{0,1}-\partial_{x}H^{x_{2}}_{0,1}+h_{2}\partial^{2}_{x}H^{x_{2}}_{0,1},\,g\rangle\\
     \langle \partial_{x}H^{x_{1}+h_{1}}_{-1,0}-\partial_{x}H^{x_{1}}_{-1,0}+h_{1}\partial^{2}_{x}H^{x_{1}}_{-1,0},\,g\rangle
    \end{bmatrix}
    +
    \begin{bmatrix}
    \langle\partial_{x}H^{x_{2}+h_{2}}_{0,1},\,H^{x_{2}}_{0,1}-H^{x_{2}+h_{2}}_{0,1}-h_{2}\partial_{x}H^{x_{2}+h_{2}}_{0,1}\rangle\\
    \langle\partial_{x}H^{x_{1}+h_{1}}_{-1,0},\,H^{x_{1}}_{-1,0}-H^{x_{1}+h_{1}}_{-1,0}-h_{1}\partial_{x}H^{x_{1}+h_{1}}_{-1,0}\rangle
    \end{bmatrix}\\
    +
    \begin{bmatrix}
    \langle\partial_{x}H^{x_{2}+h_{2}}_{0,1},\,H^{x_{1}}_{-1,0}-H^{x_{1}+h_{1}}_{-1,0}\rangle-h_{1}\langle\partial_{x}H^{x_{2}}_{0,1},\,\partial_{x}H^{x_{1}}_{-1,0}\rangle\\
    \langle\partial_{x}H^{x_{1}+h_{1}}_{-1,0},\,H^{x_{2}}_{0,1}-H^{x_{2}+h_{2}}_{0,1}\rangle-h_{2}\langle\partial_{x}H^{x_{2}}_{0,1},\,\partial_{x}H^{x_{1}}_{-1,0}\rangle
    \end{bmatrix}
\end{multline}
for all $(h_{2},h_{1}) \in \mathbb{R}^{2},$ also it is not difficult to verify that $R_{g}(0,0)=(0,0).$ Also for $\delta_{0}>0$ small enough, if $\max\{\md{h_{1}},\md{h_{2}}\}=O(\delta_{0}),\,\norm{g}_{H^{1}(\mathbb{R})}\leq \delta_{0},$ it is not difficult to verify from Lemma \ref{interact} that
\begin{equation*}
    \md{R_{g}(h_{2},h_{1})}\lesssim \norm{g}_{H^{1}}\max\{\md{h_{1}},\md{h_{2}}\}^{2}+\max\{\md{h_{1}},\md{h_{2}}\}^{3}+\max\{\md{h_{1}},\md{h_{2}}\}^{2}(x_{2}-x_{1})e^{-\sqrt{2}(x_{2}-x_{1})},
\end{equation*}
from which we deduce that
\begin{equation}\label{perturb}
    \md{R_{g}(h_{2},h_{1})}=O\Big((\delta_{0}^{2}+e^{-\frac{\sqrt{2}}{2\delta_{0}}})(\md{h_{1}}+\md{h_{2}})\Big),
\end{equation}
for any $((h_{2},h_{1}),g) \in \mathbb{R}^{2}\times H^{1}(\mathbb{R})$ such that $\max\{\md{h_{2}},\md{h_{1}}\}= O(\delta_{0})$ and  $\norm{g}_{H^{1}}\leq \delta_{0}.$ 
In particular, estimate \eqref{perturb} implies that  
$DF_{h_{2},h_{1}}((h_{2},h_{1}),g)$ is an uniformly non-degenerate matrix, for any $(h_{2},h_{1}),\, (x_{2},x_{1})\in \mathbb{R}^{2}$ and $g\in H^{1}(\mathbb{R})$ such $x_{2}-x_{1}\geq\frac{1}{\delta_{0}},\norm{g}_{H^{1}}\leq \delta_{0}$ and $\max\{\md{h_{2}},\md{h_{1}}\}=O(\delta_{0}).$ As a consequence, the result of the Modulation Lemma follows from the Implicit Function Theorem for Banach Spaces with the fact that $F((0,0),0)=(0,0)$. 
\end{proof}
\section{Optimality of Theorem \ref{T1}}\label{opt}
\begin{theorem}\label{optimal}
In notation of Theorem \ref{T1}, for any constant $C>0$ and any function $s:\mathbb{R}_{+}\to \mathbb{R}_{+}$ with $\lim_{h\to 0}s(h)=0,$ we can find a positive value $\delta(s)$ such that if $0<\epsilon\leq \delta(s),$ then for any $\norm{\overrightarrow{g(0)}}\leq \epsilon s(\epsilon)$ there is a $0<T\lesssim \frac{\ln{(\frac{1}{\epsilon})}}{\epsilon^{\frac{1}{2}}}$
such that
$\epsilon\lesssim \norm{\overrightarrow{g(T)}}.$
\end{theorem}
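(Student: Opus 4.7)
The plan is to prove Theorem \ref{optimal} by contradiction, combined with a carefully tuned linear observable that quantifies the radiation produced by the non-integrable interaction of the two kinks. Suppose the conclusion fails: there exist arbitrarily small $\epsilon > 0$ and initial data with $\|\overrightarrow{g(0)}\| \leq s(\epsilon)\,\epsilon$ and energy excess $\epsilon$, such that $\|\overrightarrow{g(t)}\| \leq \eta\,\epsilon$ throughout $[0, T_\epsilon]$, where $T_\epsilon = C_0 \ln(1/\epsilon)\,\epsilon^{-1/2}$, for a small constant $\eta>0$ and a large constant $C_0>0$ to be fixed at the end. By Theorem \ref{T2} the energy identity at $t=0$ forces $v^2 + b^2 \sim \epsilon$, and in particular $v \lesssim \epsilon^{1/2}$; by Theorem \ref{trueTheo2} the modulation parameters $x_j(t),\dot x_j(t)$ stay within $o(\epsilon^{1/2})$ of the ODE solutions $d_j(t),\dot d_j(t)$ on the whole of $[0, T_\epsilon]$, provided $\eta$ is small.

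Next I would use the remainder equation \eqref{NLW2} and decompose its right-hand side into the zero-mode part (absorbed by the modulation of $x_1,x_2$) plus an orthogonal residual dominated, by Lemma \ref{Lint}, by the interaction term
\begin{equation*}
\mathcal{I}(t,x) = \dot U(H^{x_1(t)}_{-1,0}) + \dot U(H^{x_2(t)}_{0,1}) - \dot U(H^{x_1(t)}_{-1,0} + H^{x_2(t)}_{0,1}),
\end{equation*}
whose $L^2$ norm is $\sim e^{-\sqrt{2}z(t)} \sim v^2 \sech^2(\sqrt{2}vt + c)$ and whose pointwise shape is explicit. The quantity I would track is a linear observable $\mathcal{A}(t) = \langle \overrightarrow{g(t)},\overrightarrow{\Psi(t)}\rangle$, where $\overrightarrow{\Psi(t)} \in H^1(\mathbb{R})\times L^2(\mathbb{R})$ is constructed from $\mathcal{I}(t,x)$ and suitable cutoffs so that (i) $\overrightarrow{\Psi(t)}$ is orthogonal to the four zero-mode directions of the linearized operator at the two-kink configuration, and (ii) its natural pairing with $\mathcal{I}$ is positive and of explicit size $\sim e^{-\sqrt{2}z(t)}$.

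I would then compute $\dot{\mathcal{A}}(t)$ using \eqref{NLW2}: after integration by parts and after discarding terms controlled by the smallness of $\overrightarrow{g(t)}$, by the modulation tracking of Step 2, and by the exponential estimates (D1)--(D4), the leading contribution is $c_0\,\epsilon\,\sech^2(\sqrt{2}vt + c)$ with $c_0>0$ an explicit constant. Integrating on $[0, T_\epsilon]$, and choosing $C_0$ large enough that the collision instant $t = -c/(\sqrt{2}v)$ lies in $[0,T_\epsilon]$ (or shifting the interval if not), the total change in $\mathcal{A}$ is of order $\epsilon/v \gtrsim \epsilon^{1/2}$; at the initial time $|\mathcal{A}(0)| \leq \|\overrightarrow{g(0)}\|\,\|\overrightarrow{\Psi(0)}\| \leq s(\epsilon)\epsilon\cdot O(\epsilon^{1/2}) = o(\epsilon)$. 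Hence $|\mathcal{A}(T_\epsilon)| \gtrsim \epsilon^{1/2}$, and since $\|\overrightarrow{\Psi(T_\epsilon)}\| = O(\epsilon^{1/2})$ by construction, the Cauchy--Schwarz inequality gives $\|\overrightarrow{g(T_\epsilon)}\| \gtrsim \epsilon$, contradicting the assumption $\|\overrightarrow{g(t)}\| \leq \eta\epsilon$ as soon as $\eta$ is small enough.

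The main obstacle will be constructing $\overrightarrow{\Psi(t)}$: it must be orthogonal to the four zero-mode directions (otherwise the modulation of $x_j$ kills its contribution) while still producing a positive-definite pairing with $\mathcal{I}(t,x)$. The natural candidate is a variant of the ``momentum correction'' idea from Lemma \ref{modueq} and Section~4, tailored to the explicit shape of $\mathcal{I}$ given by Lemma \ref{Lint} (a sum of two $\sech$-like bumps localized near $x_1(t)$ and $x_2(t)$). One would then verify that the cross terms arising in $\dot{\mathcal{A}}(t)$ from the quadratic and cubic nonlinearities in $g$, from the time derivatives of $\Psi$, and from the $\ddot x_j$ and $\dot x_j^{2}$ contributions, are all dominated by the leading $\sech^2$ forcing once $\|\overrightarrow{g(t)}\| \leq \eta\epsilon$ with $\eta$ small, so that the lower bound survives integration over $[0,T_\epsilon]$.
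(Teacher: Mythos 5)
Your proposal is structurally similar to the paper's proof (argue by contradiction, decompose $g$, track a scalar quantity whose second time derivative is forced), but the quantity you track and the forcing mechanism you exploit are fundamentally different from the paper's, and I believe the difference is fatal.

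The paper projects $g$ onto the directions $\partial_x^2 H^{x_j(t)}$, writing $g = P_1\partial_x^2 H^{x_1}_{-1,0} + P_2\partial_x^2 H^{x_2}_{0,1} + r$ with $r$ orthogonal to those directions, and tests the equation \eqref{NLW2} against $\partial_x^2 H^{x_j}$. The key identity that comes out is $\ddot P_j + \dot x_j(t)^2 = O(e^{-\sqrt{2}d(t)} + \text{small})$, so the leading source is the \emph{kinetic} term $\dot x_j^2$, not the kink--kink interaction $\mathcal I$. Combined with Theorem~\ref{T2}, $\epsilon \cong \|\overrightarrow{g}\|^2 + \dot x_1^2 + \dot x_2^2 + e^{-\sqrt 2 z}$, so once $\|\overrightarrow{g}\|$ is assumed $\ll \epsilon^{1/2}$ and $e^{-\sqrt 2 d(t)}$ is made small (which the paper arranges by a case analysis on $v$ and $c$), one always has $\sum_j\dot x_j^2\gtrsim\epsilon$, hence $|\sum_j\ddot P_j|\gtrsim\epsilon$, and $\dot P_j$ grows linearly over a time $\sim\epsilon^{-1/2}\ln(1/\epsilon)$, contradicting $|\dot P_j|\lesssim\|\overrightarrow{g}\|$. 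Your observable $\mathcal A$, by construction, is fed by $\mathcal I$, i.e.\ by $e^{-\sqrt 2 z(t)} \sim v^2\sech^2(\sqrt 2 v t + c)$. This mechanism simply does not see the regime $v^2\ll\epsilon$ (the paper's Case~1): there the kinks are already far apart, never collide, $\mathcal I$ is uniformly $O(v^2)\ll\epsilon$, and your $\int_0^T\dot{\mathcal A}\,dt$ cannot reach the needed size, yet the energy excess is carried entirely by the center-of-mass motion $b\sim\epsilon^{1/2}$ (the contribution $\dot x_j^2$ you explicitly relegate to ``cross terms to be dominated''). So your closing assertion that the $\dot x_j^2$ contributions ``are all dominated by the leading $\sech^2$ forcing'' is exactly backwards in this case. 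This gap cannot be repaired by ``shifting the interval'', since for small $v$ (and $c>0$) no collision ever occurs for $t\ge 0$.

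There is also an internal inconsistency in your size book-keeping that signals a second problem. From (ii), $\langle\Psi_2,\mathcal I\rangle \sim \|\mathcal I\|_{L^2}$ forces $\|\Psi_2\|\sim 1$; your claim that $\|\overrightarrow{\Psi(0)}\| = O(\epsilon^{1/2})$ is incompatible with that. Accepting $\|\overrightarrow\Psi\|\sim 1$, the Cauchy--Schwarz bound reads $|\mathcal A(t)|\le\|\overrightarrow{g(t)}\|\cdot\|\overrightarrow{\Psi(t)}\|\lesssim\epsilon$ for all $t\le T_\epsilon$, since Theorem~\ref{T1} gives $\|\overrightarrow{g(t)}\|\lesssim\epsilon$ over $t\lesssim\ln(1/\epsilon)\,\epsilon^{-1/2}$ when $\|\overrightarrow{g(0)}\|\le s(\epsilon)\epsilon$. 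Your claimed lower bound $|\mathcal A(T_\epsilon)|\gtrsim\epsilon^{1/2}$ therefore contradicts the already-proven upper bound, so the computation of $\dot{\mathcal A}$ must be missing a cancellation of leading order. The most likely culprit is the linear flux $\langle -Lg,\Psi_2\rangle$: unless $\Psi$ solves the time-dependent adjoint linearized equation \emph{exactly} (not merely being orthogonal to the zero modes at each time), this term is $O(\|\overrightarrow{g(t)}\|\cdot\|\Psi_2\|_{H^2})\sim\eta\epsilon$, which integrates over $[0,T_\epsilon]$ to $\sim\eta\epsilon^{1/2}\ln(1/\epsilon)$ --- not controlled by the $\sech^2$ forcing as $\epsilon\to 0$ for any fixed $\eta$. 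Moreover, a $\Psi$ that exactly solves the adjoint equation with a localized terminal condition disperses backward in time, so $\langle\mathcal I(t),\Psi_2(t)\rangle$ is genuinely smaller than the product $\|\mathcal I(t)\|\|\Psi_2(t)\|$ you use; making this rigorous would require dispersive estimates for the linearized two-kink flow, which the paper avoids entirely.
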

\begin{proof}[Proof of Optimality of Theorem \ref{T1}]
We use the notations of Theorem \ref{T1} and Theorem \ref{trueTheo2}.
Clearly, if the result of Theorem \ref{optimal} is false, then by contradiction there is a function $q:\mathbb{R}_{+}\to \mathbb{R}_{+}$ with $\lim_{h\to 0}q(h)=0$ such that for any $1\ll N \in \mathbb{N}$ is possible to have
\begin{equation}\label{contradicthypo}
    \norm{\overrightarrow{g(t)}}\leq q(\epsilon)\epsilon
\end{equation}
for all $0\leq t\leq N\frac{\ln{(\frac{1}{\epsilon})}}{\epsilon^{\frac{1}{2}}}=T$ if $\epsilon\ll 1$ enough. From Modulation Lemma, we can denote the solution $\phi(t,x)$ as
\begin{equation*}
    \phi(t,x)=H^{x_{1}(t)}_{-1,0}(x)+H^{x_{2}(t)}_{0,1}(x)+g(t,x),
\end{equation*}
such that \begin{equation*}
    \langle g(t,x),\,\partial_{x}H^{x_{1}(t)}_{-1,0}(x)\rangle_{L^{2}(\mathbb{R})}=0,\,\langle g(t,x),\,\partial_{x}H^{x_{2}(t)}_{0,1}(x)\rangle_{L^{2}(\mathbb{R})}=0.
\end{equation*} 
Also, for all $t\geq 0,$ we have that $g(t,x)$ has a unique representation as
\begin{equation}\label{formform}
    g(t,x)=P_{1}(t)\partial^{2}_{x}H^{x_{1}(t)}_{-1,0}(x)+P_{2}(t)\partial^{2}_{x}H^{x_{2}(t)}_{0,1}(x)+r(t,x),
\end{equation}
such that $r(t)$ satisfies the following new orthogonality conditions
\begin{equation}\label{orto2}
    \langle r(t),\,\partial^{2}_{x}H^{x_{1}(t)}_{-1,0}\rangle _{L^{2}(\mathbb{R})}=0,\,
    \langle r(t),\,\partial^{2}_{x}H^{x_{2}(t)}_{0,1}\rangle _{L^{2}(\mathbb{R})}=0.    
\end{equation}
In conclusion, we deduce that
\begin{equation}\label{quadot}
    \norm{g(t,x)}_{L^{2}(\mathbb{R})}^{2}=\norm{\ddot H_{0,1}(x)}_{L^{2}}^{2}(P_{1}^{2}+P_{2}^{2})+\norm{r(t)}_{L^{2}_{x}(\mathbb{R}}^{2}+2P_{1} P_{2}\langle \ddot H^{z(t)}_{0,1}(x),\,\ddot H_{-1,0}(x)\rangle_{L^{2}(\mathbb{R})}.
\end{equation}
We recall from Theorem \ref{Stab} that $\frac{1}{\sqrt{2}}\ln{(\frac{1}{\epsilon})}<z(t)$ for all $t\geq 0.$
Since, from Lemma \ref{interact}, we have that $\langle \partial^{2}_{x}H^{x_{1}(t)}_{-1,0}, \,\partial^{2}_{x}H^{x_{2}(t)}_{0,1}\rangle \lesssim z(t) e^{-\sqrt{2}z(t)}$ and $z(t)e^{-\sqrt{2}z(t)}\lesssim \epsilon\ln{(\frac{1}{\epsilon})}$ if $0<\epsilon\ll 1,$ we deduce from the equation \eqref{quadot} that there is a uniform constant $K>1$  such that for all $t\geq 0$ we have the following estimate
\begin{equation}\label{P1}
   \frac{\norm{g(t)}_{L^{2}}}{K} \leq \md{P_{1}(t)}+\md{P_{2}(t)}+\norm{r(t)}_{L^{2}(\mathbb{R})}\leq K \norm{\overrightarrow{g(t)}}.
\end{equation}
From Theorem \ref{Stab} and orthogonality condition \eqref{orto2}, we deduce that
\begin{equation*}
    \left\langle \partial_{t}r(t,x),\,\partial^{2}_{x}H^{x_{2}(t)}_{0,1}(x)\right\rangle_{L^{2}}=\dot x_{2}(t)\left\langle r(t,x),\,\partial^{3}_{x}H^{x_{2}(t)}_{0,1}(x)\right\rangle_{L^{2}}=O\Big(\norm{r(t)}_{L^{2}}\epsilon^{\frac{1}{2}}\Big). 
\end{equation*}
In conclusion, estimate \eqref{P1} and Lemma \ref{interact} imply that there is a $K>1$ such that
\begin{equation}\label{P2}
    \md{\dot P_{1}(t)}+\md{\dot P_{2}(t)}+\norm{\partial_{t}r(t)}_{L^{2}(\mathbb{R})}\leq K\norm{\overrightarrow{g(t)}}
\end{equation}
for all $t\geq 0.$ Finally, Minkowski inequality and estimate \eqref{P1} imply that there is a uniform constant $K>1$ such that
\begin{equation}\label{P3}
    \norm{\partial_{x}r(t,x)}_{L^{2}(\mathbb{R})}\leq K\norm{\overrightarrow{g(t)}}.
\end{equation}
We recall from Theorem \ref{T2} the following estimate
\begin{equation}\label{PP}
  \frac{\epsilon}{K}  \leq \norm{\overrightarrow{g(t)}}^{2}+\dot x_{1}(t)^{2}+\dot x_{2}(t)^{2}+ e^{-\sqrt{2}z(t)}\leq K \epsilon
\end{equation}
for some uniform constant $K>1.$ Now, from hypothesis \eqref{contradicthypo}, we obtain from Theorem \ref{trueTheo2} and Corollary \ref{colo2} that there are constants $M\in \mathbb{N}$ and $C>0$ such that for all $t\geq 0$ the following inequalities are true
\begin{align}\label{AB1}
    \max_{j\in\{1,\,2\}}\md{x_{j}(t)-d_{j}(t)}\leq \epsilon \ln{\Big(\frac{1}{\epsilon}\Big)}^{M+1}\exp\Big(\frac{10C\epsilon^{\frac{1}{2}}t}{\ln{(\frac{1}{\epsilon})}}\Big),\\ \label{AB2}
     \max_{j\in\{1,\,2\}}\md{\dot x_{j}(t)-\dot d_{j}(t)}\leq \epsilon^{\frac{3}{2}}\ln{\Big(\frac{1}{\epsilon}\Big)}^{M}\exp\Big(\frac{10C\epsilon^{\frac{1}{2}}t}{\ln{(\frac{1}{\epsilon})}}\Big),\\ \label{AB3}
     \max_{j\in\{1,\,2\}}\md{\ddot x_{j}(t)-\ddot d_{j}(t)}\leq \epsilon^{\frac{3}{2}}\ln{\Big(\frac{1}{\epsilon}\Big)}\exp\Big(\frac{10C\epsilon^{\frac{1}{2}}t}{\ln{(\frac{1}{\epsilon})}}\Big),
\end{align}
for a uniform constant $C>0.$
From the partial differential equation \eqref{nlww} satisfied by $\phi(t,x)$ and the representation \eqref{formform} of $g(t,x)$, we deduce in the distributional sense that for any $h(x)\in H^{1}(\mathbb{R})$ that
\begin{multline}\label{pdegeral}
   \left \langle h(x),\,(\ddot P_{1}(t)+\dot x_{1}(t)^{2})\partial^{2}_{x}H^{x_{1}(t)}_{-1,0}+ (\ddot P_{2}(t)+\dot x_{2}(t)^{2})\partial^{2}_{x}H^{x_{2}(t)}_{0,1}\right\rangle_{L^{2}_{x}(\mathbb{R})}=
   \left\langle h(x), -P_{1}(t)\Big[\Big(-\partial^{2}_{x}+\ddot U(H^{x_{1}(t)}_{-1,0})\Big)\partial^{2}_{x}H^{x_{1}(t)}_{-1,0}\Big]\right\rangle\\\left \langle h(x),\,-P_{2}(t)\Big[\Big(-\partial^{2}_{x}+\ddot U(H^{x_{2}(t)}_{0,1})\Big)\partial^{2}_{x}H^{x_{2}(t)}_{0,1}\Big]-\Big[\partial^{2}_{t}r(t)
    -\partial^{2}_{x}r(t)+\ddot U(H^{x_{2}(t)}_{0,1}+H^{x_{1}(t)}_{-1,0})r(t)\Big]\right\rangle_{L^{2}_{x}(\mathbb{R})}\\
    -\left\langle h(x),\,\left[ \dot U(H^{x_{2}(t)}_{0,1}+H^{x_{1}(t)}_{-1,0})-\dot U(H_{0,1}^{x_{2}(t)})-\dot U(H_{-1,0}^{x_{1}(t)})\right] 
    -\ddot x_{1}(t)\partial_{x}H^{x_{1}(t)}_{-1,0}(x)-\ddot x_{2}(t)\partial_{x}H^{x_{2}(t)}_{0,1}(x)\right\rangle_{L^{2}_{x}(\mathbb{R})}
    \\ \left\langle h(x),\, -P_{1}(t)\Big[\Big(\ddot U(H^{x_{2}(t)}_{0,1}+H^{x_{1}(t)}_{-1,0})-\ddot U(H^{x_{1}(t)}_{-1,0})\Big)\partial^{2}_{x}H^{x_{1}(t)}_{-1,0}\Big]-P_{2}(t)\Big[\Big(\ddot U(H^{x_{2}(t)}_{0,1}+H^{x_{1}(t)}_{-1,0})-\ddot U(H^{x_{2}(t)}_{0,1})\Big)\partial^{2}_{x}H^{x_{2}(t)}_{0,1}\Big]\right\rangle\\
    +O\left(\norm{h}_{L^{2}}\left[\norm{g(t)}_{H^{1}}^{2}+\max_{j\in\{1,\,2\}}\md{\ddot x_{j}(t)}+\max_{j\in\{1,\,2\}}\md{\dot P_{j}(x)\dot x_{j}(t)}+\max_{j \in\{1,\,2\}} \md{P_{j}(t)}e^{-\sqrt{2}z(t)}+\md{P_{j}(t)\ddot x_{j}(t)}+\md{P_{j}(t)\dot x_{j}(t)^{2}}\right]\right).
\end{multline}
From Lemma \ref{Lint} and estimates \eqref{AB1} and \eqref{AB3}, we obtain from \eqref{pdegeral} that
\begin{multline}\label{pdegeral1}
   \left \langle h(x),\,(\ddot P_{1}(t)+\dot x_{1}(t)^{2})\partial^{2}_{x}H^{x_{1}(t)}_{-1,0}+ (\ddot P_{2}(t)+\dot x_{2}(t)^{2})\partial^{2}_{x}H^{x_{2}(t)}_{0,1}\right\rangle_{L^{2}_{x}(\mathbb{R})}=
   \left\langle h(x), -P_{1}(t)\Big[\Big(-\partial^{2}_{x}+\ddot U(H^{x_{1}(t)}_{-1,0})\Big)\partial^{2}_{x}H^{x_{1}(t)}_{-1,0}\Big]\right\rangle_{L^{2}(\mathbb{R})}\\\left \langle h(x),\,-P_{2}(t)\Big[\Big(-\partial^{2}_{x}+\ddot U(H^{x_{2}(t)}_{0,1})\Big)\partial^{2}_{x}H^{x_{2}(t)}_{0,1}\Big]-\Big[\partial^{2}_{t}r(t)
    -\partial^{2}_{x}r(t)+\ddot U(H^{x_{2}(t)}_{0,1}+H^{x_{1}(t)}_{-1,0})r(t)\Big]\right\rangle_{L^{2}_{x}(\mathbb{R})}\\+O\left(\norm{h}_{L^{2}}\left[\max_{j\in\{1,\,2\}}\md{\ddot x_{j}(t)-\ddot d_{j}(t)}+e^{-\sqrt{2}d(t)}+\md{z(t)-d(t)}e^{-\sqrt{2}z(t)}+e^{-2\sqrt{2}z(t)}\right]\right)\\
    +O\left(\norm{h}_{L^{2}}\left[\norm{g(t)}_{H^{1}}^{2}+\max_{j\in\{1,\,2\}}\md{\ddot x_{j}(t)}+\max_{j\in\{1,\,2\}}\md{\dot P_{j}(x)\dot x_{j}(t)}+\max_{j \in\{1,\,2\}} \md{P_{j}(t)}e^{-\sqrt{2}z(t)}+\md{P_{j}(t)\ddot x_{j}(t)}+\md{P_{j}(t)\dot x_{j}(t)^{2}}\right]\right).
\end{multline}
From the condition \eqref{orto2}, we deduce that
\begin{align*}
   \left\langle \partial^{2}_{t}r(t),\,\partial^{2}_{x}H^{x_{2}(t)}_{0,1}\right\rangle_{L^{2}}=\frac{d}{dt}\left[\dot x_{2}(t)\left \langle r(t),\,\partial^{3}_{x}H^{x_{2}(t)}_{0,1}
   \right \rangle_{L^{2}}\right]+\dot x_{2}(t)\left \langle \partial_{t}r(t),\,\partial^{3}_{x}H^{x_{2}(t)}_{0,1}
   \right \rangle_{L^{2}},\\
   \left\langle \partial^{2}_{t}r(t),\,\partial^{2}_{x}H^{x_{1}(t)}_{-1,0}\right\rangle_{L^{2}}=\frac{d}{dt}\left[\dot x_{1}(t)\left \langle r(t),\,\partial^{3}_{x}H^{x_{1}(t)}_{-1,0}
   \right \rangle_{L^{2}}\right]+\dot x_{1}(t)\left \langle \partial_{t}r(t),\,\partial^{3}_{x}H^{x_{1}(t)}_{-1,0}
   \right \rangle_{L^{2}},
\end{align*} which with the Theorem \ref{Stab} imply that there is a uniform constant $C>0$ such that
\begin{equation}\label{EO1}
   \md{\left\langle \partial^{2}_{t}r(t),\,\partial^{2}_{x}H^{x_{2}(t)}_{0,1}\right\rangle_{L^{2}}}\leq C\epsilon^{\frac{1}{2}}\norm{\overrightarrow{r(t)}},\,
    \md{\left\langle \partial^{2}_{t}r(t),\,\partial^{2}_{x}H^{x_{1}(t)}_{-1,0}\right\rangle_{L^{2}}}\leq C\epsilon^{\frac{1}{2}}\norm{\overrightarrow{r(t)}}.
\end{equation}
From \eqref{P1}, \eqref{P2} and \eqref{P3}, we obtain that 
$\norm{\overrightarrow{r(t)}}\lesssim \norm{\overrightarrow{g(t)}}.$
In conclusion, after we apply the partial differential equation \eqref{pdegeral1} in distributional sense to $\partial^{2}_{x}H^{x_{2}(t)}_{0,1},\,\partial^{2}_{x}H^{x_{1}(t)}_{-1,0},$ the estimates \eqref{P1}, \eqref{P2}, \eqref{P3}, \eqref{AB1}, \eqref{AB3} and \eqref{EO1} imply that there is a uniform constant $K_{1}>0$ such that if $\epsilon\ll 1$ enough, then for $j \in \{1,\,2\}$ we have that for $0\leq t\leq \frac{N\ln{(\frac{1}{\epsilon})}}{\epsilon^{\frac{1}{2}}}$
\begin{equation*}
    \md{\ddot P_{j}(t)+\dot x_{j}(t)^{2}}\leq K_{1}\Big(e^{-\sqrt{2}d(t)}+\epsilon^{\frac{3}{2}}\ln{\Big(\frac{1}{\epsilon}\Big)}^{M+1}\exp\Big(\frac{10C\epsilon^{\frac{1}{2}}t}{\ln{(\frac{1}{\epsilon})}}\Big)+\epsilon q(\epsilon)\Big),
\end{equation*}
from which we deduce for all $0\leq t\leq N\frac{\ln{(\frac{1}{\epsilon})}}{\epsilon^{\frac{1}{2}}}$ that
\begin{equation}\label{eqqq1}
    \md{\sum_{j=1}^{2}\ddot P_{j}(t)+\dot x_{j}(t)^{2}}\leq 2K_{1}\Big(e^{-\sqrt{2}d(t)}+\epsilon^{\frac{3}{2}}\ln{\Big(\frac{1}{\epsilon}\Big)}^{M+1}\exp\Big(\frac{10C\epsilon^{\frac{1}{2}}t}{\ln{(\frac{1}{\epsilon})}}\Big)+\epsilon q(\epsilon)\Big).
\end{equation}
Since $ \md{\sum_{j=1}^{2}\ddot P_{j}(t)}\geq -\md{\sum_{j=1}^{2}\ddot P_{j}(t)+\dot x_{j}(t)^{2}}+\sum_{j=1}^{2} \dot x_{j}(t)^{2},$
we deduce from the estimates \eqref{eqqq1} and \eqref{PP} that
\begin{equation}\label{finii1}
     \md{\sum_{j=1}^{2}\ddot P_{j}(t)}\geq \frac{\epsilon}{K}-\Big[e^{-\sqrt{2}z(t)}+\norm{\overrightarrow{g(t)}}^{2}\Big]-2K_{1}\Big[e^{-\sqrt{2}d(t)}+\epsilon^{\frac{3}{2}}\ln{\Big(\frac{1}{\epsilon}\Big)}^{M+1}\exp\Big(\frac{10C\epsilon^{\frac{1}{2}}t}{\ln{(\frac{1}{\epsilon})}}\Big)\Big]-2K_{1}\epsilon q(\epsilon).
\end{equation}
We recall that from the statement of Theorem \ref{trueTheo2} that
$e^{-\sqrt{2}d(t)}=\frac{v^{2}}{8}\sech{(\sqrt{2}vt+c)}^{2},$ with
$v=\Big(\frac{\dot z(0)^{2}}{4}+8 e^{-\sqrt{2}z(0)}\Big)^{\frac{1}{2}},$
which implies that $v\lesssim \epsilon^{\frac{1}{2}}.$ Since we have verified in Theorem \ref{Stab} that $e^{-\sqrt{2}z(t)}\lesssim \epsilon,$ the mean value theorem implies that
$\md{e^{-\sqrt{2}z(t)}-e^{-\sqrt{2}d(t)}}=O(\epsilon \md{z(t)-d(t)}),$
from which we deduce from \ref{AB1} that
\begin{equation*}
    \md{e^{-\sqrt{2}z(t)}-e^{-\sqrt{2}d(t)}}=O\Big(\epsilon^{2}\ln{\Big(\frac{1}{\epsilon}\Big)}^{M+1}\exp\Big(\frac{10C\epsilon^{\frac{1}{2}}t}{\ln{(\frac{1}{\epsilon})}}\Big)\Big).
\end{equation*}
In conclusion, if $\epsilon\ll 1$ enough, we obtain for $0\leq t \leq \frac{N\ln{(\frac{1}{\epsilon})}}{\epsilon^{\frac{1}{2}}}$ from \eqref{finii1} that
\begin{equation}\label{finii2}
    \md{\sum_{j=1}^{2}\ddot P_{j}(t)}\geq \frac{\epsilon}{K}-\Big[e^{-\sqrt{2}d(t)}+\norm{\overrightarrow{g(t)}}^{2}\Big]-4K_{1}\Big[e^{-\sqrt{2}d(t)}+\epsilon^{\frac{3}{2}}\ln{\Big(\frac{1}{\epsilon}\Big)}^{M+1}\exp\Big(\frac{10C\epsilon^{\frac{1}{2}}t}{\ln{(\frac{1}{\epsilon})}}\Big)\Big]-2K_{1}\epsilon q(\epsilon).
\end{equation}
 The conclusion of the demonstration will follow from studying separate cases in the choice of $0<v,\,c.$ We also observe that $K,\,K_{1}$ are uniform constants and the value of $N \in \mathbb{N}_{>0}$ can be chosen in the beginning of the proof to be as much large as we need.\\
\textbf{Case 1.}($v^{2}\leq \frac{8\epsilon}{(1+4K_{1})2K}.$)
From inequality \eqref{finii2}, we deduce that
\begin{equation*}
    \md{\sum_{j=1}^{2}\ddot P_{j}(t)}\geq \frac{\epsilon}{2K}-\norm{\overrightarrow{g(t)}}^{2}-4K_{1}\Big(\epsilon^{\frac{3}{2}}\ln{\Big(\frac{1}{\epsilon}\Big)}^{M+1}\exp\Big(\frac{10C\epsilon^{\frac{1}{2}}t}{\ln{(\frac{1}{\epsilon})}}\Big)\Big)-2K_{1}\epsilon q(\epsilon),
\end{equation*}
then, from \eqref{contradicthypo} we deduce for $0\leq t \leq \frac{\ln{(\frac{1}{\epsilon})}}{\epsilon^{\frac{1}{2}}}$ that if $\epsilon$ is small enough, then
$\md{\sum_{j=1}^{2}\ddot P_{j}(t)}\geq \frac{\epsilon}{4K},$
and so,
\begin{equation*}
    \md{\sum_{j=1}^{2}\dot P_{j}(t)}\geq \frac{\epsilon t}{4K}-\md{\sum_{j=1}^{2}\dot P_{j}(0)},
\end{equation*}
which contradicts the fact that \eqref{P2} and \eqref{contradicthypo} should be true for $\epsilon\ll 1.$\\
\textbf{Case 2.}($v^{2}\geq \frac{8\epsilon}{ (1+4K_{1}) 2K},\,\md{c}>2\ln{(\frac{1}{\epsilon})}.$)
It is not difficult to verify that for $0\leq t\leq \min( \frac{\md{c}}{2\sqrt{2}v},N\frac{\ln{(\frac{1}{\epsilon})}}{\epsilon^{\frac{1}{2}}}),$ we have that
$e^{-\sqrt{2}d(t)}\leq \frac{v^{2}}{8}\sech{(\frac{c}{2})}^{2}\lesssim \epsilon^{3},$
then estimate \eqref{finii2} implies that $\md{\sum_{j=1}^{2}\ddot P_{j}(t)}\geq \frac{\epsilon}{4K}$ is true in this time interval. Also, since now $v\cong \epsilon^{\frac{1}{2}},$ we have that
\begin{equation*}
   \frac{\ln{(\frac{1}{\epsilon})}}{\epsilon^{\frac{1}{2}}} \lesssim\frac{\md{c}}{2\sqrt{2}v},
\end{equation*}
so we obtain a contradiction by similar argument to the Case $1.$\\
\textbf{Case 3.}($v^{2}\geq \frac{8\epsilon}{ (1+4K_{1}) 2K},\,\md{c}\leq 2\ln{(\frac{1}{\epsilon})}.$)
For $1\ll N$ enough and $t_{0}=\frac{ (1+4K_{1})^{\frac{1}{2}} K^{\frac{1}{2}} \sqrt{2} \ln{(\frac{1}{\epsilon})}}{\epsilon^{\frac{1}{2}}},$ we have during the time interval $ \left\{t_{0}\leq t \leq 2\frac{(1+4K_{1})^{\frac{1}{2}} K^{\frac{1}{2}} \sqrt{2} \ln{(\frac{1}{\epsilon})}}{\epsilon^{\frac{1}{2}}}\right\}$ that $e^{-\sqrt{2}d(t)}\leq \frac{v^{2}}{8}\sech{\Big(2\ln{\Big(\frac{1}{\epsilon}\Big)}\Big)}^{2}\lesssim \epsilon^{5}.$ In conclusion, estimate \eqref{finii1} implies that $\md{\sum_{j=1}^{2}\ddot P_{j}(t)}\geq \frac{\epsilon}{4K}$ is true in this time interval. From the Fundamental Calculus Theorem, we have that\begin{center}
$\md{\sum_{j=1}^{2}\dot P_{j}(t)}\geq \frac{\epsilon (t-t_{0})}{4K}-\md{\sum_{j=1}^{2}\dot P_{j}(t_{0})}.$\end{center}
In conclusion, hypothesis \eqref{contradicthypo} and estimate \eqref{P2} imply for $T=2\frac{(1+2K_{1})^{\frac{1}{2}} K^{\frac{1}{2}} \sqrt{2} \ln{(\frac{1}{\epsilon})}}{\epsilon^{\frac{1}{2}}}$ that
\begin{equation*}
    \md{\sum_{j=1}^{2}\dot P_{j}(T)}\geq \frac{\epsilon^{\frac{1}{2}} (1+2K_{1})^{\frac{1}{2}}  \sqrt{2} \ln{(\frac{1}{\epsilon})} }{8K^{\frac{1}{2}}},
\end{equation*}
which contradicts the fact that \eqref{contradicthypo} and \eqref{P2} should be true, which finishes our proof.
\end{proof}
\begin{remark}
Indeed, we can use Theorem \ref{optimal} to verify that there is a sequence $(t_{n})_{n\in\mathbb{R}}$ such that $t_{n}\to +\infty$ and $\epsilon\lesssim\norm{\overrightarrow{g(t_{n})}}_{H^{1}\times L^{2}}.$
\end{remark}
\bibliographystyle{plain}
\bibliography{ref}

\end{document}